\pgfplotsset{compat=1.8}
\xpatchcmd{\proof}{\itshape}{\bf}{}{}
\xpatchcmd{\example}{\itshape}{\bf}{}{}
\numberwithin{equation}{section}
\newtheorem{thm}{Theorem}[section]
\newtheorem{lemma}[thm]{Lemma}
\newtheorem{prop}[thm]{Proposition}
\newtheorem{cor}[thm]{Corollary}
\theoremstyle{definition}
\newtheorem{rmk}[thm]{Remark}
\newtheorem{defn}[thm]{Definition}
\theoremstyle{remark}
\newtheorem{example}[thm]{Example}
\newcommand{\R}{{\mathbb R}}
\newcommand{\Z}{{\mathbb Z}}
\newcommand{\bbS}{{\mathbb S}}
\DeclareMathOperator{\E}{\mathbb{E}}
\let\P\undefined
\DeclareMathOperator{\P}{\mathbb{P}}
\newcommand{\bY}{{\bar Y}}
\newcommand{\bF}{{\bar F}}
\newcommand{\bA}{{\bar A}}
\newcommand{\cA}{{\mathcal A}}
\newcommand{\cB}{{\mathcal B}}
\newcommand{\cD}{{\mathcal D}}
\newcommand{\cF}{{\mathcal F}}
\newcommand{\cN}{{\mathcal N}}
\newcommand{\cC}{{\mathcal C}}
\newcommand{\cQ}{{\mathcal Q}}
\newcommand{\cS}{{\mathcal S}}
\newcommand{\cW}{{\mathcal W}}
\newcommand{\cJ}{{\mathcal J}}
\newcommand{\cM}{{\mathcal M}}
\newcommand{\cR}{{\mathcal R}}
\newcommand{\hE}{{\widehat E}}
\newcommand{\hL}{{\hat L}}
\newcommand{\hW}{{\widehat W}}
\newcommand{\hX}{{\widehat X}}
\newcommand{\tH}{{\widetilde H}}
\newcommand{\tL}{{\tilde L}}
\newcommand{\tQ}{{\widetilde Q}}
\newcommand{\tR}{{\widetilde R}}
\newcommand{\tW}{{\widetilde W}}
\newcommand{\tZ}{{\widetilde Z}}
\newcommand{\var}{\text{-}\mathrm{var}}
\newcommand{\id}{\mathrm{Id}}
\newcommand{\eps}{\varepsilon}
\newcommand{\MM}{e}
\newcommand{\ELL}{{k_*}}
\newcommand{\Cov}{\operatorname{Cov}}
\newcommand{\Leb}{\operatorname{Leb}}
\newcommand{\Lip}{\operatorname{Lip}}
\newcommand{\sgn}{\operatorname{sgn}}
\newcommand{\SMALL}{\textstyle}
\newcommand{\dsim}{\sim_F}
\newcommand{\bone}{\mathbf{1}}
 \newcommand{\disc}[1]{{\talloblong #1 \talloblong}}
\newcommand{\DD}{{\mathscr{D}}}
\title{Superdiffusive limits beyond the Marcus regime for deterministic fast-slow systems}
\author{Ilya Chevyrev\thanks{School of Mathematics, The University of Edinburgh, Edinburgh, EH9 3FD, UK.
\href{mailto:ichevyrev@gmail.com}{\tt ichevyrev@gmail.com}}
\and Alexey Korepanov\thanks{Department of Mathematical Sciences, Loughborough University,
Loughborough, Leicestershire, LE11 3TU, UK. \href{mailto:a.korepanov@lboro.ac.uk}{\tt khumarahn@gmail.com}}
\and Ian Melbourne\thanks{Mathematics Institute,
University of Warwick,
Coventry, CV4 7AL,
UK.
\href{mailto:i.melbourne@warwick.ac.uk}{\tt i.melbourne@warwick.ac.uk}}}
\date{4 January 2024; Revised 21 October 2024}
\begin{document}

\maketitle

\begin{abstract}
We consider deterministic fast-slow dynamical systems of the form
\[
x_{k+1}^{(n)} = x_k^{(n)} + n^{-1} A(x_k^{(n)}) + n^{-1/\alpha} B(x_k^{(n)}) v(y_k), \quad y_{k+1} = Ty_k,
\]
where $\alpha\in(1,2)$ and $x_k^{(n)}\in\R^m$. Here, $T$ is a slowly mixing nonuniformly hyperbolic dynamical system and the process $W_n(t)=n^{-1/\alpha}\sum_{k=1}^{[nt]}v(y_k)$ converges weakly to a $d$-dimensional $\alpha$-stable  L\'evy process $L_\alpha$.

We are interested in convergence of the $m$-dimensional process $X_n(t)=  x_{[nt]}^{(n)}$ to the solution of a stochastic differential equation (SDE)
\[
 dX = A(X)\,dt + B(X)\, dL_\alpha.
\]
In the simplest cases considered in previous work, the limiting SDE has the Marcus interpretation. In particular, the SDE is Marcus if the noise coefficient $B$ is exact or if the excursions for $W_n$ converge to straight lines as $n\to\infty$.

Outside these simplest situations, it turns out that  typically the Marcus interpretation fails. We develop a general theory that does not rely on exactness or linearity of excursions. To achieve this, it is necessary to consider suitable spaces of ``decorated'' c\`adl\`ag paths and to interpret the limiting decorated SDE. In this way, we are able to cover more complicated examples such as billiards with flat cusps where the limiting SDE is typically non-Marcus for $m\ge2$.
\end{abstract}

\renewcommand{\thefootnote}{\fnsymbol{footnote}} 
\footnotetext{\emph{Key words:} Fast-slow systems, homogenisation, $p$-variation, billiard with flat cusp, Skorokhod space, Marcus differential equations, L\'evy processes}

\footnotetext{\emph{Mathematics Subject Classification (2020):} 60H10, 37A50 (Primary), 37C83, 60G52, 60H05 (Secondary)}
\renewcommand{\thefootnote}{\arabic{footnote}}

\tableofcontents

\section{Introduction}
\label{sec:intro}

Homogenisation for systems with multiple timescales is
a longstanding and very active area of research~\cite{PavliotisStuart}.  
Recently there has been considerable interest in the case where the underlying multiscale system is deterministic, see~\cite{ CFKM20,CFKMZ22,Dolgopyat04,Dolgopyat05,
GM13,KM16,KM17,KKM22,MS11,Pene02} as well as our survey paper~\cite{CFKMZ}.
Although the system is deterministic, chaotic dynamics often leads to a limiting stochastic differential equation (SDE) driven by Brownian motion.  

We are interested in the case where the limiting SDE is driven by a superdiffusive $\alpha$-stable L\'evy process. This situation was considered previously in~\cite{CFKM20,GM13} though, as we shall see, under restrictions that gave a misleadingly simple picture.

\subsection{Background material on interpretation of stochastic integrals}
When analysing SDEs, a key question is the appropriate interpretation of certain stochastic integrals. Consider the SDE
\begin{equation} \label{eq:SDE}
dX=A(X)\,dt+B(X)\,dW, \quad X(0)=\xi,
\end{equation}
where $A:\R^m\to\R^m$, $B:\R^m\to\R^{m\times d}$ are smooth, $\xi\in\R^m$, and $W$ is a continuous $d$-dimensional stochastic process. If $W$ is smooth, then~\eqref{eq:SDE} reduces to an ordinary differential equation (ODE). If $W$ is $\eta$-H\"older with $\eta>\frac12$, or alternatively of finite $p$-variation with $p<2$, then the integral $\int B(X)\,dW$ is well-defined as a Riemann-Stieltjes integral and is called a Young integral~\cite{Young36}. Unfortunately, Brownian motion is $\eta$-H\"older only for $\eta<\frac12$ and is of finite $p$-variation only for $p>2$. Hence, when $W$ is Brownian motion, $\int B(X)\,dW$ is not generally well-defined as a Young integral, and the question arises as to whether the correct interpretation is It\^{o} (which has the advantage of preserving the property of being a martingale), Stratonovich (which has the advantage of transforming under the standard laws of calculus) 
or some other interpretation.
Wong and Zakai~\cite{WZ65} attempted to resolve the issue of the interpretation of stochastic integrals by considering smooth approximations whereby smooth processes $W_n$ converge weakly to $W$ as $n\to\infty$. The SDE~\eqref{eq:SDE}
is then replaced by the ODE
\begin{equation} \label{eq:ODE}
dX_n=A(X_n)\,dt+B(X_n)\,dW_n, \quad X_n(0)=\xi,
\end{equation}
 and it is hoped that computing the weak limit $X$ of $X_n$ will help to clarify the nature of the limiting stochastic integral. Under certain conditions, it was shown in~\cite{WZ65}
that $X_n\to_w X$ where $X$ satisfies~\eqref{eq:SDE} with the Stratonovich interpretation. The conditions in~\cite{WZ65} hold for $d=m=1$ and more generally under certain exactness conditions (for example, it suffices that $d=m$ and $B^{-1}=dr$ for some diffeomorphism $r:\R^m\to\R^m$), but counterexamples for $d=m=2$ were subsequently found by~\cite{McShane72,Sussmann91}.

When exactness fails, a fundamental issue is the lack of continuous dependence of the outputs $X_n$ on the inputs $W_n$ in equation~\eqref{eq:ODE}. The theory of rough paths developed by Lyons~\cite{Lyons98} demonstrates that the answer is to view $X_n$ as a function of processes $(W_n,\int W_n \otimes\, dW_n)$ taking values in $\R^d\times\R^{d\times d}$. Working in the uniform topology on continuous paths in $\R^d\times\R^{d\times d}$
is still insufficient, and it is necessary to consider $p$-variation~\cite{Lyons98} or H\"older topologies~\cite{Friz2005,FrizHairer}.
This leads to nontrivial corrections to the Stratonovich integral via the ``L\'evy area'' given by the weak limit of $\int W_n \otimes\,dW_n$.
In~\cite{KM16}, see also~\cite{KM17,CFKMZ,CFKMZ22},
it was shown using rough path theory that a large class of deterministic systems (maps or ODEs) give rise to smooth approximations of Brownian motion with well-defined and nontrivial L\'evy area; the correct interpretation is generally neither It\^{o} nor Stratonovich. 

\subsection{Stochastic integration with respect to L\'evy processes}

Now we turn to the case where continuous paths $W_n$ converge weakly to $W$ which is
an $\alpha$-stable L\'evy process, $\alpha\in(0,2)$.
We write $L_\alpha$ instead of $W$ in this case, so the limiting SDE becomes
\begin{equation} \label{eq:SDEL}
    dX = A(X)\,d t + B(X) \,d L_\alpha, \quad X(0) = \xi.
\end{equation}
The L\'evy process $L_\alpha$ is tight in $p$-variation for some $p < 2$.
Were $L_\alpha$ continuous, the SDE~\eqref{eq:SDEL} would have a comparatively simple
interpretation in the sense of Young as shown in \cite{Lyons94}, but the discontinuities of $L_\alpha$
leave~\eqref{eq:SDEL} open to interpretations.

Again, the simplest situation is when $B$ is exact; there it was shown in~\cite[Section~5]{GM13} that the limiting SDE has the Marcus interpretation~\cite{Marcus80} (see~\cite{KPP95,Applebaum09,CP14} for the general theory of Marcus SDEs and their applications).
We note that the Marcus integral 
plays the same role for L\'evy processes that Stratonovich does for Brownian motion:
it is the integral that transforms under the standard laws of calculus.

It turns out that the Marcus integral may give the correct interpretation beyond the exact situation although the mechanism is somewhat different. 
In~\cite{CFKM20}, we gave general conditions under which there is convergence to a Marcus SDE, but a crucial assumption was that $W_n$ converges to $L_\alpha$ in the $\cM_1$ Skorokhod topology
on the Skorokhod space $D([0,1], \R^m)$ of c\`adl\`ag functions.

To  circumvent nonexactness, we used topologies inspired by rough paths:
weak convergence of $W_n$ no longer suffices and
we required in addition that $W_n$ is tight in $p$-variation for some $p<2$.
Moreover, although convergence of $W_n$ was assumed in the $\cM_1$ topology, $X_n$ need not converge in any of the
Skorokhod topologies~\cite{S56} even
in the simplest situations~\cite{CFKM20}. Hence we 
 needed to consider convergence of $X_n$ in generalised Skorokhod topologies as introduced recently in Chevyrev and Friz~\cite{CF19}.

In Theorem~\ref{thm:Marcus}, we extend the result in~\cite{CFKM20} to include general ``linear excursions'' for $W_n$ (by which we mean that each excursion is asymptotically a line segment, possibly with overshoots).
This includes the situations where $W_n$ converges weakly in the $\cM_1$ or $\cM_2$ topologies.

More importantly, in our main result Theorem \ref{thm:X}, we are able to show convergence of $X_n$ when $W_n$ does not have linear excursions and thus fails to converge in the $\cM_1$ or $\cM_2$ topologies.
When $d\ge2$, linear excursions for $W_n$ are highly unlikely 
unless there is a very simple mechanism underlying the excursions, such as a neutral fixed point for an intermittent map.
Moreover, as shown in Example~\ref{ex:non-Marcus} below,
typically the limiting SDE is not Marcus when $B$ is not exact and the excursions are not linear.

\begin{example} \label{ex:non-Marcus}
We show that in the simplest nonexact examples, convergence to a Marcus integral typically fails unless the excursions of $W_n$ are linear.
Take $d=m=2$, $A\equiv0$, $\xi=0$ and
\[
B:\R^2\to \R^{2\times 2},
\qquad B(x_1,x_2)=\left(\begin{array}{cc} 1 & 0 \\ 0 & x_1
\end{array}\right).
\]
Then~\eqref{eq:SDEL} becomes
\[
        dX^1 = dL^1, \qquad
        dX^2 = X^1 dL^2, \qquad X(0) = 0.
\]
In particular, $X^1(t)=L^1(t)$ and $X^2(t)=\int_0^t L^1(t)\,dL^2(t)$.

Choose $h=(h_1,h_2) \in C^1([0,1], \R^2)$ where $h(0)=(0,0)$, $h(1)=(1,1)$. 
We define $W_n \in D([0,1], \R^2)$ by
\[
    W_n(t)
    = \begin{cases}
        (0,0) & \text{if } 0  \le t < \frac12 - \frac1n  \\
        h\big(n(t-\frac12+\frac1n)\big) & \text{if } \frac12 -\frac1n \le t <\frac12 \\
        (1,1) & \text{if } \frac12  \le t \le 1
    \end{cases} .
\]
Then $W_n$ converges pointwise to $L\in D([0,1],\R^2)$ given by
$L(t) = \bone_{t \ge \frac{1}{2}}(1,1)$.
Moreover, a direct calculation shows that
the solutions $X_n$ to the ODE $dX_n=B(X_n)\,dW_n$
converge pointwise to $X\in D([0,1],\R^2)$ where
$X(t)=\bone_{t\ge\frac12}\big(1,\,\int_0^1 h_1(s)\,h_2'(s)\,ds\big)$.

We distinguish the case where the excursion
of $W_n$ is linear, so  $\{h(s);\,0\le s\le 1\}$ is contained in a line. In other words, $h_1\equiv h_2$. This includes $\cM_1$ and $\cM_2$ convergence as special cases. Indeed
$W_n$ converges to $L$ 
in the $\cM_2$ topology if and only if $\{h(s);\,0\le s\le 1\}$ is the line segment joining $(0,0)$ to $(1,1)$.
For convergence in the $\cM_1$ topology, it is required moreover that $h_1$ is non-decreasing.

In the linear case $h_1\equiv h_2$, we obtain by a change of variables that
$X(t)=\bone_{t\ge\frac12}\big(1,1)$
and the stochastic integral
$dX = B(X) \, dL$ has the Marcus interpretation.
Otherwise, even though the endpoints of the excursion for $W_n$ on $[\frac{1}{2}-\frac1n,\frac{1}{2}]$
are fixed and the limiting path $L$ is fixed, the values of the limiting integral $X$ on $[\frac12,1]$ depend sensitively on the excursion $h$.
\end{example}

\vspace{-1ex}
\paragraph{Decorated paths}
Example~\ref{ex:non-Marcus} shows that it is necessary to include excursions in the description of the limiting path $L$. Hence it is necessary to consider spaces of decorated c\`adl\`ag paths where jumps are accompanied by ``decorations'' according to the shape of the excursion. Such spaces were introduced by Whitt~\cite{Whitt02} and studied further by Freitas \emph{et al.}~\cite{FFT,FFMT}. Here, we build on and generalise the spaces used by
Chevyrev~\emph{et al.}~\cite{Chevyrev18,CF19,CFKM20} which sufficed for the Marcus limits in~\cite{CFKM20}.
As in~\cite{CFKM20}, weak convergence of $W_n$ is insufficient to deduce convergence of $X_n$ and we also require tightness of $W_n$ in $p$-variation.

The decorated c\`adl\`ag space
$\DD$ that we introduce in Section~\ref{sec:DD} has several advantages.
For our purposes, the main advantage is that it is well-suited for studying fast-slow systems
because there is a natural notion of a solution to a
``decorated differential equation'' in $\DD$, which moves beyond linear excursions of the driver and retains continuity of the solution map $\DD\ni L_\alpha \mapsto X \in\DD$ to the SDE \eqref{eq:SDEL},
see Theorem \ref{thm:S}.
Our framework for interpreting \eqref{eq:SDEL} in the presence of jumps is essentially deterministic and has novelties even if $L_\alpha$ has bounded variation.
It notably unifies the Marcus (geometric/canonical) and It\^o (forward) interpretations of differential equations; see~\cite{CF19, FZ18} for details on these interpretations.

Another advantage is that $\DD$ distinguishes more decorated paths than the spaces from~\cite{Whitt02,FFT,FFMT} (see Example~\ref{ex:DDF'}).
In addition, our metric on $\DD$ is complete and separable whereas the metrics considered in~\cite{FFT,Whitt02} are not simultaneously complete and separable.
In this regard, it is worth recalling that the standard $\cJ_1$ metric $\sigma_\infty$~\eqref{eq:J1} on the c\`adl\`ag space $D$ is separable but not complete.
Despite the fact that our construction of $\DD$ as an
extension of the spaces in~\cite{Chevyrev18,CF19,CFKM20} is rather technical, we have the
following notable characterisation.

\begin{thm} \label{thm:comp}
$\DD$ is the completion of $D$ in the standard $\cJ_1$ metric $\sigma_\infty$.
\end{thm}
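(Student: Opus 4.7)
The plan is to verify the standard three criteria for a metric completion: (i) $(D,\sigma_\infty)$ embeds isometrically into $(\DD,d_\DD)$, where $d_\DD$ denotes the metric on $\DD$ defined in Section~\ref{sec:DD}; (ii) $D$ is dense in $\DD$; and (iii) $\DD$ is complete. Since the metric completion of a metric space is unique up to isometry, these three facts together yield the theorem.

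For (i), I would use the continuous parametrisations of elements of $\DD$ set up in Section~\ref{sec:DD}: every $f\in\DD$ is represented by some pair $(\tilde f,\lambda)\in C([0,1],\R^m)\times C([0,1],[0,1])$ with $\lambda$ non-decreasing, $\lambda(0)=0$, $\lambda(1)=1$, the plateaus of $\lambda$ encoding the traversals of decorations at jumps. A genuine càdlàg path has only trivial decorations (it is represented by $\lambda=\id$ with jumps traversed instantaneously), so the formula defining $d_\DD$ restricted to $D\times D$ reduces exactly to the $\cJ_1$ formula $\sigma_\infty$. This step should be essentially a bookkeeping check.

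The main work lies in (ii). Given $f\in\DD$ with jumps at times $t_1,t_2,\dots$ carrying decorations $\gamma_i$, I would construct an approximating sequence $f_n\in D$ by unfolding all decorations of oscillation at least $1/n$ onto short intervals $[t_i-\delta_{n,i},t_i)$, approximating each $\gamma_i$ there by a piecewise-constant càdlàg path with finitely many small jumps, and leaving $f$ unchanged outside these intervals. Choosing the $\delta_{n,i}$ small enough (and summable across $i$), a time change $\mu_n$ that collapses each $[t_i-\delta_{n,i},t_i)$ back to $\{t_i\}$ and equals $\id$ otherwise supplies a witness for $d_\DD(f_n,f)\to 0$. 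The remaining small decorations of $f$ contribute a uniform error of size $O(1/n)$.

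For (iii), given a Cauchy sequence $(f_n)\subset\DD$, a diagonal extraction from the infimum defining $d_\DD$ produces continuous parametrisations $(\tilde f_n,\lambda_n)$ that are jointly Cauchy in the uniform topology; they converge uniformly to some $(\tilde f,\lambda)$ with $\lambda$ continuous, non-decreasing and surjective onto $[0,1]$, which therefore determines a candidate $f\in\DD$. The main obstacle is to verify admissibility of this limit inside $\DD$: that the non-degenerate plateaus of $\lambda$ form a countable family (so $f$ has at most countably many decorations), and that the induced decorations are well-defined modulo the reparametrisation equivalence used in Section~\ref{sec:DD}. The countability follows from the uniform limit of non-decreasing surjections, while the equivalence-class issue should be handled by propagating the admissibility properties of the $f_n$ through the uniform limit via the Cauchy estimates. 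Once this is done, $d_\DD(f_n,f)\to 0$ is immediate from the uniform convergence of the parametrisations, concluding the proof.
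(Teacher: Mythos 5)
Your three-step plan (isometric embedding, density, completeness) is the same strategy the paper uses, and most of the ideas are correct, but there is one genuine error and one unnecessary complication.

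The error is in step (iii): you claim that the parametric representations $(\tilde f_n,\lambda_n)$ can be taken \emph{continuous}. The time-change component $\lambda_n$ (the $\tau_\delta^{-1}$ of Section~\ref{subsec:pathFuncs}) is indeed continuous, non-decreasing and surjective, but the spatial component $\tilde f_n$ (the $\delta$-extension $\phi^\delta$) is in general only c\`adl\`ag. This is unavoidable: a decoration $\phi(t)\in D[0,1]$ may itself have jumps, and even for a trivial lift $\iota h$ of a c\`adl\`ag $h$ with a jump discontinuity, $\phi^\delta$ has that jump. In fact the paper goes out of its way (Remark~\ref{rmk:error_1}) to point out that exactly this implicit continuity claim is an error that appeared in~\cite{Chevyrev18,CF19,CFKM20}, so you are in good company, but it is wrong as stated. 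The repair is straightforward and is what the paper does via Lemmas~\ref{lem:dF_complete} and~\ref{lem:alpha_inf_dF}: work with the $\delta$-parametric representation $\psi^\delta=(\phi^\delta,\tau_\delta^{-1})$ as a c\`adl\`ag path under the \emph{uniform} norm, note that $\alpha_\infty$ coincides with the Fr\'echet distance $d_F$ on these representatives, and use completeness of $(D[0,1],|\cdot|_\infty)$ rather than of $(C[0,1],|\cdot|_\infty)$. Your remaining remarks about countability of plateaus and admissibility of the limit then need Lemma~\ref{lem:cadlag_path}, which says that $\phi^\delta$ c\`adl\`ag is equivalent to $(\phi,\Pi)\in\bar\DD$, including the oscillation condition~\ref{pt:non_osc}; you gesture at this with ``propagating admissibility properties'' but it is worth being explicit.

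The unnecessary complication is in step (ii). Once you unfold the decorations above scale $1/n$ onto short time intervals, the resulting path is already c\`adl\`ag on the original interval --- that is precisely the content of Lemma~\ref{lem:cadlag_path} for the $\delta$-extension. There is no need to further approximate each excursion $\gamma_i$ by a piecewise-constant path with finitely many jumps; c\`adl\`ag does not mean finitely many jumps. The paper's density argument is a one-liner: take $\rho\in\cR_{[a,b],[a,b+\delta]}$ linear, then $\phi^\delta\circ\rho\in D[a,b]$ and $\alpha_\infty(\phi,\iota(\phi^\delta\circ\rho))\le 2\delta$. Your more elaborate construction would also work, but it obscures the point.
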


This result is proved in Section~\ref{subsec:SkorM1}.
In addition, $\DD$ simultaneously generalises the classical Skorokhod $\cJ_1$ and $\cM_1$ topologies via different embeddings of $D$ into $\DD$ (see Remark \ref{rem:M1}).

\subsection{Fast-slow systems}
Let $\alpha\in(1,2)$.
In this paper, we consider multiscale equations of the form
\begin{equation} \label{eq:fs}
  \begin{cases}
    x_{k+1}^{(n)} = x_k^{(n)} + n^{-1} A(x_k^{(n)}) + n^{-1/\alpha} B(x_k^{(n)}) v(y_k) , \\
    y_{k+1} = Ty_k
  \end{cases}
\end{equation}
defined on $\R^m\times \Lambda$ where $\Lambda$ is a bounded metric space.
Here,
\[
    A : \R^m\to\R^m
    , \quad
    B : \R^m\to\R^{m\times d}
    , \quad
    v : \Lambda\to\R^d
    , \quad
    T: \Lambda\to \Lambda
    .
\]
It is assumed that the fast dynamical system $T: \Lambda\to \Lambda$ has an ergodic invariant probability measure $\mu_\Lambda$.  
Also, $v : \Lambda\to\R^d$ is H\"older with $\int v\,d\mu_\Lambda=0$.
Define 
\begin{equation} \label{eq:Wn}
    W_n(t)=n^{-1/\alpha}\sum_{j=0}^{[nt]-1}v\circ T^j
    .
\end{equation}
Then $W_n$ belongs to $D([0,1], \R^d)$ and can be viewed
as a random process on the probability space $(\Lambda, \mu_\Lambda)$ depending on the initial condition $y_0\in \Lambda$.
We assume that $T$ and $v$ exhibit superdiffusive behaviour, so that
the 
process $W_n$ converges weakly in $D([0,1],\R^d)$ to an
$\alpha$-stable L\'evy process $L_\alpha$.

Now fix $x_0^{(n)}=\xi \in \R^m$, and solve~\eqref{eq:fs} to obtain
$(x_k^{(n)},y_k)$ depending on the initial condition $y_0\in(\Lambda,\mu_\Lambda)$.  Define the c\`adl\`ag process $X_n\in D([0,1],\R^m)$ given by
$X_n(t) = x_{[nt]}^{(n)}$;   again we view this as a process on $(\Lambda,\mu_\Lambda)$.
Our aim is to show, 
under mild regularity assumptions on the functions $A: \R^m\to \R^m$ and $B : \R^m \to \R^{m \times d}$, that
$X_n\to_{\mu_\Lambda} X$ \footnote{We write $\to_{\mu_\Lambda}$ to denote weak convergence, emphasising the probability space on which $X_n$ are defined. As in~\cite{CFKM20}, $\mu_\Lambda$ can be replaced by any probability measure absolutely continuous with respect to $\mu_\Lambda$.} where $X$ is the solution of the SDE~\eqref{eq:SDEL}.
A key part of the theory is to explain the sense in which the limiting SDE is to be interpreted.

\vspace{2ex}
In this paper, we present a general theory under which the slow dynamics converges to an SDE driven by $L_\alpha$ regardless of the nature of the convergence of $W_n$. 
Moreover, we give a full description of how to interpret the limiting SDE for both the Marcus and non-Marcus cases.

\subsection{Solving decorated SDEs}
A key part of our paper is to show how to solve equations driven by decorated processes. In the framework of Example~\ref{ex:non-Marcus}, we argue that $W_n$ converges to a decorated process $\hL$ consisting of $L$ together with an excursion $\{h(s);\,0\le s\le 1\}$ at the discontinuity point $t=\frac12$. 
Solving the equation $d\hX=B(\hX)\,d\hL$ yields a decorated process $\hX$, and we show that
$X_n$ converges weakly to $\hX$. 

To solve
$d\hX=B(\hX)\,d\hL$ we proceed as follows: first, we insert an interval $[\frac12,\frac12+\delta]$ into the interval $[0,1]$ and define a \emph{$\delta$-extension} $L^\delta\in D([0,1+\delta],\R^2)$ coinciding with $L$ on $[0,\frac12]\cup[\frac12+\delta,1+\delta]$ (suitably displaced) and given by $h$ (suitably scaled) on $[\frac12,\frac12+\delta]$.
Then we solve the Young equation $X^\delta = B(X^\delta)\,dL^\delta$ for 
$X^\delta\in D([0,1+\delta],\R^2)$.
Finally, we collapse the inserted interval $[\frac12,\frac12+\delta]$ back to a point $\frac12$ to obtain the desired decorated limit $\hX$ consisting of a c\`adl\`ag path on $[0,1]$ and an excursion at $t=\frac12$.
Precise definitions are given in Section~\ref{sec:paths}.

\vspace{1ex}
\begin{example}
\label{ex:cusps}

A motivating example for the results in this paper is a class of dispersing billiards with flat cusps introduced by~\cite{JungZhang18}.
The billiard table $Q\subset\R^2$ has a boundary consisting of $C^3$ curves
with at least one flat cusp. In local
coordinates $(s, z) \in\R^2$, the $i$'th cusp lies at $(0, 0)$ and the bounding curves are given by
$\Gamma_\pm = \{(s, \pm s^{\beta_i} )\}$
close to $(0, 0)$,
where $\beta_i > 2$. We let $\beta=\max\beta_i$ corresponding to the flattest cusp(s).

The phase space of the billiard map (collision map) $T$ is given by $\Lambda= \partial  Q \times[-\pi/2, \pi/2 ]$,
with coordinates $(r, \theta )$ where $r$ denotes arc length along $\partial  Q$ and $\theta$ is the angle between
the normal to the boundary and the collision vector in the clockwise direction. (A standard reference for billiard maps is~\cite{ChernovMarkarian}.)
There
is a natural ergodic $T$-invariant probability measure $d\mu_\Lambda = (2|\partial Q|)^{-1} \cos \theta \,dr \,d\theta$ on $\Lambda$,
where $|\partial Q|$ is the length of $\partial Q$.

Let $v:\Lambda\to\R$ be a H\"older observable with $\int_\Lambda v\,d\mu_\Lambda=0$ and define $W_n$ as in~\eqref{eq:Wn}.
By~\cite{JungZhang18,JungPeneZhang20}, $W_n(1)$ converges weakly to an $\alpha$-stable law with $\alpha=\beta/(\beta-1)\in(1,2)$.
Convergence to the corresponding L\'evy process is considered in~\cite{MV20,JungPeneZhang20,JMPVZ21}.
Such convergence is impossible in the standard Skorokhod $\cJ_1$ topology since the jumps are bounded.
If $v$ has constant sign on each cusp, then convergence holds in the $\cM_1$ topology.
For more general $v$, convergence of $W_n$ might hold only in the $\cM_2$ topology~\cite{MV20} or may fail in any Skorokhod topology~\cite{JMPVZ21,MV20}. Indeed, the latter is typically the case when $d\ge2$.  Nevertheless, convergence always holds in a suitable decorated path space~\cite{FFMT}.

For purposes of illustration, consider the case of a billiard with a single flat cusp as shown in Figure \ref{fig:billiard}.
\begin{figure}[h]
\centering
        \begin{tikzpicture}[
            scale=4,
            myarrow/.style={-{Triangle[length=3mm,width=1.5mm]}},
            declare function = {
                fpx(\x) = - \x^2 / sqrt(1 + \x^4);
                fpy(\x) =     1  / sqrt(1 + \x^4);
            }
            ]
            \fill[gray!20]
                plot[domain=0:1, samples=40] ({\x + 0.05 * fpx(\x)}, {  \x^3 / 3 + 0.05 * fpy(\x) } )
                -- (1.05,1/3) -- (1.05,-1/3) -- 
                plot[domain=0:1, samples=40] ({(1-\x) + 0.05 * fpx(1-\x)}, {- (1-\x)^3 / 3 - 0.05 * fpy(1-\x) } )
                -- cycle;
            \fill[gray!20] (0,0) circle (0.05) (1,1/3) circle (0.05) (1,-1/3) circle (0.05);
            \filldraw[color=gray,thick,fill=white]
                plot[domain=0:1, samples=40] ({\x}, {  \x^3 / 3} )
                -- (1,-1/3)
                plot[domain=0:1, samples=40] ({(1-\x)}, {- (1-\x)^3 / 3} );
            \draw[myarrow](0.81,-0.177147)--(0.85,0.05);
            \draw[fill,black] (0.81,-0.177147) circle [radius=0.02];
        \end{tikzpicture}
        \caption{Billiard with a single flat cusp.}
        \label{fig:billiard}
\end{figure}
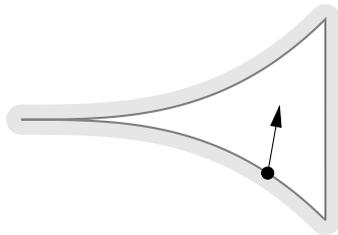

Take $\beta=3$, $\alpha=\frac32$ and
$v:\Lambda\to\R^2$ to be the mean zero observable
$v(r, \theta) = (\cos 3 \theta, \cos 5 \theta )$. 
In Figure \ref{fig:excursions}, we show
a parametric plot of a computer-simulated $W_n(t)$, $0 \le t \le 1$,
with separate plots for its coordinates $W_n^1(t)$, $W_n^2(t)$.
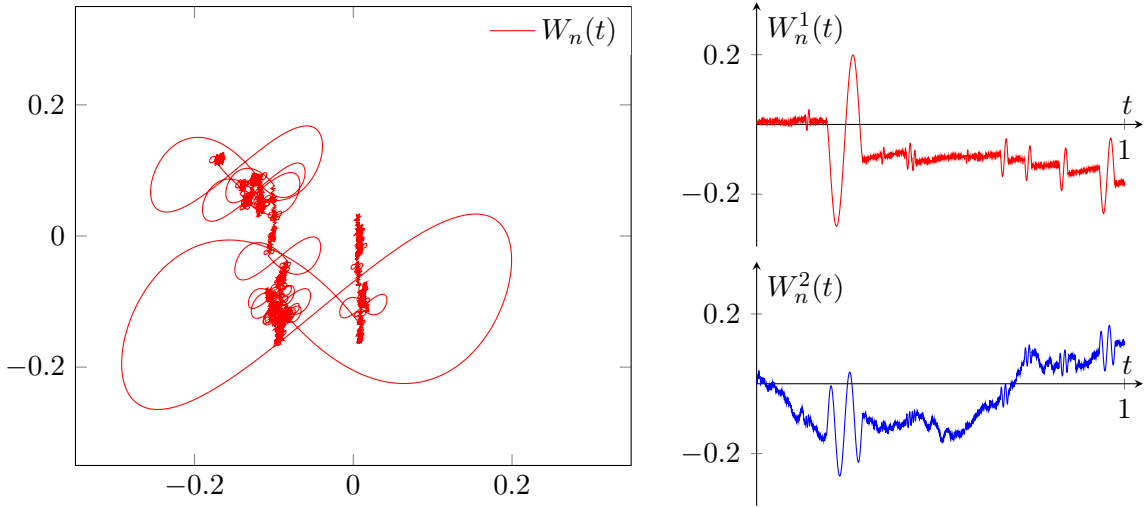
\begin{figure}[h]
\centering
    \begin{tikzpicture}
        \begin{axis}[
            name=W,
            width = 3.5in,
            xticklabel style={/pgf/number format/fixed},
            yticklabel style={/pgf/number format/fixed},
            legend style={at={(1,1)},anchor=north east},
            legend columns=-1,
            legend style={draw=none},
            xmin=-0.35, xmax=0.35,
            xtick distance=0.2,
            ymin=-0.35, ymax=0.35,
            ytick distance=0.2
            ]
            \addplot[red] table [x=x, y=y] {butterfly.data};
            \legend{$W_n(t)$}
        \end{axis} 
    \end{tikzpicture}
    \quad
    \begin{tikzpicture}
		\begin{groupplot}[
			group style={group size=1 by 2, vertical sep=0.2cm, ylabels at=edge left},
			width=2.0in,
			height = 1.275in,
			scale only axis,
			ymin=-0.35,ymax=0.35,
			ytick distance=0.2,
	        xmax = 1.05,
	        xlabel = {$t$},
	        xtick = {0, 1.0},
	        axis lines = center,
			]
			\nextgroupplot[ylabel = {$W_n^1(t)$}]
		        \addplot[red]  table [x=t, y=x] {butterfly.data};
			\nextgroupplot[ylabel = {$W_n^2(t)$}]
		        \addplot[blue] table [x=t, y=y] {butterfly.data};
		\end{groupplot}
	\end{tikzpicture}
    \caption{Parametric plot of $W_n$ for the observable $v(r, \theta) = (\cos 3 \theta, \cos 5 \theta )$.
The data can be accessed at \url{https://khumarahn.github.io/CKM23}.}
    \label{fig:excursions}
\end{figure}

Every deep excursion into the cusp resembles the figure $\infty$ (or a butterfly)
with the start and end points close to the centre.
Despite the geometric simplicity of the example, such excursions are clearly incompatible with the Skorokhod topologies. Moreover, the excursions are not linear and hence outside the regime where one should expect a Marcus SDE.

Figure~\ref{fig:excursions} clearly shows the excursions (identical up to scaling) corresponding to a single profile associated with the cusp. There are also small-scale motions resulting from values of the observable $v$ away from the cusp. 
\end{example}

Our main results incorporate a large class of nonuniformly hyperbolic systems with superdiffusive behaviour, including billiards with flat cusps.
For such systems, we recover the result of~\cite{FFMT} that $W_n$ converges weakly to a decorated $\alpha$-stable L\'evy process. Moreover, we relax certain assumptions in~\cite{FFMT}: we do not require that endpoints of excursions are in distinct directions, nor that they are nonzero. In addition, we verify tightness of $W_n$ in $p$-variation (extending the corresponding result in~\cite{CFKM20} to a larger class of dynamical systems).
Theorem~\ref{thm:X} guarantees that these are the ingredients required to show weak convergence of $X_n$ to solutions of a correspondingly decorated SDE.

\vspace{1ex}

The remainder of this paper is organised as follows.
In Section~\ref{sec:paths}, we define our space of decorated c\`adl\`ag paths
and explain rigorously how to solve SDEs driven by such decorated paths.
Then in Section~\ref{sec:setup}, we state our main results.
Sections~\ref{sec:pre} to~\ref{sec:tight} are devoted to the proofs of these main results. In Section~\ref{sec:paths-pf}, we prove the results from Section~\ref{sec:paths} for the decorated path space.

\paragraph{Notation}
We use ``big O'' and $\ll$ notation interchangeably, writing $a_n=O(b_n)$ or $a_n\ll b_n$
if there are constants $C>0$, $n_0\ge1$ such that
$a_n\le Cb_n$ for all $n\ge n_0$.
As usual, $a_n=o(b_n)$ means that $a_n/b_n\to0$ and $a_n\sim b_n$ means that $a_n/b_n\to1$.

For a function $h: [a,b]\to \R^d$, we denote $h(t^-)=\lim_{s\uparrow t} h(s)$ whenever this limit exists.
We let $\to_w$ denote weak convergence of random variables.
We write $\Z^+ = \{1,2,\ldots\}$.

\section{Decorated path space and \texorpdfstring{$p$}{p}-variation}
\label{sec:paths}

In this section we define the space $\DD$ of decorated c\`adl\`ag paths with metric $\alpha_\infty$. We also define the enhanced space
$\DD^{p\var}$ with metric $\alpha_{p\var}$ based on $p$-variation.
We then show how to construct solutions $X$ to differential equations driven by elements $\phi\in\DD^{p\var}$,
and end with Theorem~\ref{thm:S} which states that the solution map $\phi\mapsto X=\cS(\phi)$ preserves weak convergence.

Proofs of the results in this section are delayed to Section~\ref{sec:paths-pf} where independence of definitions on various choices is also verified.

\subsection{Decorated path space}
\label{sec:DD}

Fix $d\ge1$.
Let $I=[a,b]\subset\R$ be a bounded closed interval.
We denote by $C(I,\R^d)$ and $D(I,\R^d)$ the spaces of continuous and c{\`a}dl{\`a}g functions 
equipped respectively with the uniform norm
$|h|_{\infty;I} = \sup_{x\in I}|h(x)|$
and the Skorokhod $\cJ_1$-metric
\begin{equation} \label{eq:J1}
\sigma_{\infty;I} (h_1,h_2) = 
\inf_{\rho \in \cR} \max\{|\rho-\id|_\infty , |h_1\circ \rho-h_2|_{\infty}\}.
\end{equation}
Here, $\cR$ is the set of continuous increasing bijections $\rho:I\to I$.
We will drop $I$ from our notation in norms and metrics whenever it is clear from the context.

In the remainder of this subsection, we suppress the dependence on $\R^d$, writing $C(I)$, $D(I)$ and so on.
Also, we write for instance $D[a,b]$ instead of $D([a,b])$

Consider a function $\phi:I\to D[0,1]$. 
If $\phi(t)$ is a constant path in $D[0,1]$, then we say that $t\in I$ is a \emph{stationary} point for $\phi$. 

\begin{defn} \label{def:DDbar}
We let $\bar\DD(I)$ denote the space of pairs
$(\phi,\Pi)$ where $\phi:I\to D[0,1]$ and
\begin{enumerate}[label=(\alph*)]
\item \label{pt:cadlag}
the function $t\mapsto \phi(t)(1)$ lies in $D(I)$,
\item $\Pi\subset I$ is an at most countable set (possibly empty) containing the non-stationary points of $\phi$,
\item\label{pt:non_osc} 
for all $\eps>0$, there exist only finitely many $t\in\Pi$ such that $\sup_{s \in [0,1]} |\phi(t)(s) -\phi(t)(0)| > \eps$.
\end{enumerate}
\end{defn}

Let $(\phi,\Pi)\in\bar\DD[a,b]$ 
and write $\Pi=\{t_j:1\le j\le \kappa\}$ where $\kappa\in\{0,1,\dots\}\cup\{\infty\}$.
Given $\delta>0$, we define
the \emph{$\delta$-extension} $\phi^\delta\in D[a,b+\delta]$ as follows.
When $\kappa=0$, we define $\phi^\delta(t)=\phi(t)$ for $t\in[a,b]$ and $\phi^\delta(t)=\phi(b)$ for $t\in[b,b+\delta]$.
Otherwise, define $r=\sum_{j=1}^\kappa2^{-j}>0$.
We insert in $[a,b]$ \emph{fictitious} intervals $I_j$ of length $2^{-j}\delta/r$ after each $t_j$ to obtain the interval $[a,b+\delta]$.
Define  $\phi^\delta:[a,b+\delta]\to\R^d$ to coincide with $\phi$
on $[a,b+\delta]\setminus\bigcup_j I_j$ and to coincide with the appropriate time-scaled version of $\phi(t_j)$ on $I_j$.
(So if $I_j=[c_j,c_j+2^{-j}\delta/r]\subset[a,b+\delta]$, then
$\phi^\delta(c_j+s)=\phi(t_j)(s2^j r/\delta )$ for $0\le s\le 2^{-j}\delta/r$.)
See Section~\ref{subsec:pathFuncs} for a more precise definition.
By Lemma~\ref{lem:cadlag_path}, $\phi^\delta\in D[a,b+\delta]$ for all $(\phi,\Pi)\in\bar\DD[a,b]$.

For
$(\phi_1,\Pi_1),\,
(\phi_2,\Pi_2)\in \bar\DD[a,b]$ we set
\[
\alpha_{\infty;[a,b]}(\phi_1,\phi_2)=\lim_{\delta\to0}\sigma_{\infty;[a,b+\delta]}(\phi_1^\delta,\phi_2^\delta).
\]
By Lemma~\ref{lem:limExists}\ref{pt:alpha_infinity}, this limit exists
and is independent of $\Pi_1$, $\Pi_2$ and the various other choices above.
From now on we speak of $\phi\in \bar\DD[a,b]$ and suppress the set $\Pi$.

We are now in a position to define our metric space of decorated c\`adl\`ag paths. 
Given $h_1,h_2:[0,1]\to \R^d$, we say that
$h_1$ is a \emph{reparametrisation} of $h_2$ if
$\inf_{\rho\in\cR}|h_1\circ\rho-h_2|_\infty=0$.
Say that $\phi_1,\,\phi_2\in\bar\DD[a,b]$
are \emph{equivalent} if
the paths $s\mapsto \phi_i(t^-)(1) \bone_{s=0} + \phi_i(t)(s) \bone_{s>0}$ for $i=1,2$ are reparametrisations of each other for all $t\in[a,b]$.
\begin{defn} \label{def:DD}
We let $\DD[a,b]=\DD([a,b],\R^d)$ be the space of equivalence classes in
$\bar\DD[a,b]$.
\end{defn}

By Lemma~\ref{lem:limExists}\ref{pt:alpha_infinity}
and Theorem~\ref{thm:alpha}, $\alpha_\infty$ is constant on equivalence classes and defines a complete separable metric on $\DD[a,b]$.
Elements of $\DD[a,b]$ are called \emph{decorated paths}, generalising a similar notion
introduced in \cite{Chevyrev18,CF19} called path functions.

\begin{example} \label{ex:DDF'}
Our space $\DD$ distinguishes more decorated paths than the space $F'$ considered in~\cite{FFT,FFMT}, which we recall is finer than the space $F$ in~\cite[Sec.~15.7]{Whitt02} (see~\cite[Section~1.1]{FFT}).
The following are examples of decorated paths that are distinct in $\DD$ but identified in the space $F'$ considered in~\cite{FFT,FFMT} (see Figure \ref{fig:distinct_paths} for illustrations):
\[
    \phi(t)(s) =
    \begin{cases}
        \sin(2 \pi s) & \text{if } t = 1/3 \\
        \sin(4 \pi s) & \text{if } t = 2/3 \\
        0 & \text{else}
    \end{cases}
    \quad \text{and} \quad
    \psi(t)(s) =
    \begin{cases}
        \sin(4 \pi s) & \text{if } t = 1/3 \\
        \sin(2 \pi s) & \text{if } t = 2/3 \\
        0 & \text{else}
    \end{cases}
\]
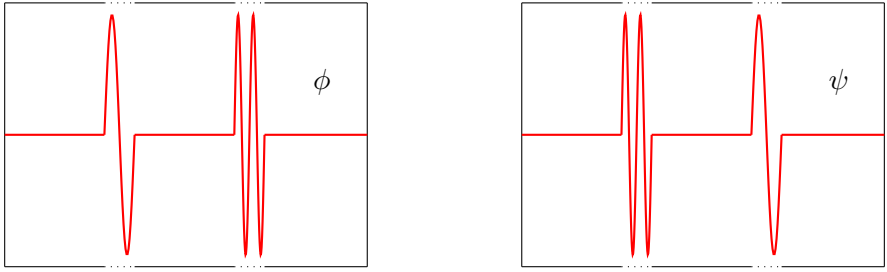
\begin{figure}[h]
\centering
    \begin{tikzpicture}[
        declare function = {
            xeeps(\x,\pp,\ee) = (\x - (\pp - \ee)) / \ee;
        }]
        \pgfmathsetmacro{\eeps}{0.1}
        \pgfmathsetmacro{\xa}{0.33}
        \pgfmathsetmacro{\xb}{0.33 + \eeps}
        \pgfmathsetmacro{\xc}{0.66 + \eeps}
        \pgfmathsetmacro{\xd}{0.66 + 2 * \eeps}
        \pgfmathsetmacro{\xe}{1.00 + 2 * \eeps}
        \pgfmathsetmacro{\ya}{-1.1}
        \pgfmathsetmacro{\yb}{1.1}
        \begin{groupplot}[
            group style={group size=2 by 1, horizontal sep=0.8in},
            width=2.5in,
            height = 2.0in,
            xmin = 0.0, xmax = \xe,
            ymin = \ya, ymax = \yb,
            ticks = none,
            x axis line style={
                draw=none,
                insert path={
                    (axis cs:0,\ya) edge (axis cs:\xa,\ya)
                    (axis cs:\xa,\ya) edge[draw, dotted] (axis cs:\xb,\ya)
                    (axis cs:\xb,\ya) edge (axis cs:\xc,\ya)
                    (axis cs:\xc,\ya) edge[draw, dotted] (axis cs:\xd,\ya)
                    (axis cs:\xd,\ya) edge (axis cs:\xe,\ya)
                    (axis cs:0,\yb) edge (axis cs:\xa,\yb)
                    (axis cs:\xa,\yb) edge[draw, dotted] (axis cs:\xb,\yb)
                    (axis cs:\xb,\yb) edge (axis cs:\xc,\yb)
                    (axis cs:\xc,\yb) edge[draw, dotted] (axis cs:\xd,\yb)
                    (axis cs:\xd,\yb) edge (axis cs:\xe,\yb)
                    (axis cs:0,\ya) edge (axis cs:0,\yb)
                    (axis cs:\xe,\ya) edge (axis cs:\xe,\yb)
                }
            }
            ]
            \nextgroupplot[]
            \addplot[red, thick, domain=0:0.33] {0};
            \addplot[red, thick, domain=0.33:{0.33+\eeps},samples=31] {sin(2*pi*deg(xeeps(x,0.33,\eeps)))};
            \addplot[red, thick, domain={0.33+\eeps}:{0.66+\eeps}] {0};
            \addplot[red, thick, domain={0.66+\eeps}:{0.66+2*\eeps},samples=63] {sin(4*pi*deg(xeeps(x,{0.66+\eeps},\eeps)))};
            \addplot[red, thick, domain={0.66+2*\eeps}:{1.0+2*\eeps}] {0};
            \node[above] at (axis cs: {1.0+2*\eeps-0.15},0.25) {$\phi$};
            \nextgroupplot[]
            \addplot[red, thick, domain=0:0.33] {0};
            \addplot[red, thick, domain=0.33:{0.33+\eeps},samples=63] {sin(4*pi*deg(xeeps(x,0.33,\eeps)))};
            \addplot[red, thick, domain={0.33+\eeps}:{0.66+\eeps}] {0};
            \addplot[red, thick, domain={0.66+\eeps}:{0.66+2*\eeps},samples=31] {sin(2*pi*deg(xeeps(x,{0.66+\eeps},\eeps)))};
            \addplot[red, thick, domain={0.66+2*\eeps}:{1.0+2*\eeps}] {0};
            \node[above] at (axis cs: {1.0+2*\eeps-0.15},0.25) {$\psi$};
        \end{groupplot}
    \end{tikzpicture}
    \caption{Paths that are distinct in $\DD$ but equal in $F'$.}
    \label{fig:distinct_paths}
\end{figure}
\end{example}

We end this subsection by recording some elementary properties of $\DD$ which are used in Section~\ref{sec:fdd}. The following result is immediate from the definitions.

\begin{prop} \label{prop:cup}
Let $a<c<b$. Then 
\[
\alpha_{\infty;[a,b]}(\phi_1,\phi_2)\le
\max\{
\alpha_{\infty;[a,c]}(\phi_1,\phi_2),
\alpha_{\infty;(c,b]}(\phi_1,\phi_2)\}
\]
for all $\phi_1,\phi_2\in \bar\DD[a,b]$.\footnote{We can define $\alpha_\infty$ on $\bar\DD(c,b]$ with the obvious modifications.} \qed
\end{prop}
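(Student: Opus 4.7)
My plan is to deduce the inequality for $\alpha_\infty$ from the analogous subadditivity of the ambient Skorokhod $\cJ_1$ metric $\sigma_\infty$ under a split of the interval, and then pass to the limit $\delta\to 0$ in the definition of $\alpha_\infty$.

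First I would fix $\delta>0$. By Lemma~\ref{lem:limExists}\ref{pt:alpha_infinity}, I am free to choose the countable sets $\Pi_i$ and the enumeration/weights used in building the $\delta$-extensions. Choose a common $\Pi=\Pi_1=\Pi_2$ containing the non-stationary points of both $\phi_1$ and $\phi_2$, enumerate so that the points of $\Pi\cap[a,c]$ are listed first, and use a common weight scheme for $\phi_1^\delta$ and $\phi_2^\delta$. Let $\delta_1>0$ be the total length of the fictitious intervals inserted at jump times in $[a,c]$ and set $\delta_2=\delta-\delta_1$; both tend to $0$ as $\delta\to 0$. With this setup, for each $i=1,2$ the restriction $\phi_i^\delta|_{[a,c+\delta_1]}$ is, up to the admissible rescaling of weights, a $\delta_1$-extension of $\phi_i|_{[a,c]}$, while (after time-shift) $\phi_i^\delta|_{[c+\delta_1,b+\delta]}$ is a $\delta_2$-extension of $\phi_i|_{(c,b]}$.

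Second I would invoke the elementary subadditivity of $\sigma_\infty$ under concatenation at a fixed point: any pair of continuous increasing bijections $\rho_1:[a,c+\delta_1]\to[a,c+\delta_1]$ and $\rho_2:[c+\delta_1,b+\delta]\to[c+\delta_1,b+\delta]$ glue to a bijection $\rho$ of $[a,b+\delta]$ fixing $c+\delta_1$, and both $|\rho-\id|_\infty$ and $|\phi_1^\delta\circ\rho-\phi_2^\delta|_\infty$ equal the maxima of the corresponding quantities over the two pieces. Taking infima over concatenated $\rho$ yields
\[
\sigma_{\infty;[a,b+\delta]}(\phi_1^\delta,\phi_2^\delta)\le\max\bigl\{\sigma_{\infty;[a,c+\delta_1]}(\phi_1^\delta,\phi_2^\delta),\ \sigma_{\infty;[c+\delta_1,b+\delta]}(\phi_1^\delta,\phi_2^\delta)\bigr\}.
\]
Finally, I would send $\delta\to 0$ (so $\delta_1,\delta_2\to 0$): the left-hand side tends to $\alpha_{\infty;[a,b]}(\phi_1,\phi_2)$ by definition, and by the first step the two terms on the right tend respectively to $\alpha_{\infty;[a,c]}(\phi_1,\phi_2)$ and $\alpha_{\infty;(c,b]}(\phi_1,\phi_2)$, again using Lemma~\ref{lem:limExists}\ref{pt:alpha_infinity} to absorb the weight rescalings.

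The only non-routine ingredient is the bookkeeping in the first step: verifying that the restrictions of $\phi_i^\delta$ to the two pieces qualify as $\delta_j$-extensions of $\phi_i|_{[a,c]}$ and $\phi_i|_{(c,b]}$ up to the admissible rescaling of weights. Once that is in place, the subadditivity step is a one-line concatenation argument for $\sigma_\infty$, which explains the author's assertion that the result is immediate from the definitions.
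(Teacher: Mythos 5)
Your argument is correct and is the natural expansion of the paper's unproved ``immediate from the definitions'' claim: you pick the common $\delta$-extension, split the extended interval at $\tau_\delta(c)=c+\delta_1$, observe that restricting a time-change that fixes this point gives the concatenation inequality $\sigma_{\infty;[a,b+\delta]}\le\max\{\sigma_{\infty;[a,c+\delta_1]},\sigma_{\infty;[c+\delta_1,b+\delta]}\}$, identify the two restricted pieces as $\delta_1$- and $\delta_2$-extensions of $\phi_i|_{[a,c]}$ and $\phi_i|_{(c,b]}$, and pass to the limit $\delta\to0$ using Lemma~\ref{lem:limExists} to absorb the choice of weight scheme. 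One small imprecision: ``enumerate so that the points of $\Pi\cap[a,c]$ are listed first'' cannot literally be done when both $\Pi\cap[a,c]$ and $\Pi\cap(c,b]$ are infinite; but this choice is not actually used anywhere in the argument---all that matters is that $\delta_1$ (the total fictitious length inserted in $[a,c]$) and $\delta_2=\delta-\delta_1$ both tend to $0$, and that the inherited weights on each piece form an admissible summable scheme, which they always do. With that one sentence removed or softened, the proof is clean and matches what the authors intended.
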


\begin{defn}\label{def:iota}
Define the embedding 
$\iota:D(I)\hookrightarrow \bar\DD(I)$ by 
setting
$
\iota h(t)(s)=h(t)
$.
We call $\iota h$ the \emph{trivial lift} of $h$.
\end{defn}

\begin{rmk} \label{rmk:lift}
Let $h_1,h_2\in D(I)$.
Then 
$\alpha_\infty(\iota h_1,\iota h_2)
=\sigma_\infty(h_1,h_2)\le |h_1-h_2|_\infty$.
\end{rmk}

\begin{prop} \label{prop:E}
Let $\phi\in\bar\DD(I)$ such that
$\phi(t)(s)\equiv\phi(a)(0)$ for all $t\in[a,b)$, $s\in[0,1]$,
and such that $\phi(b)\in D[0,1]$.
Define 
$h\in D[a,b]$,
$h(s)=\phi(b)(\tfrac{s-a}{b-a})$.
Then
$\alpha_\infty(\phi,\iota h)\le b-a$.
\end{prop}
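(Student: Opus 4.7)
The plan is to show that for each sufficiently small $\delta > 0$ there is a continuous increasing bijection $\rho_\delta : [a, b+\delta] \to [a, b+\delta]$ with
\[
\max\bigl\{|\rho_\delta - \id|_\infty,\ |\phi^\delta \circ \rho_\delta - (\iota h)^\delta|_\infty\bigr\} \le (b - a) + o(1) \quad \text{as } \delta \to 0,
\]
from which $\alpha_\infty(\phi, \iota h) \le b - a$ follows by the definition of $\alpha_\infty$. First I would unfold the two $\delta$-extensions: taking $\Pi = \{b\}$ for $\phi$ and $\Pi = \emptyset$ for $\iota h$, the hypothesis that $\phi$ is stationary on $[a, b)$ gives $\phi^\delta(t) = \phi(a)(0)$ on $[a, b)$ and $\phi^\delta(b+u) = \phi(b)(u/\delta)$ on $[b, b+\delta]$, whereas $(\iota h)^\delta(t) = \phi(b)((t-a)/(b-a))$ on $[a, b]$ and $(\iota h)^\delta \equiv \phi(b)(1)$ on $[b, b+\delta]$. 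Both extensions trace out the same excursion $\phi(b)$, but $\phi^\delta$ compresses it into $[b, b+\delta]$ while $(\iota h)^\delta$ stretches it across $[a, b]$, so a reparametrisation that shifts by roughly $b - a$ should align them.

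The natural aligning map is $\rho(t) = b + \delta(t-a)/(b-a)$, for which $\phi^\delta(\rho(t)) = (\iota h)^\delta(t)$ identically on $[a, b]$; however this violates $\rho(a) = a$ and forces $\rho(b) = b+\delta$, incompatible with strict monotonicity on $[b, b+\delta]$. I would therefore take $\rho_\delta$ piecewise linear with breakpoints $(a, a)$, $(a+\eps, b)$, $(b, b+\delta-\lambda)$, $(b+\delta, b+\delta)$ for small parameters $\eps, \lambda > 0$. A direct computation gives $|\rho_\delta - \id|_\infty = b - a - \eps$, which is strictly less than $b - a$ and tends to $b - a$ as $\eps \to 0$. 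For the composition error one checks three regimes: on $[a, a+\eps)$ the error is $|\phi(a)(0) - \phi(b)((t-a)/(b-a))|$, which vanishes as $\eps \to 0$ by right-continuity of $\phi(b)$ at $0$ together with the decorated-path identification $\phi(b)(0) = \phi(b^-)(1) = \phi(a)(0)$; on $[a+\eps, b]$ one evaluates $\phi(b)$ at two arguments differing by $O(\eps/(b-a) + \lambda/\delta)$; and on $[b, b+\delta]$ the error is controlled by $\sup_{u \in [0, \lambda/\delta]} |\phi(b)(1 - u) - \phi(b)(1)|$.

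The main obstacle is jumps of $\phi(b)$: across a jump even an arbitrarily small discrepancy between the two arguments can produce an order-one error in the composition. I would resolve this by inserting additional breakpoints $(a + (b-a) s_k, b + \delta s_k)$ into $\rho_\delta$ at each jump $s_k \in (0, 1)$ of $\phi(b)$. A short slope computation shows that $\rho_\delta$ then coincides with the aligning map $b + \delta(t-a)/(b-a)$ on every inter-jump subinterval, so $\phi^\delta \circ \rho_\delta = (\iota h)^\delta$ holds identically there. Since $\phi(b) \in D[0,1]$ has only finitely many jumps exceeding any fixed $\eta > 0$, finitely many additional breakpoints reduce the remaining composition error to at most $\eta$, while jumps of $\phi(b)$ at $s = 0$ and $s = 1$ are absorbed by the initial fast ramp and final slow ramp respectively. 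Sending $\eps, \lambda/\delta, \eta$ and then $\delta$ to zero in the appropriate order yields $\sigma_\infty(\phi^\delta, (\iota h)^\delta) \le b - a + o(1)$, which gives the conclusion.
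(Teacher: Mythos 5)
Your proposal is far more laborious than the paper's argument and misses the observation that makes the proof essentially automatic. The paper notes that, with $\Pi=\{b\}$, the $\delta$-extensions $\phi^\delta$ and $(\iota h)^\delta$ are reparametrisations of one another: both traverse $\phi(b)$ exactly once, with a constant plateau attached at one end or the other. Once this is granted, the estimate follows immediately, because for \emph{any} increasing bijection $\rho$ of $[a,b+\delta]$ one trivially has $|\rho-\id|_\infty\le b-a+\delta$; choosing $\rho$ to make the composition error $|\phi^\delta\circ\rho-(\iota h)^\delta|_\infty$ arbitrarily small then gives $\sigma_\infty(\phi^\delta,(\iota h)^\delta)\le b-a+\delta$ with no further computation. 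Your plan does not use this separation of concerns: you carefully construct a piecewise-linear $\rho_\delta$ and compute $|\rho_\delta-\id|_\infty=b-a-\eps$, when in fact the time-displacement bound is free for any bijection, so all the effort spent controlling $\rho_\delta$ itself (ramps, breakpoints) is spent where nothing needs to be proved.

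There is also a genuine gap where you treat the jumps of $\phi(b)$ near $s=1$. Your bound on $[b,b+\delta]$ is $\sup_{u\in[0,\lambda/\delta]}|\phi(b)(1-u)-\phi(b)(1)|$, and you claim jumps at $s=1$ are ``absorbed by the final slow ramp.'' They are not: a c\`adl\`ag $\phi(b)$ need not be left-continuous at $1$, and if $\phi(b)(1^-)\ne\phi(b)(1)$ this supremum is bounded below by $|\phi(b)(1^-)-\phi(b)(1)|$ no matter how small $\lambda/\delta$ is, since any bijection $\rho_\delta$ fixing $b+\delta$ forces $\phi^\delta(\rho_\delta(t))$ to evaluate $\phi(b)$ at an argument strictly less than $1$ for every $t<b+\delta$, while $(\iota h)^\delta\equiv\phi(b)(1)$ on $[b,b+\delta]$. (The paper's one-line ``reparametrisation'' claim is silently invoking the same endpoint compatibility $\phi(b)(1^-)=\phi(b)(1)$, together with $\phi(b)(0)=\phi(a)(0)$, which you correctly flag as needed; both hold in the applications where the proposition is invoked, where $\phi(b)$ is the decoration of a decorated c\`adl\`ag path.) Finally, your assertion that inserting breakpoints $(a+(b-a)s_k,\,b+\delta s_k)$ makes $\rho_\delta$ coincide with the aligning map on every inter-jump interval is not exact: the intervals adjacent to the initial and terminal ramps do not lie on that line (the point $(a+\eps,b)$ is not on $t\mapsto b+\delta(t-a)/(b-a)$ unless $\eps=0$), so the composition error on those two intervals must be estimated separately rather than vanishing identically.
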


\begin{proof}
We take $\Pi=\{b\}$.
Note that 
    $(\iota h)^\delta$ is a reparametrisation of $\phi^\delta$ on $[a,b+\delta]$ so
\[
        \sigma_{\infty} (\phi^\delta, (\iota h)^\delta)
\le b-a+\delta.
\]
The result follows by definition of $\alpha_\infty$.
\end{proof}

\begin{rmk}\label{rem:M1}
There is another embedding $\jmath:D[a,b]\hookrightarrow\DD[a,b]$ for which $(\jmath h)(t)(s) = (1-s)h(t^-) + sh(t)$, the linear path from $h(t^-)$ to $h(t)$.
Then the topology on $D[a,b]$ induced by $\alpha_\infty$ via the embedding $\jmath$ is the $\cM_1$ topology, see~\cite[Proposition 2.9]{CF19}.
Together with Remark~\ref{rmk:lift},
this shows that $(\DD[a,b],\alpha_\infty)$ generalises simultaneously the $\cJ_1$ and the $\cM_1$ 
topologies on $D[a,b]$ via the embeddings $\iota,\jmath : D[a,b]\hookrightarrow \DD[a,b]$ respectively.
\end{rmk}

\subsection{\texorpdfstring{$p$}{p}-variation}
\label{sec:pvar}

Fix $p\in[1,\infty]$.
The $p$-variation semi-norm of a function $h:[a,b]\to\R^d$ is given by
\begin{equation*}
|h|_{p\var;[a,b]} = \sup_{a=t_0<t_1<\dots<t_k=b}\Big(\sum_{j=1}^k |h(t_j)-h(t_{j-1})|^p\Big)^{1/p},
\end{equation*}
understood as $\sup_{s,t\in[a,b]} |h_s-h_t|$ if $p=\infty$.
Define also the norm
\[
\|h\|_{p\var} = |h(a)|+|h|_{p\var}.
\]

Now let $\phi\in\bar\DD[a,b]$.
Recalling the definition of $\phi^\delta\in D[a,b+\delta]$,
we set $|\phi|_{p\var}=|\phi^\delta|_{p\var}$. This is well-defined, independent of $\delta$, and constant on equivalence classes by Lemma~\ref{lem:p-var_indep}.
We can therefore define
$|\phi|_{p\var}$ for $\phi\in\DD[a,b]$.
Let 
\begin{equation}\label{eq:DD_pvar}
\DD^{p\var}[a,b]=\{\phi\in\DD[a,b]:
|\phi|_{p\var}<\infty\}. 
\end{equation}
Similarly, define $\bar\DD^{p\var}[a,b]$.

\begin{rmk} \label{rmk:lift2}
Let $h\in D[a,b]$ with trivial lift
$\iota h\in \DD[a,b]$.
Then $|\iota h|_{p\var}=|h|_{p\var}$.
\end{rmk}

Next, we introduce a Skorokhod version of $p$-variation.
For $h_1,h_2 \in D[a,b]$, define the metric
\[
\sigma_{p\var}(h_1,h_2) = 
\inf_{\rho \in \cR}\max \big\{ |\rho-\id|_\infty, \|h_1\circ\rho-h_2\|_{p\var}\big\}.
\]

For $\phi_1,\phi_2\in\bar\DD^{p\var}[a,b]$,
we set
\[\alpha_{p\var}(\phi_1,\phi_2)=\lim_{\delta\to0}\sigma_{p\var}(\phi_1^\delta,\phi_2^\delta).
\]
Again, by Lemma~\ref{lem:limExists}\ref{pt:alpha_p-var} this limit exists and is constant on equivalence classes.
In this way, we define the metric $\alpha_{p\var}$ on $\DD^{p\var}[a,b]$.

\subsection{Differential equations driven by decorated paths}
\label{sec:DEs}

Fix $m,d\ge 1$ and $p\in [1,2)$.
For $\gamma>0$, let $C^\gamma(\R^m,\R^d)$ denote the space of 
functions $f : \R^m \to \R^d$ such that
\[
    \|f\|_{C^\gamma}
    = \max_{|\alpha|=0,\ldots,[\gamma]} \|D^\alpha f\|_\infty
    + \sup_{x\neq y\in \R^m} \max_{|\alpha|=[\gamma]}
    \frac{|D^\alpha f(x) - D^\alpha f(y)|}{|x-y|^{\gamma-[\gamma]}}
    < \infty.
\]
(Note $f\in C^N$ for $N \in \Z^+$  implies only that the $(N-1)$-th derivative of $f$ is Lipschitz.)
Assume that $A\in C^{\beta}(\R^m,\R^m)$ for $\beta>1$ and $B\in C^\gamma(\R^m,\R^{m\times d})$ for $\gamma>p$.
Let $\xi\in\R^m$. We consider initial value problems of the form
\begin{equation} \label{eq:IVP}
d X = A(X)\,dt+B(X)\,d \phi, \quad X(0)=\xi,
\end{equation}
where $\phi\in\DD^{p\var}([0,1],\R^d)$, $X\in\DD^{p\var}([0,1],\R^m)$.

To interpret and solve~\eqref{eq:IVP}, consider the decorated path $(\omega,\Pi) = ((\iota\id,\phi),\Pi)\in \bar\DD^{p\var}([0,1],\R^{1+d})$,
where $\id : [0,1]\to[0,1]$ is the identity map.
Consider also a $\delta$-extension
$\omega^\delta\in D^{p\var}([0,1+\delta],\R^{1+d})$, $\delta>0$, as described in Section~\ref{sec:DD}.
There is a unique solution
$X^\delta\in D^{p\var}([0,1+\delta],\R^m)$ to the initial value problem
\[
dX^\delta=(A,B)(X^\delta)\, d\omega^\delta, \quad X^\delta(0)=\xi
\]
defined as a Young ODE, see Section~\ref{sec:RDEs}.
Define $(X,\Pi)\in\bar\DD^{p\var}([0,1],\R^m)$ with
$X(t_j)\in D^{p\var}([0,1],\R^m)$ for $t_j\in \Pi$ given by
the suitably rescaled version of $X^\delta$ restricted to the interval $I_j=[c_j,c_j+2^{-j}\delta/r]$
(see Section~\ref{sec:RDEs} for details).
By construction, $X^\delta$ is precisely the $\delta$-extension of $(X,\Pi)$.

As shown in Proposition~\ref{prop:stability_ODE}, the resulting map
\[
\cS : \DD^{p\var}([0,1],\R^d) \to \DD^{p\var}([0,1],\R^m),
\qquad \phi\mapsto X,
\]
is well-defined and continuous (with respect to the $\alpha_{p\var}$ topology on domain and range).
We call $X$ the \emph{solution} of~\eqref{eq:IVP} and
$\cS$ the \emph{solution map}.

The following is our main convergence criterion for ODEs driven by random decorated paths, the proof of which we give at the end of Section~\ref{sec:RDEs}.

\begin{thm} \label{thm:S}
Suppose $\phi_n$ is a sequence of random variables in $\DD^{p\var}([0,1],\R^d)$ and $\phi$ is a random variable in $\DD([0,1],\R^d)$, such that
$\phi_n \to_w \phi$ in the $\alpha_\infty$ topology.
Suppose further that $|\phi_n|_{p\var}$ is tight.

Then $|\phi|_{p\var}<\infty$ a.s.\ and 
the solutions 
$X_n=\cS(\phi_n),\, X=\cS(\phi) \in \DD^{p\var}([0,1],\R^m)$
satisfy
$X_n \to_w X$
in the $\alpha_{q\var}$ topology for all $q>p$.
\end{thm}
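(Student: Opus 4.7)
The plan is to reduce the statement to the continuity of the Young-ODE solution map in $p$-variation (Proposition~\ref{prop:stability_ODE}) via two preparatory steps: (i) promote $\alpha_\infty$-convergence to a.s.\ convergence and deduce $|\phi|_{p\var}<\infty$ a.s.; (ii) interpolate between $\alpha_\infty$ and the $p$-variation bounds to upgrade the convergence to $\alpha_{q\var}$ for $q>p$.

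For (i), since $(\DD[0,1],\alpha_\infty)$ is Polish (Theorem~\ref{thm:comp}), Skorokhod representation lets us assume $\alpha_\infty(\phi_n,\phi)\to 0$ a.s.\ on a common probability space. Using that $\alpha_\infty(\phi_n,\phi)\to 0$ implies $\sigma_\infty(\phi_n^\delta,\phi^\delta)\to 0$ in $D[0,1+\delta]$ for each fixed $\delta>0$, together with lower semicontinuity of $|\cdot|_{p\var}$ under $\sigma_\infty$ and the identity $|\phi|_{p\var}=|\phi^\delta|_{p\var}$ (Lemma~\ref{lem:p-var_indep}), we obtain $|\phi|_{p\var}\le\liminf_n|\phi_n|_{p\var}$. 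Tightness of $|\phi_n|_{p\var}$ combined with Fatou then gives $|\phi|_{p\var}<\infty$ a.s.\ and, moreover, tightness of $|\phi|_{p\var}$.

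For (ii), fix $q\in(p,\gamma\wedge 2)$, which is nonempty because $p<2$ and $\gamma>p$. The classical interpolation $|h|_{q\var}\le (2|h|_\infty)^{1-p/q}|h|_{p\var}^{p/q}$ on $D[0,1+\delta]$, applied to $\phi_1^\delta\circ\rho-\phi_2^\delta$ for a reparametrisation $\rho$ nearly minimising $\sigma_\infty(\phi_1^\delta,\phi_2^\delta)$, passes to the limit $\delta\to 0$ to yield the decorated interpolation
\[
\alpha_{q\var}(\phi_1,\phi_2)\le C\,\alpha_\infty(\phi_1,\phi_2)^{1-p/q}\bigl(|\phi_1|_{p\var}+|\phi_2|_{p\var}\bigr)^{p/q}.
\]
Applying this to $\phi_n,\phi$, the a.s.\ convergence $\alpha_\infty(\phi_n,\phi)\to 0$ and the tightness of $|\phi_n|_{p\var}+|\phi|_{p\var}$ together imply $\alpha_{q\var}(\phi_n,\phi)\to 0$ in probability.

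Finally, Proposition~\ref{prop:stability_ODE} applied with $q$ in place of $p$ (valid since $\gamma>q$ and $q<2$) says that $\cS$ is continuous in $\alpha_{q\var}$. The continuous mapping theorem gives $\alpha_{q\var}(X_n,X)\to 0$ in probability, hence $X_n\to_w X$ in $\alpha_{q\var}$. The statement for an arbitrary $q'>p$ is obtained by choosing $q\in(p,q'\wedge\gamma\wedge 2)$ and using the pointwise bound $|\cdot|_{q'\var}\le|\cdot|_{q\var}$. The main technical hurdle is the decorated interpolation in (ii): one must apply the classical interpolation inside the auxiliary spaces $D[0,1+\delta]$ and verify that the resulting bound is stable as $\delta\to 0$, which is a bookkeeping exercise built on the construction of $\alpha_\infty$ and $\alpha_{p\var}$ carried out in Section~\ref{sec:paths-pf}.
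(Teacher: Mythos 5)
Your argument is correct in its core and reaches the paper's conclusion by a genuinely different route. Both approaches hinge on the same interpolation idea --- bounding $\alpha_{q\var}$ by a power of $\alpha_\infty$ (or $\alpha_{\infty\var}$) times a power of the $p$-variation; this is precisely Lemma~\ref{lem:interpol_alpha} in the paper. Where you diverge is in how this is converted into weak convergence of the solutions: you invoke the Skorokhod representation theorem (on the Polish space $(\DD[0,1],\alpha_\infty)$) to get almost sure $\alpha_\infty$-convergence, upgrade this to $\alpha_{q\var}$-convergence in probability via interpolation and tightness, and then apply the continuity of $\cS$ in $\alpha_{q\var}$ (Proposition~\ref{prop:stability_ODE} with $q$ in place of $p$, legitimate since $q<\gamma\wedge 2$). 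The paper instead avoids changing the probability space and proves a custom continuous-mapping result (Lemma~\ref{lem:cont_mapping}) for the map $\cS:(\DD^{p\var},\alpha_\infty)\to(\DD^{p\var},\alpha_{q\var})$, which is continuous on each closed set $D_k=\{|\phi|_{p\var}\le k\}$. Both arguments are of comparable length; yours buys a reduction to a standard off-the-shelf tool (Skorokhod), the paper's buys a reusable lemma that stays entirely at the level of weak convergence.

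One intermediate claim in your part~(i) is incorrect as stated: $\alpha_\infty(\phi_n,\phi)\to 0$ does \emph{not} imply $\sigma_\infty(\phi_n^\delta,\phi^\delta)\to 0$ for a fixed $\delta>0$. By Lemma~\ref{lem:limExists} one only gets $|\sigma_\infty(\phi_n^\delta,\phi^\delta)-\alpha_\infty(\phi_n,\phi)|\le 2\delta$, so $\limsup_n\sigma_\infty(\phi_n^\delta,\phi^\delta)\le 2\delta$, which need not be zero. (For instance, split a single jump of $\phi$ into two nearby jumps in $\phi_n$ whose locations and small-jump heights merge in the limit: the $\delta$-extension of $\phi_n$ then contains two fictitious intervals with fixed lengths $2\delta/3$, $\delta/3$ that can never align with the single interval of length $\delta$ in $\phi^\delta$, producing an $O(\delta)$ discrepancy.) However, the conclusion you draw --- lower semicontinuity of $|\cdot|_{p\var}$ in $\alpha_\infty$ --- is still true and the fix is straightforward: by Lemma~\ref{lem:alpha_inf_dF}, $\alpha_\infty(\phi_n,\phi)=d_F(\psi_n^\delta,\psi^\delta)\to 0$ yields reparametrisations $\rho_n\in\cR_{[0,1+\delta]}$ with $|\phi_n^\delta\circ\rho_n-\phi^\delta|_\infty\to 0$; lower semicontinuity of $p$-variation under uniform convergence and reparametrisation invariance then give $|\phi|_{p\var}=|\phi^\delta|_{p\var}\le\liminf_n|\phi_n^\delta\circ\rho_n|_{p\var}=\liminf_n|\phi_n|_{p\var}$. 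Similarly, your interpolation inequality should carry the two extra terms appearing in Lemma~\ref{lem:interpol_alpha} (a ``$+1$'' and an additive $\alpha_{\infty\var}$ coming from the reparametrisation discrepancy $\disc{\rho}$ and the initial-value term in $\|\cdot\|_{q\var}$); these are harmless for the limiting argument but should be present in the stated bound.
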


\section{Dynamical setup and main results}
\label{sec:setup}

In this section, we state the main result in this paper.

\subsection{Assumptions}
\label{sec:ass}

Let $\Lambda$ be a compact metric space with Borel probability measure $\mu_\Lambda$,
and let $T:\Lambda\to\Lambda$ be an ergodic and mixing measure-preserving dynamical system.
Fix $\Sigma\subset\Lambda$ with $\mu_\Lambda(\Sigma)>0$.  We define
the \emph{first return time}
\[
    R : \Sigma\to\Z^+
    , \qquad
    R(x) = \inf \{n\ge 1: T^n x \in \Sigma\},
\]
and the \emph{first return map}
\[
f:\Sigma\to\Sigma,
\qquad  fx=T^{R(x)}x.
\]
 The map $f:\Sigma\to\Sigma$ preserves the probability measure $\mu=\mu_\Lambda|_\Sigma/\mu_\Lambda(\Sigma)$. 

As described below,
we assume that $T$ falls within the so-called Chernov-Markarian-Zhang framework~\cite{Markarian04,ChernovZhang05}.
Informally this means that 
(i) we have good control on the first return time $R$, and
(ii) $f$ is modelled by a Young tower with exponential tails.

This framework incorporates uniformly expanding/hyperbolic systems such as the intermittent maps~\cite{PomeauManneville80} as analysed in~\cite{CFKM20}, but is flexible enough to include nonuniformly hyperbolic systems such as the billiards with flat cusps~\cite{JungZhang18} from Example~\ref{ex:cusps}.

\paragraph{Assumptions on the first return time.}
Our first assumption is that $R$ is regularly varying with exponent $\alpha\in(1,2)$.
That is
\begin{equation} \label{eq:rvR}
    \mu(R>t)\sim \ell(t) t^{-\alpha}
\quad\text{as $t\to\infty$}
\end{equation}
where $\ell:(0,\infty)\to(0,\infty)$ is a slowly varying function.

Introduce $b_n>0$ such that $b_n^\alpha\sim n\ell(b_n)$ as $n\to\infty$.
We require that for every $s>0$, there exist $\theta_1>1$ and $C>0$ such that
\begin{equation} \label{eq:D}
\mu(R>sb_n\text{ and }R\circ f^j>sb_n)\le Cn^{-\theta_1}
\quad\text{for all $1\le j\le n$.}
\end{equation}
This assumption means that large first returns are not too clustered.

\paragraph{Assumptions on the first return map.}

We suppose that $(f,\Sigma,\mu)$ is ergodic and mixing and modelled by a Young tower with exponential tails~\cite{Young98} as discussed further in Section~\ref{sec:pre}.
In particular, 
\begin{itemize}
\item There is an inducing set $Y\subset \Sigma$ with $\mu(Y)>0$ and a return time $\tau:Y\to\Z^+$ (not necessarily the first return time) such that $F=f^\tau$ maps $Y$ into $Y$;
\item  The tail probabilities $\mu(\tau>n)$ decay exponentially;
\item Associated to $F$, there is a countable partition $\{Y_j:j\ge1\}$ of $Y$ and $\tau$ is constant on elements $Y_j$.
\end{itemize}
We require that $R$ is constant on elements of
\begin{equation}\label{eq:C_def}
\cC=\{f^\ell Y_j:0\le \ell < \tau|_{Y_j},\,j\ge1\}.    
\end{equation}

\paragraph{Assumptions on excursions.}

Let $\Sigma_i$, $i=1,\dots,\MM$, be a finite collection of disjoint subsets of $\Sigma$
such that the $\Sigma_i$ are unions of elements of $\cC$.
(When $T : \Lambda \to \Lambda$ is a billiard with flat cusps, the sets $\Sigma_i$ correspond to trajectories
entering the $i$'th cusp, restricting to flattest cusps.)

We suppose that
\begin{equation}
    \label{eq:rv}
    \mu(R \bone_{\Sigma_i}>t)\sim c_i\ell(t) t^{-\alpha}
\quad\text{as $t\to\infty$}
\end{equation}
for each $i$,
where $c_i>0$ and $\sum_{i=1}^\MM c_i=1$.
Setting $\Sigma_0=\Sigma\setminus\bigcup_{i=1,\dots,\MM}\Sigma_i$, we require that there exist  $C>0$ and $\eta>0$ such that
\begin{equation}
    \label{eq:rv0}
    \mu(R \bone_{\Sigma_0}>t)\le Ct^{-(\alpha+\eta)}
\quad\text{for all $t>0$.}
\end{equation}
(Together,~\eqref{eq:rv} and~\eqref{eq:rv0} imply assumption~\eqref{eq:rvR}.)

Let $v:\Lambda\to\R^d$ be a H\"older observable with $\int_\Lambda v\,d\mu_\Lambda=0$.
We consider a set of \emph{profiles} 
$P=\{P_i,\, i=1,\dots,\MM\}\subset
C( [0,1] , \R^d)$ with $P_i(0)=0$ for each $i$.
Our final assumption is that there exist $C>0$ and $\eta\in(0,1)$ such that
\begin{equation} \label{eq:P}
    \bone_{\Sigma_i}\Big|\sum_{\ell=0}^{k}v\circ T^\ell -P_i(k/R)R\Big|
    \le CR^{1-\eta}
\quad\text{for all $0\le k\le R-1$, $i=1,\dots,\MM$.}
\end{equation}

\begin{rmk} By~\eqref{eq:P} and continuity of $P_i$, each profile $P_i$ is Lipschitz. It is equivalent to~\eqref{eq:P} that
\begin{equation} \label{eq:PP}
    \bone_{\Sigma_i}\Big|\sum_{\ell=0}^{[Rt]-1}v\circ T^\ell -P_i(t)R\Big|
    \le CR^{1-\eta}
\quad\text{for all $t\in[0,1]$, $i=1,\dots,\MM$.}
\end{equation}
\end{rmk}

\begin{rmk} Unlike in~\cite{FFMT}, we do not require that the endpoints $P_i(1)\in\R^d$ are distinct and nonzero.
\end{rmk}

\subsection{The decorated L\'evy process \texorpdfstring{$L_\alpha^{P}$}{}}
\label{sec:Lalpha}

Let $\omega_1 = (1, 0, \ldots, 0)$, \ldots, $\omega_{\MM} = (0, \ldots, 0, 1)$ be the standard basis of $\R^{\MM}$.
Define
\begin{equation*}
    Z : \Sigma \to \R^{\MM}
    , \qquad
    Z = \sum_{i =1}^\MM \omega_i R \bone_{\Sigma_i}.
\end{equation*}
Then $Z$ is regularly varying with spectral measure
$\nu$ on $\bbS^{\MM-1}$ given by
$\nu = \sum_{i=1}^\MM c_i \delta_{\omega_i}$.
This means that
\begin{equation} \label{eq:regvar}
        \lim_{t \to \infty}
        \frac{\mu(|Z| > r t, \ Z / |Z| \in E)}{\mu(|Z| > t)}
        = r^{-\alpha} \nu(E)
\end{equation}
    for all $r > 0$ and all Borel sets $E \subset \bbS^{\MM-1}$ with $\nu(\partial E) = 0$.

Let 
$G_\alpha$ be the corresponding $\MM$-dimensional $\alpha$-stable law with
characteristic function
\[
    \E e^{is\cdot G_\alpha}
    = \exp\Big\{
        -\int_{\bbS^{\MM-1}}|s\cdot x|^\alpha\Big(1-i\sgn(s\cdot x)
        \tan\frac{\pi\alpha}{2}\Big)\cos\frac{\pi\alpha}{2}\,\Gamma(1-\alpha)\,d\nu(x)
    \Big\}
\]
for $s\in\R^{\MM}$.
Then $Z$ is in the domain of attraction of $G_\alpha$.
That is, if $Z_1,Z_2,\dots$ are i.i.d.\ copies of $Z$, then $b_n^{-1} \big(\sum_{j=1}^n Z_j-n\int_\Sigma Z\,d\mu\big)\to_w G_\alpha$.

Let $\bar R=\int_\Sigma R\,d\mu$ and
let $L_\alpha\in D([0,1],\R^e)$ be the $\MM$-dimensional $\alpha$-stable L\'evy process corresponding to the stable law $\bar R^{-1/\alpha} G_\alpha$.

Let 
\(
    \Gamma = (P_1(1), \ldots, P_{\MM}(1))\in \R^{d \times \MM}
    ,
\)
and choose $\Phi : \bbS^{\MM-1} \to \Lip([0,1], \R^d)$ such that
\begin{itemize}
\item $(x,s)\mapsto\Phi(x)(s)$ is Lipschitz continuous;
\item $\Phi$ is constant in a neighbourhood of each $\omega_i$;
\item $\Phi(\omega_i) = P_i$ for each $i$.
\end{itemize}
     We define the decorated L\'evy process $L_\alpha^{P} \in \DD([0,1], \R^d)$:
     \[
         L_\alpha^{P} (t) (s)
         = \Gamma L_\alpha(t^-) + |L_\alpha(t) - L_\alpha(t^-)| \Phi \biggl( \frac{L_\alpha(t) - L_\alpha(t^-)}{|L_\alpha(t) - L_\alpha(t^-)|} \biggr) (s)
         ,
     \]
with the convention that $L_\alpha^{P}(t)\equiv \Gamma L_\alpha(t)$ at continuity points $t$ of $L_\alpha$.
Although there is some flexibility in the choice of $\Phi$, the decorated process $L_\alpha^P$ depends only on $L_\alpha$ and the set of profiles $P$.

\begin{rmk}
(a) It is easily checked that $L_\alpha^{P}\in \DD([0,1],\R^d)$.
We need to verify that 
$L_\alpha^{P}(t)(1)=\Gamma L_\alpha(t)$
for discontinuity points $t$ of $L_\alpha$.
To this end, $L_\alpha(t) - L_\alpha(t^-)=\omega_i|L_\alpha(t) - L_\alpha(t^-)|$
for some $i$ and $\Phi(\omega_i)(1)=P_i(1)=\Gamma\omega_i$. Hence
\begin{align*}
L_\alpha^{P}(t)(1) 
& =\Gamma L_\alpha(t^-)+|L_\alpha(t) - L_\alpha(t^-)|\Phi(\omega_i)(1)
\\
& =\Gamma L_\alpha(t^-)+|L_\alpha(t) - L_\alpha(t^-)|\Gamma\omega_i
\\
& =\Gamma L_\alpha(t^-)+\Gamma(L_\alpha(t) - L_\alpha(t^-))
 =\Gamma L_\alpha(t).
\end{align*}
(b) Each $\delta$-extension $(L_\alpha^{P})^\delta$ is an element
of $C([0,1+\delta],\R^d)$.
\end{rmk}

\subsection{Main results}

Let $v:\Lambda\to\R^d$ be a H\"older observable with $\int_\Lambda v\,d\mu_\Lambda=0$.
Define the c\`adl\`ag processes on $(\Lambda,\mu_\Lambda)$,
\[
W_n \in D([0,1], \R^d), \qquad
    W_n(t) = b_n^{-1} \sum_{j=0}^{[nt]-1} v \circ T^j
    .
\]
Using the embedding $\iota:D([a,b],\R^d)\hookrightarrow \DD([a,b],\R^d)$
from Definition~\ref{def:iota}, we obtain the trivial lift 
\[
\hW_n=\iota W_n\in\DD([0,1],\R^d),
\qquad
\hW_n(t)(s)=W_n(t).
\]
We prove:

\begin{thm} \label{thm:Wconv}
$\hW_n\to_{\mu_\Lambda} L_\alpha^{P}$
in the $\alpha_\infty$ topology as $n\to\infty$.
\end{thm}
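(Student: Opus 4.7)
The plan is to reduce to a non-decorated stable functional limit theorem on the first-return dynamics $f:\Sigma\to\Sigma$ and then reconstruct the decorations from the profile assumption~\eqref{eq:P}. Introduce the centred induced observable $V:\Sigma\to\R^d$, $V(y)=\sum_{\ell=0}^{R(y)-1}v(T^\ell y)$. Applying~\eqref{eq:P} at $k=R-1$ on each $\Sigma_i$ together with Lipschitz continuity of $P_i$ gives $\bone_{\Sigma_i}|V-\Gamma\omega_i R|\ll R^{1-\eta}$, while~\eqref{eq:rv0} makes $V\bone_{\Sigma_0}$ subdominant in the stable tail. Thus $V=\Gamma Z+O(R^{1-\eta})$ with $Z$ regularly varying with spectral measure $\nu$. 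Since $f$ is modelled by a Young tower with exponential tails and the no-clustering condition~\eqref{eq:D} holds, the standard stable functional CLT for Young towers (as in~\cite{MV20,JungPeneZhang20,CFKM20}) yields
\[
\tilde W_n(t):=b_n^{-1}\sum_{k=0}^{[nt/\bar R]-1}V\circ f^k\;\to_{\mu_\Lambda}\;\Gamma L_\alpha
\]
in $D([0,1],\R^d)$ with the $\cJ_1$ topology.

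To return to the original time, set $S_k=\sum_{j=0}^{k-1}R\circ f^j$ and note that $W_n(S_k/n)=\tilde W_n(\bar R k/n)+o(1)$ uniformly in $k\le n/\bar R$ by Birkhoff's theorem. Within the $k$-th excursion, for $y_{k-1}\in\Sigma_i$, assumption~\eqref{eq:PP} rewrites as
\[
W_n\Big(\tfrac{S_{k-1}+[R_k t]}{n}\Big)-W_n\Big(\tfrac{S_{k-1}}{n}\Big)=(R_k/b_n)P_i(t)+O(R_k^{1-\eta}/b_n),\quad t\in[0,1].
\]
Fix $\eps>0$ and call an excursion large if $R_k>\eps b_n$. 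By~\eqref{eq:rv} the number of large excursions is bounded in probability by $C\eps^{-\alpha}$, by~\eqref{eq:D} they are temporally isolated, and on each the relative error $R_k^{-\eta}$ tends to zero; hence each large excursion trace is asymptotically the scaled profile $(R_k/b_n)P_i$, which matches exactly the decoration that $L_\alpha^{P}$ attaches to the corresponding jump of $\Gamma L_\alpha$. To conclude convergence in $\alpha_\infty$, I would pass via Skorokhod representation to a coupling in which $\tilde W_n\to\Gamma L_\alpha$ almost surely in $\cJ_1$. For each $\delta>0$, construct a reparametrisation $\rho_n$ of $[0,1+\delta]$ close to the identity that matches each large excursion interval $[S_{k-1}/n,S_k/n]$ of $(\iota W_n)^\delta$ to the corresponding inserted interval of $(L_\alpha^{P})^\delta$. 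On large excursions the profile estimate gives $|(\iota W_n)^\delta\circ\rho_n-(L_\alpha^{P})^\delta|_\infty\to0$; between them the almost-sure $\cJ_1$ convergence plus a sup-norm control on the ``small'' part of $W_n$ does the same. Taking $\eps\to0$ after $n\to\infty$, then $\delta\to0$, yields $\alpha_\infty(\iota W_n,L_\alpha^{P})\to0$ in probability, which is equivalent to the desired weak convergence.

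The main obstacle is the uniform sup-norm control of $W_n$ on the complement of the large excursions: the induced $\cJ_1$ convergence only controls $W_n$ at the return times $S_k/n$, whereas $\alpha_\infty$-closeness requires sup-norm closeness between returns. The decisive ingredients are (i) the uniform Lipschitz bound on the profiles implied by~\eqref{eq:PP}, which handles oscillations during small in-cusp excursions, and (ii) the sharper tail exponent $\alpha+\eta>\alpha$ in~\eqref{eq:rv0}, which yields moment bounds on $v\bone_{\Sigma_0}$-sums. Together they should produce a maximal inequality ensuring that the cumulative contribution of small excursions and $\Sigma_0$-blocks to $\sup_t|W_n(t)|$ vanishes in probability as $n\to\infty$ and then $\eps\to0$, which is the key estimate beyond the induced CLT.
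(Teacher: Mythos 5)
Your strategy has the same skeleton as the paper's (induce to the first-return map, prove a $\cJ_1$ functional limit for the induced Birkhoff sums, then read off the excursion profiles), but there is a genuine gap at the heart of your reduction. You pass immediately to the $\R^d$-valued process $\tilde W_n$ built from $V=\Gamma Z+O(R^{1-\eta})$ and its $\cJ_1$ limit $\Gamma L_\alpha$, and then try to reconstruct the decorations of $L_\alpha^P$ from the jumps of $\Gamma L_\alpha$. This fails precisely in the situations the paper is designed to cover: the theorem imposes no restriction on the profile endpoints $P_i(1)$ (the remark after~\eqref{eq:PP} explicitly says they may coincide or be zero), so the $d\times\MM$ matrix $\Gamma$ is in general not injective on the positive cone spanned by $\omega_1,\dots,\omega_\MM$, or may even be zero. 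A jump of $\Gamma L_\alpha$ then does not determine the coordinate direction $\omega_i$ of the underlying jump of $L_\alpha$, hence does not determine which profile $P_i$ the decorated process $L_\alpha^P$ attaches. Your coupling scheme via Skorokhod representation of $\tilde W_n\to\Gamma L_\alpha$ in $\cJ_1$ therefore does not give you access to the decoration. The paper avoids this by never collapsing to $\R^d$ before decorating: it proves $\cJ_1$ convergence of the $\R^\MM$-valued process $U_n$ (a time change of $W_n^Z$) to $L_\alpha$, and only then applies the continuous map $G:D([0,1],\R^\MM)\to\DD([0,1],\R^d)$ which reads the jump direction in $\R^\MM$ to select the profile (Lemma~\ref{lem:G_cont} plus the continuous mapping theorem in Corollary~\ref{cor:Udec}).

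Two further comments. First, you treat the induced $\cJ_1$ convergence as a citation to a ``standard stable functional CLT for Young towers''; in fact the multidimensional point-process convergence under the clustering hypothesis~\eqref{eq:D} is a substantial part of the paper (Theorem~\ref{thm:Z}, Section~\ref{sec:Z}) and cannot be invoked off the shelf. Second, your Skorokhod-representation-plus-reparametrisation plan is morally what Lemma~\ref{lem:WU}/Corollary~\ref{cor:WU} accomplish, but the paper does it as a direct convergence-in-probability bound $\alpha_\infty(G(U_n),\hW_n)\to_\mu 0$ rather than via coupling, which is cleaner and avoids the ambiguities above; the ``sup-norm control between large excursions'' you identify as the obstacle is exactly the content of the estimates on $\tH_k$, $R'$ and $R_\Lambda\circ T^n$ there. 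Finally, you prove convergence on $(\Sigma,\mu)$ but state it on $(\Lambda,\mu_\Lambda)$; the paper closes this with Zweimüller's theorem in Step~4, which your sketch omits.
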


\begin{thm}
    \label{thm:tight}
    The sequence $|W_n|_{p\var}$ is tight 
on $(\Lambda,\mu_\Lambda)$ for every $p > \alpha$.
\end{thm}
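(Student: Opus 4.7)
The plan is to use the sub-additivity of $p$-variation raised to the $p$-th power — namely, $|h|_{p\var,[0,1]}^p \le \sum_k |h|_{p\var,I_k}^p$ for any partition $\{I_k\}$ of $[0,1]$ — with the natural partition given by successive returns of the $T$-orbit of $y_0$ to $\Sigma$. On each excursion interval, the profile assumption~\eqref{eq:P} will imply that $W_n$ is close in sup-norm to a scaled Lipschitz profile whose $p$-variation is a bounded multiple of the scaling factor $R_k/b_n$. This reduces tightness of $|W_n|_{p\var}^p$ to tightness of the normalised sum $b_n^{-p}\sum_k R_k^p$ of powers of first-return times, which is then a question about regular variation in the system $(f,\Sigma,\mu)$.

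Concretely, I would set $N_0=0$ and $N_k = \sum_{j=0}^{k-1}R\circ f^j$, so that $T^{N_k}y_0\in\Sigma$; write $R_k = N_{k+1}-N_k$ and let $i_k\in\{0,1,\ldots,\MM\}$ be such that $T^{N_k}y_0\in\Sigma_{i_k}$. Put $K_n=\max\{k:N_k\le n\}$. On the excursion interval $I_k=[N_k/n,\,N_{k+1}/n]$ with $i_k\ge 1$, assumption~\eqref{eq:PP} yields a decomposition
\[
W_n|_{I_k}=W_n(N_k/n)+b_n^{-1}R_k\,P_{i_k}\circ\sigma_k+b_n^{-1}h_k,
\]
where $\sigma_k:I_k\to[0,1]$ is an affine reparametrisation and $\|h_k\|_\infty\le CR_k^{1-\eta}$. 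Using $|P_{i_k}|_{p\var}\le|P_{i_k}|_{1\var}\le\Lip(P_{i_k})$ (the inequality $\|a\|_{\ell^p}\le\|a\|_{\ell^1}$ for $p\ge 1$ gives $|\cdot|_{p\var}\le|\cdot|_{1\var}$), the scaled-profile term contributes at most $Cb_n^{-1}R_k$ to the $p$-variation. The discrete error $h_k$ has one-step increments bounded by $|v|_\infty+\Lip(P_{i_k})$, so $|h_k|_{1\var}\le CR_k$, and hence $|b_n^{-1}h_k|_{p\var}\le|b_n^{-1}h_k|_{1\var}\le Cb_n^{-1}R_k$. Therefore $|W_n|_{p\var,I_k}^p\le Cb_n^{-p}R_k^p$, and the same estimate holds trivially when $i_k=0$ via the crude bound through $1$-variation. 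After handling the two partial boundary intervals analogously,
\[
|W_n|_{p\var,[0,1]}^p\le Cb_n^{-p}\sum_{k=0}^{K_n+1}R_k^p.
\]

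The main technical obstacle is to show that $b_n^{-p}\sum_{k=1}^{K_n}R_k^p$ is tight on $(\Sigma,\mu)$. Since $\mu(R>t)\sim\ell(t)t^{-\alpha}$ and $p>\alpha$, the observable $R^p$ is regularly varying with index $\alpha/p<1$, and the identity $(b_n^p)^{\alpha/p}=b_n^\alpha\sim n\ell(b_n)$ shows that $b_n^p$ is the correct normalising sequence; for i.i.d.\ copies of $R$, the normalised partial sums would converge to a positive $(\alpha/p)$-stable law and be tight. Extending this to the dynamical setting $R_k=R\circ f^{k-1}$ combines three ingredients: the exponential tail of the inducing time $\tau$ and the spectral gap of the transfer operator of the induced map $F=f^\tau$, which provides strong mixing on the Young tower; the non-clustering assumption~\eqref{eq:D} on $R$, which forbids two large returns from nearly coinciding; and a truncation/blocking decomposition that separates the bulk of $R_k^p$ (handled by a law of large numbers via mixing) from the heavy-tailed piece (handled by a Poisson-approximation argument along the lines of the proof of Theorem~\ref{thm:Wconv}).

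Finally, tightness transfers from $(\Sigma,\mu)$ to $(\Lambda,\mu_\Lambda)$: the first entry time into $\Sigma$ is $\mu_\Lambda$-a.s.\ finite, and the pre-$\Sigma$ initial segment contributes only $O_p(b_n^{-1})$ to $|W_n|_{p\var}$, which vanishes as $n\to\infty$ and is thus harmless for tightness.
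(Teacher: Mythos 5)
The proposal rests on the claim that $|h|_{p\var,[0,1]}^p \le \sum_k |h|_{p\var,I_k}^p$ for a partition of $[0,1]$ into intervals $I_k$, but this inequality is \emph{false} for $p>1$ (and here $p>\alpha>1$). The $p$-th power of $p$-variation is \emph{super-additive} over disjoint subintervals, not sub-additive; the sub-additive quantity is $|\cdot|_{p\var}$ itself. A one-line counterexample: $h(t)=t$ on $[0,2]$ with $I_1=[0,1]$, $I_2=[1,2]$ gives $|h|_{p\var,[0,2]}^p=2^p>2=\sum_k|h|_{p\var,I_k}^p$. The correct ``gluing'' estimate available in this direction is
\[
|h|_{p\var,[0,1]}^p\le C_p\Big(\sum_k|h|_{p\var,I_k}^p + |H|_{p\var}^p\Big),
\]
where $H$ is the skeleton process taking the constant value $h(t_k)$ on $I_k$; and $|H|_{p\var}^p$ is \emph{not} controlled by $\sum_k|h(t_k)-h(t_{k-1})|^p$ (that is a lower bound, not an upper bound) or by $b_n^{-p}\sum_k R_k^p$. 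Consequently your reduction of the theorem to tightness of $b_n^{-p}\sum_{k\le K_n}R_k^p$ does not follow, and the subsequent (already only sketched) regular-variation argument would not close the proof even if that reduction were repaired.

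In fact, controlling the $p$-variation of the skeleton process is exactly the hard part, and the paper handles it by entirely different machinery. Its proof induces to the Young-tower base $Y$ (not to $\Sigma$), forms $W_n^Y$ from $V^Y=\sum_{\ell<\tau}V\circ f^\ell$, and splits $V^Y = \sum_i P_i(1)\widetilde R_i^Y + H^Y$ in Lemma~\ref{lem:VY}. The piecewise-constant, regularly-varying main part $\widetilde R_i^Y$ is handled by the Gibbs--Markov $p$-variation bound of~\cite[Theorem~4.4]{CFKM20} (which relies on Proposition~\ref{prop:phiY} to confirm that $R^Y$, and hence the dominating return time $\sigma$, is regularly varying); the error $H^Y$ is in $L^q$ for some $q>\alpha$ thanks to assumptions~\eqref{eq:rv0} and~\eqref{eq:P}, and is controlled by a martingale--coboundary decomposition together with the Pisier--Xu $p$-variation martingale inequality~\cite{PisierXu88}. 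Tightness is then transferred from $(Y,\mu_Y)$ to $(\Lambda,\mu_\Lambda)$ via~\cite[Theorem~5.2]{CFKM20}. None of these ingredients --- the skeleton control via the Gibbs--Markov $p$-variation theorem, the martingale--coboundary estimate, and the lifting lemma --- appears in your outline, so the main technical content is missing. As a minor additional remark, the profile assumption~\eqref{eq:P} is invoked in your excursion bound but is not actually needed there (the crude one-step bound $|v|_\infty b_n^{-1}$ already gives $|W_n|_{1\var,I_k}\le Cb_n^{-1}R_k$); the real role of~\eqref{eq:P} in the paper's proof is to ensure that the error $H$ lies in $L^q$ for some $q>\alpha$.
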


The proofs are postponed to Sections~\ref{sec:pre}, \ref{sec:Z}, \ref{sec:fdd} and~\ref{sec:tight}.
As a consequence, we obtain:

\begin{thm}
    \label{thm:X}
Let $d,m\ge1$, $\xi\in\R^m$.
Assume that $A:\R^m\to\R^m$ and 
$B:\R^m\to\R^{m\times d}$ are sufficiently smooth as in Section~\ref{sec:DEs}.
Let $X_n,\,X\in\DD([0,1],\R^m)$ be solutions for
    \[
dX_n = A(X_n) \, dt + B(X_n) \, d\hW_n, \qquad
    dX = A(X) \, dt + B(X) \, dL_\alpha^{P}
\]
with $X_n(0)=X(0)=\xi$.
    Then \[
X_n\to_{\mu_\Lambda}X
\quad\text{in the $\alpha_{q\var}$ topology as $n\to\infty$}
\]
    for every $q > \alpha$.
\end{thm}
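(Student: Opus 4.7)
The plan is to deduce Theorem \ref{thm:X} as essentially a corollary of three results already established in the paper: the weak convergence of inputs (Theorem \ref{thm:Wconv}), the $p$-variation tightness of inputs (Theorem \ref{thm:tight}), and the continuity of the solution map for decorated SDEs (Theorem \ref{thm:S}). The strategy is a classical continuous-mapping argument, with the twist that the relevant ``continuous mapping'' $\cS$ must live on the decorated path spaces $\DD^{p\var}$ rather than on any classical Skorokhod space.

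First I would identify $X_n$ and $X$ as images under the solution map from Section~\ref{sec:DEs}. Under the stated smoothness hypotheses on $A$ and $B$, one may pick $p \in (\alpha,2)$ small enough that $B\in C^\gamma$ with $\gamma > p$; then
\[
\cS : \DD^{p\var}([0,1],\R^d) \to \DD^{p\var}([0,1],\R^m)
\]
is well-defined. By the very definition of a solution to the decorated differential equation, $X_n=\cS(\hW_n)$ and $X=\cS(L_\alpha^{P})$.

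Next I would verify the two hypotheses of Theorem~\ref{thm:S} for the sequence $\hW_n$ with limit $L_\alpha^{P}$. Weak convergence $\hW_n\to_{\mu_\Lambda}L_\alpha^{P}$ in the $\alpha_\infty$ topology is exactly Theorem~\ref{thm:Wconv}. For the $p$-variation tightness, Remark~\ref{rmk:lift2} gives $|\hW_n|_{p\var}=|W_n|_{p\var}$ since the trivial lift preserves $p$-variation, so Theorem~\ref{thm:tight} furnishes tightness of $|\hW_n|_{p\var}$ for every $p>\alpha$. Fixing any $p\in(\alpha,2)$, we may therefore invoke Theorem~\ref{thm:S} to conclude that
\[
X_n\to_{\mu_\Lambda}X\quad\text{in the $\alpha_{q\var}$ topology for every $q>p$.}
\]
Since $p\in(\alpha,2)$ is arbitrary, letting $p\downarrow\alpha$ yields the claimed convergence for every $q>\alpha$.

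In this sense, there is no true ``main obstacle'' internal to the proof of Theorem~\ref{thm:X}: all the substantive difficulty has been absorbed into the three input results. The genuinely hard tasks—namely, proving the decorated functional limit theorem for $\hW_n$ in a topology strictly finer than $\cM_1$, establishing uniform $p$-variation bounds for $W_n$ in the presence of heavy-tailed excursions from the cusp, and constructing a continuous solution map on $\DD^{p\var}$ that interprets the jumps via the inserted-interval mechanism—are precisely the contributions of Sections~\ref{sec:pre}--\ref{sec:tight} and of the decorated-path-space framework developed in Section~\ref{sec:paths}. Given those, the proof of Theorem~\ref{thm:X} reduces to assembling the pieces as above.
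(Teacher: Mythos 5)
Your proposal is correct and follows the paper's own proof essentially verbatim: the paper also combines Theorem~\ref{thm:tight} with Remark~\ref{rmk:lift2} to get tightness of $|\hW_n|_{p\var}$, then applies Theorems~\ref{thm:Wconv} and~\ref{thm:S} to conclude $X_n=\cS(\hW_n)\to_{\mu_\Lambda}\cS(L_\alpha^{P})=X$. Your explicit remark that $p$ may be taken arbitrarily close to $\alpha$ to obtain convergence for all $q>\alpha$ is a small clarification the paper leaves implicit.
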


\begin{proof} By Theorem~\ref{thm:tight} and Remark~\ref{rmk:lift2},
$|\hW_n|_{p\var}$ is tight for $p>\alpha$. Hence it follows from
Theorems~\ref{thm:Wconv} and~\ref{thm:S} that
$X_n=\cS(\hW_n)\to_{\mu_\Lambda}\cS(L_\alpha^{P})=X$.
\end{proof}

We say that the profile $P_i$ is \emph{linear} if $\{P_i(s):0\le s\le 1\}$ is contained in a line.
Under the extra assumption that excursions are linear, i.e.\ each profile $P_i$ is linear, we can
identify our solution with a Marcus SDE as follows. 

\begin{thm}\label{thm:Marcus}
In the setting of Theorem~\ref{thm:X},
assume additionally that $P_i$ is linear for each $i=1,\dots,\MM$.
Then the path $t\mapsto X(t)(1)$ solves the Marcus SDE
$dY = A(Y) \, dt + B(Y) \diamond dL_\alpha$.
\end{thm}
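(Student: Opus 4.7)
The strategy is pathwise: I would unfold the construction $X = \cS(L_\alpha^{P})$ from Section~\ref{sec:DEs}, set $Y(t) := X(t)(1)$, and verify that $Y$ satisfies the pathwise Marcus characterisation directly. For small $\delta>0$, I work with the $\delta$-extension $(L_\alpha^{P})^\delta$ and its Young ODE solution $X^\delta$, and collapse as $\delta\to 0$.

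The key computation occurs at each discontinuity $t$ of $L_\alpha$ with $\Delta L_\alpha(t) = |\Delta L_\alpha(t)|\omega_i$. The inserted interval $I$ corresponding to $t$ carries a driving path which, up to time-rescaling, is $u\mapsto \Gamma L_\alpha(t^-) + |\Delta L_\alpha(t)|P_i(u)$. The linearity hypothesis gives $P_i(u) = \psi_i(u)\Gamma\omega_i$ for some Lipschitz $\psi_i:[0,1]\to\R$ with $\psi_i(0)=0$, $\psi_i(1)=1$. Setting $\gamma := \Gamma\Delta L_\alpha(t)$, the Young ODE on $I$ becomes
\[
\dot x(u) = B(x(u))\gamma\,\psi_i'(u) + O(\delta),
\]
with the $O(\delta)$ term coming from the $A$-drift integrated over an interval of length $O(\delta)$. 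Since $V(x) := B(x)\gamma$ is a single smooth (state-dependent) vector field, the leading-order ODE is a time-reparametrisation of the flow of $V$, so $x(u) = F^V_{\psi_i(u)}(x(0))$; crucially, the endpoint $x(1) = F^V_1(x(0))$ depends on $\psi_i$ only through the prescribed values $\psi_i(0),\psi_i(1)$. Passing $\delta\to 0$ and collapsing yields the Marcus jump rule $Y(t) = F^V_1(Y(t^-))$. Off the inserted intervals, $(L_\alpha^{P})^\delta$ is merely a re-timing of $\Gamma L_\alpha$ restricted to its continuity points; by continuity of the Young solution map (Proposition~\ref{prop:stability_ODE}), $Y$ satisfies the Young equation $dY = A(Y)\,dt + B(Y)\Gamma\,dL_\alpha$ between consecutive jumps.

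Combining the Young equation between jumps with the flow rule at each jump is exactly the pathwise characterisation of a Marcus SDE driven by a c\`adl\`ag path of finite $p$-variation with $p<2$ (see~\cite{KPP95,CP14,CF19}), so by uniqueness $Y$ coincides with the Marcus solution of $dY = A(Y)\,dt + B(Y)\diamond dL_\alpha$ (where the matrix $\Gamma$ is implicit in matching the dimensions of $L_\alpha$ to those of $B$). The main technical nuisance I anticipate is uniform control of the $O(\delta)$ error over the countably many inserted intervals; since the total length of these intervals is exactly $\delta$, the cumulative $A$-drift contribution is itself $O(\delta)$ and vanishes in the limit, while the flow-rule identification on each inserted interval is purely an ODE computation exploiting linearity of $P_i$. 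Everything else reduces to the continuity and uniqueness statements already established for the decorated solution map $\cS$.
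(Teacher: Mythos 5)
Your proposal is correct and captures the essential observation — that when $P_i$ takes values on a line through the origin, the Young ODE driven by the excursion $s\mapsto\Gamma L_\alpha(t^-)+|\Delta L_\alpha(t)|P_i(s)$ returns the same endpoint as the time-$1$ flow of the single vector field $x\mapsto B(x)\Gamma\Delta L_\alpha(t)$ — but you take a somewhat different route from the paper. You verify the pathwise Marcus characterisation directly (Young equation between jumps, flow-endpoint rule at each jump) and then invoke uniqueness for Marcus equations driven by paths of finite $p$-variation, $p<2$. The paper instead introduces the auxiliary decorated process $L_\alpha^\ell$ with linear-interpolation decorations $(1-s)L_\alpha(t^-)+sL_\alpha(t)$, cites \cite[Proposition~4.16]{CF19} to identify $t\mapsto X^\ell(t)(1)$ with the Marcus solution, and then reduces to the comparison $X(t)(1)=X^\ell(t)(1)$, which follows from the same ODE-endpoint observation applied to each excursion. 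The paper's route is more modular, outsourcing the Marcus identification to an existing lemma; yours is more self-contained but leans on the pathwise Marcus characterisation as a separate background fact.

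Two small points. First, writing $P_i(u)=\psi_i(u)\Gamma\omega_i$ with $\psi_i(1)=1$ tacitly assumes $P_i(1)=\Gamma\omega_i\neq 0$, whereas the paper explicitly does not require the endpoints $P_i(1)$ to be nonzero. If $P_i(1)=0$ with $P_i\not\equiv 0$, the image is a line through $0$ in some direction $v_i$ not proportional to $P_i(1)$, and the correct decomposition is $P_i(u)=\psi_i(u)v_i$ with $\psi_i(0)=\psi_i(1)=0$; the flow along $B(\cdot)|\Delta L_\alpha(t)|v_i$ then returns to its starting point at $u=1$, matching the Marcus correction $F^V_1=\id$ for $V\equiv 0$, so the conclusion survives but your decomposition as written does not. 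Second, your $O(\delta)$ concern about the $A$-drift on the inserted intervals is moot: the first component of the augmented driver $\omega^\delta=((\iota\id)^\delta,\phi^\delta)$ is constant on each inserted interval by construction of the $\delta$-extension of a trivial lift, so the $A$-term contributes exactly zero there, not merely $O(\delta)$.
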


\begin{proof}
To show the first claim,
consider the decorated process $L^\ell_\alpha\in\DD([0,1],\R^d)$ defined by
$L^\ell_\alpha(t)(s)=(1-s)L_\alpha(t^-) + sL_{\alpha}(t)$
and the solution
$X^\ell\in\DD([0,1],\R^m)$ to
\[
dX^\ell = A(X^\ell) \, dt + B(X^\ell) \, dL^\ell_\alpha, \quad X(0)=\xi.
\]
By~\cite[Proposition~4.16]{CF19},\footnote{\cite[Proposition~4.16]{CF19} assumes
that the functions $A,B$ are $C^\gamma$ for $\gamma>2$, but the same (even simpler) proof applies in our
setting since we are in the Young regime $p\in[1,2)$.}
the path
$t\mapsto X^\ell(t)(1)$ is almost surely equal to the solution $Y$ of the stated Marcus SDE.
On the other hand, for any $i=1,\ldots, \MM$ and $\lambda\in\R$, since $P_i$ takes values on a line through $P_i(0)=0$,
one has $Z(1)=Z^\ell(1)$ for the solutions $Z,Z^\ell:[0,1]\to \R^m$ to the ODEs $dZ = B(Z)\lambda \, dP_i$
and $dZ^\ell = B(Z^\ell) \lambda P_i(1)\, ds$ with same initial value  $Z(0)=Z^\ell(0)$.
Since every excursion of $L^{P}_\alpha$ is of the form $s\mapsto \Gamma L_\alpha(t-) + \lambda P_i(s)$ for some $i=1,\ldots,\MM$ and $\lambda\in\R$, it readily follows from the construction of the solutions $X,X^\ell$ in Section~\ref{sec:DEs} that $X(t)(1) = X^\ell(t)(1)$.
\end{proof}

\section{Preliminaries about Gibbs-Markov maps and Young towers}
\label{sec:pre}

In this section, we elaborate on the structure of the dynamical systems
$(f,\Sigma,\mu)$ and $(F,Y,\mu_Y)$  from Section~\ref{sec:ass} and we collect together some ergodic-theoretic properties of such dynamical systems for easy reference during 
Sections~\ref{sec:Z}, \ref{sec:fdd} and~\ref{sec:tight}.

Let $(\bY,\mu_\bY)$ be a probability space with an at most countable measurable partition $\{\bY_j:\,j\ge1\}$ and let
$\bF:\bY\to\bY$ be an ergodic measure-preserving map.
Define the separation time $s(y,y')$ to be the least integer $n\ge0$ such that $\bF^n y$ and $\bF^ny'$ lie in distinct partition elements.
We assume that $s(y,y')=\infty$ if and only if $y=y'$; then $d_\theta(y,y')=\theta^{s(y,y')}$ is a metric for $\theta\in(0,1)$. 

We say that
$\bF:\bY\to \bY$ is a \emph{(full-branch) Gibbs-Markov map} if
(i) $\bF:\bY_j\to \bY$ is a measure-theoretic bijection for each $j\ge1$;
(ii) there exists $\theta\in(0,1)$ such that 
$\log\xi$ is $d_\theta$-Lipschitz, where
$\xi = d\mu_\bY/d\mu_\bY\circ \bF$.
For standard facts about Gibbs-Markov maps, we refer to~\cite{AD01,ADU93}.

We say that a function $g$ defined on $\bY$ is \emph{piecewise constant} if
$g$ is constant on partition elements.

\begin{prop} \label{prop:GM}
Let $g\in L^2(\bY)$ be piecewise constant with $\int_\bY g\,d\mu_\bY=0$.
Then 
\[
\biggr|\max_{1\le k\le n}\Big|\sum_{j=0}^{k-1}g\circ \bF^j \Big|\biggr|_{L^2(\bY)} \le C n^{1/2}|g|_{L^2(\bY)}
\]
where $C>0$ is a constant independent of $g$ and $n$.
\end{prop}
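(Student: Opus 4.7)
The plan is to combine a Gordin martingale-coboundary decomposition with Doob's maximal inequality. The key technical input is that for piecewise constant mean-zero $g$, the iterates $\hL^n g$ of the transfer operator $\hL$ of $\bF$ decay exponentially with constant proportional to $|g|_{L^2(\bY)}$, rather than $|g|_\infty$ (which can be far larger when $g$ is supported on tiny partition elements).

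First, I would show that a single application of $\hL$ smooths piecewise constant functions into the Banach space $\cF_\theta$ of $d_\theta$-Lipschitz functions, with norm controlled by $L^1$ (hence $L^2$). Writing $\hL g(y) = \sum_j \xi_j(y) g_j$, the bounded distortion estimates $\xi_j(y)\le C\mu_\bY(\bY_j)$ and $|\xi_j(y)-\xi_j(y')|\le Cd_\theta(y,y')\xi_j(y)$ for $y,y'$ in the same partition element yield $\|\hL g\|_{\cF_\theta} := |\hL g|_\infty + |\hL g|_\theta \le C|g|_{L^1(\bY)} \le C|g|_{L^2(\bY)}$. The standard spectral gap of $\hL$ on mean-zero elements of $\cF_\theta$, applied to $\hL g$, then gives $\|\hL^n g\|_{\cF_\theta}\le C\rho^{n-1}|g|_{L^2(\bY)}$ for all $n\ge 1$ and some $\rho\in(0,1)$.

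Next, I would form the Gordin decomposition. Set $\chi = \sum_{n=1}^\infty \hL^n g \in L^\infty(\bY)$, satisfying $|\chi|_\infty \le C|g|_{L^2(\bY)}$, and $m = g + \chi - \chi\circ \bF$. Using $\hL(\chi\circ\bF)=\chi$ together with $\hL\chi = \chi - \hL g$, one checks $\hL m = 0$, which is equivalent to $\E[m\mid \bF^{-1}\cB_\bY]=0$. From the latter, $(m\circ \bF^j)_{j\ge 0}$ are pairwise orthogonal in $L^2$ and $|m|_{L^2(\bY)}\le C|g|_{L^2(\bY)}$. Writing
\[
\sum_{j=0}^{k-1} g\circ \bF^j = M_k + \chi - \chi\circ\bF^k, \qquad M_k = \sum_{j=0}^{k-1} m\circ \bF^j,
\]
the coboundary term $\chi - \chi\circ\bF^k$ is uniformly bounded in $k$ by $2|\chi|_\infty\le C|g|_{L^2(\bY)}$.

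For the martingale term, I would apply a standard time-reversal: for fixed $n$, $\bar M_l := M_n - M_{n-l}$ is a genuine forward $L^2$-martingale with respect to the increasing filtration $\cF_l = \bF^{-(n-l)}\cB_\bY$, so Doob's maximal inequality combined with the orthogonality above gives $\bigl\|\max_{0\le l\le n}|\bar M_l|\bigr\|_{L^2(\bY)} \le 2|M_n|_{L^2(\bY)} = 2n^{1/2}|m|_{L^2(\bY)}$. The pointwise bound $\max_{1\le k\le n}|M_k|\le|M_n|+\max_l|\bar M_l|$ then yields $\bigl\|\max_k|M_k|\bigr\|_{L^2(\bY)}\le Cn^{1/2}|g|_{L^2(\bY)}$, and combining with the coboundary estimate completes the argument. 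The hard part is really the first step, namely establishing decay of $\hL^n g$ with constant controlled by $|g|_{L^2(\bY)}$ rather than $|g|_\infty$; once that single smoothing step $\hL: L^1 \to \cF_\theta$ is in hand, the martingale machinery is routine.
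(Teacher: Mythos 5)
Your proposal is correct and follows essentially the same route as the paper: a single application of the transfer operator maps piecewise-constant $g$ into the Lipschitz space with norm $\ll|g|_{L^1}\le|g|_{L^2}$, the spectral gap then gives $|\chi|_\infty\ll|g|_{L^2}$ for $\chi=\sum_{n\ge1}\hL^ng$, and the martingale-coboundary decomposition $g=m+\chi\circ\bF-\chi$ with $m\in\ker\hL$ combined with Doob's maximal inequality (applied after time reversal, which the paper leaves implicit) finishes the argument. The only cosmetic difference is that you spell out the reversal of the filtration explicitly; the decomposition, the key $L^1\to\cF_\theta$ smoothing estimate, and the final bound match the paper's proof.
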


\begin{proof}
We suppress the bars. Also, $|\cdot|_p$ denotes $|\cdot|_{L^p(Y)}$.
Given $v:Y\to\R$ continuous, we define $\|v\|_\theta=\|v\|_\infty+\Lip_\theta v$ where
$\Lip_\theta v=\sup_{y\neq y'}|v(y)-v(y')|/d_\theta(y,y')$.
In particular, we can fix $\theta\in(0,1)$ so that
$\Lip_\theta\log\xi<\infty$.

Define the transfer operator $P:L^1(Y)\to L^1(Y)$ (so
$\int_Y Pv\,w\,d\mu_Y=\int_Y v\,w\circ F\,d\mu_Y$ for $v\in L^1(Y)$, $w\in L^\infty(Y)$). 
Then $(Pv)(y)=\sum_j {\xi(y_j)}v(y_j)$ where $y_j$ is the unique preimage of $y$ under $F|_{Y_j}$. 
There exists $C>0$ such that 
${\xi(y_j)}\le C\mu(Y_j)$ and $|{\xi(y_j)}-{\xi(y'_j)}|\le C\mu(Y_j)d_\theta(y,y')$
for all $y,y'\in Y_j$, $j\ge1$.
It follows easily that
$\|Pg\|_\theta\ll |g|_{1}\le |g|_{2}$.

There exist constants $\gamma\in(0,1)$, $C>0$ such that
$\|P^nv\|_\theta\le C \gamma^n\|v\|_\theta$ for all continuous $v:Y\to\R$ with $\int_Y v\,d\mu_Y=0$ and all $n\ge1$.
Define $\chi=\sum_{n=1}^\infty P^ng$. 
Then $|\chi|_\infty\le \sum_{n=1}^\infty \|P^{n-1}Pg\|_\theta
\ll \|Pg\|_\theta\ll |g|_{2}$. 

    Now define the ``martingale-coboundary decomposition''
\[
g=m+\chi\circ F-\chi
\]
where 
$|m|_{2}\ll |g|_{2}$. Since $m\in\ker P$, 
it follows easily that 
$\big| \sum_{j < n} m \circ F^j \big|_{2}= n^{1/2}|m|_{2}$.
Moreover,
$\{m\circ F^j,\,j\ge0\}$ is a sequence 
of reverse martingale differences so, by
Doob's inequality,
$\big| \max_{k \le n} |\sum_{j < k} m \circ F^j| \big|_{2}\le 2n^{1/2}|m|_{2}$.  Hence
    \[
        \Big| \max_{k \le n} \big|\sum_{j < k} g \circ F^j\big| \Big|_{2}
        \le 2n^{1/2}|m|_{2}+2|\chi|_{\infty}
        \ll n^{1/2}|g|_{2}
    \]
as required.
\end{proof}

We require that the map $(F,Y,\mu_Y)$ from Section~\ref{sec:ass}
quotients to 
a Gibbs-Markov map $(\bF,\bY,\mu_\bY)$. Specifically,
there is
a measure-preserving semiconjugacy $\bar\pi:Y\to\bY$ between $F$ and $\bF$.
We say that a function $g$ defined on $Y$ is \emph{piecewise constant} 
if $g$ is constant on $Y_j=\bar\pi^{-1}\bY_j$ for each $j$.

Suppose as in Section~\ref{sec:ass} that $b_n^\alpha\sim n\ell(b_n)$ where
$\alpha\in(1,2)$ and $\ell$ is slowly varying.

\begin{prop} \label{prop:G}
Suppose that $(F,Y,\mu_Y)$ quotients to a Gibbs-Markov map. Let
$g:Y\to[0,\infty)$ be piecewise constant 
and integrable with $\int_Y g\,d\mu_Y=0$.
Then the following are equivalent:
\begin{itemize}
\item[(a)] $b_n^{-1}\sum_{j=0}^{n-1}g\circ F^j\to_{\mu_Y}G$ where $G$ is a nondegenerate $\alpha$-stable law.
\item[(b)]
$\mu_Y(g>t) \sim c\ell(t)t^{-\alpha}$ as $t\to\infty$ for some $c>0$.
\end{itemize}
\end{prop}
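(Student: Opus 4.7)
My plan is to reduce to the Gibbs--Markov quotient and then invoke the stable limit theorem of Aaronson and Denker together with its Tauberian converse.

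First, since $g$ is piecewise constant with respect to the partition $\{Y_j = \bar\pi^{-1}\bar Y_j\}$, it factors as $g = \bar g \circ \bar\pi$ for a piecewise constant $\bar g : \bar Y \to \R$. Because $\bar\pi$ is a measure-preserving semiconjugacy with $\bar\pi\circ F = \bar F\circ\bar\pi$, the distribution of $g$ under $\mu_Y$ equals that of $\bar g$ under $\mu_{\bar Y}$, and $\sum_{j=0}^{n-1} g\circ F^j = \bigl(\sum_{j=0}^{n-1} \bar g \circ \bar F^j\bigr)\circ \bar\pi$. Hence conditions (a) and (b) translate verbatim to the corresponding statements for $\bar g$ on $\bar Y$, and I may assume $F=\bar F$ is itself Gibbs--Markov.

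The main tool is then a Nagaev--Guivarc'h perturbative expansion for the twisted transfer operator $P_t v = P(e^{itg} v)$. The spectral gap of $P$ that underlies Proposition~\ref{prop:GM} persists for small $t$, so $P_t$ has a unique eigenvalue $\lambda(t)$ of maximal modulus with $\lambda(0)=1$ and with the rest of the spectrum confined in a disc of radius $\gamma<1$ independent of $t$. Standard arguments then give
\[
\E\bigl[e^{itS_n/b_n}\bigr] = c(t/b_n)\,\lambda(t/b_n)^n + O(\gamma^n), \qquad S_n = \sum_{j=0}^{n-1} g\circ F^j,
\]
with $c(0)=1$ smooth. For piecewise constant $g$, a direct computation with the perturbed eigenprojector yields the asymptotic $1-\lambda(t) = \int_Y \bigl(1-e^{itg}\bigr)\,d\mu_Y + O(t^2)$ as $t\to 0$, so $1-\lambda(t)$ and $1-\hat\mu_g(t)$ agree up to negligible quadratic error. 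Karamata's Tauberian theorem then gives the two-way equivalence between regular variation of the tail $\mu_Y(g>r)\sim c\ell(r)r^{-\alpha}$ with index $\alpha\in(1,2)$ and regular variation of $1-\hat\mu_g(t)$ near $t=0$ with matching index. Combined with the scaling $b_n^\alpha \sim n\ell(b_n)$, this is equivalent to $n(1-\lambda(t/b_n))\to \Psi_\alpha(t)$, where $\Psi_\alpha$ is the log-characteristic function of a nondegenerate $\alpha$-stable law; raising to the $n$-th power proves the equivalence of (a) and (b).

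The main obstacle is the converse direction (a) $\Rightarrow$ (b): I must extract regular variation of the tail of $g$ from the shape of the limit distribution. This is done by reading the Karamata correspondence backwards, using nondegeneracy of the stable limit to rule out the trivial case $\lambda\equiv 1$ and to pin down the slowly varying prefactor $\ell$ as the one implicit in the norming $b_n$, so that inversion yields the exact tail asymptotic in (b) rather than only a subsequential one. The forward implication (b) $\Rightarrow$ (a) is by comparison more routine, since the spectral expansion together with the truncation/coboundary technology already present in Proposition~\ref{prop:GM} produces the stable characteristic function directly.
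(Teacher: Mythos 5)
Your proposal follows the same route as the paper: reduce to the Gibbs--Markov quotient via the measure-preserving semiconjugacy $\bar\pi$, then cite the Aaronson--Denker stable limit theorem \cite{AD01} for (b) $\Rightarrow$ (a) and Gou\"ezel's Tauberian converse \cite{Gouezel10b} for (a) $\Rightarrow$ (b). The paper's proof consists of exactly these two citations after the reduction; your Nagaev--Guivarc'h/Karamata sketch is the standard machinery underlying those references but is neither worked out nor needed in the paper's argument.
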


\begin{proof}
Since $\bar\pi:Y\to\bY$ is a measure-preserving semiconjugacy, it suffices to prove this result at the level of $(\bF,\bY,\bar\mu_Y)$.  Suppose that
$\mu_\bY(g>t) \sim c\ell(t)t^{-\alpha}$ as $t\to\infty$ for some $c>0$.
By~\cite{AD01}, $b_n^{-1}\sum_{j=0}^{n-1}g\circ \bF^j\to_{\mu_\bY}G$ where $G$ is a nondegenerate $\alpha$-stable law.
The converse holds by~\cite{Gouezel10b}.
\end{proof}

\paragraph{Exponential Young towers}
Let $(\Sigma,d)$ be a metric space and $f:\Sigma\to\Sigma$ a measurable map.
Let $Y\subset\Sigma$ be a measurable subset,
Let $\tau:Y\to\Z^+$ be a return time (so $f^{\tau(y)}y\in Y$ for $y\in Y$).
Define $F=f^\tau:Y\to Y$
and let $\mu_Y$ be an $F$-invariant ergodic Borel probability measure on $Y$.

We suppose as above that 
$(F,Y,\mu_Y)$ quotients to a Gibbs-Markov map $(\bF,\bY,\mu_\bY)$.
Moreover, we require that $\tau:Y\to\Z^+$ is piecewise constant.
(In particular, $\tau$ is well-defined on $\bY$.)
We suppose further that $\mu_Y(\tau>n)=O(e^{-cn})$ for some $c>0$.

Define the \emph{Young tower} $f_\Delta:\Delta\to\Delta$,
\[
\Delta=\{(y,\ell)\in Y\times\Z:0\le \ell<\tau(y)\},
\qquad
f_\Delta(y,\ell)=\begin{cases} (y,\ell+1) & \text{if } \ell\le \tau(y)-2
\\
(Fy,0) & \text{if } \ell=\tau(y)-1
\end{cases}
\]
with ergodic $f_\Delta$-invariant probability measure $\mu_\Delta=(\mu_Y\times{\rm counting})/\bar\tau$ where
$\bar\tau=\int_Y\tau\,d\mu_Y$.

Define the semiconjugacy 
\[
\pi_\Delta:\Delta\to\Sigma,
\qquad
\pi_\Delta(y,\ell)=f^\ell (y),
\]
between $f_\Delta$ and $f$. Then $\mu=(\pi_\Delta)_*\mu_\Delta$
is an $f$-invariant ergodic probability measure on $\Sigma$.
Under certain further technical assumptions as in Young~\cite{Young98}, we say that $(f,\Sigma,\mu)$ is \emph{modelled by a Young tower with exponential tails}.
(These technical assumptions ensure the validity of Theorem~\ref{thm:expdecay} and Lemma~\ref{lem:MV} below, and are not used further in this paper.)

\vspace{2ex}
For $n\ge1$, define the \emph{lap number} $N_n:\Delta\to\Z^+$ to be the integer satisfying
\begin{equation}
    \label{eq:N_n}
    \sum_{j=0}^{N_n(x)-1}\tau(F^jy)\le n+\ell< \sum_{j=0}^{N_n(x)}\tau(F^jy)
\end{equation}
for $x=(y,\ell)\in \Delta$.

\begin{prop} \label{prop:lap}
There exists $C>0$ such that 
$\big|\max_{1\le k\le n}|N_k-k\bar\tau^{-1}|\big|_{L^1(\Delta)}\le C n^{1/2}$
for all $n\ge1$.
\end{prop}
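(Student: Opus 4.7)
The plan is to extract $|N_n - n\bar\tau^{-1}|$ from~\eqref{eq:N_n} by decomposing $\sum_{j=0}^{N-1}\tau\circ F^j = \bar\tau N + S_N$, where $S_N = \sum_{j=0}^{N-1}(\tau\circ F^j - \bar\tau)$. Substituting into~\eqref{eq:N_n}, for $x=(y,\ell)\in\Delta$ we obtain
\[
\bar\tau N_n(x) + S_{N_n(x)}(y)\le n+\ell < \bar\tau(N_n(x)+1) + S_{N_n(x)+1}(y),
\]
whence
\[
|\bar\tau N_n(x) - n| \le \ell + \bar\tau + \max\bigl(|S_{N_n(x)}(y)|,\ |S_{N_n(x)+1}(y)|\bigr).
\]
Since $\tau\ge1$ pointwise, $N_k\le k$ for all $k$, so taking the max over $1\le k\le n$ yields the deterministic pointwise estimate
\[
\max_{1\le k\le n}|\bar\tau N_k(x) - k| \le \ell + \bar\tau + \max_{1\le N\le n+1}|S_N(y)|.
\]

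Next I would take $|\cdot|_{L^1(\Delta)}$-norms of both sides. The height observable $\ell$ satisfies $|\ell|_{L^1(\Delta)} = \bar\tau^{-1}\int_Y \tau(\tau-1)/2\,d\mu_Y$, which is finite because the exponential tail bound $\mu_Y(\tau>n) = O(e^{-cn})$ ensures $\tau\in L^2(Y)$. For the Birkhoff-sum term, $\tau-\bar\tau$ is piecewise constant on $Y$ (by the standing assumption that $\tau$ is) and descends via the measure-preserving semiconjugacy $\bar\pi:Y\to\bar Y$ to a piecewise constant mean-zero observable of finite $L^2(\bar Y)$-norm. Proposition~\ref{prop:GM} applied on $\bar Y$ then gives
\[
\Bigl|\max_{1\le N\le n+1}|S_N|\Bigr|_{L^2(Y)} \le Cn^{1/2}.
\]

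To transfer this $L^2(Y)$ bound to an $L^1(\Delta)$ bound, observe that the projection $\psi:\Delta\to Y$, $\psi(y,\ell)=y$, pushes $\mu_\Delta$ forward to the measure on $Y$ with density $\tau/\bar\tau$ relative to $\mu_Y$. Hence, viewing $S_N$ as a function on $\Delta$ via $\psi$, Cauchy-Schwarz gives
\[
\Bigl|\max_{N\le n+1}|S_N|\Bigr|_{L^1(\Delta)}
= \bar\tau^{-1}\int_Y \tau\,\max_{N\le n+1}|S_N|\,d\mu_Y
\le \bar\tau^{-1}|\tau|_{L^2(Y)}\Bigl|\max_{N\le n+1}|S_N|\Bigr|_{L^2(Y)}
\ll n^{1/2}.
\]
Assembling this with the bounds on $|\ell|_{L^1(\Delta)}$ and $\bar\tau$, and dividing through by $\bar\tau$, yields the claim. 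The only bookkeeping points are the index change from the random $N_k$ to the deterministic range $N\le n+1$ (which uses $N_k\le k$) and the passage from $\mu_Y$ to $\mu_\Delta$ via the explicit density $\tau/\bar\tau$; neither constitutes a substantial obstacle.
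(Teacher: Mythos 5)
Your argument is correct and follows essentially the same route as the paper's: both extract the deviation of $N_k$ from $k\bar\tau^{-1}$ out of the defining inequalities~\eqref{eq:N_n}, control the Birkhoff-sum term $\tau_j - j\bar\tau$ via Proposition~\ref{prop:GM} applied to the piecewise-constant observable $\tau-\bar\tau$, and transfer the resulting $L^2(Y)$ estimate to $L^1(\Delta)$ using Cauchy--Schwarz together with the marginal density $\tau/\bar\tau$ (all relying on $\tau\in L^2(Y)$ from the exponential tails). The only cosmetic difference is in the bookkeeping of the overshoot: you track $\ell+\bar\tau$ directly (an $O(1)$ term in $L^1(\Delta)$), while the paper absorbs it into $\max_{0\le j\le n}\tau\circ F^j$ and bounds that by $n^{1/2}|\tau|_{L^2(Y)}$; either way the $O(n^{1/2})$ conclusion is the same.
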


\begin{proof}
Write $\tau_k=\sum_{j=0}^{k-1}\tau\circ F^j$.
Then, for $x=(y,\ell)\in \Delta$,
\[
\tau_{N_n(x)}(y)-\tau(y)\le
 \sum_{j=0}^{N_n(x)-1}\tau(F^jy)-\ell
\le n\le 
\sum_{j=0}^{N_n(x)}\tau(F^jy)-\ell\le 
\tau_{N_n(x)}(y)+\tau(F^{N_n(x)}y).
\]
Hence,
\[
|n-N_n(x)\bar\tau | 
 \le |\tau_{N_n(x)}(y)-N_n(x)\bar\tau |
+\max\{\tau(y),\tau(F^{N_n(x)}y)\}.
\]
Since $N_n\le n$,
\[
\max_{1\le k\le n}|k\bar\tau^{-1}-N_k(x)| \le \bar\tau^{-1}H_n(y)
\]
where
$H_n(y)=\max_{0\le j\le n}|\tau_j(y)-j\bar\tau|
+\max_{0\le j\le n}\tau(F^jy)$.

Since $\tau\in L^2(Y)$
and $H_n$ is independent of $\ell$,
\[
\big|\max_{1\le k\le n}|k\bar\tau^{-1}-N_k|\big|_{L^1(\Delta)}\le 
\bar\tau^{-1}\int_Y \tau \,\bar\tau^{-1}H_n\,d\mu_Y
\ll |H_n|_{L^2(Y)}.
\]

Applying Proposition~\ref{prop:GM} with $g=\tau-\bar\tau$,
\[
\big|\max_{0\le j\le n}|\tau_j-j\bar\tau|\big|_{L^2(Y)} 
=\big|\max_{0\le j\le n}|\tau_j-j\bar\tau|\big|_{L^2(\bY)} 
\ll n^{1/2}.
\]
Also,
\(
|\max_{0\le j\le n}\tau\circ F^j|_{L^2(Y)}\le n^{1/2}|\tau|_{L^2(Y)}.
\)
Hence $|H_n|_{L^2(Y)}\ll n^{1/2}$ as required.
\end{proof}

Recall the set $\cC$ defined by \eqref{eq:C_def}.
Let $\cA$ consist of unions of elements of $\cC$.
Let $\cB$ consist of subsets $B\subset\Sigma$ that are unions of sets of the form $f^{-j}A$ where $A\in\cA$, $j\ge0$.
Clearly, $\cA\subset\cB$.

\begin{thm} \label{thm:expdecay}
Suppose that $f:\Sigma\to \Sigma$ is mixing and modelled by a Young tower with exponential tails.
Then there exist constants $C>0$, $\gamma\in(0,1)$ such that
\[
\big|\mu\big(A\cap f^{-n}B)-\mu(A)\mu(B)|
\le C\gamma^n
\]
for all $A\in\cA$, $B\in\cB$, $n\ge1$.
\end{thm}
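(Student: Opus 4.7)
The plan is to lift the correlation to the Young tower $(f_\Delta,\Delta,\mu_\Delta)$, where the observables involved become regular in a natural Lipschitz class, and then invoke the standard exponential decay of correlations for Young towers with exponential tails. First, using that $(\pi_\Delta)_*\mu_\Delta=\mu$ and $\pi_\Delta\circ f_\Delta = f\circ \pi_\Delta$, and writing $\tilde E = \pi_\Delta^{-1}(E)$ for $E\subset\Sigma$, one has
\[
\mu(A\cap f^{-n}B) - \mu(A)\mu(B) = \mu_\Delta(\tilde A\cap f_\Delta^{-n}\tilde B) - \mu_\Delta(\tilde A)\mu_\Delta(\tilde B),
\]
so it suffices to show exponential mixing of $f_\Delta$ against the test functions $\bone_{\tilde A}$ and $\bone_{\tilde B}$.

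Next I would identify the regularity of the lifted observables. Each element $f^\ell Y_j$ of $\cC$ lifts to the tower cell $Y_j\times\{\ell\}\subset\Delta$; hence for $A\in\cA$ the indicator $\bone_{\tilde A}$ is constant on each cell of the defining tower partition, and since $Y_j=\bar\pi^{-1}\bar Y_j$ it descends through the Gibbs--Markov quotient $\bar\pi$ to a function on the symbolic tower that is locally constant on Gibbs--Markov cylinders. Thus $\bone_{\tilde A}$ lies in the natural Lipschitz class with respect to the symbolic metric $d_\theta$, with the uniform bound $\|\bone_{\tilde A}\|_\theta\le 2$ for all $A\in\cA$ (the function is bounded by $1$ and locally constant, so its $d_\theta$-Lipschitz seminorm is at most $1$). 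On the other side, one only needs $|\bone_{\tilde B}|_\infty\le 1$, so the more permissive structure of $B\in\cB$ (as a union of backward preimages of $\cA$-sets) requires no further analysis.

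Finally I would apply the standard exponential decay of correlations theorem for Young towers with exponential tails~\cite{Young98}, which combines the spectral gap of the Gibbs--Markov transfer operator with the exponential tail of $\tau$, together with mixing of $f$ (needed precisely to ensure the leading eigenvalue is simple and isolated, so that $\gamma<1$), to yield constants $C>0$, $\gamma\in(0,1)$ depending only on the tower such that
\[
\Big|\int \phi\,(\psi\circ f_\Delta^n)\,d\mu_\Delta - \int\phi\,d\mu_\Delta\int\psi\,d\mu_\Delta\Big|\le C\gamma^n\|\phi\|_\theta|\psi|_\infty
\]
for all $\phi$ in the $d_\theta$-Lipschitz class and all $\psi\in L^\infty(\Delta)$. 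Taking $\phi=\bone_{\tilde A}$, $\psi=\bone_{\tilde B}$ concludes the proof. The only step requiring genuine checking is the uniform bound $\|\bone_{\tilde A}\|_\theta\le 2$, which is immediate since $\tilde A$ is a union of entire tower cells; beyond this, the argument is a direct application of the black-box exponential mixing theorem that is built into the standing assumption that $f$ is modelled by a Young tower with exponential tails.
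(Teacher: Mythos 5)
Your proposal follows essentially the same route as the paper: lift the correlation to the Young tower via $\pi_\Delta$, observe that the indicator of a set in $\cA$ pulls back to a $d_\theta$-Lipschitz observable (because it is constant on tower cells), note that only an $L^\infty$ bound is needed on the $B$-side, and invoke Young's exponential decay of correlations with the estimate $\ll (\,|v|_\infty+\Lip_\theta v\,)\,|w|_\infty\,\gamma^n$.

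There is, however, one point you slide over that the paper treats explicitly. Young's exponential decay theorem applies to the tower map $f_\Delta$ and, unless the return times $\tau$ have greatest common divisor~$1$, the tower is only mixing up to a finite cycle even when $f$ itself is mixing. Your sentence about ``mixing of $f$ (needed precisely to ensure the leading eigenvalue is simple and isolated)'' conflates mixing of $f$ with mixing of $f_\Delta$: the former does not automatically yield the latter, so the spectral-gap input to the black-box theorem is not justified as stated. The paper handles this by citing a standard reduction (\cite[Section~10]{Chernov99}, see also \cite[Section~4.1]{BMT21}) that passes from a mixing base map $f$ to a mixing tower. Adding that reduction, or directly verifying aperiodicity of the return times, would close the gap. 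A smaller imprecision: $\pi_\Delta^{-1}(f^\ell Y_j)$ is in general strictly larger than the single cell $Y_j\times\{\ell\}$, since $\pi_\Delta$ need not be injective; the relevant (and correct) claim is only that $\bone_A\circ\pi_\Delta$ is constant on tower cells, which is what you actually use.
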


\begin{proof}
In general, the Young towers associated to $f$ are mixing only up to a finite cycle. However, since $f$ is mixing, we can reduce by~\cite[Section~10]{Chernov99} (see also~\cite[Section~4.1]{BMT21}) to the case when $f_\Delta:\Delta\to\Delta$ is mixing. 

The observables $\bone_A$ and $\bone_B$ on $\Sigma$ lift to observables $v=\bone_A\circ\pi_\Delta$, $w=\bone_B\circ\pi_\Delta$ on the tower $\Delta$. Since $A,B\in\cB$, 
it follows from Young~\cite{Young98} that,
\[
\big|\mu\big(A\cap f^{-n}B)-\mu(A)\mu(B)|\ll (|v|_\infty+\Lip_\theta v)\,|w|_\infty\,\gamma^n,
\]
where $\Lip_\theta v=\sup_{(y,\ell)\neq (y',\ell)}|v(y,\ell)-v(y',\ell)|/{d_\theta(y,y')}$.
The result follows since $|v|_\infty\le1$, $\Lip_\theta v\le 1$ and $|w|_\infty\le1$.
\end{proof}

\begin{lemma} \label{lem:MV}
Suppose that $f:\Sigma\to \Sigma$ is mixing and modelled by a Young tower with exponential tails and fix $q>1$.
Let $H:\Sigma\to\R^d$ be such that $H\in L^q$ and $\int_\Sigma H\,d\mu=0$.
Suppose that there exists $C>0$, $\eta\in(0,1)$ such that
$|H(x)-H(x')|\le C R(x)d(x,x')^\eta$ for all
$x,x'$ in the same element of $\cC$.

Define $H^Y=\sum_{\ell=0}^{\tau-1}H\circ f^\ell:Y\to\R^d$.
Then
\[
H^Y=m+\chi\circ F-\chi
\]
where $m,\,\chi\in L^p$ for all $p<q$ and
$\{m\circ F^{n-j}:0\le j\le n\}$ is a martingale difference sequence for each $n\ge1$.
\end{lemma}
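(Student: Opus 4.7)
The plan is to apply Gordin's martingale-coboundary method to $F:Y\to Y$. Let $P:L^1(Y,\mu_Y)\to L^1(Y,\mu_Y)$ denote the transfer operator of $F$, defined by $\int_Y Pu\cdot v\,d\mu_Y=\int_Y u\cdot v\circ F\,d\mu_Y$. I would set
\[
\chi=\sum_{n=1}^\infty P^n H^Y, \qquad m = H^Y + \chi-\chi\circ F,
\]
so that $Pm=PH^Y+P\chi-\chi=0$ whenever the series converges. The reverse martingale property of $\{m\circ F^{n-j}\}_{0\le j\le n}$ then follows from the standard identity $\E[g\mid F^{-k}\cB]=(P^kg)\circ F^k$ together with $P(g\circ F)=g$: taking $g=m\circ F^{n-j}$ and $k=n-j+1$ gives $P^kg=Pm=0$, so $\E[m\circ F^{n-j}\mid F^{-(n-j+1)}\cB]=0$.

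First I would verify $H^Y\in L^p(Y)$ for every $p<q$. From $|H^Y|\le\sum_{\ell\ge0}|H|\circ f^\ell\,\bone_{\tau>\ell}$, Minkowski together with Hölder's inequality with conjugate exponents $q/p$ and $q/(q-p)$ yields
\[
\|H^Y\|_{L^p(Y)}\le C\|H\|_{L^q(\Sigma)}\sum_{\ell\ge0}\mu_Y(\tau>\ell)^{(q-p)/(pq)},
\]
where $f$-invariance of $\mu$ bounds $\|H\circ f^\ell\|_{L^q(Y)}$ by a fixed multiple of $\|H\|_{L^q(\Sigma)}$. The sum converges by the exponential tail of $\tau$.

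The heart of the argument is establishing geometric summability of $\|P^n H^Y\|_{L^p}$. Since $\tau$ is constant on each $Y_j$ and $Y_j$ is contained in a single element of $\cC$, applying the hypothesis along the orbit segment $y,fy,\dots,f^{\tau-1}y$ yields
\[
|H^Y(y)-H^Y(y')|\le C\sum_{\ell=0}^{\tau(y)-1}R(f^\ell y)\,d(f^\ell y,f^\ell y')^\eta
\]
for $y,y'\in Y_j$. Combined with the contraction properties of the Young tower, this places $H^Y$ in a space of piecewise Hölder functions on $Y$ whose norm on each $Y_j$ is bounded by a constant times $\tau|_{Y_j}\cdot\max_{\ell<\tau|_{Y_j}}R\circ f^\ell$. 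A single application of $P$ produces a function which, via the semiconjugacy $\bar\pi:Y\to\bY$, can be compared with a Hölder observable on $\bY$; the spectral gap of the Gibbs-Markov transfer operator on $\bY$ (as in the proof of Proposition~\ref{prop:GM}) then yields exponential decay of $\|P^n H^Y\|$ in the corresponding norm. To convert a Hölder bound (not uniformly finite because of the $R$-factor) into an $L^p$ bound, I would truncate at a level $N_n$ of order $\log n$: on $\{\tau\le N_n\}$ the spectral decay dominates, while on $\{\tau>N_n\}$ one uses $H^Y\in L^p$ together with the exponential decay of $\mu_Y(\tau>N_n)$.

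The main obstacle will be carrying out rigorously the reduction of $PH^Y$ to a Hölder observable on the Gibbs-Markov quotient $\bY$: since $H^Y$ is not itself $\bar\pi$-measurable, one must exploit how $P$ averages over $F$-preimages grouped by $\bar\pi$-fibres to smooth the dependence on the non-quotient coordinates, and quantifying this smoothing so that it converts cellwise Hölder regularity on $Y$ into Hölder regularity on $\bY$ with a summable remainder is the technical crux. Once $\chi\in L^p(Y)$ has been constructed, $m\in L^p(Y)$ is immediate since $F$ preserves $\mu_Y$, completing the decomposition.
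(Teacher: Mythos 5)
The paper's proof consists solely of a citation to Melbourne--Varandas~\cite[Lemma~6.2]{MV20}, so there is no in-paper argument to compare. Your Gordin-type plan --- set $\chi=\sum_{n\ge 1}P^nH^Y$, observe $Pm=0$, and deduce the reverse martingale property from $\E[g\mid F^{-k}\cB]=(P^kg)\circ F^k$ --- has the right shape and is in the same spirit as~\cite{MV20}. The $L^p$ bound on $H^Y$ is essentially correct once you tighten the appeal to ``$f$-invariance of $\mu$'': the $F$-invariant measure on $Y$ is $\mu_Y$, which need not coincide with $\mu|_Y/\mu(Y)$ (since $\tau$ is not a first return time); the clean device is the tower identity $\int_\Sigma |H|^q\,d\mu=\bar\tau^{-1}\sum_{\ell\ge0}\int_Y|H\circ f^\ell|^q\bone_{\{\tau>\ell\}}\,d\mu_Y$, which gives $\||H|\circ f^\ell\bone_{\{\tau>\ell\}}\|_{L^q(Y)}\le\bar\tau^{1/q}|H|_{L^q(\Sigma)}$ uniformly in $\ell$.

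The genuine gap is the geometric decay of $\|P^nH^Y\|_{L^p}$, which you correctly flag as the crux but whose sketch does not hold up as written. A single application of $P$ does not produce a $\bar\pi$-measurable function: $P$ averages over $F$-preimages, but preimages of two points on the same stable fibre are again stable-fibre neighbours, with separation reduced only by a factor $\theta<1$, so the stable dependence decays geometrically over many iterates rather than vanishing in one step. The standard route is a Sinai--Bowen coboundary correction $H^Y=\hat H+\psi\circ F-\psi$ with $\hat H$ $\bar\pi$-measurable, built from an infinite series over iterates of $F$; verifying $\psi\in L^p$ from the weighted H{\"o}lder hypothesis is itself nontrivial. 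Even after reducing to $\bY$, the cellwise H{\"o}lder bound on $\hat H$ involves $R^Y|_{Y_j}$, which has only polynomially decaying tails, so $\hat H$ lies outside the Banach space on which the Gibbs--Markov transfer operator has a spectral gap; your truncation at scale $\log n$ is too aggressive for the resulting tail contribution to be summable in $L^p$ for all $p<q$, and choosing the correct truncation scale and propagating it through the spectral estimate is where the real work of the lemma lives.
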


\begin{proof}
This is part of the proof of~\cite[Lemma~6.2]{MV20}.
\end{proof}

\begin{cor} \label{cor:MV}
Assume the setup of Lemma~\ref{lem:MV} with $q>\alpha$. Then
\[
b_n^{-1}\max_{k\le n}\big|\sum_{j=0}^{k-1}H\circ f^j\big|\to_\mu 0.
\]
\end{cor}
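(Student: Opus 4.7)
The plan is to lift the problem to the Young tower $\Delta$ and reduce partial sums of $H\circ f^j$ on $\Sigma$ to sums over complete $F$-laps on $Y$ plus boundary errors, then apply the martingale-coboundary decomposition from Lemma~\ref{lem:MV}. For $x=(y,\ell)\in\Delta$ and $z=\pi_\Delta x=f^\ell y$, splitting $\sum_{j=0}^{k-1}H\circ f^j(z)$ at the iterates $\tau_{N_k(x)}(y)$ determined by the lap number \eqref{eq:N_n} yields the pointwise estimate
\[
\max_{1\le k\le n}\Big|\sum_{j=0}^{k-1}H\circ f^j(z)\Big|\le\max_{1\le N\le N_n(x)+1}\Big|\sum_{i=0}^{N-1}H^Y\circ F^i(y)\Big|+2\max_{0\le i\le N_n(x)+1}H^{Y,*}\circ F^i(y),
\]
where $H^{Y,*}(y)=\max_{0\le m\le\tau(y)}|\sum_{j=0}^{m-1}H\circ f^j(y)|$. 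Proposition~\ref{prop:lap} gives $N_n(x)\le 2n/\bar\tau$ outside a set whose $\mu_\Delta$-measure tends to zero, so we may restrict the $N$-maxima to $N\le Cn$ for a fixed $C$. Since $(\pi_\Delta)_*\mu_\Delta=\mu$, it suffices to prove $\mu_\Delta$-probability convergence to $0$ of the left-hand side divided by $b_n$.

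Substituting $H^Y=m+\chi\circ F-\chi$ bounds the full-lap term by $\max_{N\le Cn}|\sum_{i<N}m\circ F^i|+2\max_{N\le Cn}|\chi\circ F^N|$. Fix $p\in(\alpha,q)$ with $p\le 2$, which is nonempty since $\alpha<2$ and $q>\alpha$; by Lemma~\ref{lem:MV} we have $m,\chi\in L^p(\mu_Y)$. Doob's and Burkholder's inequalities applied to the reverse-martingale structure from Lemma~\ref{lem:MV} yield $\big\|\max_{N\le Cn}|\sum_{i<N}m\circ F^i|\big\|_{L^p(\mu_Y)}\ll n^{1/2}\|m\|_p$, while stationarity of $\mu_Y$ together with a union bound gives $\mu_Y(\max_{N\le Cn}|\chi\circ F^N|>\eps b_n)\ll n\,b_n^{-p}$. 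Since $b_n$ is regularly varying with index $1/\alpha$ and $\alpha<2$, one has $n^{1/2}/b_n\to 0$ and $n/b_n^p\to 0$ (the latter because $p>\alpha$), so both terms are $o(b_n)$ in $\mu_Y$-probability.

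The main obstacle is controlling $\max_{i\le Cn}H^{Y,*}\circ F^i$, since $H$ is only $L^q$ while $H^{Y,*}$ involves the a priori unbounded return time $\tau$. Here the exponential tails of $\tau$ save us. From $H^{Y,*}(y)\le\tau(y)\max_{j<\tau(y)}|H\circ f^j(y)|$, H\"older's inequality with $1/p'=1/r+1/s$, the crude bound $(\max_{j<\tau}|H\circ f^j|)^s\le\sum_{j\ge0}|H|^s\circ f^j\bone_{\tau>j}$, and the Kac-type identity $\int_Y\sum_{j\ge0}g\circ f^j\bone_{\tau>j}\,d\mu_Y=\bar\tau\int_\Sigma g\,d\mu$, one obtains
\[
\|H^{Y,*}\|_{L^{p'}(\mu_Y)}\le\bar\tau^{1/s}\|\tau\|_{L^r(\mu_Y)}\|H\|_{L^s(\mu)}.
\]
Taking $s<q$ close to $q$ and $r$ large places $p'$ in $(\alpha,q)$; a union bound then gives $\mu_Y(\max_{i\le Cn}H^{Y,*}\circ F^i>\eps b_n)\ll n\,b_n^{-p'}\to 0$. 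All three maxima depend only on $y$, so $\mu_Y$-probability estimates transfer to $\mu_\Delta$ via $d\mu_\Delta=\bar\tau^{-1}\tau\,d\mu_Y$ together with a final H\"older step using $\tau\in L^{r''}(\mu_Y)$ for all $r''$, and thence to $\mu$ through $\pi_\Delta$.
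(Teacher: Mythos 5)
Your proposal follows the same core strategy as the paper—martingale--coboundary decomposition of $H^Y$ on the base $Y$, Doob and Burkholder to control the maximal partial sum, then transfer to $(\Sigma,\mu)$ by inducing—but you carry out explicitly the inducing step that the paper delegates to a citation of \cite[Theorem~2.2]{MZ15} as ``standard''. Your treatment of the boundary term $H^{Y,*}$ (via the crude bound $H^{Y,*}\le \tau\max_{j<\tau}|H\circ f^j|$, H\"older, the Kac identity, and the exponential tails of $\tau$ to place $H^{Y,*}\in L^{p'}$ with $p'>\alpha$) is correct and is the genuinely new content relative to the paper's one-line appeal to the literature. The lap-number control via Proposition~\ref{prop:lap} and the final transfer $\mu_Y\to\mu_\Delta\to\mu$ using $d\mu_\Delta\propto\tau\,d\mu_Y$ are also sound.

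One small but real error: you claim $\big\|\max_{N\le Cn}|\sum_{i<N}m\circ F^i|\big\|_{L^p(\mu_Y)}\ll n^{1/2}\|m\|_p$. For $p\in(\alpha,2)$ this is not what Burkholder gives; since Lemma~\ref{lem:MV} only guarantees $m\in L^p$ for $p<q$ (and $q$ may be $\le 2$), the correct bound is $\ll n^{1/p}\|m\|_p$, exactly as the paper states. The conclusion survives because $n^{1/p}=o(b_n)$ precisely when $p>\alpha$ (the same inequality you invoke for $\chi$ and for $H^{Y,*}$), but the exponent as written would require $m\in L^2$, which is not available.
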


\begin{proof}
Choose $p\in(\alpha,q)$.
Following the proof of~\cite[Lemma~6.2]{MV20}, we can apply inequalities of Doob and Burkholder to deduce from Lemma~\ref{lem:MV} that
\[
\Big|\max_{k\le n}\big|\sum_{j=0}^{k-1}H^Y\circ F^j\big|\Big|_p
\le \Big|\max_{k\le n}\big|\sum_{j=0}^{k-1}m\circ F^j\big|\Big|_p
+2 \Big|\max_{k\le n}|\chi\circ F^k|\Big|_p
\ll n^{1/p}=o(b_n).
\]
In particular,
$b_n^{-1}\max_{k\le n}\big|\sum_{j=0}^{k-1}H^Y\circ F^j\big|\to_{\mu_Y} 0$.
The result follows by a standard inducing argument (e.g.\ a very simplified special case of~\cite[Theorem~2.2]{MZ15}).
\end{proof}

\begin{rmk} \label{rmk:MV}
Let $(T,\Lambda,\mu_\Lambda)$ and $R$ be as in Section~\ref{sec:ass}.
Suppose that $v:\Lambda\to\R^d$ is a H\"older function with
$\int_\Lambda v\,d\mu_\Lambda=0$ and
let $H=\sum_{\ell=0}^{R-1}v\circ T^\ell$. Then 
$\int_\Sigma H\,d\mu=0$ and $H$ satisfies the estimate on
$H(x)-H(x')$ in Lemma~\ref{lem:MV}.
\end{rmk}

\begin{lemma} \label{lem:induce}
Let $V\in L^1(\Sigma)$ with $\int_\Sigma V\,d\mu=0$. Define the induced observable $V^Y:Y\to\R$, $V^Y=\sum_{j=0}^{\tau-1}V\circ f^j$. Let $G$ be a random variable. Then the following are equivalent:
\begin{itemize}
\item[(a)] $b_n^{-1}\sum_{j=0}^{n-1}V\circ f^j\to_\mu G$.
\item[(b)] $b_n^{-1}\sum_{j=0}^{n-1}V^Y\circ F^j\to_{\mu_Y}\bar\tau^{1/\alpha} G$.
\end{itemize}
\end{lemma}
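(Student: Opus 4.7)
The proof is a standard inducing argument on the Young tower from Section~\ref{sec:pre}. Via the semiconjugacy $\pi_\Delta$, convergence in $\mu$-probability on $\Sigma$ coincides with $\mu_\Delta$-probability on $\Delta$; since $\mu_\Delta|_{Y\times\{0\}}$ has density $\bar\tau^{-1}$ with respect to $\mu_Y$, absolutely continuous changes of measure on $Y$ preserve weak convergence. For $x=(y,\ell)\in\Delta$ and the lap number $N=N_n(x)$ from~\eqref{eq:N_n}, telescoping across complete excursions between consecutive returns to $Y$ gives
\[
\sum_{j=0}^{n-1}V(f^j\pi_\Delta x)=\sum_{j=0}^{N-1}V^Y(F^jy)+E_n(x),
\]
where $|E_n(x)|\le |V|^Y(y)+|V|^Y(F^Ny)$ accounts for the initial and terminal partial excursions.

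\textbf{Key steps.} First, negligibility of $E_n$: since $|V|^Y\in L^1(Y)$, $|V|^Y(y)/b_n\to 0$ in $\mu_Y$-probability; for $|V|^Y(F^Ny)/b_n$ one combines measure preservation of $F$ with $N_n/n\to\bar\tau^{-1}$ (Proposition~\ref{prop:lap}) via the Anscombe-type argument of~\cite[Thm~2.2]{MZ15}, already invoked in Corollary~\ref{cor:MV}. Second, a random time-change step: Proposition~\ref{prop:lap} gives $N_n/n\to\bar\tau^{-1}$ in probability, and regular variation of $b_n$ with index $1/\alpha$ yields $b_{N_n}/b_n\to\bar\tau^{-1/\alpha}$ (and symmetrically $b_{\tau_n}/b_n\to\bar\tau^{1/\alpha}$ where $\tau_n=\sum_{j<n}\tau\circ F^j$).

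\textbf{Both directions.} For (b)$\Rightarrow$(a): starting from $b_n^{-1}\sum_{j<n}V^Y\circ F^j\to_{\mu_Y}\bar\tau^{1/\alpha}G$, an Anscombe-type argument transfers this to the random index $N=N_n$, giving $b_N^{-1}\sum_{j<N}V^Y\circ F^j\to \bar\tau^{1/\alpha}G$; multiplying by $b_N/b_n\to\bar\tau^{-1/\alpha}$ and absorbing the negligible $E_n$ produces (a). For (a)$\Rightarrow$(b): restrict to $x=(y,0)\in Y\times\{0\}$ (so $\ell=0$ and $E_n$ vanishes) and use the exact identity $\sum_{j=0}^{\tau_n(y)-1}V(f^jy)=\sum_{j=0}^{n-1}V^Y(F^jy)$; applying (a) at the random time $m_n=\tau_n(y)$ (again by the same time-change argument, and transferring from $\mu$ to $\mu_Y$ via absolute continuity on $Y$) and rescaling by $b_{m_n}/b_n\to\bar\tau^{1/\alpha}$ yields (b).

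\textbf{Main obstacle.} The delicate technical point in both directions is the Anscombe-type random time change, which needs slightly more than convergence in probability of $N_n/n$ or $\tau_n/n$: one requires uniform tightness of the Birkhoff sums on nearby deterministic times. This is exactly the content of the simplified special case of~\cite[Thm~2.2]{MZ15} already referenced in the paper, so the remaining work is essentially bookkeeping of error terms and the rescaling by $b_{N_n}/b_n$.
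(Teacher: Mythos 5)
The paper's proof is a two-line reduction: $\tau$ is piecewise constant on the Gibbs--Markov base and has exponential tails (hence is square integrable), so the CLT of Aaronson--Denker~\cite{AD01} gives $\sum_{j<n}\tau\circ F^j - n\bar\tau = O_P(n^{1/2}) = o_P(b_n)$ (recall $b_n\sim n^{1/\alpha}\ell_*(n)$ with $\alpha<2$), and this is exactly the hypothesis of the abstract inducing-equivalence theorem \cite[Thm~A.1 and Rem.~A.3]{MZ15}, which is then cited as a black box. You instead try to reprove the content of that black box from scratch: telescope over complete excursions via the lap number $N_n$, control the overshoot $E_n$ by $|V|^Y$, and transfer across the random index with an Anscombe-type argument plus regular variation of $b_n$. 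Using Proposition~\ref{prop:lap} for $N_n-n\bar\tau^{-1}=o_P(b_n)$ is fine and plays the same role as the paper's CLT-for-$\tau$ step (the two are essentially dual).

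The genuine gap is the Anscombe step, which you correctly flag as "the main obstacle" but then defer to the wrong place. Anscombe's theorem requires uniform continuity in probability (UCIP) of the Birkhoff sums along nearby deterministic times; this is not a consequence of (a) or (b) alone and is the actual technical content of the result you are trying to establish. You cite ``the simplified special case of \cite[Thm~2.2]{MZ15} already referenced in the paper,'' but that reference (used in Corollary~\ref{cor:MV}) is a martingale/coboundary inducing bound and does not supply UCIP. The paper deliberately sidesteps this by citing \cite[Thm~A.1 and Rem.~A.3]{MZ15} directly, whose proof handles the random-time transfer without requiring a free-standing UCIP estimate. So while your decomposition, error bound on $E_n$, and the identity $S_{\tau_n(y)}(y)=S^Y_n(y)$ are all correct and do match the strategy of the cited appendix result, as written your proposal re-derives the easier bookkeeping and then points to an inapplicable reference exactly where the substance lies. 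Either cite \cite[Thm~A.1 and Rem.~A.3]{MZ15} directly after verifying $\tau_n - n\bar\tau = o_P(b_n)$ (the paper's route, which is shorter), or fill in the random-time transfer argument explicitly, which is nontrivial.
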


\begin{proof}
Since $Y$ quotients to a Gibbs-Markov map and $\tau$ is piecewise constant and square integrable, it follows for example from~\cite{AD01} that 
$n^{-1/2}(\sum_{j=0}^{n-1}\tau\circ F^j-n\bar\tau)\to_{\mu_Y}\cN$ where $\cN$ is a possibly degenerate Gaussian. In particular,
$b_n^{-1}(\sum_{j=0}^{n-1}\tau\circ F^j-n\bar\tau)\to_{\mu_Y}0$.
Hence, the result follows from~\cite[Theorem A.1 and Remark A.3]{MV20}.
\end{proof}

\section{Convergence in \texorpdfstring{$\cJ_1$}{J1} for the first return dynamics}
\label{sec:Z}

Let $(f,\Sigma,\mu)$ and $R:\Sigma\to\Z^+$ be as in Section~\ref{sec:setup}.
In particular, the map $f$ is mixing and modelled by a Young tower with exponential tails.
We assume conditions~\eqref{eq:D} and~\eqref{eq:rv} and make use
of Propositions~\ref{prop:GM} and~\ref{prop:lap} and Theorem~\ref{thm:expdecay}.
The underlying system $(T,\Lambda,\mu_\Lambda)$ plays no role in this section.

In Section~\ref{sec:Lalpha}, we defined 
\[
Z:\Sigma\to\R^\MM, \qquad
Z=\sum_{i=1}^\MM\omega_i R \bone_{\Sigma_i}:\Sigma\to\R^\MM.
\]
Recall also $b_n^\alpha\sim n\ell(b_n)$ as $n\to\infty$ from Section~\ref{sec:ass}.
Let $\tZ = Z - \int_\Sigma Z \,d\mu$ and define the sequence of processes
\[
    W_n^Z\in D([0,1],\R^\MM),
\qquad
W_n^Z(t)
    = b_n^{-1}\sum_{j=0}^{[nt]-1}\tZ\circ f^j
    ,
\]
on the probability space $(\Sigma,\mu)$.
Let $\tL_\alpha$ be the $\MM$-dimensional $\alpha$-stable L\'evy process corresponding to the stable law $G_\alpha$ described in Section~\ref{sec:Lalpha}
(so $\tL_\alpha=\bar R^{1/\alpha}L_\alpha$).

In this section, we prove

\begin{thm} \label{thm:Z}
    Assume that $f:\Sigma\to \Sigma$ is mixing and modelled by a Young tower with exponential tails. 
    Suppose that conditions~\eqref{eq:D} and~\eqref{eq:rv} hold.
    Then $W_n^Z \to_\mu \tL_\alpha$ in the $\cJ_1$ topology.
\end{thm}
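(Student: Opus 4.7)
The plan is to follow the standard point process approach to stable functional limit theorems for weakly dependent sequences, with inputs being regular variation~\eqref{eq:rv}, anti-clustering~\eqref{eq:D}, and exponential mixing of $\cA$-measurable events (Theorem~\ref{thm:expdecay}, applicable because $Z$ is constant on elements of $\cC$). The first step is to show that the point process $N_n = \sum_{j=0}^{n-1}\delta_{(j/n,\, b_n^{-1}Z\circ f^j)}$ on $[0,1]\times(\R^\MM\setminus\{0\})$ converges weakly to a Poisson random measure with intensity $dt\otimes\lambda_\alpha$, where $\lambda_\alpha$ has the spectral decomposition $\alpha r^{-\alpha-1}\,dr\otimes\sum_i c_i\delta_{\omega_i}$ determined by~\eqref{eq:rv} and~\eqref{eq:regvar}. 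I would use a blocking scheme: partition $\{0,\dots,n-1\}$ into $m_n$ blocks of length $k_n$ with $k_n\to\infty$ and $k_n/n\to 0$. Within each block, condition~\eqref{eq:D} summed over pairs (exploiting $\theta_1>1$) shows that at most one atom of $N_n$ of size exceeding any fixed $\eta>0$ occurs, with probability $1-o(1)$; distinct blocks are asymptotically independent via Theorem~\ref{thm:expdecay} once $k_n$ exceeds a suitable multiple of $\log n$; and the limiting intensity of a single block is read off directly from~\eqref{eq:rv}.

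The second step is to control the small-jump remainder. Fix $\eta>0$ and split
\[
W_n^Z(t) = W_n^{Z,>\eta}(t) + R_n^\eta(t),
\]
where $W_n^{Z,>\eta}$ is built from the centred observable $Z\bone_{|Z|>\eta b_n} - \int_\Sigma Z\bone_{|Z|>\eta b_n}\,d\mu$ and $R_n^\eta$ from its complement. The piece underlying $R_n^\eta$ is mean zero and, since $Z$ is constant on elements of $\cC$, it lifts to a piecewise constant function on the Gibbs-Markov quotient of $F$. The maximal inequality of Proposition~\ref{prop:GM}, transferred from $\bar F$ to $f$ via the lap-number control of Proposition~\ref{prop:lap}, combined with the $L^2$ estimate $\int_\Sigma |Z|^2\bone_{|Z|\le\eta b_n}\,d\mu \ll (\eta b_n)^{2-\alpha}\ell(b_n)$ coming from~\eqref{eq:rv}, yields
\[
b_n^{-1}\Big\|\sup_{t\in[0,1]}|R_n^\eta(t)|\Big\|_{L^2(\mu)} \ll \eta^{1-\alpha/2},
\]
using $b_n^\alpha\sim n\ell(b_n)$. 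Thus $\sup_t|R_n^\eta(t)|\to_\mu 0$ as first $n\to\infty$ and then $\eta\to 0$.

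To assemble, note that the map sending a locally finite point measure on $[0,1]\times\{|x|>\eta\}$ to its cumulative-sum path in $D([0,1],\R^\MM)$ is continuous in the Skorokhod $\cJ_1$ topology, provided the atoms have distinct time-coordinates, which holds almost surely for the limiting Poisson measure and with probability $1-o(1)$ for $N_n$ by~\eqref{eq:D}. Hence Step 1 combined with the continuous mapping theorem gives $\cJ_1$-convergence of $W_n^{Z,>\eta}$ to the L\'evy process obtained by keeping only jumps of size $>\eta$; Step 2 together with the analogous statement for $\tL_\alpha$ removes the truncation by a standard approximation argument, giving $W_n^Z\to_\mu \tL_\alpha$ in $\cJ_1$. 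The main obstacle will be Step 1: verifying Poisson point process convergence under the two competing constraints that $k_n$ be large enough for the exponential mixing of Theorem~\ref{thm:expdecay} to dominate, yet small enough that the pair-sum of probabilities from~\eqref{eq:D} within a block remains $o(1)$, and tracking this argument across the three layers $\bar F$, $F$, and $f$.
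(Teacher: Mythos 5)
Your overall structure --- a point-process functional limit theorem split into point process convergence and a vanishing-small-values estimate --- matches the paper's proof, which verifies Conditions I and II of~\cite[Theorem 4.1]{TyranKaminska10b} via the machinery of~\cite{PeneSaussol20,JungPeneZhang20}. Your blocking scheme for Step~1 is a legitimate alternative to the paper's route through Lemma~\ref{lem:PS}, and both ultimately hinge on the same two inputs: anti-clustering~\eqref{eq:D} within a time window of polynomial length, and exponential decay of correlations for $\cA$-measurable events (Theorem~\ref{thm:expdecay}).

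The genuine gap is in Step~2, and you have misjudged where the difficulty sits (``the main obstacle will be Step 1'' is backwards relative to the paper). To apply Proposition~\ref{prop:GM} you must induce: the relevant observable on the Gibbs-Markov base is the block sum $H_\xi = \sum_{j<\tau}(R^{(i)}\bone_{\{\omega_iR\in B_\xi(\bar Z)\}})\circ f^j$ with $\xi=\eta b_n$, not the single-step truncation $Z\bone_{|Z|\le\eta b_n}$. Your claimed bound implicitly assumes $|H_\xi|_{L^2(Y)}^2 \lesssim \bar\tau\int_\Sigma |Z|^2\bone_{|Z|\le\xi}\,d\mu$, i.e.\ that cross-terms between different returns within the same $F$-block are negligible. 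That is false in general: if two near-threshold first returns ($R\approx\xi$) occur in the same block, their contribution to $|H_\xi|^2$ can dominate. This is exactly why the paper decomposes $H_\xi = H' + H'' + H'''$ according to whether a block contains $0$, $1$, or $\geq 2$ returns exceeding a secondary cutoff $\eta=n^\gamma$ (Proposition~\ref{prop:H}), and the $H'''$ term (two or more large returns in one block) is controlled \emph{only} via the anti-clustering hypothesis~\eqref{eq:D}, through Proposition~\ref{prop:D}. So condition~\eqref{eq:D} is load-bearing in Step~2, not just in Step~1, and without the $H'/H''/H'''$ splitting (with the two-scale tuning $\xi=\eps b_n$, $\eta = n^\gamma$, $\gamma$ slightly below $1/\alpha$ so that $\gamma\alpha\theta_1>1$) your $L^2$ estimate does not follow.

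Two smaller points. First, your statement that the truncated $Z$ ``lifts to a piecewise constant function on the Gibbs-Markov quotient of $F$'' is loose: what is piecewise constant is the induced block sum $H_\xi$, not the one-step truncation; this imprecision is exactly where the gap hides. Second, the assembly also needs terms for the partial blocks at either end of $[0,n]$ and a lap-number correction (the $\psi_\xi$ and $Q_{n,\xi}$ terms in the paper's proof of Lemma~\ref{lem:vanishing}), which you allude to via Proposition~\ref{prop:lap} but do not isolate; that part, at least, is standard once the $L^2$ bound on $H_\xi$ is in hand.
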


Our proof of Theorem~\ref{thm:Z}
largely follows the approach in~\cite[Section~4]{JungPeneZhang20}
which was written specifically for billiards with flat cusps in the case $d=1$.

On the probability space $(\Sigma,\mu)$, we define the sequence of random point processes
\(
    \cN_n = \sum_{j=1}^n\delta_{(\frac{j}{n}\,,\,b_n^{-1}\tZ\circ f^{j-1})}
\)
on $(0,\infty) \times(\R^\MM \setminus \{0\})$.
The L\'evy measure $\Pi$ corresponding to the L\'evy process $\tL_\alpha$ is given by
\[
\Pi(B)=\alpha\int_{\bbS^{\MM-1}}\int_0^\infty \bone_B(rx)r^{-\alpha-1}\,dr\,d\nu(x)
\]
where $\nu=\sum_{i=1}^\MM c_i\delta_{\omega_i}$ is the spectral measure from Section~\ref{sec:Lalpha}.
Let $\cN$ be the Poisson point process on $(0,\infty)\times (\R^\MM\setminus\{0\})$ with mean measure $\Leb\times\Pi$.

By~\cite[Theorem~4.1]{TyranKaminska10b} (see also~\cite[Theorem~1.2]{TyranKaminska10} and~\cite[Proposition~4.4]{JungPeneZhang20}), to prove Theorem~\ref{thm:Z} it is enough to verify two conditions:
\begin{description}
\item[{Condition I} (Point process convergence).]
$\cN_n \to_\mu \cN$ as $n \to \infty$
in the space of point measures defined on $(0,\infty) \times (\R^\MM \setminus \{0\})$.
\item[{Condition II} (Vanishing small values).]
For every $\gamma>0$,
\[
    \lim_{\eps \to 0} \limsup_{n \to \infty} \mu \biggl(
    \max_{1\le k\le n} \Bigl| \sum_{j=0}^{k-1}
    \bigl( \tZ \bone_{\{ |\tZ| < b_n\eps\}} \bigr)\circ f^j
    - k \int_\Sigma \tZ \bone_{\{ |\tZ| <b_n \eps\}} \,d\mu
    \Bigr|
    > b_n\gamma
    \biggr)=0.
\]
\end{description}
These conditions are verified in the next two subsections.
We denote by $B_a(c)\subset\R^\MM$ the open ball of radius $a$ centred at $c$.

\subsection{Point process convergence}

In this subsection, we verify Condition~I. We follow~\cite[Section~4.5]{JungPeneZhang20}
using~\cite[Theorem~2.1]{PeneSaussol20}.
It is enough to prove convergence of $\cN_n$ to $\cN$ on 
\[
(0,\infty)\times U, \qquad U= \R^\MM\setminus \overline{B_{a_0}(0)}
\]
 for each fixed $a_0>0$.

Fix $a_0$ and let
\[
    A_n = \{ |\tZ| > a_0 b_n \}
    .
\]
Let $\cW$ be the ring of subsets of $U$ generated by sets of the type
$\{ x \in \R^\MM : a < |x| < a', \ x / |x| \in E \}$, where $a_0 < a < a'$ and
$E\subset\bbS^{\MM-1}$ is open with $\nu(\partial E)=0$.
Note that $\cW$ generates the Borel sigma-algebra on $U$ and that $\Pi(\partial W)=0$ for all $W\in\cW$.

For a collection $\cF$ of measurable subsets of $\Sigma$, define
\[
    \cQ_p (\cF)
    = \sup_{\substack{A \in \cF \\ B \in \sigma(\bigcup_{j \ge p} f^{-j} \cF )}}
    \bigl| \mu(A \cap B) - \mu(A) \mu(B) \bigr|
    .
\]

Let
\(
    J_n  = b_n^{-1} \tZ
    .
\)
By~\cite[Theorem~2.1]{PeneSaussol20}, to prove Condition~I it suffices to prove the following:

\begin{lemma} \phantomsection \label{lem:PS} 
\begin{enumerate}[label=(\alph*)]
    \item $\lim_{n\to\infty}\mu \bigl( J_n^{-1} W | A_n \bigr)=
        \Pi(W|U)$ for all $W\in\cW$.

\item        $\cQ_1(J_n^{-1} \cW_0) = o(\mu(A_n))$
for every finite subset $\cW_0$ of $\cW$.
\end{enumerate}
\end{lemma}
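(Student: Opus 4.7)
The lemma has two parts which I would address separately.

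\textbf{(a).} The structural observation driving the whole computation is that $Z$ is supported on the rays $\R_+\omega_i$: explicitly, $Z=R\omega_i$ on $\Sigma_i$ and $Z=0$ on $\Sigma_0$. Since $\tZ=Z-\bar Z$ with $\bar Z=\int_\Sigma Z\,d\mu$ a fixed bounded vector, for large $n$ the event $J_n^{-1}W$ with $W=\{a<|x|<a',\,x/|x|\in E\}$ coincides, up to an $O(1)$ shift in the radial thresholds (negligible at the $b_n$ scale) and up to a contribution from $\Sigma_0$ that vanishes once $ab_n>|\bar Z|$, with $\bigsqcup_{i:\omega_i\in E}\Sigma_i\cap\{ab_n<R<a'b_n\}$. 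Applying~\eqref{eq:rv} summand-by-summand, together with slow variation of $\ell$ and $b_n^\alpha\sim n\ell(b_n)$, yields $\mu(J_n^{-1}W)\sim (a^{-\alpha}-a'^{-\alpha})\nu(E)/n$; the same calculation with $W$ replaced by $U$ gives $\mu(A_n)\sim a_0^{-\alpha}/n$. A direct computation from the definition of $\Pi$ in Section~\ref{sec:Lalpha} gives $\Pi(W|U)=(a^{-\alpha}-a'^{-\alpha})\nu(E)/a_0^{-\alpha}$, and dividing completes (a). The ring structure of $\cW$ then extends this to general $W\in\cW$ by finite disjoint additivity.

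\textbf{(b).} I would first verify that for $n$ large, every $A\in J_n^{-1}\cW_0$ belongs to $\cA$: indeed $\Sigma_i$ is a union of elements of $\cC$ by assumption, $R$ is constant on elements of $\cC$, and the ``annular'' constraint $R\in I_i$ that defines $A\cap\Sigma_i$ then picks out a union of such elements. Writing $J_n^{-1}\cW_0=\{A_1,\ldots,A_K\}$ with $K$ fixed in $n$, I would bound $\sup_{B}|\mu(A\cap B)-\mu(A)\mu(B)|$ for each $A\in J_n^{-1}\cW_0$, where $B$ ranges over $\mathcal{G}=\sigma(\bigcup_{j\ge 1}f^{-j}\{A_1,\ldots,A_K\})$. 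Introducing a cutoff $N=\lceil c\log n\rceil$, I would approximate $\mathcal{G}$ by the finite sigma-algebra $\mathcal{G}_N$ generated by $\{f^{-j}A_k:1\le j\le N,\,1\le k\le K\}$ and decompose into atoms. Atoms forcing a ``second large return'' somewhere in the window $1\le j\le N$ contribute at most $NK\cdot Cn^{-\theta_1}$ by~\eqref{eq:D}, which is $o(1/n)$ because $\theta_1>1$; the remaining ``near-future vacant'' atoms decorrelate from $A$ at a shift of order $N$ and are controlled via Theorem~\ref{thm:expdecay}, contributing $O(\gamma^N)$. The martingale tail outside $\mathcal{G}_N$ is likewise $O(\gamma^N)$ by Theorem~\ref{thm:expdecay}. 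Choosing $c$ large enough makes each piece $o(1/n)=o(\mu(A_n))$.

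\textbf{Main obstacle.} The technically delicate step is the passage from pairwise decorrelation (provided directly by Theorem~\ref{thm:expdecay} and~\eqref{eq:D}) to a uniform bound on the full $\cQ_1$ coefficient in which $B$ ranges over the infinite-dimensional sigma-algebra $\mathcal{G}$. The exponent $\theta_1>1$ in~\eqref{eq:D} is sharp precisely for this purpose: it is what keeps the union bound over $j\le N$ at the $o(1/n)$ level for polylogarithmic $N$, which in turn is what the exponential mixing rate $\gamma^N$ can balance. The careful combinatorics of classifying the atoms of $\mathcal{G}_N$ into the two regimes above, and estimating the martingale approximation for $B$ outside the finite algebra, will be where the real work lies.
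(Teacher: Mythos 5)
Your proof of part (a) is essentially the same as the paper's: the paper packages the asymptotics into the single regular-variation statement \eqref{eq:regvar} and applies it directly, whereas you unfold it summand-by-summand using \eqref{eq:rv} and dispose of $\Sigma_0$ and the centering constant by hand, but the mechanism is identical.

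For part (b) your high-level strategy (compare $B$ with a time-shifted surrogate, control the discrepancy by a second-large-return event, and use exponential decay of correlations and the non-clustering bound \eqref{eq:D}) is the right one and mirrors the paper's, which is taken from \cite[Lemma~4.9]{JungPeneZhang20}. However, there is a genuine gap in the way you approximate $B$. The ``near-future vacant'' atom $U_N^c = \{\tau_{A_n}>N\}$ (and hence $B\cap U_N^c$) is measurable with respect to $\mathcal G_N=\sigma(\{f^{-j}A_k\}_{1\le j\le N})$, \emph{not} with respect to $\sigma(\bigcup_{j>N}f^{-j}J_n^{-1}\cW_0)$, so Theorem~\ref{thm:expdecay} does not directly give $O(\gamma^N)$ decorrelation between $A$ and $B\cap U_N^c$; similarly the ``martingale tail outside $\mathcal G_N$'' step, as stated, is not controlled by Theorem~\ref{thm:expdecay}. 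The fix, which is the content of the paper's Lemma~\ref{lem:JPZgen}, is not to restrict $B$ to an atom but to \emph{replace} $\bone_B = g(Y_1,Y_2,\ldots)$ by $\bone_{B'}=g(0,\ldots,0,Y_{p+1},Y_{p+2},\ldots)$; this $B'$ is genuinely measurable with respect to $\sigma(\bigcup_{j\ge p+1}f^{-j}J_n^{-1}\cW_0)$ (so $|\Cov(\bone_A,\bone_{B'})|\le\cQ_{n,p+1}$, bounded via Proposition~\ref{prop:Qn} and Theorem~\ref{thm:expdecay}), while at the same time $|\bone_B-\bone_{B'}|\le\bone_{\{\tau_{A_n}\le p\}}$, so the error is controlled exactly by the second-large-return estimate. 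Once you have this, your polylogarithmic cutoff $N=\lceil c\log n\rceil$ works provided you choose $c>1/|\log\gamma|$, though the paper's power-law cutoff $p_n=[n^{\theta_2}]$ with $0<\theta_2<\theta_1-1$ avoids any dependence on $\gamma$. Your observation that $J_n^{-1}\cW_0\subset\cA$ is correct and needed, and your accounting for the union bound and exponential parts is otherwise sound.
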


\begin{proof}[Proof of Lemma~\ref{lem:PS}(a)]
Without loss, we may suppose that $W=\{x\in\R^\MM:a<|x|<a',\,x/|x|\in E\}$ where
$a_0<a<a'$ and $E\subset\bbS^{\MM-1}$ is open with $\nu(\partial E)=0$.
Since $Z-\tZ$ is constant, it follows from~\eqref{eq:regvar} that
\begin{align*}
\lim_{n\to\infty} & \mu \bigl( J_n^{-1} W | A_n \bigr)
 =\lim_{n\to\infty} \frac{\mu\big(|\tZ|\in(ab_n,a'b_n),\, \tZ/|\tZ| \in E\big)} {\mu(|\tZ|>a_0 b_n)}
\\ & =\lim_{n\to\infty} \frac{\mu\big(|Z|\in(ab_n,a'b_n),\, Z/|Z| \in E\big)} {\mu(|Z|>a_0 b_n)}
 =a_0^\alpha (a^{-\alpha}-(a')^{-\alpha})\nu(E).
\end{align*}
On the other hand,
by the definition of $\Pi$,
\[
\Pi(W|U)=\frac{\Pi(W)}{\Pi(U)}=\frac{(a^{-\alpha}-(a')^{-\alpha})\nu(E)}{a_0^{-\alpha}},
\]
so the result is proved.
\end{proof}

Next, we prove Lemma~\ref{lem:PS}(b).

\begin{prop} \label{prop:An}
    $\mu(A_n)  \sim a_0^{-\alpha} n^{-1}$ as $n\to\infty$.
\end{prop}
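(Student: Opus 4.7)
The plan is to reduce from the centred observable $\tZ$ to $Z$ itself and then from $|Z|$ to $R$, after which the asymptotic follows directly from the regular variation assumption \eqref{eq:rvR} and the defining relation $b_n^\alpha \sim n\ell(b_n)$.

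First I would observe that since the sets $\Sigma_i$ are pairwise disjoint and $\omega_1,\dots,\omega_\MM$ are orthonormal, one has $|Z| = R\bone_{\bigcup_{i=1}^\MM \Sigma_i}$ pointwise on $\Sigma$. In particular
\[
\mu(|Z|>t) = \mu(R>t) - \mu(R\bone_{\Sigma_0}>t)
\qquad\text{for all $t>0$.}
\]
By \eqref{eq:rvR} together with slow variation of $\ell$, $\mu(R>sb_n)\sim \ell(sb_n)(sb_n)^{-\alpha}\sim s^{-\alpha}\ell(b_n)b_n^{-\alpha}\sim s^{-\alpha}n^{-1}$ for every fixed $s>0$. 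On the other hand, \eqref{eq:rv0} gives $\mu(R\bone_{\Sigma_0}>sb_n)\le C(sb_n)^{-(\alpha+\eta)}=o(n^{-1})$ since $b_n^{-\eta}\to 0$. Hence
\[
\mu(|Z|>sb_n) \sim s^{-\alpha}n^{-1}
\qquad\text{as $n\to\infty$, for each $s>0$.}
\]

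Next I would pass from $Z$ to $\tZ = Z - c$ where $c=\int_\Sigma Z\,d\mu\in\R^\MM$ is a fixed constant. The inequality $||\tZ|-|Z||\le|c|$ yields, for every $\eps>0$ and all $n$ large enough that $|c|\le\eps a_0 b_n$, the inclusions
\[
\{|Z|>(1+\eps)a_0 b_n\} \;\subset\; A_n \;\subset\; \{|Z|>(1-\eps)a_0 b_n\}.
\]
Applying the asymptotic from the previous step with $s=(1\pm\eps)a_0$ gives
\[
(1+\eps)^{-\alpha}a_0^{-\alpha} \le \liminf_{n\to\infty} n\mu(A_n) \le \limsup_{n\to\infty} n\mu(A_n) \le (1-\eps)^{-\alpha}a_0^{-\alpha},
\]
and letting $\eps\downarrow 0$ yields $\mu(A_n)\sim a_0^{-\alpha}n^{-1}$.

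The argument is essentially routine bookkeeping with regular variation; no substantive obstacle arises. The only point that requires care is ensuring that the centring constant $c$ and the contribution of $\Sigma_0$ are both genuinely negligible on the scale $b_n$, which is handled respectively by the triangle inequality and by the stronger tail bound \eqref{eq:rv0} with exponent $\alpha+\eta$.
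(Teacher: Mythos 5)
Your proposal is correct and follows essentially the same route as the paper: both establish $\mu(|Z|>t)\sim\ell(t)t^{-\alpha}$ from the tail assumptions, pass from $\tZ$ to $Z$ by noting the difference is a bounded constant, and then invoke slow variation of $\ell$ together with $b_n^\alpha\sim n\ell(b_n)$. Your derivation of the tail of $|Z|$ via $\mu(R>t)-\mu(R\bone_{\Sigma_0}>t)$ using \eqref{eq:rvR} and \eqref{eq:rv0} is just a cosmetic variant of the paper's direct use of \eqref{eq:rv}, and your explicit $\eps$-inclusion argument merely fills in what the paper compresses into ``$\mu(A_n)\sim\mu(A_n')$ since $Z-\tZ$ is constant.''
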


\begin{proof}
Since $Z-\tZ$ is constant, $\mu(A_n) \sim \mu(A_n')$
where $A_n'=\{|Z|>a_0b_n\}$.

By~\eqref{eq:rv}, 
\[
\mu(|Z|>t)=\mu(R \bone_{\bigcup_{i=1,\dots,\MM}\Sigma_i}>t)\sim \ell(t)t^{-\alpha}.
\]
Using that $\ell$ is slowly varying and that $b_n^\alpha\sim n\ell(b_n)$,
\[
\mu(A_n)\sim \mu(A_n')\sim \ell(a_0b_n)(a_0 b_n)^{-\alpha}
\sim a_0^{-\alpha}\ell(a_0 b_n)\ell(b_n)^{-1}n^{-1}\sim a_0^{-\alpha}n^{-1},
\]
as required.
\end{proof}

Let $\theta_1>1$ be as in~\eqref{eq:D} for $s=a_0/2$. Without loss, we can suppose that $\theta_1\in(1,2)$.
Let $\gamma\in(0,1)$ be as in Theorem~\ref{thm:expdecay}.

\begin{prop} \label{prop:Ancond}
    There exists $C>0$ such that
    $\mu ( A_n \cap f^{-k} A_n ) \le Cn^{-\theta_1}$
    for all $k, n \ge 1$.
\end{prop}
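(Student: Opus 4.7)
The plan is to split the bound according to whether $k\le n$ or $k>n$, handling the short-range regime with assumption~\eqref{eq:D} and the long-range regime with the exponential mixing in Theorem~\ref{thm:expdecay}.

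The key structural observation I would make first is that $A_n\in\cA$. Since $R$ is constant on elements of the partition $\cC$ and each $\Sigma_i$ is a union of elements of $\cC$, the function $Z=\sum_i\omega_i R\bone_{\Sigma_i}$ is constant on elements of $\cC$. Subtracting the fixed constant $\int_\Sigma Z\,d\mu$ preserves this property, so $\tZ$ is constant on elements of $\cC$ and hence $A_n=\{|\tZ|>a_0b_n\}$ is a union of elements of $\cC$, i.e.\ $A_n\in\cA$. Consequently $f^{-k}A_n\in\cB$ for every $k\ge 0$, so Theorem~\ref{thm:expdecay} will be applicable with $A=B=A_n$.

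For $1\le k\le n$, I would use that $Z-\tZ$ is a fixed constant and that $|Z|\le R$ pointwise. Thus there exists $n_0$ such that for $n\ge n_0$ one has $A_n\subset\{R>(a_0/2)b_n\}$, and applying~\eqref{eq:D} with $s=a_0/2$ gives
\[
\mu(A_n\cap f^{-k}A_n)\le \mu\bigl(\{R>(a_0/2)b_n\}\cap f^{-k}\{R>(a_0/2)b_n\}\bigr)\le Cn^{-\theta_1}
\]
uniformly in $k\in\{1,\dots,n\}$. For $k>n$, Theorem~\ref{thm:expdecay} yields $\mu(A_n\cap f^{-k}A_n)\le \mu(A_n)^2+C\gamma^k$. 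Proposition~\ref{prop:An} gives $\mu(A_n)^2\ll n^{-2}\le n^{-\theta_1}$ because $\theta_1<2$, while $\gamma^k\le\gamma^n$ is exponentially small in $n$ and hence dominated by $n^{-\theta_1}$. Combining the two regimes and enlarging $C$ to absorb the finitely many values $n<n_0$ gives the claimed uniform bound.

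The main obstacle is really the structural step of verifying $A_n\in\cA$ (and hence $f^{-k}A_n\in\cB$): this is what licenses the use of Theorem~\ref{thm:expdecay} in the long-range regime, and it relies crucially on the hypothesis that each $\Sigma_i$ is a union of elements of $\cC$ together with the fact that $R$ is piecewise constant on $\cC$. Once this is in place, the split into regimes and the two estimates are essentially routine, and the assumption $\theta_1<2$ is exactly what is needed to absorb the leading term $\mu(A_n)^2$ from the mixing estimate into the target bound.
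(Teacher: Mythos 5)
Your proof is correct and follows the same two-regime strategy as the paper's: condition~\eqref{eq:D} with $s=a_0/2$ for $1\le k\le n$, and the exponential mixing bound of Theorem~\ref{thm:expdecay} combined with $\mu(A_n)^2\ll n^{-2}\le n^{-\theta_1}$ for $k> n$. You also make explicit the verification that $A_n\in\cA$ (so that Theorem~\ref{thm:expdecay} applies) and the need for $n$ large to absorb the centring constant in $\tZ=Z-\int Z\,d\mu$, both of which the paper leaves implicit.
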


\begin{proof} 
By Theorem~\ref{thm:expdecay},
\(
\big|\mu(A_n \cap f^{-k}A_n)-\mu(A_n)^2\big|
\ll \gamma^k 
\)
for all $k\ge1$.
By Proposition~\ref{prop:An},
$\mu(A_n)^2\ll n^{-2}\le n^{-\theta_1}$.
Hence
$\mu(A_n\cap f^{-k}A_n)\ll
 n^{-\theta_1}$
uniformly in $k\ge n$.

Let $D=\{R>a_0b_n/2\}$. Then
\[
    A_n 
    \SMALL
\subset \{2|Z|>a_0b_n\}
\subset \{2R>a_0b_n\}=D.
\]
Hence $A_n \cap f^{-k} A_n \subset D \cap f^{-k}D$ and,
by~\eqref{eq:D}, $\mu(A_n \cap f^{-k} A_n )\ll n^{-\theta_1}$ uniformly in $1\le k\le n$.

Combining the estimates for $k \le n$ and $k \ge n$ yields the desired result. 
\end{proof}

Now fix $\cW_0\subset\cW$ finite, and let $\cQ_{n, p} = \cQ_p(J_n^{-1} \cW_0)$.

\begin{prop} \label{prop:Qn}
    $\cQ_{n, p} \le C \gamma^p $ for all $n,p\ge1$.
\end{prop}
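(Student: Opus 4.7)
The plan is to reduce the bound to a direct application of the exponential-decay estimate in Theorem~\ref{thm:expdecay}. The key structural observation is that $J_n = b_n^{-1}\tZ$ is constant on each element of the partition $\cC$ defined in~\eqref{eq:C_def}. Indeed, $R$ is constant on elements of $\cC$ by the assumption made after~\eqref{eq:C_def}, and each $\Sigma_i$ is a union of elements of $\cC$ by the hypothesis on excursions in Section~\ref{sec:ass}; hence $Z=\sum_{i=1}^\MM\omega_i R \bone_{\Sigma_i}$, and therefore $\tZ$ and $J_n$, are constant on elements of $\cC$. Consequently, for every $W\in\cW$ the set $J_n^{-1}W$ is a union of elements of $\cC$ and thus lies in $\cA$.

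Next I would reduce $B$ to a set shifted by $f^{-p}$. For any collection $\cE$ and any $p\ge 1$, measurability of $f$ gives
\[
\sigma\Big(\bigcup_{k\ge p} f^{-k}\cE\Big) \;=\; f^{-p}\,\sigma\Big(\bigcup_{k\ge 0} f^{-k}\cE\Big).
\]
Applying this with $\cE=J_n^{-1}\cW_0\subset\cA$, any $B\in\sigma\big(\bigcup_{k\ge p}f^{-k}J_n^{-1}\cW_0\big)$ can be written as $B=f^{-p}B^*$ for some $B^*\in\sigma\big(\bigcup_{k\ge 0}f^{-k}\cA\big)\subset\cB$, with $\mu(B)=\mu(B^*)$ by $f$-invariance of $\mu$.

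Since any $A$ in the supremum is of the form $A=J_n^{-1}W_1\in\cA$ for some $W_1\in\cW_0$, Theorem~\ref{thm:expdecay} applied to the pair $(A,B^*)$ at time $p$ yields
\[
|\mu(A\cap B)-\mu(A)\mu(B)|
\;=\;|\mu(A\cap f^{-p}B^*)-\mu(A)\mu(B^*)|
\;\le\; C\gamma^p,
\]
with the constants $C,\gamma$ from that theorem (in particular independent of $n$, $p$, $W_1$ and $B^*$). Taking suprema over $A\in J_n^{-1}\cW_0$ and $B\in\sigma\big(\bigcup_{k\ge p}f^{-k}J_n^{-1}\cW_0\big)$ gives $\cQ_{n,p}\le C\gamma^p$.

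The main subtlety to double-check is the identity $\sigma\big(\bigcup_{k\ge p}f^{-k}\cE\big)=f^{-p}\sigma\big(\bigcup_{k\ge 0}f^{-k}\cE\big)$, which follows from the standard fact that $f^{-1}$ commutes with $\sigma$-algebra generation. Everything else is bookkeeping: the finiteness of $\cW_0$ plays no role beyond ensuring that all sets $J_n^{-1}W$, $W\in\cW_0$, lie in $\cA$, which is automatic from the $\cC$-measurability of $J_n$. I do not anticipate any genuine obstacle here; the content of the proposition is essentially that the exponential mixing of Theorem~\ref{thm:expdecay} passes through the quantisation by $J_n$.
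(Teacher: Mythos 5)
Your proof is correct and takes essentially the same route as the paper: write $B = f^{-p}B'$ with $B'\in\sigma(\bigcup_{j\ge 0}f^{-j}J_n^{-1}\cW_0)$ and invoke Theorem~\ref{thm:expdecay} at time $p$. The paper's proof is terser, leaving implicit your (correct) observation that $J_n$ is $\cC$-measurable so that $J_n^{-1}\cW_0\subset\cA$; spelling this out is a reasonable addition and not a different argument.
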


\begin{proof}
Let $A\in J_n^{-1}\cW_0$, $B\in \sigma\big(\bigcup_{j\ge p}f^{-j}J_n^{-1}\cW_0\big)$. Write
$B=f^{-p}B'$ where $B'\in \sigma\big(\bigcup_{j\ge 0}f^{-j}J_n^{-1}\cW_0\big)$.
Then
\(
|\mu(A\cap B)-\mu(A)\mu(B)|\ll \gamma^p
\)
by Theorem~\ref{thm:expdecay}.
\end{proof}

Let $\tau_{A_n}(x) = \min\{ n \ge 1 : f^n x \in A_n \}$ for $x\in \Sigma$.

\begin{lemma}\label{lem:JPZgen}
    \(
        \cQ_{n,1}
        \le \cQ_{n,p+1} 
        + \mu(A_n \cap \{ \tau_{A_n} \le p\} ) + \mu(A_n)\mu(\tau_{A_n} \le p)
    \)
    for all $n,p \ge 1$.
\end{lemma}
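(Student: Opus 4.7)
The plan is to approximate a general $B\in\sigma(\bigcup_{j\ge 1}f^{-j}\cF)$, where $\cF = J_n^{-1}\cW_0$, by a set $\tilde B\in\sigma(\bigcup_{j\ge p+1}f^{-j}\cF)$ that agrees with $B$ away from the event $\{\tau_{A_n}\le p\}$, and then to apply the triangle inequality. The key observation is that every element of $\cF$ is contained in $A_n$: if $x\in J_n^{-1}W$ for some $W\in\cW_0\subset\cW$, then $|J_n(x)|>a_0$, i.e.\ $x\in A_n$. Setting $\Omega_p = \{\tau_{A_n}>p\}$, this forces $\bone_F\circ f^j$ to vanish on $\Omega_p$ for every $F\in\cF$ and every $1\le j\le p$.

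Using this, I would construct $\tilde B$ explicitly: enumerate $\cF=\{F_1,\dots,F_N\}$ and associate with each $x\in\Sigma$ the binary sequence $\omega(x) = (\bone_{F_i}(f^jx))_{j\ge 1,\,1\le i\le N}$. Any $B\in\sigma(\bigcup_{j\ge 1}f^{-j}\cF)$ is of the form $B=\{x:\omega(x)\in B^*\}$ for some Borel $B^*$. Let $\omega'(x)$ agree with $\omega(x)$ for $j\ge p+1$ and be $0$ for $1\le j\le p$, and set $\tilde B = \{x:\omega'(x)\in B^*\}$. Then $\tilde B\in \sigma(\bigcup_{j\ge p+1}f^{-j}\cF)$, and since $\omega(x)=\omega'(x)$ whenever $x\in\Omega_p$, we have $B\cap\Omega_p = \tilde B\cap\Omega_p$. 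In particular $B\triangle \tilde B \subset \Omega_p^c = \{\tau_{A_n}\le p\}$.

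The estimate now follows from
\[
|\mu(A\cap B)-\mu(A)\mu(B)| \le |\mu(A\cap\tilde B)-\mu(A)\mu(\tilde B)| + |\mu(A\cap B)-\mu(A\cap\tilde B)| + \mu(A)|\mu(B)-\mu(\tilde B)|.
\]
The first term is at most $\cQ_{n,p+1}$ by definition, since $A\in\cF$ and $\tilde B\in \sigma(\bigcup_{j\ge p+1}f^{-j}\cF)$. For the second, $(A\cap B)\triangle(A\cap\tilde B)\subset A\cap(B\triangle\tilde B)\subset A\cap\{\tau_{A_n}\le p\}$, and using $A\subset A_n$ this is bounded by $\mu(A_n\cap\{\tau_{A_n}\le p\})$. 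For the third, $|\mu(B)-\mu(\tilde B)|\le \mu(B\triangle\tilde B)\le \mu(\tau_{A_n}\le p)$ and $\mu(A)\le \mu(A_n)$.

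The only mildly delicate step is the measurable construction of $\tilde B$; once that is in place, the remainder is bookkeeping. I would expect no serious obstacle, as the argument is a clean ``truncate the first $p$ coordinates'' trick familiar from decorrelation estimates for hitting times.
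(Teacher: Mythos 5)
Your proof is correct and takes essentially the same approach as the paper's (which in turn attributes the argument to Jung--P\`ene--Zhang): both replace $B$ by a truncated set (your $\tilde B$, the paper's $B'$) obtained by zeroing the coordinates indexed by $1\le j\le p$, use the inclusion $J_n^{-1}\cW_0\subset\{A_n\}$ to conclude $B\triangle\tilde B\subset\{\tau_{A_n}\le p\}$, and then split the covariance. The construction of $\tilde B$ via a Borel set $B^*$ of sequences is the same as the paper's formulation with the function $g$.
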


\begin{proof}
This is identical to~\cite[Lemma~4.9]{JungPeneZhang20}. We give the short argument for completeness.

    Let $A \in J_n^{-1} \cW_0$ and $B \in \sigma \bigl(\bigcup_{j \ge 1} f^{-j} J_n^{-1} \cW_0 \bigr)$.
Note that $A\subset A_n$ by definition of $\cW$.

    Suppose that $\cW_0 = \{W_1, \ldots, W_K \}$.
    Observe that there exists a function
    $g : (\{0,1\}^K)^{\Z^+} \to \{0,1\}$ such that $\bone_B = g(Y_1,Y_2,\ldots)$, where
    $Y_i = \bigl( \bone_{J_n^{-1}W_1}, \ldots, \bone_{J_n^{-1}W_K} \bigr) \circ f^i$.
    Define $B' \in \sigma \bigl(\bigcup_{j \ge p+1} f^{-j}(J_n^{-1} \cW_0) \bigr)$ by
    \[
        \bone_{B'} = g(0,\ldots, 0, Y_{p+1},Y_{p+2},\ldots)
        .
    \]
    Then 
$|\Cov(\bone_A, \bone_{B'})| \le \cQ_{n,p+1}$. Moreover,
    $|\bone_B - \bone_{B'}| \le \bone_{\{\tau_{A_n}\le p\}}$, so
\begin{align*}
        \bigl| \Cov(\bone_A, \bone_B) - \Cov(\bone_A, \bone_{B'}) \bigr|
         & \le \mu \bigl( A \cap \{\tau_{A_n}\le p\} \bigr)
        + \mu(A) \mu(\tau_{A_n} \le p)
        \\
& \le  \mu(A_n \cap \{ \tau_{A_n} \le p\} ) + \mu(A_n)\mu(\tau_{A_n} \le p)
        .
\end{align*}
The result follows.
\end{proof}

\begin{proof}[Proof of Lemma~\ref{lem:PS}(b)]
    Let $p_n = [ n^{\theta_2} ]$ where
    $0<\theta_2<\theta_1-1<1$.
    Applying Proposition~\ref{prop:Ancond},
    \begin{align*}
        \mu( A_n \cap \{\tau_{A_n} \le p_n\} )
        & = \mu \biggl( A_n\cap  \bigcup_{k=1}^{p_n} f^{-k} A_n \biggr)
        \\ 
	& \le \sum_{k=1}^{p_n} \mu(A_n \cap f^{-k} A_n )
         = O ( p_n n^{-\theta_1} )
        = o(n^{-1})
        .
    \end{align*}
    Using Proposition~\ref{prop:An} and invariance of $\mu$ under $f$,
    \[
        \mu( \tau_{A_n} \le p_n )
        = \mu \biggl( \bigcup_{k=1}^{p_n} f^{-k} A_n \biggr)
        \le \sum_{k=1}^{p_n} \mu(f^{-k} A_n)
        = p_n \mu(A_n)
        = O ( p_n n^{-1} )
        = o(1)
        .
    \]
Hence it follows from 
    Proposition~\ref{prop:An} and Lemma~\ref{lem:JPZgen} that
$\cQ_{n,1}\le \cQ_{n,p_n+1}+o(\mu(A_n))$.
By Propositions~\ref{prop:An} and~\ref{prop:Qn},
$\cQ_{n,p_n+1}=o(n^{-1})=o(\mu(A_n))$, so
    $\cQ_1(J_n^{-1} \cW_0)=\cQ_{n,1} = o(\mu(A_n))$.
\end{proof}

This concludes the verification of Condition~I.

\subsection{Vanishing small values}
\label{sec:vsv}

In this subsection, we verify Condition~II. 
The regularly varying sequence $b_n$ is asymptotically equivalent to a strictly increasing sequence, so for the purposes of proving Theorem~\ref{thm:Z} we may suppose without loss of generality that $b_n$ strictly increases and $b_1=1$.

It is convenient to work at the level of the Young tower $f_\Delta:\Delta\to\Delta$. 
Recall from Section~\ref{sec:pre} that $\pi_\Delta:\Delta\to \Sigma$, $\pi(y,\ell)=f^\ell y$
is a measure-preserving semiconjugacy
from $(f_\Delta,\Delta,\mu_\Delta)$ to $(f,\Sigma,\mu)$.
Let $\Delta_i=\pi_\Delta^{-1}\Sigma_i$, $i=1,\dots,\MM$.
Abusing notation, we denote lifted observables $R\circ\pi_\Delta:\Delta\to\Z^+$,
$\tZ\circ\pi_\Delta:\Delta\to\R^\MM$ simply by $R$, $\tZ$ and so on.

Fix $i=1,\dots,\MM$ and set $R^{(i)}=R\bone_{\Delta_i}$.  Let $\bar Z=\int_\Delta Z\,d\mu_\Delta$ and
\[
\tR_\xi^{(i)}= R^{(i)}\bone_{\{\omega_iR\in B_\xi(\bar Z)\}}- \int_\Delta R^{(i)}\bone_{\{\omega_i R\in B_\xi(\bar Z)\}} \, d\mu_\Delta, \quad \xi>0.
\]

The main step in the proof is:

\begin{lemma} \label{lem:vanishing}
There is a constant $C>0$ such that
    \[
        \limsup_{n\to\infty}b_n^{-1}\biggl| \max_{1\le k \le n}
        \Big|\sum_{j = 0}^{k-1} \tR_{\eps b_n}^{(i)} \circ f_\Delta^j\Big| \biggr|_{L^1(\Delta)}
        \le C\eps^{1-\alpha/2} 
        \quad\text{for all $\eps>0$.}
    \]
\end{lemma}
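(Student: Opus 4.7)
The strategy is to reduce to the Gibbs-Markov base $\bar Y$ via inducing, establish a sharp $L^2$-estimate for the induced observable using regular variation and summable correlations, and apply Proposition \ref{prop:GM}. Since $R$, $\bone_{\Sigma_i}$, and $\bone_{\{\omega_i R\in B_\xi(\bar Z)\}}$ all factor through $\pi_\Delta:\Delta\to\Sigma$, we may view $V:=\tR^{(i)}_\xi$ as a $\cC$-piecewise-constant mean-zero function on $\Sigma$; measure-preservation of $\pi_\Delta$ reduces the claim to a bound on $L^1(\Sigma)$. Define the induced observables $V^Y=\sum_{\ell<\tau}V\circ f^\ell$ and $V^{Y,\mathrm{abs}}=\sum_{\ell<\tau}|V|\circ f^\ell$ on $Y$; both are piecewise constant on $\{Y_j\}$ (since $\tau$ is constant on $Y_j$ and $V$ is $\cC$-piecewise constant) and descend to $\bar Y$ via $\bar\pi$.

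The key $L^2$-estimate is $|V^Y|_{L^2(Y)}^2+|V^{Y,\mathrm{abs}}|_{L^2(Y)}^2\ll\ell(\xi)\xi^{2-\alpha}$. A standard regular-variation computation (Karamata) applied to $\mu(R\bone_{\Sigma_i}>t)\sim c_i\ell(t)t^{-\alpha}$ yields $|V|_{L^2(\Sigma)}^2\ll\ell(\xi)\xi^{2-\alpha}$. The Kac identity $\mathrm{Var}_{\mu_Y}[g^Y]=\bar\tau\,\sigma^2(g)$ for any centered $g$ on $\Sigma$, where $\sigma^2(g)=\int g^2\,d\mu+2\sum_{k\ge1}\int g\cdot g\circ f^k\,d\mu$, combined with summability of the correlation series from exponential decay of correlations on the tower (Theorem \ref{thm:expdecay}), then bounds $|V^Y|_{L^2(Y)}^2\ll|V|_{L^2(\Sigma)}^2$. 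For $V^{Y,\mathrm{abs}}$, center $|V|$ by $\int|V|\,d\mu=O(1)$ and apply the same reasoning to the centered piece; the mean contribution $(\int|V|)^2\int\tau^2\,d\mu_Y$ is of lower order since $\tau\in L^2(Y)$ by exponential tails.

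With the key estimate in hand, Proposition \ref{prop:GM} on $\bar Y$ gives $\bigl|\max_{m\le n}\bigl|\sum_{j<m}V^Y\circ F^j\bigr|\bigr|_{L^2(Y)}\ll n^{1/2}|V^Y|_{L^2(Y)}$. For $x=(y_0,\ell_0)\in\Delta$ we split the partial sum along tower cycles as $\sum_{j<k}V\circ f_\Delta^j(x)=T_{N_k(x)}(y_0)+B_k(x)$, where $T_m(y)=\sum_{j<m}V^Y\circ F^j(y)$ and the boundary satisfies $|B_k(x)|\le V^{Y,\mathrm{abs}}(y_0)+V^{Y,\mathrm{abs}}(F^{N_k(x)}y_0)$. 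Since $N_k\le n$ and $(\max_m X_m)^2\le\sum_m X_m^2$, we get $\bigl|\max_{m\le n}V^{Y,\mathrm{abs}}\circ F^m\bigr|_{L^2(Y)}\le n^{1/2}|V^{Y,\mathrm{abs}}|_{L^2(Y)}$. Converting from $L^2(Y)$ to $L^1(\Delta)$ via $d\mu_\Delta=\bar\tau^{-1}\tau\,d\mu_Y\otimes(\mathrm{counting})$ and Cauchy-Schwarz with $|\tau|_{L^2(Y)}<\infty$, the total bound is $\ll n^{1/2}\sqrt{\ell(\xi)}\,\xi^{1-\alpha/2}$. Setting $\xi=\eps b_n$ and using $b_n^\alpha\sim n\ell(b_n)$ with slow variation $\ell(\eps b_n)\sim\ell(b_n)$, the right side is $Cb_n\eps^{1-\alpha/2}$, giving the claim after division by $b_n$. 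The main obstacle is the sharp $L^2$-estimate itself: the crude $V^Y\le|V|_\infty\tau$ yields only $|V^Y|_{L^2}\ll\xi$, which after normalisation produces a divergent $\eps b_n^{\alpha/2}$ term, so exploiting the cancellation from the mean-zero structure via Kac together with summable correlations is essential.
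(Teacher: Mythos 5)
Your reduction to a ``key $L^2$-estimate'' $|V^Y|_{L^2(Y)}^2 + |V^{Y,\mathrm{abs}}|_{L^2(Y)}^2 \ll \ell(\xi)\xi^{2-\alpha}$ is the right target, and the outer architecture (center $V=\tR_\xi^{(i)}$ on $\Sigma$, induce to $Y$, apply Proposition~\ref{prop:GM}, control boundary blocks by the induced absolute-value observable, convert $L^2(Y)\to L^1(\Delta)$) is sound and in fact slightly cleaner than the paper's because centering before inducing dispenses with the lap-number correction $Q_{n,\xi}$. However, the central step has a genuine gap.

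The claimed ``Kac identity'' $\mathrm{Var}_{\mu_Y}[g^Y]=\bar\tau\,\sigma^2(g)$ with $\sigma^2(g)=\int g^2\,d\mu+2\sum_{k\ge1}\int g\cdot g\circ f^k\,d\mu$ is not correct. Expanding $\int_Y(g^Y)^2\,d\mu_Y$ and using Kac's integration formula produces $\bar\tau\int g^2\,d\mu+2\int_Y\sum_{k\ge1}\sum_{0\le j<\tau(y)-k}g(f^jy)g(f^{j+k}y)\,d\mu_Y$, in which only \emph{within-block} pairs appear; the unrestricted correlations $\int g\cdot g\circ f^k\,d\mu$ also include pairs crossing block boundaries. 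What is true is the analogous identity for the \emph{asymptotic CLT variance} under $F$ on $Y$, not for $\mathrm{Var}_{\mu_Y}[g^Y]$ itself, and even that does not yield your estimate.

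More fundamentally, the summable-correlations argument cannot close, independently of the identity. Exponential decay of correlations (Theorem~\ref{thm:expdecay} and the underlying Young-tower estimate) gives $|\int V\cdot V\circ f^k\,d\mu|\lesssim\|V\|_\theta|V|_\infty\gamma^k\lesssim\xi^2\gamma^k$ -- the constant scales like $\xi^2$, not like $|V|_{L^2}^2\approx\ell(\xi)\xi^{2-\alpha}$. Interpolating against the trivial Cauchy--Schwarz bound $|\int V\cdot V\circ f^k|\le|V|_{L^2}^2$ and summing over $k$, the best you can extract is $\sum_k|\int V\cdot V\circ f^k|\lesssim(\log\xi)\,\ell(\xi)\xi^{2-\alpha}$. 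Plugging $\xi=\eps b_n$ and normalising as in the lemma then produces $\limsup_n\lesssim(\log b_n)^{1/2}\eps^{1-\alpha/2}\to\infty$, not the required bound $C\eps^{1-\alpha/2}$. The logarithmic loss is fatal.

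The missing ingredient is condition~\eqref{eq:D}, the de-clustering hypothesis on first returns. Decay of correlations only says the joint event $\{R>\eta\}\cap f^{-j}\{R>\eta\}$ has probability $\le\mu(R>\eta)^2+C\gamma^j$, which is useless for small $j$ (where $\gamma^j$ is $O(1)$). Thus it does not preclude two or more large returns occurring consecutively within an inducing block with probability far exceeding the independent heuristic, which would blow up $|V^Y|_{L^2}$ and $|V^{Y,\mathrm{abs}}|_{L^2}$. This is exactly why the paper decomposes the induced observable as $H=H'+H''+H'''$ according to the number $s$ of large returns in the block, bounds $H'$ by Karamata, $H''$ by moment estimates, and crucially bounds the multi-large-return piece $H'''$ using condition~\eqref{eq:D} through Proposition~\ref{prop:D}, then optimises over the threshold parameter $\eta$ in Corollary~\ref{cor:H}. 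Your proposal never invokes~\eqref{eq:D}, and without it the key $L^2$ estimate is simply not accessible.
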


Most of the remainder of this subsection is concerned with proving Lemma~\ref{lem:vanishing}.
At the end of the subsection, we show that Condition~II follows from the lemma.

We begin with the following consequence of condition~\eqref{eq:D}.
Let $\theta_1>1$ be as in~\eqref{eq:D} with $s=1$.

\begin{prop} \label{prop:D}
There exists $a_1>0$ such that
\[
\mu_\Delta(R>\eta\text{ and }R\circ f^j>\eta)\ll 
\Big(\frac{\eta^\alpha}{\ell(\eta)}\Big)^{-\theta_1}
\quad\text{for all $1\le j\le a_1\eta^\alpha/\ell(\eta)$, $\,\eta>1$.}
\]
\end{prop}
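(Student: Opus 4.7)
The plan is to reduce the statement on the tower to the base system and then convert the continuous parameter $\eta$ to the discrete scale $b_n$ used in \eqref{eq:D}.

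First, I would observe that, under the abuse of notation introduced just above Proposition~\ref{prop:D}, $R$ on $\Delta$ denotes $R\circ\pi_\Delta$, and since $\pi_\Delta$ intertwines $f_\Delta$ with $f$, one has $R\circ f_\Delta^j = (R\circ f^j)\circ \pi_\Delta$ for every $j\ge1$. Because $(\pi_\Delta)_*\mu_\Delta = \mu$,
\[
\mu_\Delta(R>\eta,\, R\circ f_\Delta^j>\eta) = \mu(R>\eta,\, R\circ f^j>\eta),
\]
so it suffices to prove the bound on $(\Sigma,\mu)$.

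Next, I would match $\eta$ to the discrete scale. The probability is trivially at most $1$ and the target bound is bounded below by a positive constant when $\eta$ lies in a bounded set, so we may assume $\eta$ is large. Given such $\eta$, let $n=n(\eta)$ be the largest integer with $b_n\le\eta$. Since $b_n^\alpha\sim n\ell(b_n)$ with $\ell$ slowly varying, the sequence $b_n$ is regularly varying of index $1/\alpha$; therefore $b_{n+1}/b_n\to1$, giving $\eta\sim b_n$, and by the uniform convergence theorem for slowly varying functions $\ell(\eta)\sim \ell(b_n)$. Consequently
\[
\frac{\eta^\alpha}{\ell(\eta)} \sim \frac{b_n^\alpha}{\ell(b_n)} \sim n,
\]
and there is a constant $K\ge1$ with $\eta^\alpha/\ell(\eta)\le Kn$ for all sufficiently large $\eta$. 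Choose $a_1=1/(2K)$: then $1\le j\le a_1\eta^\alpha/\ell(\eta)$ forces $1\le j\le n$.

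With this choice, applying \eqref{eq:D} with $s=1$ (the corresponding $\theta_1>1$ being the one fixed in the statement of the proposition),
\[
\mu(R>\eta,\, R\circ f^j>\eta) \le \mu(R>b_n,\, R\circ f^j>b_n) \le Cn^{-\theta_1} \ll \Big(\tfrac{\eta^\alpha}{\ell(\eta)}\Big)^{-\theta_1},
\]
which is the desired estimate. The only genuinely delicate point is extracting the uniform constant $a_1$ from the regular variation of $b_n$ in the second paragraph; this is routine once one invokes the standard properties of slowly varying functions, and I do not anticipate any further obstacles.
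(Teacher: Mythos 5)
Your proof is correct and follows essentially the same route as the paper: relate $\eta$ to the discrete index $n$ via the scale $b_n$, choose $a_1$ so that $j\le a_1\eta^\alpha/\ell(\eta)$ forces $j\le n$, then apply~\eqref{eq:D} with $s=1$. The paper's write-up differs only cosmetically, using a continuous strictly increasing interpolant $\tilde b$ of $b_n$ with $\tilde b^{-1}(x)\sim x^\alpha/\ell(x)$ so that $a_1$ can be chosen uniformly over all $\eta>1$ in a single stroke, rather than treating bounded $\eta$ separately as you do.
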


\begin{proof}
Since $b_n$ is strictly increasing, we can write
$b_n=\tilde b(n)$ where
$\tilde b:(0,\infty)\to(0,\infty)$ is strictly increasing
and $\tilde b(x)^\alpha\sim x\ell(\tilde b(x))$ as $x\to\infty$.
Note that $\tilde b^{-1}(x)\sim x^\alpha/\ell(x)$ as $x\to\infty$.
Choose $a_1>0$ such that
$a_1 x^\alpha/\ell(x)\le \frac12 \tilde b^{-1}(x)$ for all $x\ge1$.

Given $\eta>1=b_1$, there exists $n\in\Z^+$ such that 
$b_n<\eta\le b_{n+1}$.
By~\eqref{eq:D},
\begin{align*}
\mu_\Delta(R>\eta\text{ and }R\circ f^j>\eta)
& \le \mu_\Delta(R>b_n\text{ and }R\circ f^j>b_n)
\\ &
\ll n^{-\theta_1}\ll (n+1)^{-\theta_1}\le (\tilde b^{-1}(\eta))^{-\theta_1}
\ll \Big(\frac{\eta^\alpha}{\ell(\eta)}\Big)^{-\theta_1}
\end{align*}
for all $1\le j\le n$.
Finally, if $j\le a_1\eta^\alpha/\ell(\eta)$,
then $j\le \frac12 \tilde b^{-1}(\eta)\le \frac12(n+1)\le n$ so the 
estimate holds for all such $j$.
\end{proof}

From now on, we fix $\theta_1$ and $a_1$ as in Proposition~\ref{prop:D}.

Let $1 \le  \eta \le \xi$. Define 
$H=H_\xi: Y \to [0,\infty)$,
 $s=s_\eta : Y \to \Z$, 
\[
H(y) = \sum_{0\le j \le \tau(y)-1} (R^{(i)}\bone_{\{\omega_i R\in B_\xi(\bar Z)\}})(y,j), \quad
    s(y) = \# \{0\le  j \le \tau(y)-1 : R(y,j) > \eta \}
    .
\]

Note that $H =  H' + H'' + H'''$, where
\begin{align*}
    H'(y)   & = \bone_{\{s(y)   =  1\}} \sum_{j < \tau(y)} (R^{(i)}\bone_{\{R>\eta\}}\bone_{\{\omega_i R\in B_\xi(\bar Z)\}})(y,j) 
    ,    \\
    H''(y) & = \sum_{j < \tau(y)} (R^{(i)} \bone_{\{R \le \eta\}}\bone_{\{\omega_i R\in B_\xi(\bar Z)\}})(y,j)
    , \\
    H'''(y)  & = \bone_{\{s(y) \ge 2\}} \sum_{j < \tau(y)} (R^{(i)}\bone_{\{R>\eta\}}\bone_{\{\omega_i R\in B_\xi(\bar Z)\}})(y,j)
    .
\end{align*}

\begin{prop}
    \label{prop:H}
    Let $\delta>0$. There exist $M,\,C>0$ such that
    \begin{enumerate}[label=(\alph*)]
        \item\label{prop:H:'} 
            $|H'|_{L^2(Y)} \le C \ell(\xi)^{1/2}\xi^{1-\alpha / 2}$,
        \item\label{prop:H:''} $|H''|_{L^2(Y)} \le C \eta^{1 - \alpha / 2 + \delta}$,
        \item\label{prop:H:'''}  $|H'''|_{L^2(Y)} \le C \xi^{1+\delta} \Big(\frac{\eta^\alpha}{\ell(\eta)}\Big)^{-\theta_1/2}$,
    \end{enumerate}
    for all $1 <  \eta \le \xi$ satisfying $M\log \xi \le a_1\eta^\alpha/\ell(\eta)$
and $\xi>|\bar Z|$.
\end{prop}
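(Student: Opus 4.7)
For part (a), the indicator $\bone_{\{s(y)=1\}}$ forces at most one summand in $\sum_{j<\tau(y)}(R^{(i)}\bone_{\{R>\eta\}}\bone_{\{\omega_i R\in B_\xi(\bar Z)\}})(y,j)$ to be nonzero, so writing $g_1 := R^{(i)}\bone_{\{R>\eta\}}\bone_{\{\omega_i R\in B_\xi(\bar Z)\}}$ one has $(H')^2 \le \sum_{j<\tau(y)} g_1^2(y,j)$ pointwise. The Kac-type identity $\int_Y \sum_{j<\tau(y)} h(y,j)\,d\mu_Y = \bar\tau \int_\Delta h\,d\mu_\Delta$ then reduces matters to $\int_\Delta g_1^2\,d\mu_\Delta$. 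Since $\omega_i R\in B_\xi(\bar Z)$ combined with $\xi>|\bar Z|$ forces $R\le 2\xi$ on the support, integration by parts together with~\eqref{eq:rv} and Karamata's theorem yield $\int_\Delta g_1^2\,d\mu_\Delta \ll \int_\eta^{2\xi} t\,\ell(t)t^{-\alpha}\,dt \sim \ell(\xi)\xi^{2-\alpha}$ for $\alpha<2$, which is (a).

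For part (b), set $g_2 := R^{(i)}\bone_{\{R\le\eta\}}\bone_{\{\omega_i R\in B_\xi(\bar Z)\}}$ and let $\tau^*(x):=\min\{n\ge 1:f^nx\in Y\}$ be the return time of $x$ to $Y$ under $f$. Expanding the square and using Kac twice,
\[
|H''|_{L^2(Y)}^2 = \bar\tau\int_\Sigma g_2^2\,d\mu + 2\bar\tau\sum_{k\ge 1}\int_\Sigma g_2\cdot(g_2\circ f^k)\bone_{\{\tau^*>k\}}\,d\mu.
\]
The diagonal term is $\ll \ell(\eta)\eta^{2-\alpha}$ by Karamata. For each off-diagonal term, Cauchy--Schwarz and $f$-invariance of $\mu$ give the bound $|g_2\bone_{\{\tau^*>k\}}|_{L^2}|g_2|_{L^2}$, and I would split the $k$-sum at $k_1\simeq c^{-1}\log(\eta^\alpha/\ell(\eta))$: for $k\le k_1$ use the trivial bound $|g_2\bone_{\{\tau^*>k\}}|_{L^2}\le|g_2|_{L^2}$, and for $k>k_1$ use $|g_2|_\infty\le\eta$ together with the exponential tail $\mu(\tau^*>k)\le Ce^{-ck}$ inherited from the Young tower. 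Both regimes contribute at most $O(\log(\eta^\alpha/\ell(\eta))\,\ell(\eta)\eta^{2-\alpha})$, and the log and slowly varying factors are absorbed into $\eta^{2\delta}$.

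Part (c) is the delicate step and where I expect the main obstacle to lie. Using $g_3 := R^{(i)}\bone_{\{R>\eta\}}\bone_{\{\omega_i R\in B_\xi(\bar Z)\}}\le 2\xi$ together with the fact that $g_3(y,j)$ is nonzero at most $s(y)$ times, one gets $H'''\le 2\xi\,s\bone_{\{s\ge 2\}}$, hence
\[
|H'''|_{L^2(Y)}^2 \le 4\xi^2\int s^2\bone_{\{s\ge 2\}}\,d\mu_Y \le 8\xi^2\,\E[s(s-1)].
\]
A pair-counting argument combined with Kac gives $\E[s(s-1)] = 2\bar\tau\sum_{k\ge 1}\mu(R>\eta,\,R\circ f^k>\eta,\,\tau^*>k)$. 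The obstacle is that naively applying Proposition~\ref{prop:D} across the full range $1\le k\le a_1\eta^\alpha/\ell(\eta)$ loses a factor of $\eta^\alpha/\ell(\eta)$; the fix is to split at $k_*\simeq(\theta_1/c)\log(\eta^\alpha/\ell(\eta))$. For $1\le k\le k_*$, Proposition~\ref{prop:D} gives $\le C(\eta^\alpha/\ell(\eta))^{-\theta_1}$ per term, contributing $\ll k_*(\eta^\alpha/\ell(\eta))^{-\theta_1}$; for $k>k_*$, bounding instead by $\mu(\tau^*>k)\le Ce^{-ck}$ gives $\sum_{k>k_*}Ce^{-ck}\ll e^{-ck_*}\sim(\eta^\alpha/\ell(\eta))^{-\theta_1}$. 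The hypothesis $M\log\xi\le a_1\eta^\alpha/\ell(\eta)$ is exactly what ensures $k_*\le a_1\eta^\alpha/\ell(\eta)$ so that Proposition~\ref{prop:D} applies throughout the first range, for $M$ chosen in terms of $c$ and $\theta_1$. The resulting $\E[s(s-1)]\ll\log(\eta^\alpha/\ell(\eta))(\eta^\alpha/\ell(\eta))^{-\theta_1}$, together with $\log(\eta^\alpha/\ell(\eta))\le\alpha\log\xi\ll\xi^{2\delta}$, yields (c).
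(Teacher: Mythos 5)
Your part~(a) is essentially the paper's argument (bound $H'$ by $\xi'=\xi+|\bar Z|$, Kac-transfer to $\Delta$, Karamata), and the $\xi>|\bar Z|$ condition is used for exactly the reason you indicate.

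For parts~(b) and~(c) you take a genuinely different route. The paper handles (b) by conditioning on $\tau=k$ and interpolating through $L^q(\Delta)$ with $q>2$ and $\eps>0$ close to $2$ and $0$, then summing against the exponential tails of $\mu_Y(\tau=k)$; your diagonal/off-diagonal expansion of $(H'')^2$ followed by Cauchy--Schwarz, a cut at $k_1\simeq c^{-1}\log(\eta^\alpha/\ell(\eta))$, and Karamata for the diagonal gets the same $\eta^{2-\alpha+2\delta}$ while avoiding the $L^q$ interpolation. For (c) the paper bounds $H'''\ll \xi\tau\bone_{\{s\ge2\}}$, conditions on $\tau=k$, applies Proposition~\ref{prop:D} to all $k^2$ pairs $(j_1,j_2)$, and splits at $k\le M\log\xi$ versus $k>M\log\xi$; your version bounds $H'''\le 2\xi\,s\bone_{\{s\ge2\}}$ and controls $\E[s(s-1)]$ by a pair-counting sum split at $k_*\simeq(\theta_1/c)\log(\eta^\alpha/\ell(\eta))$. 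Your bound on the number of nonzero summands by $s$ rather than $\tau$ is a bit cleaner (saving a couple of powers of $\log\xi$, immaterial since both are absorbed into $\xi^{2\delta}$), and your identification of the role of the constraint $M\log\xi\le a_1\eta^\alpha/\ell(\eta)$ matches the paper's intent exactly.

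One point needs correction, though. In both (b) and (c) you apply the Kac identity and write the resulting constraint as $\bone_{\{\tau^*>k\}}$ with $\tau^*$ the \emph{first} return time of $x\in\Sigma$ to $Y$ under $f$. What the Kac transfer actually produces is the tower-height function $\tau_\Delta(y,\ell)=\tau(y)-\ell$ on $\Delta$: the inner sum over $0\le j<k<\tau(y)$ becomes, after the substitution $m=k-j$, $\bar\tau\sum_{m\ge1}\int_\Delta (\cdot)\bone_{\{\tau_\Delta>m\}}\,d\mu_\Delta$, and $\tau_\Delta$ does \emph{not} descend to a function on $\Sigma$ unless $\tau$ is the first return time---which the paper deliberately does not assume (``not necessarily the first return time''). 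Since $\tau^*\circ\pi_\Delta\le\tau_\Delta$, your version of the identity is only a lower bound on $|H''|_{L^2}^2$ and $\E[s(s-1)]$, which is the wrong direction. The fix is minor: keep the calculation on $\Delta$ with $\bone_{\{\tau_\Delta>m\}}$, note that $\mu_\Delta(\tau_\Delta>m)=\bar\tau^{-1}\sum_{j\ge0}\mu_Y(\tau>m+j)\ll e^{-cm}$ by the exponential tails of $\tau$, and run your split at $k_1$ (resp.\ $k_*$) exactly as you propose. With that substitution the argument is correct.
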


\begin{proof}
We use throughout  that $R^{(i)}\le R$ for all $i$.

\noindent\ref{prop:H:'}
Observe that $H' \le \xi'$ where 
$\xi'=\xi+|\bar Z|$.
Also for $0\le n\le \xi'$,
\begin{align*}
\mu_Y(H'=n) & \le \mu_Y(y\in Y:R(y,j)=n\text{ for some $0\le j<\tau(y)$}\})
\\
& \le \int_Y \sum_{j=0}^{\tau(y)-1}\bone_{\{y\in Y:R(y,j)=n\}}\,d\mu_Y(y)
=\bar\tau\mu_\Delta(R=n).
\end{align*}
    Hence
\[
|H'|_{L^2(Y)}^2=\sum_{n\le \xi'}n^2\mu_Y(H' = n)
\le \bar\tau\sum_{n\le \xi'}n^2\mu_\Delta(R=n)
    \ll \sum_{n \le \xi'} n \mu_\Delta(R \ge n).
\]
Using~\eqref{eq:rv} and applying Karamata's inequality, we conclude that
\(
   |H'|_{L^2(Y)}^2
    \ll \ell(\xi') \xi'^{2 - \alpha}
    \ll \ell(\xi) \xi^{2 - \alpha}
\)
as required.

\vspace{1ex}
\noindent\ref{prop:H:''}
Let $q>2$, $\eps>0$ and
observe that $|R \bone_{\{R \le \eta\}}|_{L^q(\Delta)}^q \ll \sum_{j \le \eta} j^{q-1} \mu_\Delta(R \ge j) \ll \eta^{q+\eps - \alpha}$.
Let $g_j(y)=(R \bone_{\{R\le \eta\}})\circ f_\Delta^j(y,0)$.
Since $f_\Delta$ is measure-preserving,
\[
\begin{split}
\Big|\sum_{j<k}g_j\Big|_{L^q(Y)}^q & =\int_Y\Big| \sum_{j<k}g_j \Big|^q\,d\mu_Y
 \le 
\int_Y\sum_{\ell=0}^{\tau(y)-1}\Big| \sum_{j<k}(R \bone_{\{R\le \eta\}})\circ f_\Delta^j(y,\ell) \Big|^q\,d\mu_Y(y)
\\ & =\bar\tau \int_\Delta \Big| \sum_{j<k}(R \bone_{\{R\le \eta\}})\circ f_\Delta^j \Big|^q\,d\mu_\Delta
\le \bar\tau k^q|R \bone_{\{R \le \eta\}}|_{L^q(\Delta)}^q \ll k^q \eta^{q+\eps - \alpha}.
\end{split}
\]
Note also that
$H''\le \sum_{j<\tau}g_j$.
Since $\tau$ has exponential tails, there exists $c_0>0$ such that
\begin{align*}
    |H''|_{L^2(Y)}^2 
    & \le \sum_{k=1}^\infty \Bigl| \bone_{\{\tau=k\}} \sum_{j<k}g_j \Bigr|_{L^2(Y)}^2
    \ll \sum_{k=1}^\infty e^{-c_0k} \Bigl| \sum_{j<k}g_j \Bigr|_{L^q(Y)}^2
    \\ 
    & \ll \sum_{k=1}^\infty e^{-c_0k}(k \eta^{(q+\eps-\alpha)/q})^2 \ll \eta^{2(q+\eps-\alpha)/q}.
\end{align*}
The desired estimate follows for $q$ and $\eps$ sufficiently close to $2$ and $0$.

\vspace{1ex}
 \noindent       \ref{prop:H:'''}
Since $\tau$ has exponential tails, there exists $c_1>0$ such that
$k^2\mu_Y (\tau = k) \ll e^{-c_1 k}$.
Set $M = 2\alpha\theta_1  / c_1$.

Suppose that $1< \eta\le \xi$ satisfies 
$M\log\xi\le a_1\eta^{\alpha}/\ell(\eta)$.
For any $y\in\{\tau=k, \, s\ge2\}$, there exist
$0\le j_1<j_2 < k$ such that 
$R(y,j_1)>\eta$ and
$R(y,j_2)>\eta$.
By Proposition~\ref{prop:D},
\begin{align*}
    \mu_Y( \tau = k , \; s \ge 2 )
    & \le \bar\tau \sum_{0\le j_1<j_2 < k}\mu_\Delta(R\circ f_\Delta^{j_1}>\eta
    \text{ and } R\circ f_\Delta^{j_2}>\eta)
    \\
    & \le k \bar\tau \sum_{1\le j< k}\mu_\Delta(R>\eta\text{ and }R\circ f_\Delta^j>\eta)
    \ll k^2 \Big(\frac{\eta^\alpha}{\ell(\eta)}\Big)^{-\theta_1}
\end{align*}
for all $k\le a_1 \eta^{\alpha}/\ell(\eta)$ and hence for 
$k\le M\log \xi$.

Noting that
$H'''(y)\ll \bone_{\{s(y)\ge 2\}}\,\xi\tau(y)$,
\begin{align*}
    |H'''|_{L^2(Y)}^2
    & \ll \xi^2\int_Y \bone_{\{s\ge 2\}}\, \tau^2\,d\mu_Y
    \\
    & \ll \xi^2 \sum_{k \le M \log\xi} k^2 \mu_Y( \tau = k , \; s \ge 2 )
    + \xi^2\sum_{k > M \log\xi} k^2 \mu_Y( \tau = k ).
\end{align*}
Now, 
\begin{align*}
 \sum_{k \le M \log\xi} k^2 \mu_Y( \tau = k , \; s \ge 2 )
& \ll 
\sum_{k \le M \log\xi} k^4 \Big(\frac{\eta^\alpha}{\ell(\eta)}\Big)^{-\theta_1}
\ll (\log\xi)^5 \Big(\frac{\eta^\alpha}{\ell(\eta)}\Big)^{-\theta_1}, \\
 \sum_{k > M \log\xi} k^2 \mu_Y( \tau = k )
& \ll \sum_{k > M \log\xi} e^{-c_1k}
\ll \xi^{-c_1M}\le \eta^{-c_1M}=\eta^{-2\alpha\theta_1}. 
\end{align*}
Hence,
$|H'''|_{L^2(Y)}^2 \ll \xi^2 (\log \xi +1 )^5 \Big(\frac{\eta^\alpha}{\ell(\eta)}\Big)^{-\theta_1}$
and the result follows.
\end{proof}

\begin{cor} \label{cor:H}
There is a constant $C>0$ such that
\[
\limsup_{n\to\infty} b_n^{-1} n^{1/2}|H_{\eps b_n}|_{L^2(Y)}\le C \eps^{1-\alpha/2}
\quad\text{for all $\eps>0$.}
\]
\end{cor}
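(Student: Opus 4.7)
The plan is to apply Proposition~\ref{prop:H} with $\xi = \eps b_n$ and an intermediate scale $\eta = b_n^\beta$ for a suitably chosen $\beta \in (1/\theta_1, 1)$, then combine the three resulting bounds via the triangle inequality $|H_{\eps b_n}|_{L^2(Y)} \le |H'|_{L^2(Y)} + |H''|_{L^2(Y)} + |H'''|_{L^2(Y)}$. The auxiliary identity to record at the outset is that $b_n^\alpha \sim n\ell(b_n)$ rearranges to $n^{1/2}/b_n \sim b_n^{\alpha/2-1}\ell(b_n)^{-1/2}$, so the renormalising factor $b_n^{-1}n^{1/2}$ is a negative power of $b_n$ up to a slowly varying correction. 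I would also check that the hypothesis $M\log(\eps b_n) \le a_1\eta^\alpha/\ell(\eta)$ of Proposition~\ref{prop:H} holds for all large $n$: this is automatic for $\eta = b_n^\beta$ with $\beta>0$, since $b_n^{\alpha\beta}/\ell(b_n^\beta)$ grows faster than any power of $\log b_n$.

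From part~(a) the leading contribution is
\[
b_n^{-1}n^{1/2}|H'|_{L^2(Y)} \ll \eps^{1-\alpha/2}\ell(\eps b_n)^{1/2}\,n^{1/2}b_n^{-\alpha/2} \;\sim\; \eps^{1-\alpha/2}\,\frac{\ell(\eps b_n)^{1/2}}{\ell(b_n)^{1/2}} \;\longrightarrow\; \eps^{1-\alpha/2},
\]
by the slow variation $\ell(\eps b_n)/\ell(b_n) \to 1$. This already yields the stated bound, so it remains only to show that parts~(b) and~(c) contribute $o(1)$. Substituting $\eta = b_n^\beta$ into those bounds and multiplying by $b_n^{-1}n^{1/2}$ yields terms proportional to $b_n^{\alpha/2-1+\beta(1-\alpha/2+\delta)}\ell(b_n)^{-1/2}$ and $\eps^{1+\delta}b_n^{\delta+\alpha(1-\beta\theta_1)/2}\ell(b_n^\beta)^{\theta_1/2}\ell(b_n)^{-1/2}$ respectively. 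Absorbing the slowly varying factors into an arbitrarily small power of $b_n$, both tend to zero provided $\beta(1-\alpha/2+\delta) < 1-\alpha/2$ and $\beta\theta_1 > 1 + 2\delta/\alpha$; as $\delta\to 0$ these reduce to $\beta < 1$ and $\beta > 1/\theta_1$, and since $\theta_1 > 1$ the interval $(1/\theta_1,1)$ is nonempty.

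The only real obstacle is this simultaneous choice of $\beta$ and $\delta$, which is pure bookkeeping but genuinely uses the strict inequality $\theta_1>1$ coming from the non-clustering hypothesis~\eqref{eq:D}; without it, the contribution of part~(c) could not be made to vanish faster than the leading term from~(a).
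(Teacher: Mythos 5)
Your argument is correct and follows essentially the same route as the paper: apply Proposition~\ref{prop:H} with $\xi=\eps b_n$ and a polynomially-growing intermediate scale, let part~(a) supply the $\eps^{1-\alpha/2}$ leading term via slow variation, and kill parts~(b) and~(c) using $\alpha<2$ and $\theta_1>1$ respectively. The only cosmetic difference is the parametrisation of the intermediate scale: you take $\eta=b_n^\beta$ with $\beta\in(1/\theta_1,1)$, while the paper takes $\eta=n^\gamma$ with $\gamma\in(0,1/\alpha)$ and $\gamma\alpha\theta_1>1$; since $b_n\sim(n\ell(b_n))^{1/\alpha}$ these are equivalent up to slowly varying factors, with $\beta\leftrightarrow\gamma\alpha$, and both choices reduce to the same pair of strict inequalities after shrinking $\delta$.
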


\begin{proof}
Recalling that $\theta_1>1$, we 
choose $\gamma\in(0,1/\alpha)$ sufficiently close to $1/\alpha$ that $\gamma\alpha\theta_1>1$.
    We apply Proposition~\ref{prop:H} with
    $\xi = \eps b_n$, $\eta = n^{\gamma}$.
(Note that for all $\eps>0$, $|\bar Z|\in\R^\MM$, the constraints $1<\eta\le\xi$, $M\log\xi\le a_1\eta^\alpha/\ell(\alpha)$, $\xi>|\bar Z|$ are satisfied for $n$ sufficiently large.)
Then
\[
|H'|_{L^2(Y)}\ll \eps^{1-\alpha/2}b_n \Big(\frac{\ell(\eps b_n)}{\ell(b_n)}\Big)^{1/2} \big(\ell(b_n) b_n^{-\alpha}\big)^{1/2}
\sim \eps^{1-\alpha/2} b_n n^{-1/2}.
\]
Also, $|H''|_{L^2(Y)}\ll n^{\gamma(1-\alpha/2+\delta)}=o(b_n n^{-1/2})$ for $\delta>0$ sufficiently small.
Finally, $|H'''|_{L^2(Y)}\ll  b_n^{1+\delta}n^{-\gamma\alpha\theta_1/2}\tilde\ell(n)$ where $\tilde\ell$ is slowly varying.
Hence,
$|H'''|_{L^2(Y)}\ll  b_n^{1+\delta}n^{-\gamma\alpha\theta_1/2}n^\delta$.
Since $\gamma\alpha\theta_1>1$, we can shrink $\delta$ if necessary so that
$b_n^\delta n^\delta n^{-\gamma\alpha\theta_1/2}=o(n^{-1/2})$.
Hence, $|H'''|_{L^2(Y)}=o(b_n n^{-1/2})$.
\end{proof}

Define 
$\psi_\xi: \Delta \to [0,\infty)$ by
\(
\psi_\xi(y,\ell) = \sum_{0\le j < \ell} (R^{(i)}\bone_{\{\omega_i R\in B_\xi(\bar Z)\}})(y,j).
\)

\begin{cor} \label{cor:psi}
There exists $C>0$ such that
\[
\limsup_{n\to\infty} b_n^{-1}\big|\max_{k\le n}\psi_{\eps b_n}\circ f_\Delta^k\big|_{L^1(\Delta)} \le C\eps^{1-\alpha/2}
\quad\text{for all $\eps>0$.}
\]
\end{cor}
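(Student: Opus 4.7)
The plan is to reduce Corollary \ref{cor:psi} to Corollary \ref{cor:H} by a pointwise domination at the tower base and a standard maximal inequality. Write $x=(y,\ell)\in\Delta$ and, for $0\le k\le n$, set $f_\Delta^kx=(y_k,\ell_k)$. By the Young tower dynamics, $y_k=F^{N_k(x)}y$ where $N_k$ is the lap number from \eqref{eq:N_n}, and $0\le\ell_k<\tau(y_k)$. Since the summand in the definition of $\psi_\xi$ is non-negative and $\ell_k\le \tau(y_k)-1$,
\[
    \psi_\xi(f_\Delta^kx)=\sum_{0\le j<\ell_k}(R^{(i)}\bone_{\{\omega_i R\in B_\xi(\bar Z)\}})(y_k,j)\le H_\xi(y_k).
\]
Using $N_k\le k\le n$, this gives the pointwise bound
\[
    \max_{0\le k\le n}\psi_\xi\circ f_\Delta^k(x)\le M_n(y),\qquad M_n(y):=\max_{0\le j\le n}H_\xi(F^jy).
\]

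Next, I control $M_n$ in $L^2(Y)$. Since $M_n^2\le \sum_{j=0}^n (H_\xi\circ F^j)^2$ and $\mu_Y$ is $F$-invariant,
\[
    |M_n|_{L^2(Y)}^2\le \sum_{j=0}^n|H_\xi\circ F^j|_{L^2(Y)}^2=(n+1)\,|H_\xi|_{L^2(Y)}^2.
\]
To transfer from $Y$ to $\Delta$, I use $d\mu_\Delta=\bar\tau^{-1}\,d\mu_Y\times d(\text{counting})$ together with the fact that $M_n$ is constant in the fibre variable, and Cauchy--Schwarz:
\[
    |M_n|_{L^1(\Delta)}=\bar\tau^{-1}\int_Y\tau\, M_n\,d\mu_Y\le \bar\tau^{-1}|\tau|_{L^2(Y)}\,|M_n|_{L^2(Y)}\ll n^{1/2}|H_\xi|_{L^2(Y)},
\]
where $|\tau|_{L^2(Y)}<\infty$ because $\tau$ has exponential tails.

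Specialising to $\xi=\eps b_n$ and combining the three steps,
\[
    b_n^{-1}\big|\max_{0\le k\le n}\psi_{\eps b_n}\circ f_\Delta^k\big|_{L^1(\Delta)}\ll b_n^{-1}n^{1/2}|H_{\eps b_n}|_{L^2(Y)},
\]
and Corollary \ref{cor:H} bounds the limsup of the right-hand side by $C\eps^{1-\alpha/2}$.

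I do not anticipate a substantive obstacle: the argument is routine once one recognises that a truncated excursion sum $\psi_\xi(z,m)$ is dominated by its full-excursion counterpart $H_\xi(z)$. The only care required is the careful bookkeeping of the lap number $N_k$ in identifying the correct base iterate, and the passage between the $L^p(Y)$ and $L^p(\Delta)$ norms, which is handled cleanly by the exponential tail assumption on $\tau$. In particular, the crucial $n^{1/2}$ (rather than $n$) on the right-hand side comes from applying the maximal bound in $L^2$ before converting to $L^1(\Delta)$, not from the trivial pointwise estimate $\max\le\sum$.
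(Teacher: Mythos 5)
Your proof is correct and takes essentially the same route as the paper: the paper also dominates $\psi_\xi$ pointwise by the fibre-independent function $h_\xi(y,\ell)=H_\xi(y)$, uses $N_k\le k$ to bound $\max_{k\le n}h_\xi\circ f_\Delta^k$ by $\max_{k\le n}H_\xi\circ F^k$, passes to $L^2(Y)$ via the weight $\tau$ and exponential tails, and then applies the elementary $L^2$ maximal bound $|\max_{j\le n}H_\xi\circ F^j|_{L^2(Y)}\ll n^{1/2}|H_\xi|_{L^2(Y)}$ before invoking Corollary~\ref{cor:H}.
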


\begin{proof}
Define $h_\xi:\Delta\to[0,\infty)$ by
$h_\xi(y,\ell)=H_\xi(y)$.
Then
$\max_{k\le n}h_\xi\circ f_\Delta^k(y,\ell)
\le \max_{k\le n}H_\xi(F^ky)$.
Since $\tau$ has exponential tails,
\begin{align*}
\big|\max_{k\le n}h_\xi\circ f_\Delta^k\big|_{L^1(\Delta)}
& \le \bar\tau^{-1}\int_Y\tau \max_{k\le n}H_\xi\circ F^k\,d\mu_Y
\\ &\ll 
\big|\max_{k\le n}H_\xi\circ F^k\big|_{L^2(Y)}
\ll n^{1/2}|H_\xi|_{L^2(Y)}.
\end{align*}
Hence it follows from Corollary~\ref{cor:H} that
$\limsup_{n\to\infty}b_n^{-1}|\max_{k\le n}h_{\eps b_n}\circ f_\Delta^k|_{L^1(\Delta)}\ll \eps^{1-\alpha/2}$.
Since $0\le \psi_{\eps b_n}\le h_{\eps b_n}$, we obtain
 the desired estimate for $\max_{k\le n}\psi_{\eps b_n}\circ f_\Delta^k$.
\end{proof}

Define $Q_{n,\xi}:\Delta\to \R$ by $Q_{n,\xi}=N_n\int_Y H_\xi\,d\mu_Y-n\int_\Delta R^{(i)}\bone_{\{\omega_i R\in B_\xi(\bar Z)\}} \,d\mu_\Delta$ where $N_n$ is the lap number defined in~\eqref{eq:N_n}.

\begin{prop} \label{prop:Q}
There exists $C>0$ such that
\[
\limsup_{n\to\infty} b_n^{-1}\big|\max_{k\le n}|Q_{k,\eps b_n}|\big|_{L^1(\Delta)}
        \le C \eps^{1-\alpha/2}
\quad\text{for all $\eps>0$.}
\]
\end{prop}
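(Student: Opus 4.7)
The plan is to observe that $Q_{n,\xi}$ admits a clean factorisation into a random part and a deterministic part, each of which can be controlled by tools already in hand. Since on the Young tower $\mu_\Delta = (\mu_Y\times\textup{counting})/\bar\tau$, and since $H_\xi(y)=\sum_{j=0}^{\tau(y)-1}g(y,j)$ with $g=R^{(i)}\bone_{\{\omega_i R\in B_\xi(\bar Z)\}}$, Fubini yields
\[
\int_Y H_\xi\,d\mu_Y
= \bar\tau\int_\Delta R^{(i)}\bone_{\{\omega_i R\in B_\xi(\bar Z)\}}\,d\mu_\Delta
= \bar\tau\, I_\xi,
\]
where $I_\xi := \int_\Delta R^{(i)}\bone_{\{\omega_i R\in B_\xi(\bar Z)\}}\,d\mu_\Delta$. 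Substituting into the definition of $Q_{n,\xi}$ gives the key identity
\[
Q_{n,\xi}=(N_n\bar\tau-n)\,I_\xi.
\]

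Next I would bound the two factors separately. Since $R^{(i)}\le R$ and $R\in L^1(\mu_\Delta)$ (using $\alpha>1$ together with~\eqref{eq:rvR}), the deterministic factor satisfies $0\le I_\xi\le \int_\Delta R\,d\mu_\Delta = \bar R<\infty$ uniformly in $\xi$. For the random factor, Proposition~\ref{prop:lap} gives
\[
\Big|\max_{1\le k\le n}|N_k\bar\tau-k|\Big|_{L^1(\Delta)}
\le \bar\tau\,\Big|\max_{1\le k\le n}|N_k-k\bar\tau^{-1}|\Big|_{L^1(\Delta)}
\ll n^{1/2}.
\]
Multiplying the two bounds yields $\big|\max_{k\le n}|Q_{k,\eps b_n}|\big|_{L^1(\Delta)}\ll n^{1/2}$, uniformly in $\eps>0$.

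To conclude, recall that $\alpha\in(1,2)$, so $b_n$ is regularly varying of index $1/\alpha>1/2$ and hence $n^{1/2}b_n^{-1}\to 0$. The stated $\limsup$ therefore equals $0$, which is trivially bounded by $C\eps^{1-\alpha/2}$ for any $C>0$. I do not anticipate any real obstacle here: unlike Corollaries~\ref{cor:H} and~\ref{cor:psi}, which required the delicate Karamata-type truncation estimates of Proposition~\ref{prop:H} to recover the $\eps^{1-\alpha/2}$ scaling, here the $\eps$-dependence on the left-hand side is essentially trivial once the factorisation $Q_{n,\xi}=(N_n\bar\tau-n)I_\xi$ is recognised. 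The form $C\eps^{1-\alpha/2}$ in the statement appears purely to harmonise with the earlier corollaries so that they combine cleanly in the upcoming verification of Condition~II.
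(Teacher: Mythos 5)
Your proof is correct and gets a stronger conclusion than the paper's. Both arguments start from the same factorisation $Q_{n,\xi}=(N_n-n\bar\tau^{-1})\int_Y H_\xi\,d\mu_Y$ (your $(N_n\bar\tau-n)I_\xi$ is the same thing, since $\int_Y H_\xi\,d\mu_Y=\bar\tau I_\xi$ by the tower formula you invoke) and both control the lap-number error by Proposition~\ref{prop:lap}. The difference is in how the deterministic factor is bounded: the paper uses $\int_Y H_\xi\,d\mu_Y\le |H_\xi|_{L^2(Y)}$ and then Corollary~\ref{cor:H} to pick up the $\eps^{1-\alpha/2}$ factor, while you observe that $I_\xi\le\int_\Delta R\,d\mu_\Delta=\bar R$ uniformly in $\xi$, so the whole expression is $O(n^{1/2})$ irrespective of $\eps$ and the $\limsup$ is in fact $0$ because $n^{1/2}/b_n\to 0$ for $\alpha<2$. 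Your route is more elementary (it avoids Corollary~\ref{cor:H} entirely and needs no truncation estimates), and it exposes that the $\eps^{1-\alpha/2}$ on the right-hand side of the proposition is not sharp here but simply matches the form of Corollaries~\ref{cor:H} and~\ref{cor:psi} so that the three estimates combine cleanly in the proof of Lemma~\ref{lem:vanishing}. One small remark on your justification of $R\in L^1(\mu_\Delta)$: since $R$ is a genuine first return time, Kac's formula already gives $\int_\Sigma R\,d\mu=\mu_\Lambda(\Sigma)^{-1}<\infty$ directly; the appeal to $\alpha>1$ via~\eqref{eq:rvR} is consistent but not needed.
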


\begin{proof}
Observe that
$Q_{n,\xi}
=(N_n-n\bar\tau^{-1})\int_Y H_\xi\,d\mu_Y$. Hence
\[
|Q_{n,\xi}|\le |N_n-n\bar\tau^{-1}|\int_Y H_\xi\,d\mu_Y
\le |N_n-n\bar\tau^{-1}|\,|H_\xi|_{L^2(Y)}.
\]
The result follows from Proposition~\ref{prop:lap} and Corollary~\ref{cor:H}.
\end{proof}

\begin{proof}[Proof of Lemma~\ref{lem:vanishing}]
    Let $\tH_\xi = H_\xi - \int_Y H_\xi \, d\mu_Y$.
Define $G_{n,\xi}:Y\to\R$, $g_{n,\xi}:\Delta\to\R$,
\[
G_{n,\xi}=\max_{k \le n} \sum_{j < k} \tH_\xi \circ F^j,
\quad
g_{n,\xi}(y,\ell)=G_{n,\xi}(y).
\]
By Proposition~\ref{prop:GM} and Corollary~\ref{cor:H},
\[
        \limsup_{n\to\infty}b_n^{-1}| G_{n,\eps b_n}|_{L^2(Y)}
\ll
\limsup_{n\to\infty}b_n^{-1}n^{1/2}|H_{\eps b_n}|_{L^2(Y)}
        \ll \eps^{1-\alpha/2}.
\]
Since $\tau$ has exponential tails,
\[
\int_\Delta |g_{n,\eps b_n}|\,d\mu_\Delta=\bar\tau^{-1}\int_Y \tau|G_{n,\eps b_n}|\,d\mu_Y
\ll | G_{n,\eps b_n}|_{L^2(Y)},
\]
so
    \begin{equation}\label{eq:h}
\limsup_{n\to\infty}b_n^{-1}\int_\Delta |g_{n,\eps b_n}|\,d\mu_\Delta
\ll \eps^{1-\alpha/2}.
    \end{equation}

Next, by the definition of the lap number $N_n$,
\[
\sum_{j=0}^{n-1}(R^{(i)}\bone_{\{\omega_i R\in B_\xi(\bar Z)\}})(f_\Delta^j(y,\ell))=
\sum_{j=0}^{N_n(y,\ell)-1}H_\xi(F^jy)+\psi_\xi(f_\Delta^n(y,\ell))-\psi_\xi(y,\ell),
\]
and so
\[
\sum_{j=0}^{n-1}\tR_\xi^{(i)}(f_\Delta^j(y,\ell))
=\sum_{j=0}^{N_n(y,\ell)-1}\tH_\xi(F^jy)+\psi_{\xi}(f_\Delta^n(y,\ell))-\psi_{\xi}(y,\ell)+Q_{n,\xi}(y,\ell).
\]
Since $N_n\le n$ for all $n$,
\[
    \max_{k\le n}\Big|\sum_{j=0}^{k-1}\tR_\xi^{(i)}\circ f_\Delta^j(y,\ell)\Big|
    \ll \max_{k\le n}\big|\sum_{j<k}\tH_\xi(F^jy)\big|
    + \max_{k\le n}\psi_{\xi}(f_\Delta^k(y,\ell))+\max_{k\le n}|Q_{k,\xi}(y,\ell)|.
\]
In other words,
\[
    \max_{k\le n}\Big|\sum_{j=0}^{k-1}\tR_\xi^{(i)}\circ f_\Delta^j\Big| \ll g_{n,\xi}
    + \max_{k\le n}\psi_{\xi}\circ f_\Delta^k+\max_{k\le n}|Q_{k,\xi}|.
\]
Setting $\xi=\eps b_n$, the result follows from~\eqref{eq:h}, Corollary~\ref{cor:psi} and Proposition~\ref{prop:Q}.
\end{proof}

We can now complete the verification of Condition II.
First notice that
\[
R^{(i)} \bone_{\{|\tZ|<\xi\}}
=R^{(i)} \bone_{\{|\omega_i R^{(i)}-\int_\Delta Z\,d\mu|<\xi\}}
=R^{(i)} \bone_{\{\omega_i R\in B_\xi(\bar Z)\}}.
\]
Hence 
\[
Z\bone_{\{|\tZ|<\xi\}} = \sum_{i} \omega_i R^{(i)} \bone_{\{\omega_i R\in B_\xi(\bar Z)\}}
\]
and so
\[
Z\bone_{\{|\tZ|<\xi\}}-\int_\Delta Z\bone_{\{|\tZ|<\xi\}}d\mu_\Delta = \sum_{i} \omega_i \tR_\xi^{(i)}.
\]
By Lemma~\ref{lem:vanishing},
    \[
        \limsup_{n\to\infty}b_n^{-1}\biggl|\max_{1\le k \le n}\Big| \sum_{j = 0}^{k-1} \big(Z\bone_{\{|\tZ|<\eps b_n\}}\big) \circ f_\Delta^j-k\int_\Delta Z\bone_{\{|\tZ|<\eps b_n\}}d\mu_\Delta\Big| \biggr|_{L^1(\Delta)}\ll \eps^{1-\alpha/2} .
    \]
Also, by Proposition~\ref{prop:GM} with $g=\bone_{\{|\tZ|<\eps b_n\}}-\int_\Delta \bone_{\{|\tZ|<\eps b_n\}}d\mu_\Delta$,
    \[
        \biggl| \max_{1\le k \le n}\Big| \sum_{j = 0}^{k-1} \bone_{\{|\tZ|<\eps b_n\}} \circ f_\Delta^j-k\int_\Delta \bone_{\{|\tZ|<\eps b_n\}}d\mu_\Delta\Big| \biggr|_{L^2(\Delta)}\ll n^{1/2}.
    \]
Combining these two estimates yields
    \[
        \limsup_{n\to\infty}b_n^{-1}\biggl| \max_{1\le k \le n}\Big| \sum_{j = 0}^{k-1} \big(\tZ\bone_{\{|\tZ|<\eps b_n\}}\big) \circ f_\Delta^j-k\int_\Delta \tZ\bone_{\{|\tZ|<\eps b_n\}}d\mu_\Delta\Big| \biggr|_{L^1(\Delta)}\ll \eps^{1-\alpha/2} .
    \]
Hence Condition II follows from Markov's inequality and the fact that $\pi_\Delta:\Delta\to \Sigma$ is a measure-preserving semiconjugacy.
This completes the proof of Theorem~\ref{thm:Z}.

\section{Proof of Theorem~\ref{thm:Wconv}}
\label{sec:fdd}

In this section, we complete the proof of Theorem~\ref{thm:Wconv},
using the results of Section~\ref{sec:pre} (specifically Lemma~\ref{lem:MV} and
Remark~\ref{rmk:MV}) and the $\cJ_1$ convergence from
Section~\ref{sec:Z}.
The structure of the proof broadly follows~\cite{FFMT,MZ15}.

        In Section~\ref{sec:Z}, we defined processes
        \[
W_n^Z\in D([0,1],\R^\MM), \qquad
W_n^Z(t) = b_n^{-1}\sum_{j=0}^{[nt]-1}\tZ\circ f^j,
\]
        where $\tZ = Z - \int_\Sigma Z \,d\mu$. 
By Theorem~\ref{thm:Z},
\(
            W_n^Z \to_\mu \tL_\alpha
\)
            in the $\cJ_1$ topology,
        where $\tL_\alpha$ is an $\MM$-dimensional $\alpha$-stable L\'evy process.
        Recall that $L_\alpha=\bar R^{-1/\alpha}\tL_\alpha$ is the $\alpha$-stable L\'evy process corresponding
        to the stable law $\bar R^{-1/\alpha} G_\alpha$
and that $L_\alpha^P\in\DD([0,1],\R^d)$ is the decorated L\'evy process.

\vspace{1ex}
\noindent\textbf{Step 1:}
        Define processes on $(\Sigma,\mu)$,
        \[
U_n\in D([0,1],\R^\MM), \qquad            
U_n(t) = 
        b_n^{-1} \sum_{j=0}^{N^R_{[nt]}-1} \tZ \circ f^j,
\]
        where $N_k^R$ is the largest integer~$n$ satisfying $\sum_{j=0}^{n-1}R\circ f^j\le k$.
        (This is analogous to, but different from, the lap number $N_n$ defined in Section~\ref{sec:pre}.)
        Then $U_n$ is a time-changed version of $W_n^Z$, and using Theorem~\ref{thm:Z}
        we show that
\[
            U_n \to_\mu L_\alpha
\quad\text{in the $\cJ_1$ topology.}
\]

\vspace{1ex}
\noindent\textbf{Step 2:}
We define decorated processes $G(U_n)\in\DD([0,1], \R^d)$ and show that
\[
            G(U_n) \to_\mu L_\alpha^{P}
\quad\text{in the $\alpha_\infty$ topology.}
\]

\noindent\textbf{Step 3:}
Recall that $\hW_n\in\DD([0,1],\R^d)$ is the trivial lift of $W_n$.
        We show that 
\(
            \alpha_{\infty} (G(U_n), \hW_n) {\to_\mu 0}
            .
\)
From this and Step 2,
        \[
\hW_n \to_\mu L_\alpha^{P}
\quad\text{in the $\alpha_\infty$ topology.}
\]

\noindent\textbf{Step 4:}
        We apply~\cite[Theorem~1]{Zweimuller07} to show that
\(
            \hW_n \to_{\mu_\Lambda} L_\alpha^{P}\)
in the $\alpha_\infty$ topology.

\vspace{1ex}
In the remainder of this section we implement the steps above.  
To carry out Step 1, we apply the argument in~\cite[Lemma~5.7]{FFMT}. 
In this way we obtain the following consequence of
Theorem~\ref{thm:Z}:

\begin{cor}
    \label{cor:U}
    $U_n \to_\mu L_\alpha$ in the $\cJ_1$ topology. \qed
\end{cor}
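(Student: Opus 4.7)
The plan is to exhibit $U_n$ as a random time-change of the process $W_n^Z$ from Theorem~\ref{thm:Z}, and then use self-similarity of the $\alpha$-stable Lévy limit to identify the resulting limit with $L_\alpha$.

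Set $\phi_n:[0,1]\to[0,1]$ by $\phi_n(t)=N^R_{[nt]}/n$. By construction, $U_n = W_n^Z\circ\phi_n$. Since $N^R_k$ is the renewal counting function associated with the Birkhoff sums $\tau_k=\sum_{j=0}^{k-1}R\circ f^j$ and $R\in L^1(\mu)$ (indeed $\bar R=\int R\,d\mu<\infty$), the Birkhoff ergodic theorem gives $\tau_k/k\to\bar R$ almost surely on $(\Sigma,\mu)$, and hence $N^R_k/k\to\bar R^{-1}$ almost surely. Each realisation of $\phi_n$ is non-decreasing, and the proposed limit $\phi(t)=t/\bar R$ is continuous, so the monotone form of Dini's theorem upgrades pointwise convergence to uniform convergence on $[0,1]$:
\[
    \sup_{t\in[0,1]}|\phi_n(t)-\phi(t)|\to 0 \quad\text{a.s.\ on }(\Sigma,\mu).
\]

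Because $\phi$ is deterministic, the a.s.\ uniform convergence of $\phi_n$ combines with the $\cJ_1$-convergence of Theorem~\ref{thm:Z} to yield joint convergence $(W_n^Z,\phi_n)\to_\mu(\tL_\alpha,\phi)$ in the product topology $\cJ_1\times(\text{uniform})$. The composition map $(h,\psi)\mapsto h\circ\psi$ is $\cJ_1$-continuous at $(\tL_\alpha,\phi)$ because $\phi$ is continuous and strictly increasing; this is a standard fact about Skorokhod's $\cJ_1$ topology (see e.g.~\cite[Sec.~13.2]{Whitt02}). Consequently
\[
    U_n=W_n^Z\circ\phi_n\to_\mu\tL_\alpha\circ\phi \quad\text{in the }\cJ_1\text{ topology.}
\]

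It remains to identify the limit. By $\alpha$-self-similarity of $\tL_\alpha$, the process $t\mapsto\tL_\alpha(t/\bar R)$ has the same law as $t\mapsto\bar R^{-1/\alpha}\tL_\alpha(t)$, which is precisely $L_\alpha$ by the definition in Section~\ref{sec:Lalpha}. This completes the argument. The one non-cosmetic point is establishing uniform convergence of $\phi_n$ to $\phi$, but this is routine: Birkhoff controls $\tau_k$ pointwise, the renewal inversion gives pointwise control of $N^R_k$, and Dini's theorem promotes this to uniform convergence thanks to the continuity of $\phi$; none of the heavier technical hypotheses~\eqref{eq:D}--\eqref{eq:rv0} or the Young-tower structure are invoked here beyond what Theorem~\ref{thm:Z} already uses.
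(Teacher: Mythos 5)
Your proof is correct and matches the paper's approach. The paper gives no explicit argument for this corollary, stating it as a direct consequence of Theorem~\ref{thm:Z} by appealing to ``the argument in [FFMT, Lemma~5.7]'', and the chain you spell out — write $U_n=W_n^Z\circ\phi_n$, use the ergodic theorem and a Dini/P\'olya argument to get uniform a.s.\ convergence of the time changes $\phi_n$ to the deterministic linear map $\phi(t)=t/\bar R$, apply continuity of composition at a pair $(h,\psi)$ with $\psi$ continuous and strictly increasing, and then invoke $\alpha$-self-similarity to identify $\tL_\alpha\circ\phi$ in law with $\bar R^{-1/\alpha}\tL_\alpha=L_\alpha$ — is precisely the argument being delegated to.
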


Turning to Step 2, recall $\Gamma\in \R^{d\times e}$ and the map $\Phi: \bbS^{e-1}\to \Lip([0,1],\R^d)$
from Section~\ref{sec:Lalpha}.
Define
\[
G:D([0,1],\R^\MM)\to\DD([0,1],\R^d), \qquad h\mapsto G(h)
\]
where
     \[
         G(h) (t) (s)
         =
\begin{cases}
\Gamma h(t^-) + |h(t) - h(t^-)| \Phi \bigl( \frac{h(t) - h(t^-)}{|h(t) - h(t^-)|} \bigr) (s)
& \text{if } s\in[0,1) \text{ and } h(t^-)\neq h(t)\\
\Gamma h(t) & \text{if } s=1 \text{ or } h(t^-)=h(t)
\end{cases}
     \]

\begin{lemma}\label{lem:G_cont}
The map $G$ is continuous from $(D([0,1],\R^\MM),\sigma_\infty)$ to $(\DD([0,1],\R^d),\alpha_\infty)$.
\end{lemma}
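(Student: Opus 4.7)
The plan is to prove sequential continuity: assuming $\sigma_\infty(h_n, h) \to 0$, I must show $\alpha_\infty(G(h_n), G(h)) \to 0$. Pick $\rho_n \in \cR$ with $|\rho_n - \id|_\infty \to 0$ and $|h_n \circ \rho_n - h|_\infty \to 0$. Fix $\eps > 0$ and let $t_1, \ldots, t_K \in (0,1]$ enumerate the (finitely many) jumps of $h$ of size $> \eps$. By standard $\cJ_1$ Skorokhod theory, after a slight perturbation of $\rho_n$ one may assume that for $n$ large $h_n$ has exactly $K$ jumps of size $> \eps/2$, located at $s_j^n = \rho_n^{-1}(t_j)$, and that the jump sizes $|h_n(s_j^n) - h_n(s_j^{n-})|$ and unit directions $v_j^n = (h_n(s_j^n) - h_n(s_j^{n-}))/|h_n(s_j^n) - h_n(s_j^{n-})|$ converge to the corresponding quantities $|h(t_j)-h(t_j^-)|$ and $v_j$ for $h$. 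Since $\Phi$ is Lipschitz on $\bbS^{\MM-1}$, the rescaled profile $|h_n(s_j^n) - h_n(s_j^{n-})|\, \Phi(v_j^n)(\cdot)$ converges uniformly on $[0,1]$ to $|h(t_j) - h(t_j^-)|\, \Phi(v_j)(\cdot)$ for each $j \le K$.

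Next I would take $\Pi$ and $\Pi_n$ to be the jump sets of $h$ and $h_n$, enumerated so that $t_1, \ldots, t_K$ (respectively $s_1^n, \ldots, s_K^n$) appear first. Then the inserted intervals $I_j$ and $I_j^n$ in the $\delta$-extensions have the same length $2^{-j}\delta$. Construct $\tilde\rho^\delta \in \cR([0,1+\delta])$ that maps $I_j^n$ affinely onto $I_j$ for $j \le K$ and extends to an increasing continuous bijection of the complementary subsets (which both have total length $1 + 2^{-K}\delta$), in a way consistent with $\rho_n$ on the non-fictitious parts of $[0,1]$; since $s_j^n \to t_j$, we get $|\tilde\rho^\delta - \id|_\infty \to 0$ as $n \to \infty$. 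On each $I_j^n$ with $j \le K$, $|G(h_n)^\delta \circ \tilde\rho^\delta - G(h)^\delta|$ vanishes uniformly by the previous paragraph; on the non-fictitious parts, it is bounded by $|\Gamma| \cdot |h_n \circ \rho_n - h|_\infty = o(1)$; on the small-jump fictitious intervals ($j > K$), each of $G(h_n)^\delta$ and $G(h)^\delta$ differs from the adjacent c\`adl\`ag value by at most $\eps \|\Phi\|_\infty$. Hence $\sigma_\infty(G(h_n)^\delta, G(h)^\delta) \le 2\eps \|\Phi\|_\infty + o(1)$ uniformly in small $\delta$, giving $\alpha_\infty(G(h_n), G(h)) \le 2\eps \|\Phi\|_\infty$ eventually; letting $\eps \to 0$ completes the proof.

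The main obstacle is the treatment of the small-jump fictitious intervals. Their positions and ordering along $[0,1]$ need not match between $G(h)^\delta$ and $G(h_n)^\delta$, since small jumps of $h_n$ can appear or disappear and their positions need not converge to those of small jumps of $h$. Consequently $\tilde\rho^\delta$ cannot align them exactly. My resolution is to exploit the uniform bound: the total length of these intervals is only $2^{-K}\delta$, and the profile on each differs from the surrounding c\`adl\`ag value by at most $\eps \|\Phi\|_\infty$; thus even when $\tilde\rho^\delta$ sends a small-jump fictitious portion of $G(h_n)^\delta$ to an arbitrary (possibly non-fictitious or mis-aligned) portion of $G(h)^\delta$, the pointwise error is $O(\eps) + o(1)$, which suffices after $\eps \to 0$.
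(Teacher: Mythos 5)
Your overall strategy is genuinely different from the paper's and, while morally plausible, it leaves a nontrivial gap that the paper's argument avoids entirely.

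The key difficulty you yourself identify --- that the small-jump fictitious intervals of $G(h_n)^\delta$ and $G(h)^\delta$ have no a priori correspondence in position or ordering --- is sidestepped in the paper by a simple but decisive trick: rather than building separate decoration sets $\Pi_n$ and $\Pi$ and trying to align their $\delta$-extensions via an increasing bijection $\tilde\rho^\delta$, the paper first replaces $g$ by $g\circ\rho$ (with $|g\circ\rho-h|_\infty<\eps$) and then takes a \emph{single} set $\Pi$ containing all discontinuities of $g$, $g\circ\rho$ and $h$. Since the two $\delta$-extensions are then constructed from the same $\Pi$, the fictitious intervals are literally identical, and the comparison $|G(g\circ\rho)^\delta-G(h)^\delta|_\infty$ is a direct pointwise estimate with a two-case analysis on the jump magnitude $\Delta$ at each $t_j$: if $\Delta\le 4\eps$ one uses a naive bound with $|\Phi|_\infty$, and if $\Delta>4\eps$ one uses the Lipschitz constant $L$ of $\Phi$ together with the observation $|\Delta_g/|\Delta_g|-\Delta_h/|\Delta_h||<8\eps/\Delta$. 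This gives the explicit bound $|\Gamma|\eps+8\eps|\Phi|_\infty+8\eps L$ with no alignment machinery at all. The extra term $\sigma_\infty(G(g)^\delta,G(g\circ\rho)^\delta)\le\eps+\delta$ comes for free since $G(g\circ\rho)^\delta$ is a reparametrisation of $G(g)^\delta$.

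By contrast, your route creates work that it does not then rigorously finish. Three specific issues: (i) the construction of $\tilde\rho^\delta$ that simultaneously maps $I_j^n\to I_j$ affinely for $j\le K$, is ``consistent with $\rho_n$'' on non-fictitious parts, and accommodates the misaligned small-jump intervals is over-determined; in particular the consistency-with-$\rho_n$ requirement cannot hold exactly once you are forced to stretch/compress across small-jump intervals that sit at unrelated positions. Consequently the non-fictitious-part error is \emph{not} simply $|\Gamma|\cdot|h_n\circ\rho_n-h|_\infty$; one must also account for the shift incurred when $\tilde\rho^\delta$ sends a non-fictitious time of $h$ into (or past) a small-jump interval of $h_n$, which is itself only $O(\eps)$ after an additional argument about the oscillation of $h$ near small jumps. (ii) You invoke ``standard $\cJ_1$ theory'' to claim that $h_n$ has \emph{exactly} $K$ jumps of size $>\eps/2$; this fails if $h$ has jumps of size in $(\eps/2,\eps]$ and requires picking a threshold value not attained by any jump of $h$. (iii) The statement that $|\tilde\rho^\delta-\id|_\infty\to0$ as $n\to\infty$ is only true for fixed $\delta$, and one needs the residual term to be $O(\delta)$ uniformly in $n$ so that the outer limit $\delta\to 0$ in the definition of $\alpha_\infty$ can be taken. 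Each of these is plausibly fixable, but together they constitute substantially more work than the paper's proof, and as written the argument is not complete.

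Conceptually your resolution for the small jumps --- that their profiles deviate from the ambient c\`adl\`ag value by at most $O(\eps\|\Phi\|_\infty)$ --- is sound and is in fact the same observation underlying the paper's case $\Delta\le 4\eps$. The difference is that the paper applies it within a shared-$\Pi$ framework where there is no reparametrisation to build, whereas you apply it to bound a misalignment error whose existence the shared-$\Pi$ trick removes. I would recommend adopting the shared-$\Pi$ device: fix $g,h$ with $\sigma_\infty(g,h)<\eps$, pull back via $\rho$, and work with a single decoration set.
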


\begin{proof}
It is clear that $G(h)\in \DD([0,1],\R^d)$ for all $h\in D([0,1],\R^{\MM})$ since $G(h)(\cdot)(1)=\Gamma h$ and $h$ is c{\`a}dl{\`a}g by assumption.

To prove continuity of $G$, let $\eps>0$ and $g,h\in D([0,1],\R^\MM)$ with $\sigma_\infty(g,h)<\eps$.
Choose  $\rho\in\cR$ such that $|g\circ\rho-h|_\infty <\eps$
and $|\rho-\id|_\infty<\eps$.
Let $\Pi=\{t_1,t_2\ldots\}\subset [0,1]$ be the set of
discontinuities of the triple $(g,g\circ\rho,h)$ ordered arbitrarily.
All $\delta$-extensions taken below are with respect to the set $\Pi$.

Let $|\Gamma|$ denote the operator norm of $\Gamma$ and
let $|\Phi|_\infty = \sup_{x\in\bbS^{e-1},s\in[0,1]}|\Phi(x)(s)|$.
Choose $L>0$ such that
$|\Phi(x)(s)-\Phi(y)(s)|\le L|x-y|$.
For $\delta>0$, we show that
\begin{equation}\label{eq:ab_bound}
|G(g\circ\rho)^\delta - G(h)^\delta|_\infty \le |\Gamma|\eps + 8\eps |\Phi|_\infty + 8\eps L.
\end{equation}
Note also that $G(g\circ\rho)^\delta$ is a reparametrisation of $G(g)^\delta$, so
 $\sigma_\infty(G(g)^\delta,G(g\circ\rho)^\delta)\le \eps+\delta$.
Hence
$\sigma_\infty(G(g)^\delta,G(h)^\delta) \le |\Gamma|\eps + 8\eps |\Phi|_\infty + 8\eps L+\eps
+\delta$ and the result follows.

Turning to the verification of~\eqref{eq:ab_bound},
let $\tilde g=g\circ \rho$ and set
$a=G(\tilde g)$, $b=G(h)$.
Let $I_j=[c_j,d_j]\subset [0,1+\delta]$ be the fictitious time intervals appearing in the construction of $a^\delta$ and $b^\delta$ (see Section~\ref{sec:DD}) and denote $J=\bigcup_j [c_j,d_j)$.
Since $|\tilde g-h|_\infty < \eps$, 
\begin{equation}\label{eq:t_not_in_J}
\sup_{t\in [0,1+\delta]\setminus J}|a^\delta(t)-b^\delta(t)|\le \sup_{t\in[0,1]}
|\Gamma(\tilde g(t)-h(t))|\le|\Gamma|\eps.
\end{equation}

Consider now $t_j\in \Pi$.
Let
$\Delta_g=\tilde g(t_j^-)-\tilde g(t_j)$,
$\Delta_h=h(t_j^-)-h(t_j)$ and
$\Delta=\max\{|\Delta_g|,|\Delta_h|\}$.
We have the basic estimate
\begin{equation}\label{eq:naive}
\sup_{t\in [c_j,d_j)} |a^\delta(t)-b^\delta(t)| 
\le |\Gamma(\tilde g(t_j)-h(t_j))|+2\Delta |\Phi|_\infty
\le |\Gamma|\eps +2\Delta |\Phi|_\infty.
\end{equation}
In addition, we note that
$|\Delta_g-\Delta_h|<2\eps$, so if $\Delta>4\eps$ then
\[
\Big|\frac{\Delta_g}{|\Delta_g|} - \frac{\Delta_h}{|\Delta_h|}\Big| 
\le
2|\Delta_g-\Delta_h| /|\Delta_g|
<
 4\eps/(\Delta-2\eps) < 8\eps/\Delta.
\]
Again in the case $\Delta>4\eps$, note that $t_j$ is necessarily a discontinuity of both $\tilde g$ and $h$. Hence
\begin{align} \nonumber
\sup_{t\in [c_j,d_j)}|a^\delta(t)-b^\delta(t)| & \le 
|\Gamma(\tilde g(t_j^-)-h(t_j^-))| + |\Delta_g-\Delta_h||\Phi|_\infty
\\ 
& \qquad+|\Delta_h|{\textstyle\sup_{s\in[0,1]}} |\Phi(\Delta_g/|\Delta_g|)(s)-\Phi(\Delta_h/|\Delta_h|)(s)| 
\nonumber
\\ & \le 
|\Gamma|\eps  + 2\eps |\Phi|_\infty + 8\eps L.
\label{eq:refined}
\end{align}
Applying
\eqref{eq:naive} for $\Delta\le 4\eps$ and \eqref{eq:refined} for $\Delta >4\eps$, we obtain
\[
\sup_{t\in I_j}|a^\delta(t)-b^\delta(t)| \le 
|\Gamma|\eps  + 8\eps |\Phi|_\infty + 8\eps L.
\]
This combined with~\eqref{eq:t_not_in_J} yields
the required estimate~\eqref{eq:ab_bound}
\end{proof}

We now complete Step 2, noting that $L_\alpha^{P}=G(L_\alpha)$.

\begin{cor}
    \label{cor:Udec}
    $G(U_n) \to_\mu L_\alpha^{P}$ in the $\alpha_\infty$ topology.
\end{cor}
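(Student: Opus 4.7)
The proof will be a direct application of the continuous mapping theorem for weak convergence. The two ingredients are already in hand: Corollary~\ref{cor:U} gives $U_n \to_\mu L_\alpha$ in the $\cJ_1$ topology, i.e.\ with respect to the metric $\sigma_\infty$, and Lemma~\ref{lem:G_cont} establishes that $G:(D([0,1],\R^{\MM}),\sigma_\infty)\to(\DD([0,1],\R^d),\alpha_\infty)$ is (everywhere) continuous.

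First, I would verify the pathwise identification $G(L_\alpha) = L_\alpha^P$ as elements of $\DD([0,1],\R^d)$. For each realisation, at continuity points of $L_\alpha$ both paths reduce to $\Gamma L_\alpha(t)$, and at a jump time $t$ one has $L_\alpha(t)-L_\alpha(t^-)\neq 0$, so comparing the definition of $G$ in the preceding paragraph with the definition of $L_\alpha^P$ in Section~\ref{sec:Lalpha} shows that the two expressions coincide for $s\in[0,1)$; the value at $s=1$ was already checked to be $\Gamma L_\alpha(t)$ in the remark following the definition of $L_\alpha^P$. Hence $G(L_\alpha)=L_\alpha^P$ almost surely.

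Then I would apply the continuous mapping theorem: since $G$ is continuous on all of $D([0,1],\R^{\MM})$ (not merely almost everywhere with respect to the law of $L_\alpha$), the weak convergence $U_n\to_\mu L_\alpha$ in $(\,D,\sigma_\infty)$ transfers directly to $G(U_n)\to_\mu G(L_\alpha)=L_\alpha^P$ in $(\DD,\alpha_\infty)$, which is the desired conclusion.

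There is no real obstacle here — all the technical work has been done in Lemma~\ref{lem:G_cont} (continuity of $G$) and Corollary~\ref{cor:U} ($\cJ_1$ convergence of $U_n$). The only minor point warranting care is the pathwise identification $G(L_\alpha)=L_\alpha^P$, which is immediate from the definitions but should be explicitly recorded so that the continuous mapping theorem yields the statement in the form written.
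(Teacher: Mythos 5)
Your proof is correct and takes essentially the same approach as the paper: the paper likewise invokes Corollary~\ref{cor:U}, Lemma~\ref{lem:G_cont}, and the continuous mapping theorem, having noted just beforehand that $L_\alpha^{P}=G(L_\alpha)$. Your additional care in spelling out the pathwise identification $G(L_\alpha)=L_\alpha^P$ is a reasonable elaboration of a step the paper treats as immediate.
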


\begin{proof}
This follows from  Corollary \ref{cor:U}, Lemma \ref{lem:G_cont} and the continuous mapping theorem.~
\end{proof}

Next, we carry out Step 3. 
Recall that $v:\Lambda\to\R^d$ is H\"older with $\int_\Lambda v\,d\mu_\Lambda=0$.
Define $R':\Sigma\to\Z$ and $V:\Sigma\to\R^d$,
\[
R'=R^{1-\eta}+R\bone_{\Sigma_0}, \qquad V = \sum_{j=0}^{R-1} v \circ T^j,
\]
where $\eta>0$ is as in~\eqref{eq:P}.

\begin{prop} \label{prop:R'}
There exists $p>\alpha$ such that
$R'\in L^p$. Moreover, 
$V=\Gamma Z+H$ where $H\in L^p$.
\end{prop}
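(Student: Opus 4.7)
The plan is to choose a single exponent $p>\alpha$ that works for both statements, and then handle each of the two summands making up $R'$, and the two regions $\bigcup_{i\ge 1}\Sigma_i$ versus $\Sigma_0$, separately.

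First, I would fix $p$ with $\alpha<p<\min\{\alpha/(1-\eta),\,\alpha+\eta\}$; such $p$ exists because $\eta>0$ (and, without loss, $\eta<1$). For the first claim, I would observe that~\eqref{eq:rvR} gives $\mu(R^{1-\eta}>t)=\mu(R>t^{1/(1-\eta)})\sim \ell(t^{1/(1-\eta)})t^{-\alpha/(1-\eta)}$, so $R^{1-\eta}\in L^p$ for every $p<\alpha/(1-\eta)$. Similarly, the tail bound~\eqref{eq:rv0} gives $R\bone_{\Sigma_0}\in L^p$ for every $p<\alpha+\eta$. Hence $R'=R^{1-\eta}+R\bone_{\Sigma_0}\in L^p$ for our chosen $p$.

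For the decomposition $V=\Gamma Z+H$, I would work on each $\Sigma_i$ separately. Recalling $\Gamma\omega_i=P_i(1)$ and $Z=\sum_{i=1}^{\MM}\omega_iR\bone_{\Sigma_i}$, we get $\Gamma Z=\sum_{i=1}^{\MM}P_i(1)R\bone_{\Sigma_i}$, in particular $\Gamma Z=0$ on $\Sigma_0$. On $\Sigma_i$ for $i\ge 1$, apply~\eqref{eq:P} with $k=R-1$ and use Lipschitz continuity of $P_i$ (noted in the remark following~\eqref{eq:P}) to get
\[
\bone_{\Sigma_i}\big|V-P_i(1)R\big|\le \bone_{\Sigma_i}\big|V-P_i((R-1)/R)R\big|+\Lip(P_i)\le CR^{1-\eta}+C'.
\]
So $\bone_{\Sigma_i}|H|\ll R^{1-\eta}+1$. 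On $\Sigma_0$, $H=V$ and the trivial bound $|V|\le R|v|_\infty$ gives $\bone_{\Sigma_0}|H|\ll R\bone_{\Sigma_0}$. Combining these estimates yields $|H|\ll R'+1$, and hence $H\in L^p$ by the first part of the proposition.

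There is no real obstacle: the argument is essentially bookkeeping once one notes that the two tail assumptions~\eqref{eq:rvR} and~\eqref{eq:rv0} each gain something relative to $R\in L^q$ for $q<\alpha$ (a power in one case, an index in the other), and that~\eqref{eq:P} is tailor-made to produce $\Gamma Z$ as the leading contribution to $V$ on the cuspidal sets.
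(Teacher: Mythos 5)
Your proof is correct and follows essentially the same route as the paper: the same decomposition $R'=R^{1-\eta}+R\bone_{\Sigma_0}$, the same region-by-region bounds $|H|\ll R^{1-\eta}$ on $\Sigma_i$ (via~\eqref{eq:P}) and $|H|\ll R$ on $\Sigma_0$. The only cosmetic differences are that you invoke~\eqref{eq:P} with $k=R-1$ plus the Lipschitz bound on $P_i$ (yielding an additive constant) where the paper appeals to the equivalent form~\eqref{eq:PP} at $t=1$, and you deduce $R^{1-\eta}\in L^p$ directly from regular variation where the paper uses the simpler fact $R\in L^q$ for $q<\alpha$.
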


\begin{proof}
Since $R\in L^q$ for all $q<\alpha$, it follows that
$R^{1-\eta}\in L^p$ for some $p>\alpha$.
Also, $R\bone_{\Sigma_0}\in L^p$ for some $p>\alpha$ by~\eqref{eq:rv0}.

By~\eqref{eq:P}, 
\[
|H|=|V-\Gamma Z|=
    |V-P_i(1)R| \ll R^{1-\eta}
\]
on $\Sigma_i$, $i=1,\dots,\MM$.
On $\Sigma_0$, we have $|H|=|V|\ll R$.
Hence $|H|\ll R'\in L^p$.
\end{proof}

Note that $\int_\Sigma V\,d\mu=0$.
Hence $V=\Gamma\tZ+\tH$ where
$\tH=H-\int_\Sigma H\,d\mu$.
Define
$\tH_k=\sum_{j=0}^{k-1}\tH\circ f^j$.

    Using that $R$ is the first return time to $\Sigma$, define $R_\Lambda : \Lambda \to \Z$
    so that $R_\Lambda(T^\ell x) = R(x)$ for every $x \in \Sigma$ and $0 \le \ell  < R(x)$.

\begin{lemma} \label{lem:WU}
There exists $C>0$ such that
\[
\alpha_{\infty} (G(U_n), \hW_n)
\le 
\max_{k<n}\{b_n^{-1}|\tH_k|+Cb_n^{-1}R'\circ f^k+n^{-1}R\circ f^k\}
+|v|_\infty b_n^{-1}R_\Lambda\circ T^n
\]
for all $n\ge1$.
\end{lemma}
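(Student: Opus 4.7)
The plan is to construct an explicit reparametrisation $\rho \in \cR$ of $[0, 1+\delta]$ that aligns the structure of $\hW_n^\delta$ (which tracks the actual partial sums $W_n$) with that of $G(U_n)^\delta$ (whose decorations at each jump time $\tau_k/n$ of $U_n$ are encoded on the fictitious intervals $I_k = [c_k^-, c_k^+]$ of length $2^{-k}\delta/r$). Fix $y_0 \in \Sigma$, set $\tau_k = \sum_{j < k} R \circ f^j(y_0)$ and $N = N_n^R$, so that $\tau_N \le n < \tau_{N+1}$. Using an auxiliary parameter $\delta' > 0$ with $n\delta' \to 0$ as $\delta \to 0$, define $\rho$ to be piecewise linear so that on each excursion interval $[\tau_{k-1}/n, \tau_k/n]$ (for $1 \le k \le N$) the initial short piece $[\tau_{k-1}/n, \tau_{k-1}/n + \delta']$ is stretched onto the preceding normal interval $[c_{k-1}^+, c_k^-]$ of $G(U_n)^\delta$ (whose length equals $R(f^{k-1}y_0)/n$), while the remainder is mapped linearly onto $I_k$; on the tail $[\tau_N/n, 1+\delta]$ take $\rho$ linear with $\rho(1+\delta) = 1+\delta$. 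Since the total inserted length is $\delta$ and $c_k^+ - \tau_k/n \le \delta$, the deviation $|\rho - \id|_\infty$ is bounded by $\delta + \max_{k} n^{-1} R \circ f^{k-1}(y_0)$, producing the $n^{-1} R \circ f^k$ contribution.

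For the sup-norm comparison $|G(U_n)^\delta \circ \rho - \hW_n^\delta|_\infty$, work excursion by excursion. On the normal-traversal segment of length $\delta'$, $G(U_n)^\delta \circ \rho$ sits at the constant $\Gamma U_n(\tau_{k-1}/n)$ while $\hW_n^\delta = W_n$ stays within $|v|_\infty n \delta' / b_n$ of $W_n(\tau_{k-1}/n)$; the drift discrepancy $W_n(\tau_{k-1}/n) - \Gamma U_n(\tau_{k-1}/n) = b_n^{-1} \tH_{k-1}$ comes from the identity $V = \Gamma \tZ + \tH$ (which follows from Proposition~\ref{prop:R'} combined with $\int_\Sigma V \,d\mu = 0$). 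On the fictitious-traversal segment, when $f^{k-1} y_0 \in \Sigma_i$ and $R := R(f^{k-1} y_0)$ is large enough that the $U_n$-jump direction $(\omega_i R - \bar Z)/|\omega_i R - \bar Z|$ lies in the region where $\Phi \equiv P_i$, the decoration equals $\Gamma U_n(\tau_{k-1}/n) + b_n^{-1} R P_i(\cdot) + O(b_n^{-1})$, while \eqref{eq:PP} gives $W_n(t) = W_n(\tau_{k-1}/n) + b_n^{-1} R P_i((t - \tau_{k-1}/n) n/R) + O(b_n^{-1} R^{1-\eta})$. The linearity of $\rho$ on $I_k$ together with Lipschitz continuity of $P_i$ keep the profile-matching error at $O(b_n^{-1} R^{1-\eta}(f^{k-1} y_0))$, modulo terms that vanish as $\delta \to 0$. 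On the tail, $\hW_n^\delta - \Gamma U_n(\tau_N/n) = b_n^{-1} \tH_N + O(b_n^{-1} |v|_\infty R_\Lambda(T^n y_0))$, producing the final term.

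Taking the maximum of these errors over $k$ and letting $\delta \to 0$ (with $\delta' = o(\delta/n)$) yields the stated bound on $\alpha_\infty(G(U_n), \hW_n)$. The main technical subtlety is handling moderate excursion lengths $R$ for which the $U_n$-jump direction falls outside the region where $\Phi$ equals $P_i$: the decoration on $I_k$ then deviates from $b_n^{-1} R P_i$, but since $\Phi$ is uniformly bounded and the actual excursion is bounded by $|v|_\infty R / b_n$ with $R$ itself bounded in this regime, the discrepancy is only $O(b_n^{-1})$ and is absorbed into $C b_n^{-1} R'$ (since $R' \ge R^{1-\eta} \ge 1$ for $R \ge 1$). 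The analogous crude bound for $f^{k-1} y_0 \in \Sigma_0$ gives $O(b_n^{-1} R \bone_{\Sigma_0})$, also absorbed by the same term.
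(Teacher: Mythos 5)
Your proof is correct and rests on the same core estimates as the paper's: the $\sigma$-algebra-level identity $V=\Gamma\tZ+\tH$ giving $W_n(t_k)-\Gamma U_n(t_k)=b_n^{-1}\tH_k$, profile matching via~\eqref{eq:PP} with $O(R^{1-\eta})$ error, the absorption of the $O(b_n^{-1})$ and $\Sigma_0$ contributions into $Cb_n^{-1}R'$, and the tail bound producing the $R_\Lambda\circ T^n$ term. The only meaningful difference is in how these estimates are assembled into a bound on $\alpha_\infty$. You build a single explicit global reparametrisation $\rho\in\cR_{[0,1+\delta]}$ with an auxiliary parameter $\delta'$ and then bound $\max\{|\rho-\id|_\infty,|G(U_n)^\delta\circ\rho-\hW_n^\delta|_\infty\}$ directly. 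The paper instead decomposes $[0,1]$ into excursion intervals $(t_k,t_{k+1}]$ via Proposition~\ref{prop:cup}, introduces the ``unfolded'' auxiliary path $E$ on each piece, and invokes Proposition~\ref{prop:E} (giving $\alpha_\infty(G(U_n),\hE)\le t_{k+1}-t_k$) together with the triangle inequality and Remark~\ref{rmk:lift}. The two are essentially the same argument --- Proposition~\ref{prop:E} is proved precisely by the ``stretch-then-compress'' reparametrisation you construct by hand on each excursion --- but the paper's packaging avoids the $\delta'$ bookkeeping and the need to verify global monotonicity of $\rho$ (e.g.\ that $\delta'<1/n$, a constraint you omit but which is needed to keep your piecewise-linear pieces disjoint). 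Your approach buys explicitness at the cost of these verifications; the paper's buys modularity.
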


\begin{proof}
    Set $t_0 = 0$ and $t_{k+1} = t_k + n^{-1} R \circ f^k$, so that the time intervals
    $[t_k, t_{k+1}]$ correspond to excursions from $\Sigma$.
In particular,
$W_n(t_k)=b_n^{-1}\sum_{j=0}^{k-1}V\circ f^j$ and
$U_n(t_k)=b_n^{-1}\sum_{j=0}^{k-1}\tZ\circ f^j$. 
Hence
\begin{equation} \label{eq:WU}
W_n(t_k)-\Gamma U_n(t_k)= 
b_n^{-1}\sum_{j=0}^{k-1}(V-\Gamma\tZ)\circ f^j=
b_n^{-1}\tH_k.
\end{equation}

    Let $\ELL = \max \{ k : t_k \le 1 \}$. Note that $t_k$ and $\ELL$ are random variables depending also on $n$, and that $\ELL\le n$.

    We are going to bound $\alpha_{\infty; (t_k,t_{k+1}]} (G(U_n), \hW_n)$ with $0\le k < \ELL$
    and $\alpha_{\infty; (t_\ELL,1]} (G(U_n), \hW_n)$ separately. 
(The last term is absent if $t_\ELL=1$.) By Proposition~\ref{prop:cup},
    \begin{equation}
        \label{eq:aaa}
        \alpha_{\infty} (G(U_n), \hW_n)
        \le \max \Bigl\{
            \max_{0\le k < \ELL} \alpha_{\infty; (t_k,t_{k+1}]} (G(U_n), \hW_n)
           \ , \ 
            \alpha_{\infty; (t_\ELL,1]} (G(U_n), \hW_n)
        \Bigr\}
        .
    \end{equation}

    Let $0\le k < \ELL$. Define $E \in D([t_k,t_{k+1}], \R^d)$,
    \[
        E(t) = G(U_n)(t_{k+1})(s)
\qquad s= \frac{t-t_k}{t_{k+1}-t_k}=\frac{(t-t_k)n}{R\circ f^k}
        .
    \]
Let $\hE=\iota E\in \DD((t_k,t_{k+1}], \R^d)$ denote the trivial lift
of $E$ from Definition~\ref{def:iota}.
Recall that $U_n$ is constant on $[t_k,t_{k+1})$.
By Proposition~\ref{prop:E},
    \[
        \alpha_{\infty; (t_k, t_{k+1}]} (G(U_n), \hE)
        \le t_{k+1} - t_k
        = n^{-1} R \circ f^k
        .
    \]

Next, we estimate $|W_n-E|_{\infty,(t_k,t_{k+1}]}$.
By~\eqref{eq:WU},
\begin{equation} \label{eq:WE}
W_n(t_k)-E(t_k)= W_n(t_k)-\Gamma U_n(t_k)= b_n^{-1}\tH_k.
\end{equation}

Note that $U_n(t_{k+1})-U_n(t_k)=b_n^{-1}\tZ\circ f^k$.
Hence, for $t\in(t_k,t_{k+1})$, 
\[
E(t) - E(t_k) 
= b_n^{-1}\big\{|\tZ|\, \Phi(\tZ/|\tZ|)(s)\big\}\circ f^k
.
\]
Recall that
$\tZ=Z-\int_\Sigma Z\,d\mu$, $Z=\sum_{i=1}^{\MM} \omega_i R\bone_{\Sigma_i}$.
Since $\Phi:\bbS^{\MM-1}\to \Lip([0,1],\R^d)$ is constant on a neighbourhood of each $\omega_i$ with value $P_i$,
there exists $R_0>0$ such that
\[
\bone_{\Sigma_i}\Phi(\tZ/|\tZ|)=\bone_{\Sigma_i}\Phi(Z/|Z|)=\Phi(\omega_i)=P_i
\]
for $R\ge R_0$ for each $i=1,\dots,\MM$. 
Also, $\bone_{\Sigma_0}|\tZ|\, \Phi(\tZ/|\tZ|)\le |\int_\Sigma Z\,d\mu|_\infty |\Phi|_\infty<\infty$.
Hence,
for $ t\in (t_k,t_{k+1})$,
\begin{align} \label{eq:DE}
E(t)-E(t_k)
=b_n^{-1}\sum_{i=1}^{\MM} \Big\{R\bone_{\Sigma_i}
P_i\big(\tfrac{(t-t_k)n}{R}\big)\Big\}\circ f^k + O(b_n^{-1}).
\end{align}

On the other hand, for $t\in (t_k,t_{k+1}]$,
it follows from~\eqref{eq:PP} that
\begin{align} \label{eq:DW}
W_n(t)   -W_n(t_k) &
 = b_n^{-1}\sum_{\ell=0}^{[(t-t_k)n]-1}v\circ T^\ell\circ f^k
\\ & = b_n^{-1}\Big\{\sum_{i=1}^\MM R\bone_{\Sigma_i}P_i\big(\tfrac{(t-t_k)n}{R}\big) + 
O(R^{1-\eta})+O(R\bone_{\Sigma_0})\Big\}\circ f^k. \nonumber
\end{align}

Combining~\eqref{eq:WE},~\eqref{eq:DE} and~\eqref{eq:DW}, there exists $C>0$ such that
\[
|W_n-E|_{\infty;(t_k,t_{k+1}]}\le b_n^{-1}\big\{\max\{|\tH_k|, |\tH_{k+1}|\}+ CR'\circ f^k\big\}.
\]
Since $\hW_n$ and $\hE$ are trivial lifts,
it follows from Remark~\ref{rmk:lift} that
\[
        \alpha_{\infty;(t_k,t_{k+1}]}(\hW_n,\hE)\le 
|W_n-E|_{\infty;(t_k,t_{k+1}]}\le b_n^{-1}\big\{\max\{|\tH_k|, |\tH_{k+1}|\}+
CR'\circ f^k\big\}.
\]

Combining the estimates for $\alpha_\infty(G(U_n),\hE)$ and $\alpha_\infty(\hW_n,\hE)$, we obtain
    \begin{equation}
        \label{eq:WUt}
        \begin{aligned}
            \alpha_{\infty; (t_k,t_{k+1}]} (G(U_n), \hW_n)
            & \le \alpha_{\infty; (t_k,t_{k+1}]} (G(U_n), \hE) + 
            \alpha_{\infty; (t_k,t_{k+1}]}
(\hW_n , \hE)
            \\
& \le \max_{k\le n}\{b_n^{-1}|\tH_k|+Cb_n^{-1}R'\circ f^k+n^{-1}R\circ f^k\}
            .
        \end{aligned}
    \end{equation}
    Finally, we treat the interval $(t_\ELL, 1]$ for $t_\ELL < 1$.
    On this interval, $G(U_n) = \Gamma U_n(t_\ELL)$ is constant.
    Note that $R \circ f^\ELL = R_\Lambda \circ T^n$ and it follows
that $| W_n(t_\ELL) - W_n |_{\infty;(t_\ELL, 1]}\le b_n^{-1}|v|_\infty  R_\Lambda \circ T^n$. By Remark~\ref{rmk:lift} and~\eqref{eq:WU},
    \begin{equation}
        \label{eq:WUl}
        \begin{aligned}
            \alpha_{\infty; (t_\ELL,1]} (G(U_n), \hW_n)
            & \le |\Gamma U_n(t_\ELL)- W_n|_{\infty; (t_\ELL,1]} 
             \\ & \le |\Gamma U_n(t_\ELL) - W_n(t_\ELL)|
            + | W_n(t_\ELL) - W_n |_{\infty;(t_\ELL, 1]}
            \\
            & \le b_n^{-1}\big\{|\tH_\ELL|
            + |v|_\infty R_\Lambda \circ T^n\big\}
            .
        \end{aligned}
    \end{equation}

    The result follows from~\eqref{eq:aaa}, \eqref{eq:WUt} and~\eqref{eq:WUl}.
\end{proof}

We can now complete Step 3.
\begin{cor} \label{cor:WU}
$\alpha_{\infty} (G(U_n), \hW_n) \to_\mu 0$.
\end{cor}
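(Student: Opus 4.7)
The plan is to apply Lemma \ref{lem:WU} and argue that each of the four terms on its right-hand side vanishes in $\mu$-probability as $n\to\infty$.

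For the term $b_n^{-1}\max_{k<n}|\tH_k|$, where $\tH_k = \sum_{j=0}^{k-1}\tH\circ f^j$ and $\tH = H - \int_\Sigma H\,d\mu$ with $H = V-\Gamma Z$: by Proposition \ref{prop:R'} we have $\tH\in L^p$ for some $p>\alpha$. Moreover, $\tH$ inherits the Lipschitz-type regularity on elements of $\cC$ from $V$ (as in Remark \ref{rmk:MV}), since $Z$ is constant on such elements by construction. Hence $\tH$ satisfies the hypotheses of Lemma \ref{lem:MV} for some $q>\alpha$, and Corollary \ref{cor:MV} yields $b_n^{-1}\max_{k\le n}|\tH_k|\to_\mu 0$.

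The second and third terms are handled by Markov's inequality together with a union bound. For the second, Proposition \ref{prop:R'} gives $R'\in L^p$ for some $p>\alpha$, so
\[
\mu\bigl(\textstyle\max_{k<n}R'\circ f^k > \eps b_n\bigr) \le n\,\mu(R'>\eps b_n) \ll n b_n^{-p} \to 0
\]
because $b_n^\alpha\sim n\ell(b_n)$ and $p>\alpha$. For the third, $\mu\bigl(\max_{k<n}R\circ f^k > \eps n\bigr)\le n\mu(R>\eps n)\ll n\,\ell(\eps n)(\eps n)^{-\alpha}\to 0$ by \eqref{eq:rvR} and the assumption $\alpha>1$.

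The fourth term, $b_n^{-1}|v|_\infty R_\Lambda\circ T^n$, is handled by a direct computation of the tail of $R_\Lambda$ under $\mu_\Lambda$. Using that orbits leaving $\Sigma$ spend $R(x)$ steps before returning, one obtains $\mu_\Lambda(R_\Lambda>t) = \mu_\Lambda(\Sigma)\sum_{k>t}k\,\mu(R=k) \ll \ell(t)t^{1-\alpha}$ by Karamata's theorem. Combined with $b_n^\alpha\sim n\ell(b_n)$, this gives $\mu_\Lambda(R_\Lambda>\eps b_n)\to 0$. By $T$-invariance of $\mu_\Lambda$ and the absolute continuity of $\mu$ with respect to $\mu_\Lambda|_\Sigma$, the same bound transfers to $\mu(R_\Lambda\circ T^n>\eps b_n)\to 0$.

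The only non-routine step is the first one, where we invoke the martingale-coboundary machinery of Section \ref{sec:pre}; the remaining bounds are straightforward tail estimates. Combining the four convergences with Lemma \ref{lem:WU} concludes the proof.
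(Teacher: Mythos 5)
Your proof is correct, and the handling of the first term (via $\tH\in L^p$, the regularity of $\tH$ on elements of $\cC$, Remark~\ref{rmk:MV} and Corollary~\ref{cor:MV}) is exactly what the paper does. For the remaining three terms you take a genuinely different route. The paper disposes of the $R'$ and $R$ terms with the pointwise ergodic theorem — for $g\in L^q$ one has $n^{-1/q}\max_{k\le n}g\circ f^k\to 0$ a.e., applied with $q=p$ and $q=1$ respectively — whereas you use a union bound together with Markov's inequality (for $R'$) and the regular variation~\eqref{eq:rvR} (for $R$). Both are valid; the paper's version gives almost-sure statements, while yours gives explicit quantitative rates but at the cost of a small amount of extra bookkeeping about the slowly varying function. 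For the fourth term, the paper avoids any tail computation: it simply observes that $R_\Lambda<\infty$ a.e.\ and $b_n\to\infty$, so $b_n^{-1}R_\Lambda\to0$ a.e., and then transfers via $T$-invariance of $\mu_\Lambda$ and absolute continuity of $\mu\ll\mu_\Lambda$. Your Karamata-based tail estimate $\mu_\Lambda(R_\Lambda>t)\ll \ell(t)t^{1-\alpha}$ is correct (via the Kac-type decomposition of $\mu_\Lambda$ over the tower above $\Sigma$) but is stronger than what's needed. In short: the conclusion and the decomposition into four terms agree with the paper; the techniques for the last three bounds differ, with the paper's choices being somewhat softer and shorter.
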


\begin{proof}
We show that all the terms on the right-hand side in Lemma~\ref{lem:WU}
converge to $0$ in probability on $(\Sigma, \mu)$.

Let $p>\alpha$ be as in Proposition~\ref{prop:R'}.
By Proposition~\ref{prop:R'}, $\tH=V-\Gamma \tZ\in L^p$.
Also $\int_\Sigma\tH\,d\mu=0$.
For $x,x'$ in the same element of $\cC$, 
we have $\tH(x)-\tH(x')=V(x)-V(x')$. Hence it follows from
    Corollary~\ref{cor:MV} and Remark~\ref{rmk:MV} that
$b_n^{-1}\big|\max_{k\le n}|\tH_k|\big|
    \to_\mu0$.

By the pointwise ergodic theorem $n^{-1/q}\max_{k\le n}g\circ f^k\to0$ a.e.\ for all $L^q$ functions $g:\Sigma\to\R$, $1\le q<\infty$.
Taking $q=1$, it follows that $n^{-1}\max_{k\le n} R\circ f^k\to_\mu0$.
Taking $q=p$,
we have $b_n^{-1}\max_{k\le n} R'\circ f^k
 \ll n^{-1/p}\max_{k\le n} R'\circ f^k\to_\mu0$.

    Finally, $b_n^{-1}R_\Lambda\to_\mu0$ a.e.\ on $(\Lambda,\mu_\Lambda)$.
Since $\mu_\Lambda$ is $T$-invariant,
$b_n^{-1} R_\Lambda \circ T^n \to_{\mu_\Lambda} 0$.
    But $\mu$ is absolutely continuous with respect to $\mu_\Lambda$, so $b_n^{-1} R_\Lambda \circ T^n \to_\mu 0$ too.
\end{proof}

The key ingredient of Step 4 is:
        \begin{prop} \label{prop:SDC}
        $\alpha_\infty(\hW_n\circ T,\hW_n)\le 2|v|_\infty\,b_n^{-1}$.
        \end{prop}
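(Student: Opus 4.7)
The key observation is that both $\hW_n$ and $\hW_n\circ T$ are trivial lifts, so by Remark~\ref{rmk:lift} it suffices to bound the uniform distance between the underlying c\`adl\`ag paths. Concretely, $\hW_n = \iota W_n$, and regarding $W_n$ as a random element of $D([0,1],\R^d)$ on $(\Lambda,\mu_\Lambda)$, the composition $\hW_n\circ T$ is simply $\iota(W_n\circ T)$, where $(W_n\circ T)(t) = b_n^{-1}\sum_{j=0}^{[nt]-1} v\circ T^{j+1}$. So by Remark~\ref{rmk:lift},
\[
\alpha_\infty(\hW_n\circ T, \hW_n)\le |W_n\circ T - W_n|_\infty.
\]

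The heart of the proof is then a one-line telescoping. For each $t\in[0,1]$,
\[
(W_n\circ T)(t)-W_n(t) = b_n^{-1}\Big(\sum_{j=1}^{[nt]} v\circ T^{j} - \sum_{j=0}^{[nt]-1} v\circ T^{j}\Big) = b_n^{-1}\bigl(v\circ T^{[nt]} - v\bigr),
\]
whose sup norm is bounded by $2|v|_\infty b_n^{-1}$. Combining with the previous display gives $\alpha_\infty(\hW_n\circ T,\hW_n)\le 2|v|_\infty b_n^{-1}$, as required.

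There is no real obstacle: the only subtlety to confirm is that the naturality of the trivial lift $\iota$ makes $\hW_n\circ T$ coincide with $\iota(W_n\circ T)$, which is immediate from $\iota h(t)(s)=h(t)$.
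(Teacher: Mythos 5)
Your proof is correct and follows essentially the same route as the paper: both invoke the trivial-lift observation and Remark~\ref{rmk:lift} to reduce $\alpha_\infty$ to the uniform distance $|W_n\circ T - W_n|_\infty$, then bound that by $2|v|_\infty b_n^{-1}$. The only difference is that you spell out the telescoping cancellation explicitly, which the paper leaves implicit.
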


\begin{proof}
Since $\hW_n$ (and hence $\hW_n\circ T$) is a trivial lift,
it follows from Remark~\ref{rmk:lift} that
\[
\alpha_\infty(\hW_n\circ T,\hW_n)
\le |W_n\circ T-W_n|_\infty
\le 2|v|_\infty b_n^{-1},
\]
as required.
\end{proof}

In particular,
        $\alpha_\infty(\hW_n\circ T,\hW_n)\to_\mu 0$.
 Hence we have verified~\cite[Condition~(1)]{Zweimuller07} in Proposition~\ref{prop:SDC}.
Since $\mu$ is absolutely continuous with respect to $\mu_\Lambda$,        
it follows from~\cite[Theorem~1]{Zweimuller07} that
we can replace $\mu$ in Step~3 by $\mu_\Lambda$, thereby
completing the proof of Theorem~\ref{thm:Wconv}.

\section{Tightness of \texorpdfstring{$W_n$ in $p$}{Wn in p}-variation}
\label{sec:tight}

In this section, we prove Theorem~\ref{thm:tight}.
Recall that we have mixing ergodic dynamical systems $(T,\Lambda,\mu_\Lambda)$, $(f,\Sigma,\mu)$ and $(F,Y,\mu_Y)$ where
 $f=T^R$ and $F=f^\tau$,
Here, $R:\Sigma\to\Z^+$ is a first return time and
$\tau:Y\to\Z^+$ is a return time with exponential tails.
Moreover $(F,Y,\mu_Y)$ quotients to a
Gibbs-Markov map $(\bF,\bY,\mu_\bY)$.

Define the induced return time function
\[
R^Y:Y\to\Z^+, \qquad R^Y=\sum_{j=0}^{\tau-1}R\circ f^j.
\]

\begin{prop} \label{prop:phiY}
$\mu_Y(R^Y>t) \sim c\ell(t)t^{-\alpha}$ as $t\to\infty$ 
for some $c>0$.
\end{prop}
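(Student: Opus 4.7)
}
The strategy is the ``single big jump'' principle: since $R$ has regularly varying tail under $\mu$ with index $-\alpha$ and $\tau$ has exponential tails, the random sum $R^Y = \sum_{j=0}^{\tau-1} R\circ f^j$ should have tail driven by its largest summand. Set $\phi = R^Y$ and $\phi_{\max}(y)=\max_{0\le j<\tau(y)} R(f^j y)$. I aim to prove $\mu_Y(\phi > t)\sim\bar\tau\ell(t)t^{-\alpha}$ (so $c=\bar\tau$), in two main steps: (i) establish $\mu_Y(\phi_{\max} > t)\sim \bar\tau\ell(t)t^{-\alpha}$, and (ii) show $\mu_Y(\phi>t,\phi_{\max}\le(1-\eps)t)=o(\ell(t)t^{-\alpha})$ for each fixed $\eps\in(0,1)$.

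The tower identity $\int_\Sigma g\,d\mu=\bar\tau^{-1}\int_Y\sum_{\ell=0}^{\tau-1}g\circ f^\ell\,d\mu_Y$ applied with $g=\bone_{\{R>t\}}$ gives $\sum_{j\ge0}\mu_Y(j<\tau,\,R\circ f^j>t)=\bar\tau\mu(R>t)$, which immediately yields the upper bound $\mu_Y(\phi_{\max}>t)\le\bar\tau\mu(R>t)$. Taking $\ell=0$ in the same identity also gives $\mu_Y(B)\le\bar\tau\mu(B)$ for any $B\subset Y$, which I will use throughout. For the lower bound I apply Bonferroni:
\[
\mu_Y(\phi_{\max}>t)\ge \bar\tau\mu(R>t)-\sum_{0\le i<j}\mu_Y(\tau>j,\,R\circ f^i>t,\,R\circ f^j>t).
\]
I split the double sum at $j=C\log t$. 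For $j>C\log t$, the terms are controlled by the exponential tails of $\tau$, yielding $\ll(\log t)\,t^{-cC}$, which is $o(\ell(t)t^{-\alpha})$ once $cC>\alpha$. For $j\le C\log t$, I use $\mu_Y(R\circ f^i>t,R\circ f^j>t)\le \bar\tau\mu(R>t,R\circ f^{j-i}>t)$ ($f$-invariance of $\mu$ and the bound $\mu_Y\le\bar\tau\mu$), and then Proposition~\ref{prop:D} bounds this by $\ll(t^\alpha/\ell(t))^{-\theta_1}$ since $j-i\le C\log t\le a_1 t^\alpha/\ell(t)$ for large $t$. Summing gives $(\log t)^2 t^{-\alpha\theta_1}\ell(t)^{\theta_1}=o(\ell(t)t^{-\alpha})$ thanks to $\theta_1>1$. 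This completes step (i).

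For step (ii), if $\phi>t$ and $\phi_{\max}\le(1-\eps)t$, then $\tau\ge2$ and the sum of the $\tau-1$ non-maximal terms exceeds $\eps t$; combined with the maximum being at least as large, there must exist $0\le i<j<\tau$ with $R\circ f^i,R\circ f^j>\eps t/(\tau-1)\ge\eps t/\tau$. Splitting again at $\tau=C\log t$ and applying exactly the same two bounds as above (exponential tails for $\tau>C\log t$, and Proposition~\ref{prop:D} at threshold $\eps t/k$ for $\tau=k\le C\log t$, noting that $k-1\le a_1(\eps t/k)^\alpha/\ell(\eps t/k)$ for $t$ large since $k=O(\log t)$), the bound $k^{2+\alpha\theta_1}t^{-\alpha\theta_1}\ell(t)^{\theta_1}$ summed up to $C\log t$ gives $o(\ell(t)t^{-\alpha})$. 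Combining the two steps, $\mu_Y(\phi>t)\le \bar\tau(1-\eps)^{-\alpha}\ell(t)t^{-\alpha}(1+o(1))$, and $\liminf \mu_Y(\phi>t)/(\ell(t)t^{-\alpha})\ge\bar\tau$ from $\phi\ge\phi_{\max}$; sending $\eps\to 0$ finishes the proof with $c=\bar\tau$.

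The main obstacle is step (ii), which requires simultaneously handling the randomness of $\tau$ and the fact that having two moderately large jumps in a short window is rare. The key technical input on both fronts is the non-clustering Proposition~\ref{prop:D} derived from~\eqref{eq:D}; without it one cannot rule out the contribution of configurations with multiple medium-sized $R\circ f^j$ summing to exceed $t$, and the asymptotic constant $c$ would not be identified.
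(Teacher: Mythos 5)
Your proof is correct, and it takes a genuinely different route from the paper's. The paper deduces the tail asymptotics indirectly: it first evaluates the already-proved functional limit theorem (Theorem~\ref{thm:Z}) at $t=1$ to get $b_n^{-1}\tR_n\to_\mu MG_\alpha$ (decomposing $\tR=\tQ+\tQ'$ with $\tQ=M\tZ$ and using Corollary~\ref{cor:MV} to kill the $\Sigma_0$-part), then transfers this to the induced system $(F,Y,\mu_Y)$ via Lemma~\ref{lem:induce}, and finally invokes Proposition~\ref{prop:G} --- a converse stable CLT for Gibbs-Markov maps drawn from Gou\"ezel --- to read off the regularly varying tail of the piecewise-constant observable $\tR^Y$; the constant $c$ is not identified. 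Your argument is instead a direct ``single big jump'' tail estimate for the random sum $R^Y=\sum_{j<\tau}R\circ f^j$, using only the tower identity $\mu(B)=\bar\tau^{-1}\int_Y\sum_{\ell<\tau}\bone_B\circ f^\ell\,d\mu_Y$, the exponential tails of $\tau$, and the non-clustering bound of Proposition~\ref{prop:D} (derived from~\eqref{eq:D}). This is more elementary --- in particular it avoids both Theorem~\ref{thm:Z} and the Gou\"ezel converse --- and it has the bonus of identifying the constant $c=\bar\tau$ explicitly, which the paper's argument does not (and does not need). Both the Bonferroni lower bound for $\mu_Y(\phi_{\max}>t)$ and the ``two moderately large returns'' estimate for $\mu_Y(\phi>t,\phi_{\max}\le(1-\eps)t)$ are handled correctly by splitting at $\tau\asymp\log t$; the only wrinkles are cosmetic (the inequality should be $\ge\eps t/(\tau-1)$ rather than strict, and one should invoke Potter's bound to compare $\ell(\eps t/k)$ with $\ell(t)$ uniformly over $k\le C\log t$), but the powers of $t$ dominate and the conclusion is unaffected.
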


\begin{proof}
In this proof, we denote Birkhoff sums under $f$ by $R_n=\sum_{j=0}^{n-1}R\circ f^j$ and so on.
Let $M$ denote the $1\times\MM$ matrix all of whose entries are $1$.
Then $MG_\alpha$ is a nondegenerate totally skewed $1$-dimensional $\alpha$-stable law.
Let $\tR=R-\int_\Sigma R\,d\mu$. 
We claim that 
$b_n^{-1}\tR_n  \to_\mu MG_\alpha$.

The result follows from the claim by
an argument in~\cite[Lemma~5.1(c)]{MV20}.
Indeed, define the piecewise constant function $\tR^Y=\sum_{j=0}^{\tau-1}\tR\circ f^j$.
By Lemma~\ref{lem:induce},
\[
b_n^{-1}\sum_{j=0}^{n-1}\tR^Y \circ F^j\to_{\mu_Y} \bar\tau^{1/\alpha}MG_\alpha.
\]
  By Proposition~\ref{prop:G}, $\mu_Y(\tR^Y>t)\sim c \ell(t) t^{-\alpha}$.
Finally, $R^Y=\tR^Y + \tau\int_\Sigma R\,d\mu$ and $\tau$ has exponential tails, so
  $\mu_Y(R^Y>t)\sim c \ell(t) t^{-\alpha}$.

It remains to prove the claim.
Write $R=Q+Q'$ where $Q=R\bone_{\Sigma\setminus\Sigma_0}$
and $Q'=R\bone_{\Sigma_0}$.
Define $\tQ=Q-\int_\Sigma Q\,d\mu$, $\tQ'=Q'-\int_\Sigma Q'\,d\mu$.

Since $M\omega_i=1$ for all $i$, it follows that
$\tQ=M\tZ$. Evaluating at $t=1$ in Theorem~\ref{thm:Z},
$b_n^{-1}\tZ_n\to_\mu \tL_\alpha(1)=G_\alpha$.
By the continuous mapping theorem,
$b_n^{-1}\tQ_n=b_n^{-1}M\tZ \to_\mu MG_\alpha$.

By Proposition~\ref{prop:R'}, $R\bone_{\Sigma_0}\in L^q$ for some
$q>\alpha$ and hence $\tQ'\in L^q$. Also, $\int_\Sigma\tQ'\,d\mu=0$
and $\tQ'$ is constant on elements of $\cC$.
By Corollary~\ref{cor:MV}, $b_n^{-1}\tQ'_n\to_\mu0$.

The claim for $\tR=\tQ+\tQ'$ now follows by combining the results for $\tQ$ and $\tQ'$.
\end{proof}

Recall that $V:\Sigma\to\R^d$ is given by $V=\sum_{j=0}^{R-1}v\circ T^j$.
Define 
\[
V^Y:Y\to\R^d, \qquad
V^Y=\sum_{\ell=0}^{\tau-1}V\circ f^\ell.
\]
Define the corresponding processes on $(Y,\mu_Y)$,
\[
    W_n^Y\in D([0,1],\R^d),
\qquad
W_n^Y(t)
    = b_n^{-1}\sum_{j=0}^{[nt]-1}V^Y\circ F^j
    .
\]

\begin{lemma} \label{lem:VY}
    $\sup_n \int_{Y}|W_n^{Y}|_{p\var} \, d\mu_{Y}<\infty$
    for all $p>\alpha$.
\end{lemma}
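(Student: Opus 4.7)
The plan is to decompose $V^Y = \Gamma Z^Y + H^Y$, where $Z^Y = \sum_{\ell=0}^{\tau-1} Z\circ f^\ell$ and $H^Y = \sum_{\ell=0}^{\tau-1} H\circ f^\ell$ with $H = V - \Gamma Z$. Since $R$, and hence $Z = \sum_i \omega_i R\bone_{\Sigma_i}$, is constant on each element of $\cC$, the function $Z^Y$ is constant on each $Y_j$ and descends to a piecewise constant observable on the Gibbs--Markov quotient $\bY$ with regularly varying tail of index $\alpha$, by~\eqref{eq:rv}. By Proposition~\ref{prop:R'}, $H \in L^q$ for some $q > \alpha$, and since $\tau$ has exponential tails, $H^Y \in L^{q'}$ for any $q' \in (\alpha, q)$. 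Let $W_n^{Y,Z}$ and $W_n^{Y,H}$ denote the partial-sum processes associated to $\Gamma Z^Y$ and $H^Y$, so $W_n^Y = W_n^{Y,Z} + W_n^{Y,H}$. Fix $p_0 \in (\alpha, \min(q, 2))$; since $|\cdot|_{p\var}$ is nonincreasing in $p$, by the triangle inequality it suffices to bound $\int_Y|W_n^{Y,Z}|_{p_0\var}\,d\mu_Y$ and $\int_Y|W_n^{Y,H}|_{p_0\var}\,d\mu_Y$ separately, uniformly in $n$.

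For $W_n^{Y,H}$, apply Lemma~\ref{lem:MV} to write $H^Y = m + \chi\circ F - \chi$ with $m,\chi\in L^{p_0}$. The elementary step-function inequality $|f|_{p_0\var}^{p_0}\le 2^{p_0}\sum_{k\le n}|g_k|^{p_0}$ (for a path with values $\{g_k\}$ at grid times, following from $|a-b|^{p_0}\le 2^{p_0-1}(|a|^{p_0}+|b|^{p_0})$ and each index appearing in at most two consecutive pairs) applied to the coboundary gives $\E|\cdot|_{p_0\var}^{p_0}\le Cnb_n^{-p_0}\E|\chi|^{p_0}$, hence $\E|\cdot|_{p_0\var} = O(n^{1/p_0}/b_n) = o(1)$ since $p_0 > \alpha$. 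The martingale piece is bounded analogously by invoking a Burkholder-type $p_0$-variation inequality for reverse martingales of the form $\E|M|_{p_0\var}^{p_0} \le C_{p_0} n\E|m|^{p_0}$ valid for $p_0 \in (1,2]$ (combining a Burkholder square-function bound with $(a+b)^{p_0/2}\le a^{p_0/2}+b^{p_0/2}$), again yielding the $o(1)$ scaling.

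For $W_n^{Y,Z}$, I would adapt the truncation strategy from Section~\ref{sec:Z}. Split each $Z^Y\circ F^j$ into a big part (with $|Z^Y\circ F^j|>b_n$) and a centered small part. The big-jump $p_0$-variation is dominated by its $1$-variation $b_n^{-1}\sum_j|Z^Y\circ F^j|\bone_{|Z^Y\circ F^j|>b_n}$, whose $L^1$ expectation is uniformly bounded in $n$ by Karamata's inequality applied to~\eqref{eq:rv}. For the centered small-jump piece, the piecewise constant zero-mean observable $Z^Y\bone_{|Z^Y|\le b_n}-\E$ has $L^{p_0}$ norm of order $b_n^{p_0-\alpha}\ell(b_n)$ by Karamata; combining the martingale-coboundary decomposition from the proof of Proposition~\ref{prop:GM} with the martingale $p_0$-variation inequality above yields $\E|\cdot|_{p_0\var}^{p_0} \le Cnb_n^{-p_0}\cdot b_n^{p_0-\alpha}\ell(b_n) \sim C$, hence uniform $L^1$ boundedness.

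The main obstacle is the $\Gamma Z^Y$ contribution: because $Z^Y$ has regularly varying heavy tails of index $\alpha$, one cannot directly apply a martingale $p_0$-variation bound to the full observable, since $\E|Z^Y|^{p_0} = \infty$ for $p_0 > \alpha$. The truncation at level $b_n$ is calibrated so that the martingale estimate produces exactly a constant bound and the rare big jumps contribute a uniformly bounded $1$-variation. Establishing the required martingale $p_0$-variation inequality for $p_0 \in (1,2)$ in the Gibbs--Markov setting—adapting Burkholder's square-function bound to $p$-variation rather than the maximal function—is the key technical step; once this is in hand, the triangle inequality combines the two contributions to give the lemma.
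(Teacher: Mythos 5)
Your decomposition $V^Y = \Gamma Z^Y + H^Y$ and the treatment of the $H^Y$-piece via martingale-coboundary are essentially the same as the paper's, modulo a centering: the paper sets $H = V - \Gamma\tZ$ (with $\tZ = Z - \int_\Sigma Z\,d\mu$) so that $\int_\Sigma H\,d\mu = 0$ as Lemma~\ref{lem:MV} requires, whereas your $H = V - \Gamma Z$ has nonzero mean and you would need to absorb the constant $\Gamma\int_\Sigma Z\,d\mu$. The martingale $p_0$-variation inequality you flag as the key technical ingredient is exactly~\cite[Theorem~2.1]{PisierXu88}, which the paper cites directly; you are right that it holds for $p_0\in(1,2)$, though its proof is deeper than Burkholder combined with subadditivity of $x\mapsto x^{p_0/2}$. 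The one genuine divergence from the paper is the $\Gamma Z^Y$-piece: the paper cites~\cite[Theorem~4.4]{CFKM20} (a ready-made $p$-variation tightness result for partial sums of piecewise-constant, regularly varying observables over Gibbs--Markov maps, whose hypotheses are checked here via Proposition~\ref{prop:phiY}), whereas you propose to re-derive it by truncation at level $b_n$, Karamata for the big jumps, and martingale-coboundary plus Pisier--Xu for the centred small jumps. That outline is sound and is likely close to the proof of the cited theorem itself --- the estimate closes because the coboundary in Proposition~\ref{prop:GM} satisfies $|\chi|_\infty\ll|g|_{L^1}\le|g|_{L^{p_0}}$, so both $m$ and $\chi$ have $L^{p_0}$ norms controlled by that of $g$ --- but be aware that the relevant tail is that of the \emph{induced} observable $Z^Y$ (Proposition~\ref{prop:phiY}), not directly~\eqref{eq:rv}, and the truncated pieces need their own centering bookkeeping. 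The re-derivation buys self-containment at the cost of length; the citation buys brevity.
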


\begin{proof}
    Let $H = V-\sum_{i=1}^\MM P_i(1) \tR_i$
    where $R_i=R\bone_{\Sigma_i}$,
    $\tR_i=R_i-\int_{\Sigma} R_i \, d\mu$.
Then $\int_\Sigma H\,d\mu{=0}$.
Also, $H=V-\sum_{i=1}^\MM P_i(1)R_i+C$ where
$C=\sum_{i=1}^\MM P_i(1)\int_\Sigma R_i\,d\mu\in\R^d$ is a constant.
In particular, $|H|\le |v|_\infty R+|C|$ on $\Sigma_0$ and
$H=V-P_i(1)R+|C|$ on $\Sigma_i$, $i=1,\dots,\MM$. Hence
    by assumptions~\eqref{eq:rv0} and~\eqref{eq:P}, $H \in L^{q}(\Sigma,\mu)$ for some $q>\alpha$. Let $p\in(\alpha,q)$.
    It suffices to show that $\sup_n\int_Y|W_n^Y|_{p\var} \, d\mu_Y <\infty$.

    Write $V^Y=\sum_{i=1}^\MM P_i(1) \tR_i^Y + H^Y:Y\to\R^d$, where
    \[
        \tR_i^Y
        = \sum_{j=0}^{\tau-1}\tR_i \circ f^j
        , \qquad
        H^Y=\sum_{j=0}^{\tau-1} H \circ f^j.
    \]
    Correspondingly, write $W_n^Y = \sum_{i=1}^\MM P_i(1) A_{i,n} + B_n$ on $Y$ where
    \[
        A_{i,n}(t)
        = b_n^{-1} \sum_{j=0}^{[nt]-1} \tR_i^Y\circ F^j
        , \qquad
        B_n(t)
        = b_n^{-1} \sum_{j=0}^{[nt]-1} H^Y\circ F^j
        .
    \]

    Now $\tR_i^Y$ is piecewise constant on $Y$  and hence well-defined
    as a piecewise constant function $\bY$.
    Let
    \[
        \bA_{i,n}(t)
        = b_n^{-1}\sum_{j=0}^{[nt]-1}\tR_i^\bY\circ \bF^j:\bY\to\R
        .
    \]
Let $\sigma=R^\bY+(\int_\Sigma R\,d\mu)\tau$.
    We are going to apply~\cite[Theorem~4.4]{CFKM20}
    (with $\bY$, $\tR_i^\bY$ and $\sigma$ playing the roles of $Z$, $V$ and~$\tau$ in~\cite{CFKM20}).
    By Proposition~\ref{prop:phiY}, $R^\bY$ is regularly varying with exponent $\alpha$. Since $\tau$ has exponential tails,
$\sigma$ is regularly varying with exponent $\alpha$.
    For all $y,y'\in \bY_j$ and all partition elements $\bY_j$,
    \[
        |\tR_i^\bY(y)|\le \sigma(y), \qquad \tR_i^\bY(y)-\tR_i^\bY(y')=0,
    \]
    verifying the condition~\cite[eq.~(4.2)]{CFKM20}.
    Hence it follows from~\cite[Theorem~4.4]{CFKM20} that
    \begin{equation}
        \label{eq:VY:A}
        \sup_n \int_Y |A_{i,n}|_{p\var} \, d\mu_Y
        =\sup_n \int_\bY |\bA_{i,n}|_{p\var} \, d\mu_\bY
        < \infty
        \quad \text{for each } i =1,\dots,\MM
        .
    \end{equation}

    Next,
    by Lemma~\ref{lem:MV},
    \[
        H^Y = m + \chi \circ F - \chi
        ,
    \]
    where $m,\,\chi\in L^p(\mu_Y)$ and $\{m\circ F^{n-j} : 0 \le j \le n\}$ is a martingale difference sequence
    for each $n \ge 1$.
    By a simpler argument than the one in the proof of~\cite[Theorem~4.4]{CFKM20},
    we show that $\lim_{n\to\infty}\int_Y|B_n|_{p\var}^p \, d\mu_Y=0$.
    Indeed, we can write $B_n=M_n+D_n$ where
    \[
        M_n(t) = b_n^{-1}\sum_{j=0}^{[nt]-1} m\circ F^j
        , \qquad
        D_n(t) = b_n^{-1}(\chi\circ F^{[nt]} - \chi)
        .
    \]
    Define also $M_n^{-}(t) = b_n^{-1} \sum_{j=1}^{[nt]} m \circ F^{n-j}$, so that $M_n^{-}(t)$ is a martingale.
    By~\cite[Theorem~2.1]{PisierXu88}, there is $C_p > 0$ depending only on $p$ so that
    \[
        \int |M_n^-|_{p\var}^p \, d\mu_Y
        \le C_p b_n^{-p} \sum_{j=1}^n \int |m\circ F^{n-j}|^p \, d\mu_Y
        = C_p n b_n^{-p} \int |m|^p \, d\mu_Y
        .
    \]
    Hence
    \begin{equation}
        \label{eq:VY:M}
        \int |M_n|_{p\var}^p \, d\mu_Y
        = \int |M_n^-|_{p\var}^p \, d\mu_Y
        \le C_p n b_n^{-p} \int |m|^p \, d\mu_Y
        .
    \end{equation}

    Further, for $1\le j_1< j_2<\cdots<j_k\le n-1$,
    \begin{align*}
        \sum_{i=1}^k|\chi\circ F^{j_i} - & \chi\circ F^{j_{i-1}}|^p
        \le \sum_{i=1}^k \bigl( |\chi|\circ F^{j_i}+|\chi|\circ F^{j_{i-1}} \bigr)^p
        \\
        & \le 2^{p-1} \sum_{i=1}^k \bigl( |\chi|^p\circ F^{j_i}+|\chi|^p\circ F^{j_{i-1}} \bigr)
        \le 2^p \sum_{j=0}^{n-1}|\chi|^p\circ F^j
        .
    \end{align*}
    We deduce that $|D_n|_{p\var}^p \le 2^p b_n^{-p} \sum_{j=0}^{n-1} |\chi|^p \circ F^j$,
    and hence
    \begin{equation}
        \label{eq:VY:D}
        \int_Y |D_n|_{p\var}^p \, d\mu_Y
        \le 2^p n b_n^{-p} \int |\chi|^p \, d\mu_Y
        .
    \end{equation}

    Combining the estimates~\eqref{eq:VY:M} and~\eqref{eq:VY:D} and using
    $\lim_{n \to \infty} n b_n^{-p} = 0$, we see that
    $\lim_{n\to\infty} \int_Y |B_n|_{p\var}^p \, d\mu_Y = 0$.
    This, together with~\eqref{eq:VY:A}, shows that
    $\sup_n \int_{Y}|W_n^{Y}|_{p\var} \, d\mu_Y < \infty$, as required.
\end{proof}

\begin{proof}[Proof of Theorem~\ref{thm:tight}]
Note that
$V^Y=\sum_{j=0}^{R^Y-1}v\circ T^j$.
By Proposition~\ref{prop:phiY}, $R^Y$ is regularly varying.
Applying~\cite[Theorem~5.2]{CFKM20} (with $\Lambda$, $Y$, $W_n^Y$ and $R^Y$ playing the roles of $Y$, $Z$, $\tW_n$ and $\tau$ in~\cite{CFKM20})
and Lemma~\ref{lem:VY},
we obtain that
$|W_n|_{p\var}$ is tight on $(\Lambda,\mu_\Lambda)$.
\end{proof}

\section{Path space -- Proofs}
\label{sec:paths-pf}

In this section we define the space $\DD$ in more detail,
collect its important properties, construct solutions of differential equations driven by elements of $\DD$,
and prove that the solution map is continuous (Theorem~\ref{thm:S}).

We follow the notation and terminology introduced in Section \ref{sec:paths}
and often drop reference to the target space $\R^d$ and write, e.g.\ $D[a,b]$ for $D([a,b],\R^d)$.
We also drop reference to the interval $[a,b]$ when it is clear from the context.
For intervals $I,J\subset \R$, let $\cR_{I,J}$ denote the set of all
continuous increasing bijections $\rho: I\to J$.
We use the shorthand $\cR_{I} = \cR_{I,I}$.

\subsection{The decorated path space \texorpdfstring{$\DD$}{D}}
\label{subsec:pathFuncs}

\begin{defn}\label{def:repara}
Define the space $\cD = \bigcup_{a<b}D[a,b]$ of c{\`a}dl{\`a}g paths parametrised by compact intervals.
For $h_1 \in D[a_1,b_1]$, $h_2 \in D[a_2,b_2]$,
define the \emph{Fr{\'e}chet distance} by
\begin{equation*}
d_F(h_1,h_2) = \inf_{\rho\in \cR_{[a_1,b_1],[a_2,b_2]}} |h_1\circ \rho -h_2|_\infty.
\end{equation*}
We say that $h_1$ is a \emph{reparametrisation} of $h_2$, and write $h_1 \dsim h_2$, if $d_F(h_1,h_2)=0$.
Denote $[h] = \{h'\in \cD : h \dsim h'\}$.
\end{defn}

\begin{rmk}\label{rmk:DF_metric}
$d_F$ defines a metric on $\cD/{\dsim}$.
Indeed, this follows from the fact that
$h_1,h_2$ are reparametrisations
if and only if there exist non-decreasing surjections $\rho_1: [0,1]\to[a_1,b_1]$ and $\rho_2 :[0,1]\to[a_2,b_2]$
such that $h_1\circ \rho_1 = h_2\circ \rho_2$.
See for example~\cite[Exercise~2.5.3]{BBI01}
and~\cite[Proposition~5.3]{BG15}.
\end{rmk}

\begin{lemma}\label{lem:dF_complete}
$\cD/{\dsim}$
is complete under the metric $d_F$.
\end{lemma}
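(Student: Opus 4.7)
The plan is to reduce to representatives in $D[0,1]$ and exploit completeness of the uniform metric there. First, any $h \in D[a,b]$ is $\dsim$-equivalent to its linear reparametrisation $\tilde h \in D[0,1]$ given by $\tilde h(s) = h(a + s(b-a))$, so every class in $\cD/{\dsim}$ has a representative in $D[0,1]$. On $D[0,1]$, Remark~\ref{rmk:DF_metric} together with the substitution $\rho \mapsto \rho^{-1}$ shows that the Fréchet distance coincides with the symmetric quantity $d_F(h_1,h_2) = \inf_{\rho \in \cR_{[0,1]}} |h_1 \circ \rho - h_2|_\infty$.

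Given a $d_F$-Cauchy sequence $([h_n])$ in $\cD/{\dsim}$, I would pick representatives $h_n \in D[0,1]$, extract a subsequence (still called $h_n$) with $d_F(h_n,h_{n+1}) < 2^{-n}$, and choose $\rho_n \in \cR_{[0,1]}$ realising this bound up to a factor, so that $|h_{n+1} \circ \rho_n - h_n|_\infty < 2^{-n}$. Set $\psi_1 = \id$ and $\psi_{n+1} = \rho_n \circ \psi_n$; each $\psi_n$ is a continuous increasing bijection of $[0,1]$, hence $g_n := h_n \circ \psi_n$ lies in $D[0,1]$ (the composition of a càdlàg function with such a $\psi_n$ is càdlàg). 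Using that $\psi_n$ is a bijection,
\[
|g_{n+1} - g_n|_\infty = |h_{n+1} \circ \rho_n \circ \psi_n - h_n \circ \psi_n|_\infty = |h_{n+1} \circ \rho_n - h_n|_\infty < 2^{-n},
\]
so $(g_n)$ is uniformly Cauchy.

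The uniform metric on $D[0,1]$ is complete: uniform limits of càdlàg functions remain càdlàg (both right-continuity and existence of left limits are preserved by uniform convergence via a standard $3\eps$-argument). Thus $g_n \to g$ uniformly for some $g \in D[0,1]$. Since $[g_n] = [h_n]$ and $d_F([g_n],[g]) \le |g_n - g|_\infty \to 0$, the subsequence converges to $[g]$ in $d_F$; because the original sequence is Cauchy, the full sequence converges to $[g]$ as well.

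The only point requiring care is the bookkeeping of the compositions $\psi_n$ that converts the Fréchet-Cauchy property into a uniform-Cauchy property; the remaining ingredients (completeness of $D[0,1]$ under $|\cdot|_\infty$, closure of $D[0,1]$ under composition with continuous increasing bijections, and the recovery of convergence of a Cauchy sequence from that of a subsequence) are standard.
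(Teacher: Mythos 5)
Your proof is correct and follows essentially the same route as the paper's (one-line) argument: reduce to representatives in $D[0,1]$, iteratively compose the near-optimal reparametrisations to convert the $d_F$-Cauchy subsequence into a uniformly Cauchy sequence of representatives, invoke completeness of $(D[0,1],|\cdot|_\infty)$, and recover convergence of the full Cauchy sequence from the subsequence. You have merely spelled out the bookkeeping that the paper leaves implicit, and all the steps (closure of $D[0,1]$ under precomposition with a continuous increasing bijection, symmetry of $d_F$, the telescoping bound) check out.
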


\begin{proof}
We can extract from a Cauchy sequence $[X_n]$ a Cauchy sequence $Y_n\in[X_n]$
in the space $D[0,1]$ with the uniform norm, and apply the fact that this space
is complete.
\end{proof}

For the rest of this subsection	, consider $\delta>0$, a function $\phi:[a,b]\to D([0,1],\R^d)$, and a countable set $\Pi=\{t_1,t_2,\ldots\} \subset[a,b]$ such that $\Pi$ contains the non-stationary points of $\phi$.
We construct functions $\phi^\delta: [a,b+\delta] \to \R^d$ and
$\psi^\delta : [a,b+\delta] \to \R^d\times [a,b]$ as follows.

First we define $\tau_\delta : [a,b] \to [a, b+\delta]$.
Let $0 \le \kappa \le \infty$ be the cardinality of $\Pi$.
If $\kappa=0$, we set $\tau_\delta(t)=t$. Otherwise,
let $r = \sum_{j=1}^\kappa 2^{-j}>0$ and
\[
    \tau_\delta(t)
    = t + \sum_{j=1}^\kappa \frac{\delta 2^{-j}}{r} \bone_{t_j \le t}
    .
\]
Note that $\tau_\delta$ is a strictly increasing c\`adl\`ag function with
$\tau_\delta(t^-) < \tau_\delta(t)$ if and only if $t = t_j$ for some $1 \le j < \kappa+1$.
Moreover, the interval $[\tau_\delta(t_j^-), \tau_\delta(t_j))$ is of length $\delta 2^{-j}/r$.

Let $\phi^\delta(t)=\phi(\min\{t,b\})$ if $\kappa=0$ and otherwise
\begin{equation*}
\phi^\delta(u)
=
\begin{cases} \phi(t)(1) &\mbox{if $u = \tau_\delta(t)$ for some $t \in [a,b]$}, \\ \phi(t_j)\Big(
\frac{u-\tau_\delta(t_j^-)}{\delta 2^{-j}/r}
\Big)
&\mbox{if $u \in [\tau_\delta(t_j^-), \tau_\delta(t_j))$ for some $1 \le j < \kappa+1$}.
\end{cases}
\end{equation*}
We call  $\phi^\delta$ the \emph{$\delta$-extension} of $(\phi,\Pi)$.

Furthermore, let $\tau_\delta^{-1}: [a,b+\delta]\to[a,b]$ be a left inverse of $\tau_\delta$ defined by
\[
    \tau_\delta^{-1}(t) = \inf \{s \in [a,b] : \tau_\delta(s) \geq t\}
    .
\]
This way, $\tau_\delta^{-1}$ is a non-decreasing surjection which
maps each interval $[\tau_\delta(t_j^-),\tau_\delta(t_j)]$ to $t_j$,
and $\tau^{-1}_\delta\circ \tau_\delta = \id$.
Finally, define
\begin{equation}\label{eq:psi_delta}
\psi^\delta(u) = (\phi^\delta(u),\tau_\delta^{-1}(u)).
\end{equation}
We call $\psi^\delta$ the \emph{$\delta$-parametric representation} of $(\phi,\Pi)$.

Recall the spaces $\bar\DD[a,b]$ and $\DD[a,b]$ as in
Definitions~\ref{def:DDbar} and~\ref{def:DD}.
We write $\phi_1\sim\phi_2$ if $\phi_1$ and $\phi_2$ are equivalent in $\bar\DD[a,b]$,
so that $\DD[a,b]=\bar\DD[a,b]/{\sim}$.

\begin{lemma}\label{lem:cadlag_path}
The following statements are equivalent.
\begin{enumerate}[label=(\roman*)]
\item\label{pt:one_cont} At least one of $\psi^\delta$ or $\phi^\delta$ is c{\`a}dl{\`a}g.

\item\label{pt:all_cont} Both $\psi^\delta$ and $\phi^\delta$ are c{\`a}dl{\`a}g.

\item\label{pt:in_DD} $(\phi,\Pi)\in\bar\DD$.
\end{enumerate}
\end{lemma}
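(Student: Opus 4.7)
I would prove the chain (iii)$\Rightarrow$(ii)$\Rightarrow$(i)$\Rightarrow$(iii), noting first that (i)$\iff$(ii) comes essentially for free: by~\eqref{eq:psi_delta}, $\psi^\delta(u)=(\phi^\delta(u),\tau_\delta^{-1}(u))$, and the map $\tau_\delta^{-1}$ is continuous because $\tau_\delta$ is strictly increasing and right-continuous, and $\tau_\delta^{-1}$ is constant on each image interval $[\tau_\delta(t_j^-),\tau_\delta(t_j)]$ and the genuine inverse of the continuous part of $\tau_\delta$ elsewhere. Hence $\psi^\delta$ is c{\`a}dl{\`a}g iff $\phi^\delta$ is, which collapses (i) and (ii). The substantive content is then (iii)$\iff$(ii) for $\phi^\delta$.

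For (iii)$\Rightarrow$(ii) I would check c{\`a}dl{\`a}g-ness of $\phi^\delta$ locally. On the interior of each inserted interval $[\tau_\delta(t_j^-),\tau_\delta(t_j))$ the function is an affine time-change of $\phi(t_j)\in D[0,1]$, hence c{\`a}dl{\`a}g. On the complement of the images of $\Pi$ one has $\phi^\delta(u)=\phi(\tau_\delta^{-1}(u))(1)$, and the required one-sided limits follow from assumption~\ref{pt:cadlag} and continuity of $\tau_\delta^{-1}$. The only delicate case is a point $u^\ast$ where infinitely many inserted intervals accumulate from one side, corresponding to $t_{j_k}\to t^\ast$ in $\Pi$; condition~\ref{pt:non_osc} tells me that for any $\eps>0$ only finitely many $k$ have $\sup_s|\phi(t_{j_k})(s)-\phi(t_{j_k})(0)|>\eps$, so the oscillation of $\phi^\delta$ on those inserted intervals is eventually at most $\eps$. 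The one-sided limit of $\phi^\delta$ at $u^\ast$ therefore coincides with the corresponding one-sided limit of $t\mapsto \phi(t)(1)$, which exists by~\ref{pt:cadlag}.

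For (ii)$\Rightarrow$(iii), composing with the right-continuous non-decreasing map $\tau_\delta$ yields $\phi(t)(1)=\phi^\delta(\tau_\delta(t))$; since $\tau_\delta$ sends right/left neighbourhoods of $t$ into right/left neighbourhoods of $\tau_\delta(t)$, the c{\`a}dl{\`a}g property of $\phi^\delta$ transfers to $t\mapsto \phi(t)(1)$, giving~\ref{pt:cadlag}. For~\ref{pt:non_osc} I would argue by contradiction: assume there exist $\eps>0$ and infinitely many $t_{j_k}\in\Pi$ with $\sup_s|\phi(t_{j_k})(s)-\phi(t_{j_k})(0)|>\eps$, extract a monotone convergent subsequence $t_{j_k}\to t^\ast\in[a,b]$, and choose witnesses $s_k\in[0,1]$. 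Then the pair of points $u_k=\tau_\delta(t_{j_k}^-)+s_k\delta 2^{-j_k}/r$ and $u'_k=\tau_\delta(t_{j_k}^-)$ both approach the appropriate one-sided value of $\tau_\delta$ at $t^\ast$ (since the interval lengths $\delta 2^{-j_k}/r$ shrink to $0$), while $|\phi^\delta(u_k)-\phi^\delta(u'_k)|=|\phi(t_{j_k})(s_k)-\phi(t_{j_k})(0)|>\eps$, contradicting the existence of a one-sided limit of $\phi^\delta$ at that point.

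The main obstacle I anticipate is the bookkeeping around accumulation points in (iii)$\Rightarrow$(ii), particularly when $t^\ast\in\Pi$ itself: in that case $\tau_\delta$ has its own jump at $t^\ast$ and the inserted intervals for the accumulating $t_{j_k}$ need to be matched with whichever side of the jump they sit on, and one must separately verify that $\phi^\delta$ has the right one-sided limits both approaching the anchor value $\phi(t^\ast)(1)$ and approaching the decoration $\phi(t^\ast)(\cdot)$. In the symmetric direction, care is also needed to allow $\Pi$ to accumulate from both sides simultaneously, which is handled by treating left and right limits independently.
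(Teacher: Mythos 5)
Your proposal is correct and follows essentially the same route as the paper: the equivalence of (i) and (ii) is handled by continuity of $\tau_\delta^{-1}$; (iii)$\Rightarrow$(ii) proceeds by a local check using the c\`adl\`ag assumption on $t\mapsto\phi(t)(1)$ together with the non-oscillation condition to control the accumulating inserted intervals (the paper packages this estimate as Lemma~\ref{lem:non-osc}); and (ii)$\Rightarrow$(iii) is done by contradiction, extracting a monotone subsequence of non-oscillation violators and exhibiting a failure of a one-sided limit of $\phi^\delta$ exactly as the paper does. The only cosmetic difference is that the paper isolates the one-sided estimate into a separate auxiliary lemma rather than inlining it.
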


\begin{proof}
\ref{pt:all_cont}~$\Rightarrow$~\ref{pt:one_cont} is trivial
while \ref{pt:one_cont}~$\Rightarrow$~\ref{pt:all_cont} follows
from the fact that $\tau_\delta^{-1}$ is continuous.
To prove \ref{pt:in_DD}~$\Rightarrow$~\ref{pt:one_cont}, suppose that $(\phi,\Pi)\in\bar\DD$.
We will prove that $\phi^\delta$ is c{\`a}dl{\`a}g.
If $t\notin \Pi$,
then Lemma~\ref{lem:non-osc} below
implies that $\phi^\delta$ is continuous at $\tau_\delta(t)$.
If $t\in \Pi$, then Lemma~\ref{lem:non-osc} implies that $\phi^\delta$ has a left limit at $\tau_\delta(t^-)$
while the fact that $\phi(t)\in D[0,1]$ implies that $\phi^\delta$ is c{\`a}dl{\`a}g on $[\tau_\delta(t^-),\tau_\delta(t)]$.
Hence~\ref{pt:in_DD}~$\Rightarrow$~\ref{pt:one_cont}.

Conversely, suppose that $\phi\notin\bar\DD$,
in which case either condition~\ref{pt:non_osc} in Definition~\ref{def:DDbar} does not hold or $h: t\mapsto \phi(t)(1)$ is not c{\`a}dl{\`a}g.
If $h$ is not c{\`a}dl{\`a}g, then clearly $\phi^\delta$ is also not c{\`a}dl{\`a}g.
If Definition~\ref{def:DDbar}\ref{pt:non_osc} does not hold,
then there exists $\eps>0$ and a subsequence $n(k)$ such that $\sup_{t\in[0,1]}|\phi(t_{n(k)})(t)-\phi(t_{n(k)})(0)|>\eps$ for all $k\ge 1$.
Furthermore, there exists $t\in[a,b]$ such that either (a) $t_{n(k)} \uparrow t$ as $k\to\infty$ or (b) $t_{n(k)} \downarrow t$ as $k\to\infty$.
In case (a), $\phi^\delta$ does not have a left limit at $\tau_\delta(t^-)$, and in case (b), $\phi^\delta$ does not have a right limit at $\tau_\delta(t)$.
Therefore~\ref{pt:all_cont}~$\Rightarrow$~\ref{pt:in_DD}.
\end{proof}

\begin{lemma}\label{lem:non-osc}
For all $\phi\in\bar\DD[a,b]$, $t\in [a,b]$ and $\eps>0$, there exists $\gamma>0$ such that
\[
\sup_{u\in (t-\gamma,t)} \sup_{s\in[0,1]} |\phi(u)(s)-\phi(t^-)(1)| < \eps,\quad  \sup_{u\in (t,t+\gamma)} \sup_{s\in[0,1]} |\phi(u)(s)-\phi(t)(1)| < \eps.
\]
\end{lemma}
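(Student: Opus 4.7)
The plan is to prove the left-hand estimate; the right-hand one will follow by a symmetric argument. Write $h(u)=\phi(u)(1)$, which by Definition~\ref{def:DDbar}\ref{pt:cadlag} lies in $D[a,b]$, so the left limit $h(t^-)$ exists. The key structural observation I will exploit is that for $u\notin\Pi$ the point $u$ is a stationary point of $\phi$ by the definition of $\Pi$, so $\phi(u)\in D[0,1]$ is the constant path identically equal to $h(u)$; thus the $s$-dependence of $\phi(u)(s)$ is trivial off $\Pi$, and the nontrivial $s$-dependence is confined to the countable set $\Pi$, where it is controlled by clause~\ref{pt:non_osc}.

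Fix $\eps>0$ and set $\eta=\eps/4$. By~\ref{pt:non_osc}, the set
\[
\Pi_\eta=\{u\in\Pi:\sup_{s\in[0,1]}|\phi(u)(s)-\phi(u)(0)|>\eta\}
\]
is finite, so $\gamma_1>0$ can be chosen with $(t-\gamma_1,t)\cap\Pi_\eta=\emptyset$. Since $h$ is c\`adl\`ag, $\gamma_2>0$ can be chosen with $|h(u)-h(t^-)|<\eta$ for all $u\in(t-\gamma_2,t)$. Set $\gamma=\min(\gamma_1,\gamma_2)$. For any $u\in(t-\gamma,t)$ and $s\in[0,1]$ I split into two cases. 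If $u\notin\Pi$, stationarity forces $\phi(u)(s)=h(u)$ and hence $|\phi(u)(s)-h(t^-)|=|h(u)-h(t^-)|<\eta$. If $u\in\Pi$ then $u\notin\Pi_\eta$, so $|\phi(u)(s)-\phi(u)(0)|\le\eta$ for every $s$; specialising to $s=1$ also gives $|h(u)-\phi(u)(0)|\le\eta$, whence by the triangle inequality
\[
|\phi(u)(s)-h(t^-)|\le|\phi(u)(s)-\phi(u)(0)|+|\phi(u)(0)-h(u)|+|h(u)-h(t^-)|\le 3\eta.
\]
Either way the uniform bound $3\eta=3\eps/4<\eps$ holds, yielding the claimed supremum estimate.

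No serious obstacle is anticipated; the lemma is essentially a careful unpacking of Definition~\ref{def:DDbar}. The only point worth flagging is that recognising the role of stationarity in collapsing the $s$-dependence off $\Pi$ is what makes the uniform-in-$s$ control from~\ref{pt:non_osc} precisely the tool needed to handle the non-stationary points near $t$, and that the finiteness of $\Pi_\eta$ in~\ref{pt:non_osc} lets me exclude the bad points of $\Pi$ entirely on a small enough one-sided interval.
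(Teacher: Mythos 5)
Your proof is correct and follows exactly the approach the paper intends: the paper's proof of this lemma is a single sentence citing Definition~\ref{def:DDbar}\ref{pt:cadlag} and~\ref{pt:non_osc}, and your argument is a careful unpacking of that — using c\`adl\`ag-ness of $u\mapsto\phi(u)(1)$ for the near-$t$ control, finiteness of $\Pi_\eta$ to excise bad decorated points, and stationarity off $\Pi$ to collapse the $s$-dependence.
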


\begin{proof}
This follows from conditions~\ref{pt:cadlag} and~\ref{pt:non_osc} in Definition~\ref{def:DDbar}.
\end{proof}

\begin{rmk}\label{rmk:psi_phi_sim}
For $\phi\in\bar\DD[a,b]$ and $\delta_1,\delta_2>0$, we have
$\psi^{\delta_1} \dsim \psi^{\delta_2}$.
Furthermore,
$\phi$ and $\psi^\delta$ are related via $\psi^\delta = (\phi^\delta, \tau_\delta^{-1})$ and
$\phi(t) \dsim \phi^\delta \mid_{[\tau_\delta(t^-),\tau_\delta(t)]}$
for each $t\in [a,b]$.
Note that the interval $[\tau_\delta(t^-),\tau_\delta(t)]$
is precisely the set of all $y\in[a,b+\delta]$ such that $\tau_\delta^{-1}(y)=t$.
In particular, if $\phi_1,\phi_2 \in\bar\DD[a,b]$, then $\psi_1^\delta \dsim \psi_2^\delta$ for some $\delta>0$ if and only if $\phi_1\sim\phi_2$.
\end{rmk}

\begin{rmk}\label{rmk:error_1}
Lemma~\ref{lem:cadlag_path} remains true (with essentially the same proof)
when one replaces ``c{\`a}dl{\`a}g'' by ``continuous'' in \ref{pt:one_cont}--\ref{pt:all_cont} and $\bar\DD$ by $\bar\DD_c$ defined as the space of all pairs $(\phi,\Pi)\in\DD$ such that $\phi : [a,b] \to C[0,1]$ and $\phi(t)(0) = \phi(t^-)(1)$ in \ref{pt:in_DD}.

We point out a minor error in~\cite{Chevyrev18, CF19, CFKM20}:
the constructions therein implicitly claimed that $\phi^\delta$ is in $C[a,b+\delta]$ whenever $\phi \in \bar\DD_c$ without assuming condition~\ref{pt:non_osc} in Definition~\ref{def:DDbar}, which is false.
However, the results in those articles always deal with decorated paths (called path functions therein)
having further nice properties, such as \emph{endpoint continuity} or \emph{finiteness of $p$-variation}, for which Definition~\ref{def:DDbar}\ref{pt:non_osc} holds and thus
$\phi^\delta\in C[a,b+\delta]$.
\end{rmk}

\begin{rmk}[Intrinsic definition of $\phi^\delta$] \label{rmk:xPhiwelldefined} 
The construction of $\phi^\delta$ involves several arbitrary choices: the set
$\Pi = \{t_1, t_2, \ldots\}$ with the enumeration of its points and the sequence $2^{-j}$.
If $\bar \phi^\delta$ is constructed similarly, but using another $\Pi$
and another summable sequence, then $\phi^\delta$ and $\bar \phi^\delta$ are reparametrisations of each other.
See Lemma~\ref{lem:limExists} where these arbitrary choices are in essence removed.
The same remarks apply to $\psi^\delta$.
\end{rmk}

\subsection{A generalisation of Skorokhod's \texorpdfstring{$\cM_1$}{M1} topology}
\label{subsec:SkorM1}

We fix $p\in [1,\infty]$.
Recall the semi-norm $|\cdot|_{p\var}$ and space $D^{p\var}[a,b]$ from Section~\ref{sec:pvar}.
For later use, we record the interpolation estimate
\begin{equation}
    \label{eq:interpolation}
    |f|_{q\var}\le |f|_{{\infty\var}}^{1-p/q}|f|_{p\var}^{p/q}
    \quad \text{for} \quad
    1 \le p \le q \le \infty
    .
\end{equation}
For $f: I\to \R$ denote
\[
    \disc{f} = \sup_{x\in I}|f(x)-x|.
\]
Remark that if $g: J\to I$, then $\disc{f\circ g} \le \disc{f}+\disc{g}$.

\begin{lemma}\label{lem:diff_repar}
Consider $\delta>0$, $g_i\in D[a,b]$ and $h_i \in D[a,b+\delta]$ for $i=1,2$
such that $h_i=g_i\circ\rho_i$ for some $\rho_i\in\cR_{[a,b+\delta],[a,b]}$.
Then
\begin{equation}\label{eq:simga_inf_lambda}
|\sigma_{\infty}(g_1,g_2) - \sigma_\infty(h_1,h_2)| \le \disc{\rho_1} + \disc{\rho_2}.
\end{equation}
If $g_i,h_i \in D^{p\var}[a,b]$, then~\eqref{eq:simga_inf_lambda}
holds with $\sigma_\infty$ replaced by $\sigma_{p\var}$.
\end{lemma}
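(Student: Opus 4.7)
The plan is to establish the two inequalities
\[
\sigma_\infty(g_1,g_2) \le \sigma_\infty(h_1,h_2) + \disc{\rho_1} + \disc{\rho_2}
\qquad\text{and}\qquad
\sigma_\infty(h_1,h_2) \le \sigma_\infty(g_1,g_2) + \disc{\rho_1} + \disc{\rho_2}
\]
by exhibiting explicit maps between the spaces of reparametrisations $\cR_{[a,b]}$ and $\cR_{[a,b+\delta]}$. The first elementary fact I will need is that $\disc{\rho_i^{-1}} = \disc{\rho_i}$, which follows from the substitution $z = \rho_i^{-1}(y)$. I will also use repeatedly that if $\tau$ is any non-decreasing surjection between intervals, then $|f\circ\tau|_\infty = |f|_\infty$ and (for the second part) $|f\circ\tau|_{p\var} = |f|_{p\var}$.

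For the first inequality, given $\rho\in\cR_{[a,b]}$ I will set $\sigma := \rho_1^{-1}\circ\rho\circ\rho_2 \in \cR_{[a,b+\delta]}$. Using $h_i = g_i\circ\rho_i$ and that $\rho_2$ is a bijection onto $[a,b]$, a direct computation gives $h_1\circ\sigma - h_2 = (g_1\circ\rho - g_2)\circ\rho_2$, so $|h_1\circ\sigma - h_2|_\infty = |g_1\circ\rho - g_2|_\infty$. A triangle-inequality estimate, inserting $\rho_2(x)$ and $\rho(\rho_2(x))$, yields
\[
|\sigma(x)-x| \le |\rho_1^{-1}(\rho(\rho_2(x)))-\rho(\rho_2(x))| + |\rho(\rho_2(x))-\rho_2(x)| + |\rho_2(x)-x|
\le \disc{\rho_1} + |\rho-\id|_\infty + \disc{\rho_2}.
\]
Taking the maximum and then the infimum over $\rho$ gives the desired bound on $\sigma_\infty(h_1,h_2)$.

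For the reverse inequality, given $\sigma\in\cR_{[a,b+\delta]}$ I will set $\pi := \rho_1\circ\sigma\circ\rho_2^{-1}\in\cR_{[a,b]}$. Then $h_1\circ\sigma - h_2 = (g_1\circ\pi - g_2)\circ\rho_2$, so the uniform norms agree, and the analogous triangle inequality gives $|\pi-\id|_\infty \le |\sigma-\id|_\infty + \disc{\rho_1} + \disc{\rho_2}$. Taking the infimum over $\sigma$ yields the bound on $\sigma_\infty(g_1,g_2)$.

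For the $p$-variation version, the same two maps $\rho\mapsto\sigma$ and $\sigma\mapsto\pi$ work without modification. The only additional ingredient is invariance of $p$-variation under reparametrisation, together with the observation that since $\rho_i(a)=a$ and $\sigma(a)=\pi(a)=a$, the initial values match: $(h_1\circ\sigma - h_2)(a) = g_1(a) - g_2(a) = (g_1\circ\pi - g_2)(a)$. Thus $\|h_1\circ\sigma - h_2\|_{p\var} = \|g_1\circ\pi - g_2\|_{p\var}$, and the rest of the argument carries over verbatim.

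There is no deep obstacle here; the proof is essentially bookkeeping. The one subtle point is choosing the correct ``conjugation'' (namely $\sigma = \rho_1^{-1}\circ\rho\circ\rho_2$, not $\rho_2^{-1}\circ\rho\circ\rho_1$), which is forced by the identity $h_i = g_i\circ\rho_i$ so that the factor $\circ\rho_2$ can be peeled off without disturbing uniform or $p$-variation norms.
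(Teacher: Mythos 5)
Your proof is correct and takes essentially the same approach as the paper: the key step in both is to conjugate a reparametrisation on one interval by $\rho_1,\rho_2$ to obtain one on the other interval, observe that the uniform (or $p$-variation) term is unchanged, and bound the extra distortion by $\disc{\rho_1}+\disc{\rho_2}$. Your write-up is somewhat more explicit (two directions spelled out, the fact $\disc{\rho_i^{-1}}=\disc{\rho_i}$ made explicit, and the matching of initial values at $a$ for the $p$-variation case), and the only cosmetic slip is that you call the bound on $\sigma_\infty(h_1,h_2)$ the ``first inequality'' although it is the second one in your initial display.
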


\begin{proof}
By definition of $\sigma_\infty$,
\begin{align*}
|\sigma_{\infty}(g_1,g_2) - \sigma_\infty(h_1,h_2)|
&=
\big|
\inf_{\omega_1 \in \cR_{[a,b]}} \max\{\disc{\omega_1} , |g_1\circ \omega_1-g_2|_\infty\}
\\
&\qquad
- \inf_{\omega_2 \in \cR_{[a,b+\delta]}} \max\{\disc{\omega_2} , |g_1\circ\rho_1\circ\omega_2\circ\rho_2^{-1}-g_2|_\infty\}
\big|
\end{align*}
from which \eqref{eq:simga_inf_lambda} follows
upon making the substitution $\omega_1 =\rho_1\circ\omega_2\circ\rho_2^{-1}$
and using
$\disc{\rho_1\circ\omega_2\circ\rho_2^{-1}} \le \disc{\rho_1}+\disc{\omega_2}+\disc{\rho_2}$.
An identical argument shows the final claim.
\end{proof}

\begin{lemma}\label{lem:diff_deltas}
For all $\delta_2>\delta_1>0$
and $(\phi ,\Pi)\in\bar\DD[a,b]$,
there exists $\rho\in \cR_{[a,b+\delta_1],[a,b+\delta_2]}$
such that $\phi^{\delta_2}\circ\rho = \phi^{\delta_1}$
and $\disc{\rho} = \delta_2-\delta_1$.
\end{lemma}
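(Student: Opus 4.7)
The plan is to construct $\rho$ explicitly by affinely stretching each fictitious interval of the $\delta_1$-extension onto the corresponding fictitious interval of the $\delta_2$-extension, while otherwise sending $\tau_{\delta_1}(t) \mapsto \tau_{\delta_2}(t)$. When $\kappa\ge 1$, I would set
\begin{equation*}
\rho(s) =
\begin{cases}
\tau_{\delta_2}(t_j^-) + \frac{\delta_2}{\delta_1}\bigl(s - \tau_{\delta_1}(t_j^-)\bigr) & \text{if } s \in \bigl[\tau_{\delta_1}(t_j^-), \tau_{\delta_1}(t_j)\bigr] \text{ for some } t_j\in \Pi,\\
\tau_{\delta_2}\bigl(\tau_{\delta_1}^{-1}(s)\bigr) & \text{otherwise,}
\end{cases}
\end{equation*}
and for $\kappa=0$ simply take $\rho(s)=s$ on $[a,b]$ and $\rho(s)=b+(\delta_2/\delta_1)(s-b)$ on $[b,b+\delta_1]$. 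The two cases match at $s=\tau_{\delta_1}(t_j^-)$ and $s=\tau_{\delta_1}(t_j)$ because $\tau_{\delta_1}^{-1}$ collapses every fictitious interval onto~$t_j$.

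Next I would check that $\rho$ is a continuous strictly increasing bijection from $[a,b+\delta_1]$ to $[a,b+\delta_2]$ satisfying $\phi^{\delta_2}\circ\rho = \phi^{\delta_1}$. Strict monotonicity is immediate from the displayed formula and the strict monotonicity of $\tau_{\delta_2}$. Continuity is routine away from $\Pi$; at $s$ corresponding to some $t_j\in \Pi$ it reduces to $\tau_{\delta_2}(t')\to \tau_{\delta_2}(t_j^\pm)$ as $t'\to t_j^\pm$, which follows by monotone convergence on the absolutely summable series $\sum_k 2^{-k}\bone_{t_k\le t'}$. For the composition identity, outside fictitious intervals both sides equal $\phi(t)(1)$ at $s=\tau_{\delta_1}(t)$; inside the $j$-th fictitious interval the affine definition is arranged precisely so that
\begin{equation*}
\frac{\rho(s) - \tau_{\delta_2}(t_j^-)}{\delta_2\, 2^{-j}/r} \;=\; \frac{s - \tau_{\delta_1}(t_j^-)}{\delta_1\, 2^{-j}/r},
\end{equation*}
hence $\phi^{\delta_2}(\rho(s))$ and $\phi^{\delta_1}(s)$ are both equal to $\phi(t_j)$ evaluated at the same argument.

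For the distance bound, at any $s=\tau_{\delta_1}(t)$ one has $\rho(s)-s = \tau_{\delta_2}(t)-\tau_{\delta_1}(t) = (\delta_2-\delta_1) r^{-1}\sum_{t_k\le t} 2^{-k}\in [0,\delta_2-\delta_1]$; on each fictitious interval $\rho(s)-s$ interpolates affinely between the endpoint values and so remains in the same range. The upper bound is saturated at $s=b+\delta_1$, where $\rho(b+\delta_1)=\tau_{\delta_2}(b)=b+\delta_2$, giving $\disc{\rho}=\delta_2-\delta_1$ exactly. I don't foresee a real obstacle; the only delicate point will be continuity of $\rho$ at an accumulation point of $\Pi$, and this is forced uniformly by the summability built into the construction of $\tau_\delta$.
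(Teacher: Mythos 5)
Your construction is correct and is exactly what the paper has in mind when it dismisses the lemma as ``Obvious from the construction of $\phi^\delta$''; you have simply written out the canonical reparametrisation $\rho$ that scales each fictitious interval $[\tau_{\delta_1}(t_j^-),\tau_{\delta_1}(t_j)]$ of length $\delta_1 2^{-j}/r$ onto the corresponding interval of length $\delta_2 2^{-j}/r$ and equals $\tau_{\delta_2}\circ\tau_{\delta_1}^{-1}$ elsewhere. All the checks (matching at interval endpoints, continuity via summability of $\sum 2^{-j}$, the affine cancellation giving $\phi^{\delta_2}\circ\rho=\phi^{\delta_1}$, and $\rho(s)-s\in[0,\delta_2-\delta_1]$ with the supremum attained at $s=b+\delta_1$) are right.
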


\begin{proof}
    Obvious from the construction of $\phi^\delta$.
\end{proof}

\begin{lemma}\label{lem:p-var_indep}
Consider $\delta_1,\delta_2>0$ and
$\phi_1,\phi_2 \in \bar\DD[a,b]$ with $\phi_1\sim\phi_2$.
Then $|\phi_1^{\delta_1}|_{p\var;[a,b+\delta_1]} = |\phi_2^{\delta_2}|_{p\var;[a,b+\delta_2]}$.
\end{lemma}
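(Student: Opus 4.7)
The plan is to reduce the statement to reparametrisation invariance of $p$-variation. The decisive input is Remark~\ref{rmk:psi_phi_sim}, which says both that $\psi^{\delta_1} \dsim \psi^{\delta_2}$ for every $\delta_1,\delta_2>0$ and the same $\phi$, and that $\phi_1 \sim \phi_2$ is equivalent to $\psi_1^\delta \dsim \psi_2^\delta$ for some (hence all) $\delta>0$. Chaining these (via any common $\delta$) yields
\[
\psi_1^{\delta_1} \dsim \psi_1^\delta \dsim \psi_2^\delta \dsim \psi_2^{\delta_2}.
\]
By Remark~\ref{rmk:DF_metric}, there then exist non-decreasing surjections $\rho_i : [0,1] \to [a,b+\delta_i]$ with $\psi_1^{\delta_1} \circ \rho_1 = \psi_2^{\delta_2} \circ \rho_2$. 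Reading off the first coordinate of $\psi_i^{\delta_i} = (\phi_i^{\delta_i}, \tau_{\delta_i}^{-1})$ gives $\phi_1^{\delta_1} \circ \rho_1 = \phi_2^{\delta_2} \circ \rho_2$, i.e.\ $\phi_1^{\delta_1} \dsim \phi_2^{\delta_2}$ as c{\`a}dl{\`a}g paths.

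It then suffices to verify that Fr{\'e}chet equivalence preserves $p$-variation. Given any partition $a = t_0 < t_1 < \dots < t_n = b+\delta_1$, by surjectivity of $\rho_1$ one can pick $0 = s_0 < s_1 < \dots < s_n = 1$ with $\rho_1(s_i) = t_i$; the inequalities $s_{i-1} < s_i$ are automatic since the $t_i$ are distinct. Setting $u_i = \rho_2(s_i)$ yields a non-decreasing sequence from $a$ to $b+\delta_2$ with
\[
\sum_{i=1}^n |\phi_1^{\delta_1}(t_i) - \phi_1^{\delta_1}(t_{i-1})|^p = \sum_{i=1}^n |\phi_2^{\delta_2}(u_i) - \phi_2^{\delta_2}(u_{i-1})|^p \le |\phi_2^{\delta_2}|_{p\var}^p,
\]
since any degenerate terms (those with $u_{i-1} = u_i$) contribute zero and can be discarded, leaving a genuine partition of $[a, b+\delta_2]$. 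Taking the supremum over partitions of $[a,b+\delta_1]$ gives one inequality and the symmetric argument closes the loop.

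The only mildly subtle point will be the reparametrisation step: because $\phi_i^{\delta_i}$ is genuinely c{\`a}dl{\`a}g, the maps $\rho_i$ furnished by Remark~\ref{rmk:DF_metric} are only non-decreasing surjections rather than strictly increasing bijections. However, a non-decreasing surjection $[0,1] \to [a_i,b_i+\delta_i]$ is automatically continuous, and the sup-over-partitions definition of $p$-variation is insensitive to the flat pieces of $\rho_i$, so the argument above goes through without difficulty. No other ingredient is required.
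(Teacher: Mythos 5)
Your proof is correct and takes essentially the same route as the paper, which disposes of the lemma in one line by asserting that $\phi_1^{\delta_1}$ is a reparametrisation of $\phi_2^{\delta_2}$; you simply spell out in full the two facts left implicit there (that the chain $\psi_1^{\delta_1} \dsim \psi_1^\delta \dsim \psi_2^\delta \dsim \psi_2^{\delta_2}$ holds and projects to $\phi_1^{\delta_1}\dsim \phi_2^{\delta_2}$, and that $p$-variation is insensitive to the flat pieces of the non-decreasing surjections). In particular your observation that a non-decreasing surjection onto an interval is automatically continuous correctly handles the subtlety you flagged.
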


\begin{proof}
This is immediate from the fact that $\phi_1^{\delta_1}$ is a reparametrisation of $\phi_2^{\delta_2}$.
\end{proof}

Recall the space  $\DD^{p\var}[a,b]$ from~\eqref{eq:DD_pvar}.

\begin{lemma}\phantomsection \label{lem:limExists}
\begin{enumerate}[label=(\roman*)]
\item\label{pt:alpha_infinity} The
limit $
\alpha_{\infty}(\phi_1,\phi_2) = \lim_{\delta \to 0} \sigma_{\infty} (\phi_1^\delta,\phi_2^\delta)
$
exists and is well-defined for all $(\phi_1,\Pi_1), (\phi_2,\Pi_2) \in \DD[a,b]$,
i.e.\ is independent of the choice of the series $\sum_{j=1}^\infty 2^{-j}$
used in Section~\ref{subsec:pathFuncs}, the choice of $\Pi_1$ and $\Pi_2$,
the respective enumeration of points, or the
parametrisations of $\phi_1(t)$, $\phi_2(t)$ for each $t$.

\item\label{pt:alpha_p-var} The limit
$
\alpha_{p\var}(\phi_1,\phi_2) = \lim_{\delta \to 0} \sigma_{p\var} (\phi_1^\delta,\phi_2^\delta)
$
exists and is well-defined for $\phi_1,\phi_2\in \DD^{p\var}[a,b]$ in the same sense as in \ref{pt:alpha_infinity}.
\end{enumerate} 
\end{lemma}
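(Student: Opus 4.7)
The plan is to establish the lemma in two main steps: \emph{(i)} for each fixed choice of the auxiliary data, $\delta\mapsto\sigma_\infty(\phi_1^\delta,\phi_2^\delta)$ is Cauchy as $\delta\downarrow 0$, so the limit exists; \emph{(ii)} two different admissible choices yield the same limit, because the two pairs of extensions are $\sigma_\infty$-close. Part (ii) of the lemma follows by the same arguments applied to $\sigma_{p\var}$, using the second assertion of Lemma~\ref{lem:diff_repar} together with Lemma~\ref{lem:p-var_indep} to ensure that the extensions remain in $D^{p\var}$.

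For Step (i), given $0<\delta_1<\delta_2$, Lemma~\ref{lem:diff_deltas} supplies bijections $\rho_i\in\cR_{[a,b+\delta_1],[a,b+\delta_2]}$ with $\phi_i^{\delta_2}\circ\rho_i=\phi_i^{\delta_1}$ and $\disc{\rho_i}=\delta_2-\delta_1$. Writing $\tilde b:=b+\delta_1$ and $\tilde\delta:=\delta_2-\delta_1$, the inverses $\rho_i^{-1}\in\cR_{[\tilde a,\tilde b+\tilde\delta],[\tilde a,\tilde b]}$ satisfy $\phi_i^{\delta_2}=\phi_i^{\delta_1}\circ\rho_i^{-1}$ with $\disc{\rho_i^{-1}}=\tilde\delta$, so Lemma~\ref{lem:diff_repar} yields
\[
\bigl|\sigma_\infty(\phi_1^{\delta_1},\phi_2^{\delta_1})-\sigma_\infty(\phi_1^{\delta_2},\phi_2^{\delta_2})\bigr|\le 2(\delta_2-\delta_1),
\]
and analogously for $\sigma_{p\var}$. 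This establishes the Cauchy property and hence existence of both limits.

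For Step (ii), fix $\delta>0$ and consider extensions $\phi_i^\delta$ and $\bar\phi_i^\delta$ produced by two admissible choices. Both live on $[a,b+\delta]$ since every admissible choice contributes total fictitious length $\delta$. When the two choices differ only in (a) the internal parametrisations of the paths $\phi_i(t)$, (b) the summable positive weights used in place of $\{2^{-j}\}$, or (c) the enumeration of $\Pi_i$, there is a continuous increasing bijection $\rho\in\cR_{[a,b+\delta]}$ with $\bar\phi_i^\delta=\phi_i^\delta\circ\rho$. Because $\tau_\delta(t_j^-)\ge t_j$ and $\tau_\delta(t_j)\le t_j+\delta$ (and similarly for $\bar\tau_\delta$), each fictitious interval lies in a window $[t_j,t_j+\delta]$ of width $\delta$, so $\disc{\rho}\le\delta$. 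Lemma~\ref{lem:diff_repar} then gives $|\sigma_\infty(\phi_1^\delta,\phi_2^\delta)-\sigma_\infty(\bar\phi_1^\delta,\bar\phi_2^\delta)|\le 2\delta$, and passing to $\delta\downarrow 0$ shows the limits coincide. When instead the sets $\Pi_i$ differ, they must differ only by stationary points, at which the extra fictitious interval in one of the extensions is a \emph{constant} stretch equal to $\phi_i(t^*)(0)$. Lemma~\ref{lem:non-osc} guarantees that the other extension approximates this constant to any $\eps>0$ on a neighbourhood of $\tau_\delta(t^*)$, so one can construct $\rho\in\cR_{[a,b+\delta]}$ with $\disc{\rho}\le\delta$ and $|\phi_i^\delta\circ\rho-\bar\phi_i^\delta|_\infty\le\eps$, after which letting $\delta\downarrow 0$ and then $\eps\downarrow 0$ finishes the argument. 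The $p$-variation case uses the same $\rho$, noting that the short constant stretches contribute nothing to $p$-variation.

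The main obstacle is the $\Pi_1\neq\Pi_2$ sub-case of Step (ii): the two extensions there have genuinely different skeletal structures and are only Fr\'echet-equivalent via non-decreasing surjections (collapsing a constant interval to a point), so Lemma~\ref{lem:diff_repar} does not apply verbatim. Reconciling this with the strict-bijection hypothesis of Lemma~\ref{lem:diff_repar} requires converting the qualitative non-oscillation condition (Definition~\ref{def:DDbar}\ref{pt:non_osc}) into the quantitative control supplied by Lemma~\ref{lem:non-osc}, which is what allows the small $\eps$-perturbation above.
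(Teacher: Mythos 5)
Your Step (i) follows exactly the paper's route: combine Lemma~\ref{lem:diff_deltas} and Lemma~\ref{lem:diff_repar} to obtain the Cauchy estimate
$|\sigma_{\infty}(\phi_1^{\delta_2},\phi_2^{\delta_2}) - \sigma_{\infty}(\phi_1^{\delta_1}, \phi_2^{\delta_1})|\le 2(\delta_2-\delta_1)$
(and analogously for $\sigma_{p\var}$), from which existence of the limits follows. This is precisely the paper's argument. For the independence (your Step (ii)) the paper simply declares it ``clear,'' so your attempt to supply the details is extra credit rather than a deviation.

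Two small imprecisions in Step (ii) are worth flagging. First, the bijection you construct depends on the index $i$ (each $\phi_i$ has its own $\Pi_i$, weights, etc.), so you should write $\rho_i$ rather than a single $\rho$, and the resulting bound from Lemma~\ref{lem:diff_repar} is $\disc{\rho_1}+\disc{\rho_2}\le 2\delta$. Second, and more substantively, in sub-case (a) — different internal parametrisations of $\phi_i(t)$ — you claim an exact bijection $\rho\in\cR_{[a,b+\delta]}$ with $\bar\phi_i^\delta=\phi_i^\delta\circ\rho$. This need not exist: the equivalence on $\bar\DD$ is defined via Fr\'echet distance $d_F=0$ (Definitions~\ref{def:repara} and~\ref{def:DD}), which only guarantees $\eps$-approximate reparametrisations, since the infimum over $\cR_{[0,1]}$ may not be attained (e.g.\ when one representative has a constant stretch that the other collapses). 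So sub-case (a) in fact requires the same $\eps$-perturbation device you deploy for the $\Pi_1\neq\Pi_2$ case, combined with condition~\ref{pt:non_osc} of Definition~\ref{def:DDbar} to control all but finitely many excursions uniformly. This is a fixable oversight rather than a fatal flaw, and the overall strategy — produce $\rho_i$ with $\disc{\rho_i}\le\delta$ and $|\phi_i^\delta\circ\rho_i-\bar\phi_i^\delta|_\infty\le\eps$, then let $\delta\downarrow 0$ and $\eps\downarrow 0$ — is sound.
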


\begin{proof}
\ref{pt:alpha_infinity}
Lemmas~\ref{lem:diff_repar}
and~\ref{lem:diff_deltas} imply that, for $\delta_2>\delta_1>0$ and $\phi_1,\phi_2\in\bar\DD[a,b]$,
\[
|\sigma_{\infty}(\phi_1^{\delta_2},\phi_2^{\delta_2}) - \sigma_{\infty}(\phi_1^{\delta_1}, \phi_2^{\delta_1})|
\le
2(\delta_2-\delta_1),
\]
from which the existence of the limit follows.
The fact that the limit is independent of the series $\sum_j 2^{-j}$, the enumeration of non-stationary points,
and representatives of equivalence classes
is clear.
\ref{pt:alpha_p-var} follows in an identical manner.
\end{proof}

\begin{lemma}\label{lem:alpha_inf_dF}
Let $\delta>0$ and let $\psi_1^\delta,\psi_2^\delta:[a,b+\delta]\to \R^d\times[a,b]$ be the $\delta$-parametric representations of $\phi_1,\phi_2\in \bar\DD[a,b]$ as in~\eqref{eq:psi_delta}.
Then
\[
\alpha_\infty(\phi_1,\phi_2)
=
d_F(\psi_1^\delta,\psi_2^\delta),
\]
where we equip $\R^d\times\R$ with the norm $\max\{|h|,|t|\}$ for $(h,t)\in\R^d\times\R$.
\end{lemma}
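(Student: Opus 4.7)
The plan is to bridge the two quantities via the observation that $\sigma_\infty$ penalises $|\rho-\id|_\infty$ whereas the Fréchet distance on $\psi^\delta$-pairs effectively penalises $|\tau_\delta^{-1}\circ\rho-\tau_\delta^{-1}|_\infty$ (the time component of $\psi_1^\delta\circ\rho-\psi_2^\delta$), and these two penalties differ by at most $2\delta$.

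First I would record that $d_F(\psi_1^\delta,\psi_2^\delta)$ is independent of $\delta$: by Remark~\ref{rmk:psi_phi_sim}, $\psi_i^{\delta_1}\dsim\psi_i^{\delta_2}$ for $i=1,2$ and any $\delta_1,\delta_2>0$, and $d_F$ is manifestly invariant under simultaneous reparametrisation of its arguments (Remark~\ref{rmk:DF_metric}). Next I would verify two elementary geometric facts about $\tau_\delta$: (i) $|\tau_\delta^{-1}-\id|_{\infty;[a,b+\delta]}\le\delta$, since $\tau_\delta(t)-t\in[0,\delta]$ and the preimage of any flat interval has length at most $\delta$; and (ii) $\tau_\delta^{-1}$ is $1$-Lipschitz, because $\tau_\delta(t)=t+f(t)$ with $f$ non-decreasing implies $\tau_\delta(s+h)\ge\tau_\delta(s)+h$ for $h\ge 0$.

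For the inequality $d_F(\psi_1^\delta,\psi_2^\delta)\le\sigma_\infty(\phi_1^\delta,\phi_2^\delta)$, given any $\rho\in\cR_{[a,b+\delta]}$, the $1$-Lipschitz property gives
\[
|\tau_\delta^{-1}\circ\rho-\tau_\delta^{-1}|_\infty\le|\rho-\id|_\infty,
\]
so the competitor $\rho$ for $\sigma_\infty(\phi_1^\delta,\phi_2^\delta)$ is simultaneously a competitor for $d_F(\psi_1^\delta,\psi_2^\delta)$ with a no-larger max. For the reverse inequality, given $\eps>0$ and a near-optimal $\rho^*\in\cR_{[a,b+\delta]}$ for $d_F(\psi_1^\delta,\psi_2^\delta)$, the triangle inequality combined with (i) gives
\[
|\rho^*-\id|_\infty\le|\rho^*-\tau_\delta^{-1}\circ\rho^*|_\infty+|\tau_\delta^{-1}\circ\rho^*-\tau_\delta^{-1}|_\infty+|\tau_\delta^{-1}-\id|_\infty\le 2\delta+d_F(\psi_1^\delta,\psi_2^\delta)+\eps,
\]
and hence $\sigma_\infty(\phi_1^\delta,\phi_2^\delta)\le d_F(\psi_1^\delta,\psi_2^\delta)+2\delta+\eps$. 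Letting $\eps\to 0$ and then $\delta\to 0$, together with the $\delta$-independence of $d_F(\psi_1^\delta,\psi_2^\delta)$, concludes $\alpha_\infty(\phi_1,\phi_2)=d_F(\psi_1^\delta,\psi_2^\delta)$.

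The main potential obstacle is the careful handling of $\tau_\delta^{-1}$ on the fictitious intervals $[\tau_\delta(t_j^-),\tau_\delta(t_j))$, where it is constant; but (i) and (ii) above are precisely what absorb this, and the pointwise bound $|u-\tau_\delta^{-1}(u)|\le\delta$ uniformly on $[a,b+\delta]$ (checked separately inside and outside fictitious intervals) is the only routine calculation required.
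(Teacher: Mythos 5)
Your proof is correct and follows essentially the same strategy as the paper's: compare the time components of the parametric representations against $|\rho-\id|_\infty$, absorbing the difference into $O(\delta)$ errors that vanish in the limit. One small point worth flagging is that you implicitly take a \emph{common} set $\Pi$ (hence a single $\tau_\delta$) for both $\phi_1$ and $\phi_2$ — whereas the paper carries separate $\tau_{1,\delta},\tau_{2,\delta}$ and invokes surjectivity of $\tau_{2,\delta}^{-1}$; your simplification is legitimate since both $\alpha_\infty(\phi_1,\phi_2)$ and $d_F(\psi_1^\delta,\psi_2^\delta)$ are independent of the choice of $\Pi$ (Remark~\ref{rmk:xPhiwelldefined}, Lemma~\ref{lem:limExists}), and it cleanly yields the sharper one-sided bound $d_F\le\sigma_\infty(\phi_1^\delta,\phi_2^\delta)$ without a $2\delta$ correction. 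Appealing to the $1$-Lipschitz property of $\tau_\delta^{-1}$ for that direction is a nice touch and is slightly more economical than the paper's inequality chain via $\disc{\tau_{1,\delta}^{-1}\circ\rho\circ\tau_{2,\delta}}$.
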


\begin{proof}
Observe that $d_F(\psi_1^\delta,\psi_2^\delta)$ does not depend on $\delta$, hence, where convenient,
we can suppose that $\delta$ is arbitrarily small.

Suppose $\rho\in \cR_{[a,b+\delta]}$.
Let $\tau_{1,\delta},\tau_{2,\delta}$ and $\tau_{1,\delta}^{-1},\tau_{2,\delta}^{-1}$
be the time-changes associated to $\phi_1,\phi_2$ respectively as in Section~\ref{subsec:pathFuncs}.
Then
\[
\disc{\tau_{1,\delta}^{-1}\circ\rho\circ\tau_{2,\delta}}
\le
\sup_{t\in[a,b+\delta]}|\tau_{1,\delta}^{-1}(\rho(t))-\tau^{-1}_{2,\delta}(t)|
\le \disc{\tau^{-1}_{1,\delta}}+\disc{\tau^{-1}_{2,\delta}}
+\disc{\rho}
,
\]
where in the first bound we used that $\tau^{-1}_{2,\delta}$ is surjective.
Therefore
\begin{align*}
|\psi_1^\delta\circ\rho-\psi_2^\delta|_\infty
&=
\sup_{t\in[a,b+\delta]}
\max
\{|\phi_1^\delta(\rho(t))- \phi_2^\delta(t)| , |\tau_{1,\delta}^{-1}(\rho(t))-\tau^{-1}_{2,\delta}(t)|\}
\\
&\ge \max \bigl\{ |\phi_1^\delta\circ\rho-\phi_2^\delta|_\infty,
    \disc{\tau_{1,\delta}^{-1}\circ\rho\circ\tau_{2,\delta}}
\bigr\}.
\end{align*}
Furthermore, $\disc{\tau_{1,\delta}^{-1}\circ\rho\circ\tau_{2,\delta}}\to\disc{\rho}$ as $\delta\to0$,
and thus $d_F(\psi_1^\delta,\psi_2^\delta)\ge \alpha_\infty(\phi_1,\phi_2)$.
Conversely,
\[
|\psi_1^\delta\circ\rho-\psi_2^\delta|_\infty
\le
\max
\{|\phi_1^\delta\circ\rho-\phi_2^\delta|_\infty,
\disc{\rho} + \disc{\tau^{-1}_{1,\delta}} + \disc{\tau^{-1}_{2,\delta}}\}
\]
which proves $d_F(\psi_1^\delta,\psi_2^\delta)\le \alpha_\infty(\phi_1,\phi_2)$
since $\disc{\tau^{-1}_{1,\delta}} + \disc{\tau^{-1}_{2,\delta}} \to 0$ as $\delta\to0$.
\end{proof}

\begin{thm}\label{thm:alpha}
$\DD[a,b]$ equipped with $\alpha_\infty$ is a complete separable metric space.
\end{thm}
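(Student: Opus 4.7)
The plan is to leverage Lemma~\ref{lem:alpha_inf_dF}, which identifies $\alpha_\infty(\phi_1,\phi_2)$ with the Fr\'echet distance $d_F(\psi_1^\delta,\psi_2^\delta)$ between the parametric representations. This reduces all three tasks to transferring properties from the complete space $(\cD/{\dsim}, d_F)$ of Lemma~\ref{lem:dF_complete}. For the metric axioms, symmetry and the triangle inequality transfer directly via Lemma~\ref{lem:alpha_inf_dF}, while non-degeneracy follows from the chain $\alpha_\infty(\phi_1,\phi_2)=0 \Leftrightarrow \psi_1^\delta \dsim \psi_2^\delta \Leftrightarrow \phi_1\sim\phi_2$, using Remarks~\ref{rmk:DF_metric} and~\ref{rmk:psi_phi_sim}.

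Fix $\delta=1$ and consider the map $\Psi:\DD[a,b]\to \cD/{\dsim}$, $\phi\mapsto [\psi^1]$, which by Lemma~\ref{lem:alpha_inf_dF} is an isometric embedding. Separability of $(\DD[a,b],\alpha_\infty)$ then follows from separability of the ambient space: every class in $\cD/{\dsim}$ has a representative parametrised by $[0,1]$, and since $d_F \le \sigma_\infty$ on $D([0,1],\R^d\times\R)$ and $(D[0,1],\sigma_\infty)$ is separable, $(\cD/{\dsim},d_F)$ is separable as well.

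For completeness, it suffices to show that $\Psi(\DD[a,b])$ is closed in $(\cD/{\dsim}, d_F)$. Suppose $[\psi_n^1] \to [\psi]$ with $\phi_n \in \bar\DD[a,b]$. Reparametrise all representatives on $[0,1]$ and use the definition of $d_F$ to extract reparametrisations $\rho_n$ such that $\psi_n^1\circ\rho_n = (\phi_n^1\circ\rho_n,\,\tau_{n,1}^{-1}\circ\rho_n)\to(u,\sigma)$ uniformly on $[0,1]$. Uniform limits of c\`adl\`ag functions are c\`adl\`ag, and uniform limits of continuous non-decreasing surjections $[0,1]\to[a,b]$ are again of the same type; so $\sigma:[0,1]\to[a,b]$ is a continuous non-decreasing surjection and $u:[0,1]\to\R^d$ is c\`adl\`ag. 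From $(u,\sigma)$ I reconstruct $\phi_*\in\bar\DD[a,b]$ as follows: each fibre $\sigma^{-1}(\{t\})=[\ell_t,r_t]$ is a compact interval, and I set $\phi_*(t)(s)=u(\ell_t+s(r_t-\ell_t))$ for $s\in[0,1)$ with $\phi_*(t)(1)=u(r_t)$, and $\Pi=\{t:\ell_t<r_t\}$, which is countable since the $(\ell_t,r_t)$ are pairwise disjoint subintervals of $[0,1]$. The map $t\mapsto u(r_t)$ is c\`adl\`ag because $t\mapsto r_t$ is right-continuous non-decreasing and $u$ is c\`adl\`ag; condition (c) of Definition~\ref{def:DDbar} follows by contradiction, since infinitely many fibres with oscillation exceeding $\eps$ would force an accumulation point of disjoint fibres where $u$ fails to have one-sided limits. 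With an appropriate choice of weights so that the inserted interval after each $t_j\in\Pi$ has length $r_{t_j}-\ell_{t_j}$, a direct inspection shows that $\psi_*^{\delta'}$ traces the same parametric curve as $\psi$, so $[\psi]\in\Psi(\DD[a,b])$.

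The main obstacle is the reconstruction step: recovering an element of $\bar\DD[a,b]$ from a Fr\'echet limit of parametric representations. The key observation that makes this step work is that uniform convergence of the $\psi_n^1$ (after common reparametrisation) preserves the monotonicity, continuity and surjectivity of the second coordinate, so the limit still has the shape of a parametric representation. Once this is granted, the remaining verifications of Definition~\ref{def:DDbar} and of the Fr\'echet equivalence $\psi\dsim\psi_*^{\delta'}$ are routine.
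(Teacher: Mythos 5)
Your proof proposal is correct and follows essentially the same route as the paper: reduce the metric axioms, completeness, and separability to the space $(\cD/{\dsim},d_F)$ via the isometry of Lemma~\ref{lem:alpha_inf_dF} and the completeness result of Lemma~\ref{lem:dF_complete}. The only difference is that you spell out the reconstruction of $\phi_*\in\bar\DD[a,b]$ from the Fr\'echet limit $\psi=(u,\sigma)$ (verification of the c\`adl\`ag property of $t\mapsto u(r_t)$ and of Definition~\ref{def:DDbar}\ref{pt:non_osc}), whereas the paper compresses this step into the remark that since the second component of the limit is again a non-decreasing surjection, one can ``readily define $\phi\in\DD$ with $\delta$-parametric representation $\psi^\delta$ such that $\psi\dsim\psi^\delta$.''
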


\begin{proof}
We first prove that $\alpha_\infty$ is a metric.
It is clear that $\alpha_\infty$ satisfies the triangle inequality,
so following Lemma~\ref{lem:limExists}
it remains only to show that if $\alpha_\infty(\phi_1,\phi_2)=0$ for some $\phi_1,\phi_2\in\bar\DD[a,b]$,
then $\phi_1\sim\phi_2$.
This in turn follows from Lemma~\ref{lem:alpha_inf_dF} and Remarks~\ref{rmk:psi_phi_sim} and~\ref{rmk:DF_metric}.

Next, if $\phi_n \in\DD$ is a Cauchy sequence for $\alpha_\infty$,
then $\psi^\delta_n \in \cD$ is a Cauchy sequence for $d_F$ by Lemma~\ref{lem:alpha_inf_dF}.
By Lemma~\ref{lem:dF_complete}, $\cD / {\dsim}$ is complete under $d_F$,
therefore $\lim_{n\to\infty}d_F(\psi^\delta_n,\psi)\to 0$
for some $\psi\in\cD$.
Since the second component of each $\psi^\delta_n$ is a non-decreasing surjection,
the same holds for $\psi$.
We can then readily define $\phi\in\DD$ with $\delta$-parametric representation $\psi^\delta$ such that $\psi\dsim \psi^\delta$,
and thus $\phi_n\to\phi$ in $\alpha_\infty$ due to Lemma~\ref{lem:alpha_inf_dF}.
Hence $(\DD,\alpha_\infty)$ is complete.

Finally, note that $(\cD,d_F)$ is a separable metric space (this follows, e.g.\ from separability of the Skorokhod space).
Again by Lemma~\ref{lem:alpha_inf_dF},
we can identify $(\DD,\alpha_\infty)$ with a subspace of $(\cD,d_F)$,
and thus $(\DD,\alpha_\infty)$ is separable.
\end{proof}

\begin{proof}[Proof of Theorem~\ref{thm:comp}]
By Theorem~\ref{thm:alpha}, $(\DD[a,b],\alpha_\infty)$ is complete.
Furthermore, recall the embedding $\iota : D[a,b]\to \DD[a,b]$ from Definition~\ref{def:iota}.
It is easy to see that $\alpha_\infty=\sigma_\infty$ on $\iota D[a,b]$, thus $(\iota D[a,b],\alpha_\infty)$ is isometric to $(D[a,b],\sigma_\infty)$ (Remark~\ref{rmk:lift}).
Finally, $\iota D[a,b]$ is dense in $\DD[a,b]$ since $\alpha_\infty(\phi, \iota \phi^\delta\circ\rho)\leq2\delta$ where $\rho$ is the linear reparametrisation $\rho:[a,b+\delta]\to[a,b]$.
\end{proof}

\subsection{Differential equations driven by decorated paths}
\label{sec:RDEs}

Consider $p\in [1,2)$, $\xi\in\R^m$, and $A,B$ as in Section~\ref{sec:DEs}.
In this section we make precise the meaning of and study the differential equation
\begin{equation*}
d X = A(X)\,dt+B(X)\,d \phi, \quad X(a)=\xi,
\end{equation*}
where $\phi\in \DD^{p\var}([a,b],\R^d)$
and which we now pose on a general interval $[a,b]$.

We first recall and make precise the construction of $X$ as an element of $\DD^{p\var}([a,b],\R^m)$.
Recall $(\omega,\Pi) = ((\iota\id, \phi),\Pi)\in \bar\DD^{p\var}([a,b],\R^{1+d})$
from Section~\ref{sec:DEs}
where $\id :[a,b]\to[a,b]$ is the identity map.
By \cite[Theorem~3.6]{CFKMZ22},\footnote{There is an assumption  in \cite[Theorem~3.6]{CFKMZ22}
that the drift has finite $p/2$-variation; we can drop this assumption here as
we are working in the Young regime $p\in [1,2)$, see also \cite[Theorem~3.2 \& Remark~6.1]{FZ18}.}
the Young ODE $d X^\delta = (A,B)(X^\delta) \,d \omega^\delta$
admits a unique solution $X^\delta\in D^{p\var}([a,b],\R^m)$ since $p\in[1,2)$.
We define $X\colon [a,b]\to D^{p\var}([0,1],\R^m)$
by taking $X(t)$ as
the linear reparametrisation of $X^\delta |_{[\tau_\delta(t^-),\tau_\delta(t)]}$,
where $\tau_\delta$ is associated to $(\omega,\Pi)$ as in Section~\ref{subsec:pathFuncs}.

The main continuity result on ODEs driven by decorated paths in the Young regime is the following.

\begin{prop}\label{prop:stability_ODE}
The map $\cS: (\xi,\phi) \mapsto X$ is well-defined as a map $\R^m\times \DD^{p\var}([a,b],\R^d) \to \DD^{p\var}([a,b],\R^m)$
and is locally H{\"o}lder continuous when the spaces $\DD^{p\var}$ on both sides are equipped with $\alpha_{p\var}$.
\end{prop}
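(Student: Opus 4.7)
My plan is to treat well-definedness and continuity in tandem, reducing both claims to classical Young ODE theory applied to the $\delta$-extended driver $\omega^\delta = ((\iota\id,\bar\phi))^\delta \in D^{p\var}([a,b+\delta],\R^{1+d})$.

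For well-definedness, I would fix a representative $(\bar\phi,\Pi) \in \bar\DD^{p\var}$ and $\delta > 0$. Since $p \in [1,2)$, $\beta > 1$ and $\gamma > p$, \cite[Theorem~3.6]{CFKMZ22} produces a unique Young solution $X^\delta \in D^{p\var}([a,b+\delta], \R^m)$ with $X^\delta(a) = \xi$ together with an a priori bound $|X^\delta|_{p\var} \le \Phi(|\omega^\delta|_{p\var})$, where $\Phi$ depends only on $\|A\|_{C^\beta}$ and $\|B\|_{C^\gamma}$. Defining $X$ as in the excerpt, the c{\`a}dl{\`a}g and countability conditions of Definition~\ref{def:DDbar} are immediate from c{\`a}dl{\`a}g-ness of $X^\delta$ and right-continuity of $\tau_\delta$, while condition~\ref{pt:non_osc} reduces to the local Young bound
\[
\sup_{s\in[0,1]}|X(t_j)(s) - X(t_j)(0)| \le C|\omega^\delta|_{p\var;[\tau_\delta(t_j^-),\tau_\delta(t_j)]},
\]
which tends to zero as $j \to \infty$ since $|\omega^\delta|_{p\var}$ is finite. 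Finiteness $|X|_{p\var} = |X^\delta|_{p\var} < \infty$ is then immediate. Independence of the class $[X] \in \DD^{p\var}$ on the remaining arbitrary choices ($\delta$, the enumeration of $\Pi$, the summable sequence $2^{-j}$ and the internal parametrisations of $\bar\phi(t)$) follows from Young uniqueness together with the reparametrisation invariance of Young integrals: any two choices produce $\delta$-extensions related by a strictly increasing bijection $\rho$, and the corresponding Young solutions are then related by the same $\rho$.

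For local H{\"o}lder continuity I would invoke the standard Young ODE stability estimate (again \cite[Theorem~3.6]{CFKMZ22}): for every $M > 0$ there exist $C = C(M,\|A\|_{C^\beta},\|B\|_{C^\gamma})$ and $\theta \in (0,1]$ such that whenever $|\omega_i^\delta|_{p\var} \vee |\xi_i| \le M$,
\[
|X_1^\delta - X_2^\delta|_{p\var} \le C\bigl(|\xi_1 - \xi_2| + |\omega_1^\delta - \omega_2^\delta|_{p\var}\bigr)^\theta.
\]
Since Young integrals are invariant under simultaneous reparametrisation of driver and solution, applying this bound to $(\omega_2^\delta\circ\rho,X_2^\delta\circ\rho)$ for any $\rho \in \cR_{[a,b+\delta]}$ and optimising over $\rho$ yields the Skorokhod-type estimate
\[
\sigma_{p\var}(X_1^\delta,X_2^\delta) \le C\bigl(|\xi_1-\xi_2| + \sigma_{p\var}(\omega_1^\delta,\omega_2^\delta)\bigr)^\theta.
\]
Letting $\delta \to 0$ and invoking Lemma~\ref{lem:limExists}\ref{pt:alpha_p-var} produces $\alpha_{p\var}(X_1,X_2) \le C(|\xi_1-\xi_2| + \alpha_{p\var}(\phi_1,\phi_2))^\theta$, which is exactly local H{\"o}lder continuity on $\{(\xi,\phi) : |\xi| + |\phi|_{p\var} \le M\}$.

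The main obstacle is the upgrade from $p$-variation norm stability to Skorokhod-type $\sigma_{p\var}$ stability: standard Young ODE references phrase stability in the $p$-variation norm, and the upgrade hinges crucially on the reparametrisation invariance of Young integrals, which lets an almost optimal $\rho$ for $\sigma_{p\var}(\omega_1^\delta,\omega_2^\delta)$ serve as a competitor for $\sigma_{p\var}(X_1^\delta,X_2^\delta)$. A secondary bookkeeping point is ensuring that the bound $M$ on $|\omega_i^\delta|_{p\var}$ is uniform as $\delta \to 0$, which follows from Lemma~\ref{lem:p-var_indep} together with Lemmas~\ref{lem:diff_repar} and~\ref{lem:diff_deltas}.
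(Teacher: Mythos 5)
There is a gap in the continuity part. The step you skip is controlling $\sigma_{p\var}(\omega_1^\delta, \omega_2^\delta)$ by $\alpha_{p\var}(\phi_1, \phi_2)$ in the $\delta\to0$ limit, and this does not hold as stated: $\omega_i^\delta = (g^\delta, \phi_i^\delta)$ carries the drift time-component $g^\delta$, and after reparametrising by a $\rho$ that aligns the $\phi$-components, the mismatch $g^\delta\circ\rho - g^\delta$ has small sup-norm (of order $\disc{\rho}+\delta$) but $p$-variation of order one. Making that term small requires interpolating between the $1$-variation (bounded by $2(b-a)$) and the sup-norm via~\eqref{eq:interpolation}, and that interpolation is exactly what generates the H\"older exponent in the paper's proof. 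Your argument never performs it, so the exponent $\theta$ appears without derivation; in the paper it emerges with value $1-1/q$ from the $g^\delta$ term.

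Relatedly, your claimed form of the Young stability bound from \cite[Theorem~3.6]{CFKMZ22} is not what that theorem gives: the estimate is Lipschitz, not H\"older with a free $\theta$, and --- as the paper's proof records --- it controls the drift driver in $q$-variation for $q\in(1,\beta)$, separately from the noise driver in $p$-variation. This separation is forced by the regularity assumptions, since $A$ is only $C^\beta$ with $\beta>1$ rather than $\beta>p$, so there is no stability estimate with the combined driver $\omega^\delta$ in a single $p$-variation scale. Your well-definedness argument is essentially the same as the paper's.
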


\begin{proof}
For $\xi,\phi$ as above, we first verify that $(X,\Pi)\in \DD^{p\var}([a,b],\R^m)$.
If $t\notin\Pi$, then $t$ is a continuity point of $\tau_\delta$, hence $[\tau_\delta(t^-),\tau_\delta(t)]$ is a singleton and $X(t) \equiv X^\delta(\tau_\delta(t))$ is constant.
Therefore $\Pi$ contains all the non-stationary points of $X$.
Furthermore $X(t)(1) = X^\delta\circ \tau_\delta(t)$ which is c{\`a}dl{\`a}g since $X^\delta$ is c{\`a}dl{\`a}g and $\tau_\delta$ is c{\`a}dl{\`a}g and increasing. 
The fact that $\sup_{s\in[0,1]}|X(t)(s)-X(t)(0)|>\eps>0$
for at most finitely many $t\in\Pi$
follows from $|X^\delta|_{p\var}<\infty$.
Therefore $(X,\Pi)\in \DD([a,b],\R^m)$ and thus $(X,\Pi)\in \DD^{p\var}([a,b],\R^m)$.

Next, we recall that ODEs commute with reparametrisations:
if $Y_i\in D^{p\var}([a,b],\R^m)$ for $i=1,2$ solves $dY_i=(A,B)(Y_i)\,du_i$ for $u_i\in D^{p\var}([a,b],\R^{1+d})$ with first component of bounded variation, and $u_1\circ\rho = u_2$ for some $\rho\in\cR_{[a,b]}$,
then $Y_1\circ\rho = Y_2$.
It readily follows that if $X'$ is a solution associated to $(\xi,\phi')$ with $\omega'=(\iota\id,\phi')\sim \omega$, then $X\sim X'$.
Moreover, $(\iota\id,\phi')\sim (\iota\id,\phi)$ if and only if $\phi'\sim\phi$, therefore $\cS$ is well-defined.

It remains to prove that $\cS$ is locally H{\"o}lder continuous.
Consider $\xi_i,\phi_i,\omega_i,X_i$ as above for $i=1,2$
and take $\Pi$ to contain all non-stationary points of $\phi_1$, $\phi_2$.
Remark that $\omega^\delta_i= (g^\delta,\phi_i^\delta) \in D^{p\var}([a,b+\delta],\R^{1+d})$
for a non-decreasing surjection $g^\delta:[a,b+\delta]\to [a,b]$
with $\disc{g^\delta}\leq \delta$.

Recalling that $A\in C^{\beta}(\R^m,\R^m)$ for $\beta>1$,
take $q\in (1,\beta)$.
Since ODEs commute with reparameterisations, 
$X_1^\delta\circ\rho$ solves the ODE
$dX = (A,B)(X)\,d(\omega_1^\delta\circ\rho)$
for any $\rho\in\cR_{[a,b+\delta]}$.
Therefore, by~\cite[Theorem~3.6]{CFKMZ22},
\begin{equation*}
|X_1^\delta\circ\rho - X_2^\delta|_{p\var} \le C (|g^\delta\circ\rho - g^\delta|_{q\var} + |\phi^\delta_1\circ\rho-\phi^\delta_2|_{p\var} + |\xi_1-\xi_2|),
\end{equation*}
where $C$ here and below depends on $\phi_i$, $\xi_i$ only through $|\xi_1-\xi_2|+\sum_{i=1}^2|\phi_i|_{p\var}$.
By~\eqref{eq:interpolation},
\[
|g^\delta\circ\rho - g^\delta|_{q\var}
\leq 
|g^\delta\circ\rho - g^\delta|_{\infty\var}^{1-1/q}
\;
|g^\delta\circ\rho - g^\delta|_{1\var}^{1/q}.
\]
Using $|\cdot|_{\infty\var}\leq 2|\cdot|_\infty$ and $\disc{g^\delta}\leq \delta$, we obtain $|g^\delta\circ\rho - g^\delta|_{\infty\var} \leq 4\delta + 2\disc{\rho}$.
Furthermore, $|g^\delta\circ\rho - g^\delta|_{1\var} \le |g^\delta\circ\rho|_{1\var} + |g^\delta|_{1\var} \le 2(b-a)$,
and thus
\begin{align*}
\alpha_{p\var}(X_1,X_2) &\leq C\lim_{\delta\to0}
\inf_{\rho\in\cR}
\max
\{
\disc{\rho},  \disc{\rho}^{1-1/q}
+
|\phi^\delta_1\circ\rho-\phi^\delta_2|_{p\var}
+|\xi_1-\xi_2|
\}
\\
&\leq C \bigl(
\alpha_{p\var}(\phi_1,\phi_2)+\alpha_{p\var}(\phi_1,\phi_2)^{1-1/q}
+|\xi_1-\xi_2|
\bigr).
\qedhere
\end{align*}
\end{proof}

To get control on $\alpha_{p\var}$, there is a useful interpolation inequality.

\begin{lemma}\label{lem:interpol_sigma}
Let $g,h : [a,b]\to \R^d$ (not necessarily c\`adl\`ag)
and $1\le p\le q\le \infty$.
Then
\[
\sigma_{q\var}(g,h) \le \{1+(|g|_{p\var} + |h|_{p\var})^{p/q} \} \{\sigma_{\infty\var}(g,h)^{1-p/q} + \sigma_{\infty\var}(g,h)\}.
\]
\end{lemma}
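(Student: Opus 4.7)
The strategy is a reparametrisation-then-interpolation argument: I fix a near-optimal $\rho$ for $\sigma_{\infty\var}(g,h)$ and apply the interpolation inequality \eqref{eq:interpolation} to the function $f = g\circ\rho - h$.

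More precisely, fix $\eps > \sigma_{\infty\var}(g,h)$ and choose $\rho \in \cR$ with $\disc{\rho} \le \eps$ and $\|g\circ\rho - h\|_{\infty\var} \le \eps$. From the second estimate I extract $|g(a)-h(a)| \le \eps$ and $|g\circ\rho - h|_{\infty\var} \le \eps$. Applying \eqref{eq:interpolation} to $f = g\circ\rho - h$, together with subadditivity of the $p$-variation semi-norm and its invariance under reparametrisation (so that $|g\circ\rho|_{p\var} = |g|_{p\var}$), gives
\[
|g\circ\rho - h|_{q\var} \le |g\circ\rho - h|_{\infty\var}^{1-p/q} \, |g\circ\rho - h|_{p\var}^{p/q} \le \eps^{1-p/q}(|g|_{p\var} + |h|_{p\var})^{p/q}.
\]

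Combining with $|g(a)-h(a)| \le \eps$, I obtain $\|g\circ\rho - h\|_{q\var} \le \eps + \eps^{1-p/q}(|g|_{p\var} + |h|_{p\var})^{p/q}$. Since also $\disc{\rho} \le \eps$, the definition of $\sigma_{q\var}$ yields
\[
\sigma_{q\var}(g,h) \le \eps + \eps^{1-p/q}(|g|_{p\var} + |h|_{p\var})^{p/q},
\]
after letting $\eps \downarrow \sigma_{\infty\var}(g,h)$. The stated bound then follows by observing that the RHS of the claimed inequality expands as
\[
\eps^{1-p/q} + \eps + (|g|_{p\var} + |h|_{p\var})^{p/q}\bigl(\eps^{1-p/q} + \eps\bigr),
\]
which majorises the estimate above.

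There is no substantive obstacle here; the only care needed is the usual convention $\eps^0 = 1$ in the edge cases $p = q$ (where the interpolation becomes trivial) and $q = \infty$ (where the factor $(|g|_{p\var}+|h|_{p\var})^{p/q}$ collapses to $1$), and the fact that the stated bound is deliberately slightly weaker than the sharper $\eps + \eps^{1-p/q}(|g|_{p\var} + |h|_{p\var})^{p/q}$, presumably to present a symmetric product form convenient for subsequent use.
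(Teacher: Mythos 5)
Your proof is correct and takes essentially the same approach as the paper: both arguments fix a near-optimal reparametrisation $\rho$ for $\sigma_{\infty\var}$, apply the interpolation inequality~\eqref{eq:interpolation} to $g\circ\rho-h$, and use subadditivity plus reparametrisation-invariance of the $p$-variation seminorm, then relax the sharper resulting bound to the stated product form. The paper presents this as a single chain of inequalities under the infimum over $\rho$, whereas you unroll it via an explicit $\eps>\sigma_{\infty\var}(g,h)$; the content is identical.
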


\begin{proof}
It follows from~\eqref{eq:interpolation} that
\begin{align*}
\sigma_{q\var}(g,h) &= \inf_\rho \max\{\disc{\rho} , |g\circ \rho - h|_{q\var} + |g(a)-h(a)|\}
\\
&\le \inf_\rho
\max \{\disc{\rho} ,
|g\circ \rho - h|_{p\var}^{p/q} \;
|g\circ \rho -h|_{\infty\var}^{1-p/q} + |g(a)-h(a)|\}
\\
&\le \inf_\rho
\max \{\disc{\rho} , (|g|_{p\var} + |h|_{p\var})^{p/q} \,
|g\circ \rho-h|_{\infty\var}^{1-p/q} + |g(a)-h(a)|\}
\\
&\le \{1+(|g|_{p\var} + |h|_{p\var})^{p/q} \} \{\sigma_{\infty\var}(g,h)^{1-p/q} + \sigma_{\infty\var}(g,h)\}.\qedhere
\end{align*}
\end{proof}

\begin{lemma}\label{lem:interpol_alpha}
For $\phi_1,\phi_2\in \DD^{q\var}$ and $1\le p\le q\le \infty$,
\[
\alpha_{q\var}(\phi_1,\phi_2) \le \{1+(|\phi_1|_{p\var} + |\phi_2|_{p\var})^{p/q} \} \{\alpha_{\infty\var}(\phi_1,\phi_2)^{1-p/q} + \alpha_{\infty\var}(\phi_1,\phi_2)\}.
\]
\end{lemma}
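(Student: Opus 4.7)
The natural approach is to lift the path-level inequality of Lemma~\ref{lem:interpol_sigma} to the decorated path level by applying it to $\delta$-extensions and passing to the limit $\delta \to 0$. Concretely, for each $\delta > 0$ the functions $\phi_1^\delta, \phi_2^\delta \in D([a,b+\delta],\R^d)$ are ordinary (c\`adl\`ag) paths, so I would apply Lemma~\ref{lem:interpol_sigma} with $g = \phi_1^\delta$ and $h = \phi_2^\delta$ to obtain
\[
\sigma_{q\var}(\phi_1^\delta,\phi_2^\delta) \le \bigl\{1+(|\phi_1^\delta|_{p\var} + |\phi_2^\delta|_{p\var})^{p/q} \bigr\}\bigl\{\sigma_{\infty\var}(\phi_1^\delta,\phi_2^\delta)^{1-p/q} + \sigma_{\infty\var}(\phi_1^\delta,\phi_2^\delta)\bigr\}.
\]
By Lemma~\ref{lem:p-var_indep} the quantity $|\phi_i^\delta|_{p\var}$ is independent of $\delta$ and equal to $|\phi_i|_{p\var}$ (and if either is $+\infty$ the stated bound is trivial, so I may assume both are finite).

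The remaining task is then to let $\delta \to 0$ inside the above inequality. The left-hand side converges to $\alpha_{q\var}(\phi_1,\phi_2)$ by the definition of $\alpha_{q\var}$ given after Lemma~\ref{lem:limExists}, while on the right-hand side one uses that $\sigma_{\infty\var}(\phi_1^\delta,\phi_2^\delta) \to \alpha_{\infty\var}(\phi_1,\phi_2)$; since $x \mapsto x^{1-p/q} + x$ is continuous, the bound passes to the limit and yields exactly the claimed inequality. The only point worth checking carefully — and what I expect to be the main (minor) obstacle — is the existence of the limit $\lim_{\delta\to 0}\sigma_{\infty\var}(\phi_1^\delta,\phi_2^\delta)$, since Lemma~\ref{lem:limExists}\ref{pt:alpha_p-var} is stated for finite $p$. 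This is handled by repeating the reparametrisation argument of Lemmas~\ref{lem:diff_repar}--\ref{lem:diff_deltas} with $|\cdot|_{\infty\var}$ in place of $|\cdot|_{p\var}$, which requires only that $|f|_{\infty\var} \le 2|f|_\infty$ so that the $\delta$-change error $|\phi_i^{\delta_2} - \phi_i^{\delta_1}\circ\rho|_{\infty\var}$ (for the reparametrisation $\rho$ supplied by Lemma~\ref{lem:diff_deltas}) tends to zero with $\delta_2 - \delta_1$; everything else is verbatim. The $q = \infty$ case is trivial ($p/q = 0$), so no separate argument is needed there.
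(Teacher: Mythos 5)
Your proposal is correct and follows essentially the same route as the paper, which simply applies Lemma~\ref{lem:interpol_sigma} to $\phi_1^\delta,\phi_2^\delta$ and passes to the $\delta\to 0$ limit. The extra worry you raise about the existence of $\lim_{\delta\to 0}\sigma_{\infty\var}(\phi_1^\delta,\phi_2^\delta)$ is in fact already covered: Section~\ref{subsec:SkorM1} fixes $p\in[1,\infty]$ from the outset, so Lemmas~\ref{lem:diff_repar} and~\ref{lem:limExists}\ref{pt:alpha_p-var} include the $\infty\var$ case — though your sketch of how that works is accurate.
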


\begin{proof}
We apply Lemma~\ref{lem:interpol_sigma} to $\phi_1^\delta$ and $\phi_2^{\delta}$ for each $\delta>0$
and use the definition of $\alpha_{q\var}$ and $\alpha_{\infty\var}$.
\end{proof}

The following is a minor extension of the continuous mapping theorem.

\begin{lemma}\label{lem:cont_mapping}
Let $E,F$ be metric spaces equipped with their Borel $\sigma$-algebras and let $H: E\to F$ be a mapping.
Suppose that there exist closed sets $D_k\subset E$ for $k\ge 1$ such that $D_k \subset D_{k+1}$ and
$\bigcup_{k\ge 1} D_k = E$, and that $H$ is continuous on every $D_k$.
Suppose that $X_n$ are $E$-valued random variables such that $\lim_{k\to\infty} \sup_n \P[X_n\notin D_k] = 0$.
Finally, suppose that $X_n \to_w X$ in $E$ as $n\to\infty$.
Then $H(X_n)\to_w H(X)$ in $F$ as $n\to\infty$.
\end{lemma}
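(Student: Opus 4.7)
The plan is to approximate $H$ by globally bounded continuous functions on $E$ using the Tietze extension theorem, and then reduce to the usual weak convergence via a three-term splitting. This is essentially the standard Portmanteau-plus-Tietze argument for generalised continuous mapping theorems.

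First I would note that $H$ is automatically Borel measurable: for any Borel $B\subset F$, $H^{-1}(B) = \bigcup_k (H^{-1}(B)\cap D_k)$ is a countable union of Borel sets, since $H|_{D_k}$ is continuous on the closed set $D_k$. Next I would use Portmanteau (applied to the open set $D_k^c$) together with the hypothesis to transfer the tightness estimate to the limit $X$:
\[
\P[X\notin D_k] \;\le\; \liminf_{n\to\infty}\P[X_n\notin D_k] \;\le\; \sup_n \P[X_n\notin D_k] \xrightarrow{k\to\infty} 0.
\]

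Now fix a bounded continuous $g:F\to \R$ with $\|g\|_\infty\le M$, and fix $\eps>0$. Choose $k$ large enough that both $\sup_n\P[X_n\notin D_k]<\eps$ and $\P[X\notin D_k]<\eps$. Since $g\circ H$ is continuous and bounded by $M$ on the closed subset $D_k$ of the metric (hence normal) space $E$, Tietze's extension theorem yields a continuous $\tilde g_k:E\to \R$ with $\|\tilde g_k\|_\infty\le M$ and $\tilde g_k|_{D_k}=g\circ H|_{D_k}$. Then
\[
\bigl|\E[g(H(X_n))]-\E[\tilde g_k(X_n)]\bigr| \;\le\; 2M\,\P[X_n\notin D_k] \;<\; 2M\eps,
\]
and analogously for $X$, while $\E[\tilde g_k(X_n)]\to \E[\tilde g_k(X)]$ by the weak convergence $X_n\to_w X$ applied to the bounded continuous function $\tilde g_k$. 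A triangle inequality gives $\limsup_n |\E[g(H(X_n))]-\E[g(H(X))]|\le 4M\eps$, and letting $\eps\to 0$ concludes $H(X_n)\to_w H(X)$.

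I do not expect a serious obstacle here; the only subtle point is the invocation of the Tietze extension theorem (with the sup-norm preserved), which is where we essentially use that $E$ is metric, and the use of Portmanteau in the first step to convert uniform tightness of $X_n$ on $D_k$ into a tightness estimate for $X$ itself.
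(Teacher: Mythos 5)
Your proof is correct, and it takes a genuinely different route from the paper's. The paper works with a bounded lower semicontinuous $f:F\to\R$ and shows directly that $H_k=\bone_{D_k}\,f\circ H+\bone_{D_k^c}M$ is lower semicontinuous on $E$ (using that $D_k$ is closed and $H|_{D_k}$ is continuous), then applies the Portmanteau lemma to the decreasing sequence $H_k\downarrow f\circ H$ together with the uniform tightness hypothesis; it is a one-sided argument throughout and never invokes Tietze. You instead approximate $g\circ H$ by a globally continuous, norm-preserving Tietze extension $\tilde g_k$ and run a two-sided three-term estimate, first transferring the tightness bound to the limit $X$ via Portmanteau applied to the open set $D_k^c$. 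Your version is arguably the more textbook-standard generalised continuous mapping argument and makes the measurability of $H$ (and hence the fact that $H(X_n)$ is a random variable) explicit, which the paper leaves implicit; it does, however, rely on the norm-preserving Tietze theorem, which is where the normality of the metric space $E$ enters. The paper's construction is more self-contained, trading Tietze for a direct verification of lower semicontinuity. Both are sound.
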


\begin{proof}
Let $f: F\to \R$ be a lower semicontinuous function such that $|f| \le M$.
We claim that the function $H_k= \bone_{D_k}f\circ H + \bone_{D_k^c} M$ is lower semicontinuous on $E$.
We suppose that $x_n\to x$ in $E$ and we show that $\liminf_{n \to \infty} H_k(x_n) \geq H_k(x)$. 
The verification is straightforward if $\liminf_{n \to \infty} H_k(x_n)=M$.
Suppose now $\liminf_{n \to \infty} H_k(x_n)<M$.
Let
$x_{n_m}$ be a subsequence such that $\liminf_{n \to \infty} H_k(x_n) = \lim_{m \to \infty} H_k(x_{n_m})$.
Note that $x_{n_m}$ is eventually in $D_k$ and thus $x \in D_k$ by the assumption that $D_k$ is closed.
Then by continuity of $H$ on $D_k$ and lower semicontinuity of $f$,
\[
    \liminf_{n \to \infty} H_k(x_n)
    = \lim_{m \to \infty} H_k(x_{n_m})
    = \lim_{m \to \infty} f\circ H(x_{n_m})
    \geq f \circ H(x)
    = H_k(x)
    .
\]
The claim is proved.

Next, by the Portmanteau lemma,
\[
\liminf_{n\to\infty} \E[H_k(X_n)] \ge \E[H_k(X)].
\]
Moreover, $H_k$ is decreasing in $k$ with $H_k \to f\circ H$ pointwise, and therefore 
\[
\lim_{k\to\infty} \E[H_k(X)] = \E[f\circ H(X)].
\]
On the other hand,
\[
    \lim_{k\to\infty}\sup_n \bigl( \E[H_k(X_n)]-\E[f\circ H(X_n)] \bigr) = 0
\]
by the assumption that $\lim_{k\to\infty} \sup_n \P(X_n\notin D_k) = 0$.
Combining these facts, we obtain
\[
    \liminf_{n \to \infty} \E[f\circ H(X_n)] \ge \E[f\circ H(X)].
\]
Since this holds for all bounded lower semicontinuous functions $f$,
it follows that $H(X_n) \to_w H(X)$ in $F$ by the Portmanteau lemma.
\end{proof}

We are now ready to prove Theorem~\ref{thm:S}.

\begin{proof}[Proof of  Theorem~\ref{thm:S}]
The fact that $|\phi|_{p\var}<\infty$ follows from lower semicontinuity of $p$-variation and the Portmanteau lemma
(more specifically, we use that $\bone_{|\phi|_{p\var}>n}$ is lower semicontinuous).
Consider now the metric spaces $E=(\DD^{p\var}([a,b],\R^d),\alpha_\infty)$
and $F=(\DD^{p\var}([a,b],\R^m),\alpha_{q\var})$ with $q\in (p,\gamma)$.
Recall that $B\in C^\gamma(\R^m,\R^{m\times d})$ for $\gamma>p$.
By stability of Young ODEs  (Proposition~\ref{prop:stability_ODE}), interpolation (Lemma~\ref{lem:interpol_alpha}),
and the elementary inequality
$|f(a)| + |f|_{\infty\var} \le 3|f|_{\infty}$
and thus
$\alpha_{\infty\var} \le 3\alpha_{\infty}$,
the solution map $\cS$ maps every subset $D_k = \{\phi\in E\,:\, |\phi|_{p\var} \le k\}$ continuously into $F$.
By lower semicontinuity of $p$-variation, each set $D_k$ is closed in $E$,
and, by the assumption that $|\phi_n|_{p\var}$ is tight,
$\lim_{k\to\infty} \sup_n \P[\phi_n\notin D_k] = 0$.
Finally $\phi_n \to_w \phi$ in $E$ by assumption.
It follows from Lemma~\ref{lem:cont_mapping} that $\cS(\phi_n)\to_w \cS(\phi)$ in $F$ as $n\to\infty$ as claimed.
\end{proof}

\paragraph{Acknowledgements}
We are grateful to the referees for their very careful reading of the paper and their helpful suggestions.
I.C. acknowledges support by the Engineering and Physical Sciences Research Council via the New Investigator Award EP/X015688/1
and would like to thank the University of Warwick for its hospitality during which this work was concluded.
A.K. has been partially supported by EPSRC grant EP/V053493/1, and is grateful to Wael Bahsoun for support.
We also thank ICMS, Edinburgh, for its hospitality during which part of this work was carried out.

\end{document}